\numberwithin{procedure}{section}
\newcommand{\erase}[1]{}
\newtheorem{theorem}{Theorem}[section]
\newtheorem{lemma}[theorem]{Lemma}
\newtheorem{proposition}[theorem]{Proposition}
\newtheorem{corollary}[theorem]{Corollary}
\newtheorem{_algorithm}[theorem]{Algorithm}
\newtheorem{_definition}[theorem]{Definition}
\newenvironment{definition}{\begin{_definition}\rm}{\end{_definition}}
\newtheorem{_propositiondefinition}[theorem]{Proposition-Definition}
\newtheorem{_remark}[theorem]{\it Remark}
\newenvironment{remark}{\begin{_remark}\rm}{\end{_remark}}
\newtheorem{_example}[theorem]{Example}
\newenvironment{example}{\begin{_example}\rm}{\end{_example}}
\newtheorem{_assumption}[theorem]{Assumption}
\newtheorem{_construction}[theorem]{Construction}
\newtheorem{_claim}[theorem]{Claim}
\newtheorem{_conjecture}[theorem]{Conjecture}
\numberwithin{equation}{section}
\numberwithin{table}{section}
\numberwithin{figure}{section}
\renewcommand{\qed}{\hfill {$\Box$}}
\newcommand{\C}{\mathord{\mathbb C}}
\newcommand{\F}{\mathord{\mathbb F}}
\renewcommand{\P}{\mathord{\mathbb  P}}
\newcommand{\Q}{\mathord{\mathbb  Q}}
\newcommand{\R}{\mathord{\mathbb R}}
\newcommand{\Z}{\mathord{\mathbb Z}}
\newcommand{\AAA}{\mathord{\mathcal A}}
\newcommand{\EEE}{\mathord{\mathcal E}}
\newcommand{\GGG}{\mathord{\mathcal G}}
\newcommand{\HHH}{\mathord{\mathcal H}}
\newcommand{\PPP}{\mathord{\mathcal P}}
\newcommand{\RRR}{\mathord{\mathcal R}}
\newcommand{\VVV}{\mathord{\mathcal V}}
\newcommand{\SSSS}{\mathord{\mathfrak S}}
\newcommand{\maprightsp}[1]{\; \smash{\mathop{\; \longrightarrow \; }\limits\sp{#1}}\; }
\newcommand{\maprightinjsp}[1]{\; \smash{\mathop{\; \inj \; }\limits\sp{#1}}\; }
\newcommand{\maprightisom}{\; \smash{\mathop{\; \longrightarrow \; }\limits\sp{\sim}}\; }
\newcommand{\mapleftisom}{\; \smash{\mathop{\; \longleftarrow \; }\limits\sp{\sim}}\; }
\newcommand{\mapdown}{\phantom{\Big\downarrow}\hskip -8pt \downarrow}
\newcommand{\mapdownright}[1]{\mapdown\rlap{$\vcenter{\hbox{$\scriptstyle#1$}}$}}
\newcommand{\mapdownleft}[1]{\llap{$\vcenter{\hbox{$\scriptstyle#1$\;}}$}\mapdown}
\newcommand{\mapdownsurj}{
\hbox{$\downarrow$}
\llap{\hbox{\raise 2pt\hbox{$\downarrow$}}}%
%\vstrech %mapdown
}
\newcommand{\mapupsurj}{
\hbox{$\bigm\uparrow$}
\llap{\hbox{\raise 2pt\hbox{$\bigm\uparrow$}}}%
\vstrechmapup
}
\newcommand{\inj}{\hookrightarrow}
\newcommand{\surj}{\mathbin{\to \hskip -7pt \to}}
\newcommand{\isom}{\xrightarrow{\sim}}
\newcommand{\set}[2]{\{\,{#1}\mid {#2} \,\}}
\newcommand{\gen}[1]{\langle {#1}  \rangle}
\newcommand{\tensor}{\otimes}
\newcommand{\sprime}{\sp\prime}
\newcommand{\sbar}[1]{\sb{(#1)}}
\newcommand{\spprime}{\sp{\prime\prime}}
\newcommand{\sperp}{\sp{\perp}}
\newcommand{\dual}{\sp{\vee}}
\newcommand{\semidirectproduct}{\rtimes}
\newcommand{\inv}{\sp{-1}}
\newcommand{\Hom}{\mathord{\mathrm{Hom}}}
\newcommand{\OG}{\mathord{\mathrm{O}}}
\newcommand{\id}{\mathord{\mathrm{id}}}
\newcommand{\Ker}{\operatorname{\mathrm{Ker}}\nolimits}
\newcommand{\Image}{\operatorname{\mathrm{Im}}\nolimits}
\newcommand{\Aut}{\operatorname{\mathrm{Aut}}\nolimits}
\newcommand{\pr}{\mathord{\mathrm{pr}}}
\newcommand{\rank}{\operatorname{\mathrm{rank}}\nolimits}
\newcommand{\closure}[1]{\overline{#1}}
\newcommand{\mystruth}[1]{\phantom{\hbox{\vrule height #1}}}
\newcommand{\intf}[1]{\langle #1 \rangle}
\newcommand{\Rats}{\RRR}
\newcommand{\Ells}{\EEE}
\newcommand{\BP}{\mathord{\rm{BP}}}
\newcommand{\vol}{\mathord{\rm{vol}}}
\newcommand{\StdFD}{\VVV}
\newcommand{\enrinvol}{\varepsilon}
\newcommand{\intfX}[1]{\intf{#1}_X}
\newcommand{\intfY}[1]{\intf{#1}_Y}
\newcommand{\period}{\omega}
\newcommand{\OGP}{\OG^{\PPP}}
\newcommand{\aut}{\mathord{\rm{aut}}}
\newcommand{\Nef}{\mathord{\rm Nef}}
\newcommand{\NefY}{\Nef_{Y}}
\newcommand{\NefX}{\Nef_{X}}
\newcommand{\temp}{\mathord{\rm temp}}
\newcommand{\clD}{\closure{D}}
\newcommand{\LSY}{L_{26}/S_Y(2)}
\newcommand{\none}{\mathord{\textrm{none}}}
\newcommand{\ADE}{\mathord{\rm ADE}}
\newcommand{\RDP}{\mathord{\rm RDP}}
\newcommand{\Rbar}{\closure{R}}
\newcommand{\Rtil}{\widetilde{R}}
\newcommand{\Vtil}{\widetilde{V}}
\newcommand{\SXp}{S_{X+}}
\newcommand{\SXm}{S_{X-}}
\newcommand{\SX}{S_{X}}
\newcommand{\SY}{S_{Y}}
\newcommand{\gens}[1]{\langle#1\rangle}
\newcommand{\LLt}{L_{26}/L_{10}(2)}
\newcommand{\TG}{T_G}
\newcommand{\cloX}{\closure{X}}
\newcommand{\cloY}{\closure{Y}}
\newcommand{\wtC}{\widetilde{C}}
\newcommand{\embR}{\iota_R}
\newcommand{\Lten}{L_{10}}
\newcommand{\embLM}{\varpi}
\newcommand{\downisom}{\mapdown\llap{\small $\hskip -4pt \wr$}}
\newcommand{\taubar}{\bar{\tau}}
\newcommand{\tilg}{\tilde{g}}
\newcommand{\tilr}{\tilde{r}}
\newcommand{\tisom}{\mathord{\rm isom}}
\newcommand{\isoms}{\mathord{\rm isoms}}
\newcommand{\Gbar}{\closure{G}}
\newcommand{\tilC}{\widetilde{C}}
\DeclareMathOperator{\rk}{rank}
\newcommand{\rl}{R}
\newcommand{\brl}{\overline{\rl}}
\newcommand{\trl}{\widetilde{\rl}}
\newcommand{\arl}{\Aut(\tau(\rl))}
\renewcommand{\ker}{\Ker}
\begin{document}

\title[Automorphism groups of  Enriques surfaces]
{Automorphism groups of  certain Enriques surfaces}

\author[S. Brandhorst]{Simon Brandhorst}
\address{Fachbereich Mathematik, Saarland University,
Campus E2.4, 66123 Saarbr\"ucken, Germany}
\email{brandhorst@math.uni-sb.de}

\author[I. Shimada]{Ichiro Shimada}
\address{Department of Mathematics, Graduate School of Science, Hiroshima University,
1-3-1 Kagamiyama, Higashi-Hiroshima, 739-8526 JAPAN}
\email{ichiro-shimada@hiroshima-u.ac.jp}
\thanks{The first author gratefully acknowledges funding by the Deutsche Forschungsgemeinschaft (DFG, German Research Foundation) – Project-ID 286237555 – TRR 195.
The second author gratefully acknowledges financial support by JSPS KAKENHI Grant Number~16H03926 and~20H01798.}

\begin{abstract}
We calculate 
the automorphism group of certain Enriques surfaces.
The Enriques surfaces that we investigate 
include very general $n$-nodal Enriques surfaces
and very general cuspidal Enriques surfaces.
We also describe the action of the automorphism group
on the set of smooth rational curves
and on the set of elliptic fibrations.
\end{abstract}
\keywords{Enriques surface,  K3 surface,  hyperbolic lattice}
%\date{  \currenttime \;\;\today}
\subjclass{14J28,14J50}

\maketitle
%
%\begin{center}
%\emph{{\small Dedicated to John Horton Conway and Ernest Vinberg with admiration.}}
%\end{center}
%
\section{Introduction}\label{sec:Introduction}
A central theme in algebraic geometry is to study varieties using
convex geometry. The cone of curves of a variety is the convex hull of
the numerical equivalence classes of curves. 
Its dual is the cone of nef line bundles. 
Much of the birational geometry of a variety is encoded in these cones and their interplay with the canonical divisor.
While for Fano varieties the nef cone is rational polyhedral \cite[Theorem 3.7]{morikollar1998},
in general the nef cone is not well understood. For instance it can have infinitely many faces or be round.

The Morrison--Kawamata cone conjecture \cite{morrison1992,kawamata1997} gives a clear picture of the effective nef cone of a Calabi-Yau variety.
It predicts that the action of the automorphism group on the effective nef cone admits a fundamental domain which is a rational polyhedral cone.
\par
The conjecture is wide open in dimension three and beyond \cite{lazicoguiso2018}. 
But it has been verified for $K3$ surfaces by Sterk \cite{sterk1985},
and for Enriques surfaces by Namikawa \cite{namikawa1985}.
It follows that an Enriques surface admits up to the action of the automorphism group only finitely many \emph{smooth rational curves}, 
finitely many \emph{elliptic fibrations}, finitely many \emph{projective models} of a given degree and
its automorphism group is finitely generated and in fact finitely presented \cite[Corollaries 4.15, 4.16]{looijenga2014}.
\par
Naturally, enumerative questions arise:
\begin{itemize}
 \item Can one explicitly describe a fundamental domain?
 \item \emph{How many} smooth rational curves, elliptic fibrations or projective models are there up to the action of the automorphism group?
 \item Can one give generators for the automorphism group?
\end{itemize}
Barth--Peters \cite{BP1983} noted that very general Enriques surfaces do not contain smooth rational curves. 
Hence their nef cone is round - it is the entire positive cone, and they proceed to answer the three questions for very general Enriques surfaces.
%This has been answered for very general Enriques surfaces by
%Barth--Peters
\par
Enriques surfaces containing a smooth rational curve are called nodal. 
They form a subset of codimension one in the moduli space of Enriques surfaces. Very general nodal Enriques surfaces are treated by Cossec--Dolgachev \cite{CossecDolgachev1985}
 (see also the works of Allcock \cite{Allcock2018} and Peters--Sterk~\cite{PetersSterk2020}).
\par
 When an Enriques surface is deformed to one containing more rational curves several phenomena working against each other occur. 
 On the one hand the nef cone gets smaller and on the other hand the automorphism group may change drastically.
 Barth -- Peters \cite[p.395]{BP1983} write that they do not know whether one can control these effects.
 Albeit the behaviour of the nef cone and the automorphism group may be erratic, 
 the cone conjecture promises that the fundamental domain on the nef cone stays of finite volume at least. 
 Our first main result (Theorem \ref{thm:volumeformula}) states that we can control the (change of) volume in a precise way under mild assumptions.
 %: it provides a general formula for the volume of the fundamental domain of the action on the nef cone on an Enriques surface $Y$ under mild genericity assumptions on $Y$.}
\par
To generalize the aforementioned results of
Barth, Peters, Cossec and Dolgachev to Enriques surfaces with more nodes, we introduce the notion of $(\tau,\taubar)$-generic Enriques surfaces, which is closely related to the root invariant
introduced by  Nikulin~\cite{Nikulin1984}.
See the next subsection for the precise definition.
For instance the very general Enriques surface is $(0,0)$-generic,
a very general nodal Enriques surface is $(A_1,A_1)$-generic and
if $Y$ is an Enriques surface that is very general in the moduli
%shimada changed on June 08
of Enriques surfaces containing $n$ disjoint smooth rational curves,
then $Y$ is  $(nA_1, nA_1)$-generic. 
\newcommand{\Atwo}{\setlength\unitlength{.2truecm}%
\begin{picture}(4.7,1.7)(0,0)%
\put(.9,.5){\circle{.7}}%
\put(3.9,.5){\circle{.7}}%
\put(1.25,.5){\line(1,0){2.3}}%
\end{picture}}%
If $Y$ is very general in the moduli
of Enriques surfaces containing
two smooth rational curves whose dual graph is $\Atwo$
(that is, $Y$ is a very general cuspidal Enriques surface),
then $Y$ is  $(A_2, A_2)$-generic.
\par
Next we give algorithms to compute generators for the automorphism group $\Aut(Y)$, 
a fundamental domain for $\Aut(Y)$ on the nef and big cone $\Nef(Y)$ and orbit representatives for its action on
\begin{eqnarray*}
\Rats(Y) &:=& \textrm{the set of  smooth rational curves on $Y$}, \\
\Ells(Y) &:=& \textrm{the set of elliptic
fibrations $Y\to \P^1$}.
\end{eqnarray*}
We apply Theorem~\ref{thm:volumeformula} and the aforementioned algorithms
to $(\tau, \taubar)$-generic Enriques surfaces. 
This results in our second, series of main results: 
Theorem \ref{thm:main1vol} expresses the volume of the fundamental domain of
 $\Aut(Y)$ on the nef cone $\Nef(Y)$ in terms of the Weyl group of $\tau$, 
Theorem \ref{thm:main2rats} relates the orbits of $\Aut(Y)$
on the set of smooth rational curves $\RRR(Y)$ 
to the connected components of the Dynkin diagram $\tau$ and 
Theorem \ref{thm:main3ells} counts the $\Aut(Y)$-orbits of the set of elliptic fibrations $\EEE(Y)$ and their fiber types.

Our new idea is the lattice theoretic result
obtained in~\cite{BS2019}
(see also Dolgachev--Kondo~\cite[Chapter~10]{DKbook}).
For a lattice $L$ with the intersection form $\intf{-,-}$,
let $L(m)$ denote the lattice with
the same underlying $\Z$-module as $L$ and
with the intersection form $m\,\intf{-,-}$.
A lattice $L$ of rank $n>1$ is said to be \emph{hyperbolic}   if the signature is $(1, n-1)$.
For a positive integer $n$ with $n\bmod 8=2$,
let $L_n$ denote an even unimodular hyperbolic lattice  of rank $n$,
which is unique up to isomorphism.
Borcherds~\cite{Bor1},~\cite{Bor2} developed a method
to calculate the orthogonal group of an even  hyperbolic lattice $S$
by embedding $S$ primitively into $L_{26}$
and using the result of Conway~\cite{Conway1983}.
This method has been applied to the study
of automorphism groups of $K3$ surfaces by many authors.
However, the method often requires impractically heavy computation
(see, for example,~\cite{KKS2014}~and~\cite{Shimada2015}).
\par
On the other hand,
in~\cite{BS2019},
we have classified all primitive embeddings of $L_{10}(2)$ into
$L_{26}$ and showed that they have
a remarkable property~(see Theorems~\ref{thm:17} and~\ref{thm:16simples}) which enables us to calculate automorphism
groups of Enriques surfaces efficiently and explicitly
for the first time.
The resulting speed up (roughly by a factor of $10^{20}$ in the best situation see Remark~\ref{rem:computationsize})
over a more direct approach, allows us to calculate the automorphism groups of the $184$ families of
%  efficiently and explicitly
$(\tau, \taubar)$-generic Enriques surfaces.
\subsection{Definition of $(\tau, \taubar)$-generic Enriques surfaces}\label{subsec:tautaubar}
\newcommand{\sametype}{-} % shimada added on 2021 June 08
First, we define $(\tau, \taubar)$-generic Enriques surfaces.
Let $L$ be a lattice.
We let 
the group $\OG(L)$ of isometries of $L$ act on $L$ from the right,
and write the action as
$v\mapsto v^g$ for $v\in L\tensor\R$ and $g\in \OG(L)$.
We have a natural identification $\OG(L)=\OG(L(m))$ for any non-zero integer $m$.
A vector $v$ of a lattice is called a \emph{$k$-vector} if  $\intf{v,v}=k$.
A $(-2)$-vector is called a \emph{root}.
\begin{definition}
An \emph{$\ADE$-lattice} is an even negative definite lattice generated by  roots.
An $\ADE$-lattice $R$ has a basis consisting of roots 
whose dual graph is a Dynkin diagram of an $\ADE$-type.
This $\ADE$-type  is denoted by $\tau(R)$.
\end{definition}
A \emph{positive half-cone} of a hyperbolic lattice $L$
is one of the two connected components of
$\set{x\in L\tensor\R}{\intf{x, x}>0}$.
Let $\PPP$ be a positive half-cone of a hyperbolic lattice $L$.
We put
\[
\OGP(L):=\set{g\in \OG(L)}{\PPP^g=\PPP}.
\]
In~\cite{RDPEnriques},
we classified  the $\ADE$-sublattices of $\Lten$ up to the action of $\OGP(\Lten)$.
Let $R$ be an $\ADE$-sublattice of $\Lten$,
and $\Rbar$ the primitive closure of $R$ in $L_{10}$.
It turned out that $\Rbar$ is also an $\ADE$-sublattice of $\Lten$.
\begin{proposition}[\cite{RDPEnriques}]\label{prop:184}
{\rm (1)} 
Let $R\sprime$ be another $\ADE$-sublattice of $\Lten$
with the primitive closure $\Rbar\sprime$.
Then $R$ and $R\sprime$ are in the same orbit under the action 
of $\OGP(\Lten)$ 
if and only if $(\tau(R), \tau(\Rbar))=(\tau(R\sprime), \tau(\Rbar\sprime))$.
\par
{\rm (2)}
The pair  $(\tau, \taubar)$ of $\ADE$-types is equal to 
$(\tau(R), \tau(\Rbar))$  of an $\ADE$-sublattice $R$ of $\Lten$
if and only if $(\tau, \taubar)$ is one of the $184$ pairs in Table~\ref{table:184},
where the 3rd column being ``$\sametype$" means $\tau=\taubar$. %shimada added on 2021 June 08
\qed
\end{proposition}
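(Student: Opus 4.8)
The plan is to reduce the classification of $\ADE$-sublattices of $\Lten$ up to $\OGP(\Lten)$ to a purely arithmetic computation using Nikulin's theory of primitive embeddings and discriminant forms, then carry out that computation by machine. First I would recall the key structural fact: for an $\ADE$-sublattice $R\subseteq\Lten$ with primitive closure $\Rbar$, both $R$ and $\Rbar$ are negative definite root lattices, and $\Rbar/R$ is a finite group embedding isotropically into the discriminant group $(\disc R)$. Since $\Rbar$ is itself an $\ADE$-lattice (as stated in the excerpt, following~\cite{RDPEnriques}), the pair of Dynkin types $(\tau,\taubar)$ together with the glue group $\Rbar/R\hookrightarrow \disc R$ determines the abstract isometry class of the inclusion $R\subseteq\Rbar$ up to the action of $\OG(\disc R)$.

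The main step is then to show that the $\OGP(\Lten)$-orbit of $R$ is determined by $(\tau,\taubar)$ alone. For part~(1), I would argue as follows. The orthogonal complement $T:=R^{\perp}$ in $\Lten$ depends only on the genus of the pair, and since $\Lten$ is even unimodular, $\disc \Rbar \cong -\disc(\Rbar^{\perp})$; combined with the classification of indefinite forms in their genus (Nikulin / Eichler), one expects the isometry class of $\Rbar^{\perp}$, and hence of $\Rbar\oplus\Rbar^{\perp}$ as an overlattice inside $\Lten$, to be unique once $\taubar$ is fixed — here one uses that $\Rbar^{\perp}$ has large enough rank and signature $(1,\ast)$ to fall under the uniqueness theorems. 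The passage from $\Rbar$ back to $R$ is governed only by the Dynkin type $\tau$ and the finite index, and Witt's theorem for the finite bilinear form lets one move any two embeddings of the glue group by an element of $\OG(\disc \Rbar)$, which lifts to $\OGP(\Lten)$ because $\Lten$ is unimodular (so $\OG(\Lten)\to\OG(\disc)$ considerations are trivial, and the positive-cone condition is arranged by composing with $-\mathrm{id}$ if necessary). Conversely, $\tau(R)$ and $\tau(\Rbar)$ are manifestly $\OGP(\Lten)$-invariants, giving the ``only if'' direction.

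For part~(2), once part~(1) is in place the list of $184$ pairs is obtained by an exhaustive search: enumerate all $\ADE$-types $\tau$ of rank $\le 9$ (rank $10$ being impossible since $R$ is negative definite inside a hyperbolic lattice of rank $10$, and a rank-$10$ negative definite sublattice cannot be primitively... but more precisely the complement must contain a positive vector), for each such $\tau$ enumerate the possible isotropic subgroups of $\disc(R_\tau)$ defining candidate primitive closures $\Rbar$, check that the resulting overlattice is again an $\ADE$-lattice and that it admits a primitive embedding into $\Lten$ (equivalently, that $-\disc \Rbar$ is the discriminant form of some even lattice of signature $(1,9-\rk\Rbar)$, via Nikulin's existence criterion), and record the surviving pairs $(\tau,\taubar)$. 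I would run this enumeration and cross-check the count against Table~\ref{table:184}.

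The hard part is not any single step but controlling the interaction between the two directions of gluing: one must simultaneously ensure that the candidate $\Rbar$ embeds primitively in $\Lten$ (a condition on $\disc\Rbar$ and the complement) \emph{and} that the prescribed $\tau$-to-$\taubar$ gluing is realized — and that the orbit is unique. The potential obstruction is a failure of uniqueness in the genus of $\Rbar^{\perp}$ for the few cases where its rank or discriminant length is small; these exceptional cases would have to be handled by hand, verifying directly that distinct genus members either do not occur as complements in $\Lten$ or are nonetheless $\OGP(\Lten)$-conjugate. I expect that for the ranges occurring here Nikulin's criteria apply cleanly, but this is where care is needed, and it is essentially the content of the cited paper~\cite{RDPEnriques}.
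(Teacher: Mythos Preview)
The paper does not prove this proposition; it is quoted from~\cite{RDPEnriques} and closed with a \qed, so there is no in-paper argument to compare against.

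Your outline is the natural strategy and presumably close to what~\cite{RDPEnriques} actually carries out: treat the primitive embedding $\Rbar\hookrightarrow L_{10}$ via Nikulin's theory, handle the finite-index inclusion $R\subset\Rbar$ separately, then enumerate by machine. Two points deserve tightening. First, the step you phrase as ``Witt's theorem for the finite bilinear form lets one move any two embeddings of the glue group'' does not do what you need. The actual requirement is that any two full-rank root sublattices of type $\tau$ inside a fixed $\Rbar$ lie in one orbit under $\OG(\Rbar)$, equivalently that the isotropic subgroups of $R\dual/R$ yielding an overlattice of type $\taubar$ form a single orbit under the \emph{image} of $\OG(R)$ in $\OG(R\dual/R)$ --- not under all of $\OG(R\dual/R)$. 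That image is controlled by the diagram symmetries (compare Lemma~\ref{lem:imWR} in this paper), and the needed transitivity is a finite case-check per pair $(\tau,\taubar)$, not a consequence of any Witt-type theorem. Second, for uniqueness of the primitive embedding $\Rbar\hookrightarrow L_{10}$ you need both that the orthogonal complement is unique in its genus and that its isometry group surjects onto the automorphisms of its discriminant form; when $\rk\Rbar\in\{8,9\}$ the complement has rank at most $2$, the standard indefinite criteria do not apply, and those cases must be handled directly. You do flag this last issue, correctly, as where the substance lies.
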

\begin{table}
{\scriptsize
\[
%%%%%%
%Table184rows1.tex
%%%%%%
\begin{array}{clllccrl}
\textrm{No.} & \tau(R) & \tau(\Rbar) &\tau(\Rtil) & \texttt{exist} & c_{(\tau, \taubar)} & \texttt{rat} &\texttt{irec} \\ 
\hline 
1 & A_{1} & \sametype & \sametype &   & 1 & 1 & \texttt{96C} \\ 
2 & 2A_{1} & \sametype & \sametype &   & 1 & 2 & \texttt{96C} \\ 
3 & A_{2} & \sametype & \sametype &   & 1 & 1 & \texttt{96C} \\ 
4 & 3A_{1} & \sametype & \sametype &   & 1 & 3 & \texttt{96C} \\ 
5 & A_{2}+A_{1} & \sametype & \sametype &   & 1 & 2 & \texttt{96C} \\ 
6 & A_{3} & \sametype & \sametype &   & 1 & 1 & \texttt{96C} \\ 
7 & 4A_{1} & \sametype & \sametype &   & 1 & 4 & \texttt{96C} \\ 
8 & 4A_{1} & D_{4} & D_{4} &   & 1 & 4 & \texttt{96C} \\ 
9 & A_{2}+2A_{1} & \sametype & \sametype &   & 1 & 3 & \texttt{96C} \\ 
10 & A_{3}+A_{1} & \sametype & \sametype &   & 1 & 2 & \texttt{96C} \\ 
11 & 2A_{2} & \sametype & \sametype &   & 1 & 2 & \texttt{96C} \\ 
12 & A_{4} & \sametype & \sametype &   & 1 & 1 & \texttt{40E} \\ 
13 & D_{4} & \sametype & \sametype &   & 1 & 1 & \texttt{96A} \\ 
14 & 5A_{1} & \sametype & \sametype &   & 1 & 5 & \texttt{96C} \\ 
15 & 5A_{1} & D_{4}+A_{1} & D_{4}+A_{1} &   & 1 & 5 & \texttt{96C} \\ 
16 & A_{2}+3A_{1} & \sametype & \sametype &   & 1 & 4 & \texttt{96C} \\ 
17 & A_{3}+2A_{1} & \sametype & \sametype &   & 1 & 3 & \texttt{96C} \\ 
18 & A_{3}+2A_{1} & D_{5} & D_{5} &   & 1 & 3 & \texttt{96C} \\ 
19 & 2A_{2}+A_{1} & \sametype & \sametype &   & 1 & 3 & \texttt{96C} \\ 
20 & A_{4}+A_{1} & \sametype & \sametype &   & 1 & 2 & \texttt{40E} \\ 
21 & D_{4}+A_{1} & \sametype & \sametype &   & 1 & 2 & \texttt{96A} \\ 
22 & A_{3}+A_{2} & \sametype & \sametype &   & 1 & 2 & \texttt{96C} \\ 
23 & A_{5} & \sametype & \sametype &   & 1 & 1 & \texttt{40E} \\ 
24 & D_{5} & \sametype & \sametype &   & 1 & 1 & \texttt{40A} \\ 
25 & 6A_{1} & D_{4}+2A_{1} & D_{4}+2A_{1} &   & 1 & 6 & \texttt{96C} \\ 
26 & 6A_{1} & D_{6} & D_{6} & \times & 1 & 6 & \texttt{96C} \\ 
27 & A_{2}+4A_{1} & \sametype & \sametype &   & 1 & 5 & \texttt{96C} \\ 
28 & A_{2}+4A_{1} & D_{4}+A_{2} & D_{4}+A_{2} &   & 1 & 5 & \texttt{96C} \\ 
29 & A_{3}+3A_{1} & \sametype & \sametype &   & 1 & 4 & \texttt{96C} \\ 
30 & A_{3}+3A_{1} & D_{5}+A_{1} & D_{5}+A_{1} &   & 1 & 4 & \texttt{96C} \\ 
31 & 2A_{2}+2A_{1} & \sametype & \sametype &   & 1 & 4 & \texttt{96C} \\ 
32 & A_{4}+2A_{1} & \sametype & \sametype &   & 1 & 3 & \texttt{40E} \\ 
33 & D_{4}+2A_{1} & \sametype & \sametype &   & 1 & 3 & \texttt{96A} \\ 
34 & D_{4}+2A_{1} & D_{6} & D_{6} &   & 1 & 3 & \texttt{96A} \\ 
35 & A_{3}+A_{2}+A_{1} & \sametype & \sametype &   & 1 & 3 & \texttt{96C} \\ 
36 & A_{5}+A_{1} & \sametype & \sametype &   & 1 & 2 & \texttt{40E} \\ 
37 & A_{5}+A_{1} & E_{6} & E_{6} &   & 1 & 2 & \texttt{40E} \\ 
38 & D_{5}+A_{1} & \sametype & \sametype &   & 1 & 2 & \texttt{40A} \\ 
39 & 3A_{2} & \sametype & \sametype &   & 1 & 3 & \texttt{96C} \\ 
40 & 3A_{2} & E_{6} & 3A_{2} &   & 1 & 3 & \texttt{96C} \\ 
41 & A_{4}+A_{2} & \sametype & \sametype &   & 1 & 2 & \texttt{40E} \\ 
42 & D_{4}+A_{2} & \sametype & \sametype &   & 1 & 2 & \texttt{96A} \\ 
43 & 2A_{3} & \sametype & \sametype &   & 1 & 2 & \texttt{96A} \\ 
44 & 2A_{3} & D_{6} & D_{6} &   & 1 & 2 & \texttt{96C} \\ 
45 & A_{6} & \sametype & \sametype &   & 1 & 1 & \texttt{40C} \\ 
46 & D_{6} & \sametype & \sametype &   & 1 & 1 & \texttt{40A} \\ 
47 & E_{6} & \sametype & \sametype &   & 1 & 1 & \texttt{20E} \\ 
48 & 7A_{1} & D_{6}+A_{1} & D_{6}+A_{1} & \times & 1 & 7 & \texttt{96C} \\ 
49 & 7A_{1} & E_{7} & E_{7} & \times & 1 & 7 & \texttt{96A} \\ 
50 & A_{2}+5A_{1} & D_{4}+A_{2}+A_{1} & D_{4}+A_{2}+A_{1} &   & 1 & 6 & \texttt{96C} \\ 
51 & A_{3}+4A_{1} & D_{5}+2A_{1} & D_{5}+2A_{1} &   & 1 & 5 & \texttt{96C} \\ 
52 & A_{3}+4A_{1} & D_{4}+A_{3} & D_{4}+A_{3} &   & 1 & 5 & \texttt{96A} \\ 
53 & A_{3}+4A_{1} & D_{7} & D_{7} & \times & 1 & 5 & \texttt{96C} \\ 
54 & 2A_{2}+3A_{1} & \sametype & \sametype &   & 1 & 5 & \texttt{96C} \\ 
55 & A_{4}+3A_{1} & \sametype & \sametype &   & 1 & 4 & \texttt{40E} \\ 
56 & D_{4}+3A_{1} & D_{6}+A_{1} & D_{6}+A_{1} &   & 1 & 4 & \texttt{96A} \\ 
57 & D_{4}+3A_{1} & E_{7} & E_{7} & \times & 1 & 4 & \texttt{96A} \\ 
58 & A_{3}+A_{2}+2A_{1} & \sametype & \sametype &   & 1 & 4 & \texttt{96C} \\ 
59 & A_{3}+A_{2}+2A_{1} & D_{5}+A_{2} & D_{5}+A_{2} &   & 1 & 4 & \texttt{96C} \\ 
60 & A_{5}+2A_{1} & \sametype & \sametype &   & 1 & 3 & \texttt{40E} \\ 
61 & A_{5}+2A_{1} & E_{6}+A_{1} & E_{6}+A_{1} &   & 1 & 3 & \texttt{40E} \\ 
62 & D_{5}+2A_{1} & \sametype & \sametype &   & 1 & 3 & \texttt{40A} \\ 
63 & D_{5}+2A_{1} & D_{7} & D_{7} &   & 1 & 3 & \texttt{40A} \\ 
64 & 3A_{2}+A_{1} & \sametype & \sametype &   & 1 & 4 & \texttt{96C} \\ 
65 & 3A_{2}+A_{1} & E_{6}+A_{1} & 3A_{2}+A_{1} &   & 1 & 4 & \texttt{96C} \\ 
66 & A_{4}+A_{2}+A_{1} & \sametype & \sametype &   & 1 & 3 & \texttt{40E} \\ 
67 & D_{4}+A_{2}+A_{1} & \sametype & \sametype &   & 1 & 3 & \texttt{96A} \\ 
68 & 2A_{3}+A_{1} & \sametype & \sametype &   & 1 & 3 & \texttt{96A} \\ 
69 & 2A_{3}+A_{1} & D_{6}+A_{1} & D_{6}+A_{1} &   & 1 & 3 & \texttt{96C} \\ 
70 & 2A_{3}+A_{1} & E_{7} & D_{6}+A_{1} &   & 1 & 3 & \texttt{96C} \\ 
\end{array}

\]
}
\vskip .1cm
{\scriptsize 
\parbox{10cm}{For $\tau(R), \tau(\Rbar), \tau(\Rtil)$, see Propositions~\ref{prop:184} and~\ref{prop:Rtil}.
For~$\texttt{exist}$, see Proposition~\ref{prop:5th}.
For~$c_{(\tau, \taubar)} $, see Theorem~\ref{thm:main1vol}.
For~$ \texttt{rat} $, see Theorem~\ref{thm:main2rats}.
For~$\texttt{irec}$, see Section~\ref{subsec:construction}.}
}
\vskip .1cm
\caption{$\ADE$-sublattices of  $\Lten$ (continues)}
\label{table:184}
\end{table}
\setcounter{table}{0}
\begin{table}
{\scriptsize
\[
%%%%%%
%Table184rows2.tex
%%%%%%
\begin{array}{clllccrl}
\textrm{No.} & \tau(R) & \tau(\Rbar) &\tau(\Rtil) & \texttt{exist} & c_{(\tau, \taubar)} & \texttt{rat} &\texttt{irec} \\ 
\hline 
71 & A_{6}+A_{1} & \sametype & \sametype &   & 1 & 2 & \texttt{40C} \\ 
72 & D_{6}+A_{1} & \sametype & \sametype &   & 1 & 2 & \texttt{40A} \\ 
73 & D_{6}+A_{1} & E_{7} & E_{7} &   & 1 & 2 & \texttt{40A} \\ 
74 & E_{6}+A_{1} & \sametype & \sametype &   & 1 & 2 & \texttt{20E} \\ 
75 & A_{3}+2A_{2} & \sametype & \sametype &   & 1 & 3 & \texttt{96C} \\ 
76 & A_{5}+A_{2} & \sametype & \sametype &   & 1 & 2 & \texttt{40E} \\ 
77 & A_{5}+A_{2} & E_{7} & A_{5}+A_{2} &   & 1 & 2 & \texttt{40E} \\ 
78 & D_{5}+A_{2} & \sametype & \sametype &   & 1 & 2 & \texttt{40A} \\ 
79 & A_{4}+A_{3} & \sametype & \sametype &   & 1 & 2 & \texttt{40E} \\ 
80 & D_{4}+A_{3} & \sametype & \sametype &   & 1 & 2 & \texttt{20F} \\ 
81 & D_{4}+A_{3} & D_{7} & D_{7} &   & 1 & 2 & \texttt{96A} \\ 
82 & A_{7} & \sametype & \sametype &   & 1 & 1 & \texttt{20D} \\ 
83 & A_{7} & E_{7} & E_{7} &   & 1 & 1 & \texttt{40C} \\ 
84 & D_{7} & \sametype & \sametype &   & 1 & 1 & \texttt{20B} \\ 
85 & E_{7} & \sametype & \sametype &   & 1 & \times2 & \texttt{20A} \\ 
86 & 8A_{1} & E_{7}+A_{1} & E_{7}+A_{1} & \times & 1 & 8 & \texttt{96A} \\ 
87 & 8A_{1} & D_{8} & D_{8} & \times & 1 & 8 & \texttt{96B} \\ 
88 & 8A_{1} & E_{8} & E_{8} & \times & \rlap{\hskip -2pt see Remark~\ref{rem:88and146}} &  &  \\ 
89 & A_{2}+6A_{1} & D_{6}+A_{2} & D_{6}+A_{2} & \times & 1 & 7 & \texttt{96C} \\ 
90 & A_{3}+5A_{1} & D_{7}+A_{1} & D_{7}+A_{1} & \times & 1 & 6 & \texttt{96C} \\ 
91 & A_{4}+4A_{1} & D_{4}+A_{4} & D_{4}+A_{4} &   & 1 & 5 & \texttt{40E} \\ 
92 & D_{4}+4A_{1} & E_{7}+A_{1} & E_{7}+A_{1} & \times & 1 & 5 & \texttt{96A} \\ 
93 & D_{4}+4A_{1} & D_{8} & D_{8} & \times & 1 & 5 & \texttt{96A} \\ 
94 & D_{4}+4A_{1} & E_{8} & E_{8} & \times & 2 & 5 & \texttt{96A} \\ 
95 & A_{3}+A_{2}+3A_{1} & D_{5}+A_{2}+A_{1} & D_{5}+A_{2}+A_{1} &   & 1 & 5 & \texttt{96C} \\ 
96 & A_{5}+3A_{1} & E_{6}+2A_{1} & E_{6}+2A_{1} &   & 1 & 4 & \texttt{40E} \\ 
97 & D_{5}+3A_{1} & D_{7}+A_{1} & D_{7}+A_{1} &   & 1 & 4 & \texttt{40A} \\ 
98 & 3A_{2}+2A_{1} & E_{6}+2A_{1} & 3A_{2}+2A_{1} &   & 1 & 5 & \texttt{96C} \\ 
99 & A_{4}+A_{2}+2A_{1} & \sametype & \sametype &   & 1 & 4 & \texttt{40E} \\ 
100 & D_{4}+A_{2}+2A_{1} & D_{6}+A_{2} & D_{6}+A_{2} &   & 1 & 4 & \texttt{96A} \\ 
101 & 2A_{3}+2A_{1} & E_{7}+A_{1} & D_{6}+2A_{1} &   & 1 & 4 & \texttt{96C} \\ 
102 & 2A_{3}+2A_{1} & D_{5}+A_{3} & D_{5}+A_{3} &   & 1 & 4 & \texttt{96A} \\ 
103 & 2A_{3}+2A_{1} & D_{8} & D_{8} & \times & 1 & 4 & \texttt{96C} \\ 
104 & 2A_{3}+2A_{1} & E_{8} & D_{8} & \times & 1 & 4 & \texttt{96C} \\ 
105 & A_{6}+2A_{1} & \sametype & \sametype &   & 1 & 3 & \texttt{40C} \\ 
106 & D_{6}+2A_{1} & E_{7}+A_{1} & E_{7}+A_{1} &   & 1 & 3 & \texttt{40A} \\ 
107 & D_{6}+2A_{1} & D_{8} & D_{8} &   & 1 & 3 & \texttt{40A} \\ 
108 & D_{6}+2A_{1} & E_{8} & E_{8} & \times & 2 & 3 & \texttt{40A} \\ 
109 & E_{6}+2A_{1} & \sametype & \sametype &   & 1 & 3 & \texttt{20E} \\ 
110 & A_{3}+2A_{2}+A_{1} & \sametype & \sametype &   & 1 & 4 & \texttt{96C} \\ 
111 & A_{5}+A_{2}+A_{1} & \sametype & \sametype &   & 1 & 3 & \texttt{40E} \\ 
112 & A_{5}+A_{2}+A_{1} & E_{7}+A_{1} & A_{5}+A_{2}+A_{1} &   & 1 & 3 & \texttt{40E} \\ 
113 & A_{5}+A_{2}+A_{1} & E_{6}+A_{2} & E_{6}+A_{2} &   & 1 & 3 & \texttt{40E} \\ 
114 & A_{5}+A_{2}+A_{1} & E_{8} & E_{6}+A_{2} &   & 1 & 3 & \texttt{40E} \\ 
115 & D_{5}+A_{2}+A_{1} & \sametype & \sametype &   & 1 & 3 & \texttt{40A} \\ 
116 & A_{4}+A_{3}+A_{1} & \sametype & \sametype &   & 1 & 3 & \texttt{40E} \\ 
117 & D_{4}+A_{3}+A_{1} & D_{7}+A_{1} & D_{7}+A_{1} &   & 1 & 3 & \texttt{96A} \\ 
118 & A_{7}+A_{1} & \sametype & \sametype &   & 1 & 2 & \texttt{20D} \\ 
119 & A_{7}+A_{1} & E_{7}+A_{1} & E_{7}+A_{1} &   & 1 & 2 & \texttt{40C} \\ 
120 & A_{7}+A_{1} & E_{8} & E_{7}+A_{1} &   & 1 & 2 & \texttt{40C} \\ 
121 & D_{7}+A_{1} & \sametype & \sametype &   & 1 & 2 & \texttt{20B} \\ 
122 & E_{7}+A_{1} & \sametype & \sametype &   & 1 & \times3 & \texttt{20A} \\ 
123 & E_{7}+A_{1} & E_{8} & E_{8} &   & 2 & \times3 & \texttt{20A} \\ 
124 & 4A_{2} & E_{6}+A_{2} & 4A_{2} &   & 1 & 4 & \texttt{96C} \\ 
125 & 4A_{2} & E_{8} & 4A_{2} &   & 1 & 4 & \texttt{96C} \\ 
126 & A_{4}+2A_{2} & \sametype & \sametype &   & 1 & 3 & \texttt{40E} \\ 
127 & 2A_{3}+A_{2} & D_{6}+A_{2} & D_{6}+A_{2} &   & 1 & 3 & \texttt{96C} \\ 
128 & A_{6}+A_{2} & \sametype & \sametype &   & 1 & 2 & \texttt{40C} \\ 
129 & D_{6}+A_{2} & \sametype & \sametype &   & 1 & 2 & \texttt{40A} \\ 
130 & E_{6}+A_{2} & \sametype & \sametype &   & 1 & 2 & \texttt{20E} \\ 
131 & E_{6}+A_{2} & E_{8} & E_{6}+A_{2} &   & 1 & 2 & \texttt{20E} \\ 
132 & A_{5}+A_{3} & \sametype & \sametype &   & 1 & 2 & \texttt{40E} \\ 
133 & D_{5}+A_{3} & \sametype & \sametype &   & 1 & 2 & \texttt{20F} \\ 
134 & D_{5}+A_{3} & D_{8} & D_{8} &   & 1 & 2 & \texttt{40A} \\ 
135 & D_{5}+A_{3} & E_{8} & D_{8} &   & 1 & 2 & \texttt{40A} \\ 
136 & 2A_{4} & \sametype & \sametype &   & 1 & 2 & \texttt{40E} \\ 
137 & 2A_{4} & E_{8} & 2A_{4} &   & 1 & 2 & \texttt{40E} \\ 
138 & D_{4}+A_{4} & \sametype & \sametype &   & 1 & 2 & \texttt{20F} \\ 
139 & A_{8} & \sametype & \sametype &   & 1 & 1 & \texttt{20D} \\ 
140 & A_{8} & E_{8} & A_{8} &   & 1 & 1 & \texttt{20D} \\ 
\end{array}

\]
}
\vskip .1cm
\caption{$\ADE$-sublattices of  $\Lten$ (continued and continues)}
%\label{table:184}
\end{table}
\setcounter{table}{0}
\begin{table}
{\scriptsize
\[
%%%%%%
%Table184rows3.tex
%%%%%%
\begin{array}{clllccrl}
\textrm{No.} & \tau(R) & \tau(\Rbar) &\tau(\Rtil) & \texttt{exist} & c_{(\tau, \taubar)} & \texttt{rat} &\texttt{irec} \\ 
\hline 
141 & 2D_{4} & D_{8} & D_{8} &   & 1 & 2 & \texttt{20F} \\ 
142 & 2D_{4} & E_{8} & E_{8} & \times & 1 & \times1 & \texttt{96A} \\ 
143 & D_{8} & \sametype & \sametype &   & 1 & 1 & \texttt{12B} \\ 
144 & D_{8} & E_{8} & E_{8} &   & 2 & \times2 & \texttt{20B} \\ 
145 & E_{8} & \sametype & \sametype &   & 2 & \times4 & \texttt{12A} \\ 
146 & 9A_{1} & E_{8}+A_{1} & E_{8}+A_{1} & \times & \rlap{\hskip -2pt see Remark~\ref{rem:88and146}} &  &  \\ 
147 & A_{2}+7A_{1} & E_{7}+A_{2} & E_{7}+A_{2} & \times & 1 & 8 & \texttt{96A} \\ 
148 & A_{3}+6A_{1} & D_{9} & D_{9} & \times & 1 & 7 & \texttt{96B} \\ 
149 & D_{4}+5A_{1} & E_{8}+A_{1} & E_{8}+A_{1} & \times & 2 & 6 & \texttt{96A} \\ 
150 & D_{5}+4A_{1} & D_{9} & D_{9} & \times & 1 & 5 & \texttt{40A} \\ 
151 & D_{4}+A_{2}+3A_{1} & E_{7}+A_{2} & E_{7}+A_{2} & \times & 1 & 5 & \texttt{96A} \\ 
152 & 2A_{3}+3A_{1} & E_{8}+A_{1} & D_{8}+A_{1} & \times & 1 & 5 & \texttt{96C} \\ 
153 & D_{6}+3A_{1} & E_{8}+A_{1} & E_{8}+A_{1} & \times & 2 & 4 & \texttt{40A} \\ 
154 & A_{5}+A_{2}+2A_{1} & E_{8}+A_{1} & E_{6}+A_{2}+A_{1} &   & 1 & 4 & \texttt{40E} \\ 
155 & A_{4}+A_{3}+2A_{1} & D_{5}+A_{4} & D_{5}+A_{4} &   & 1 & 4 & \texttt{40E} \\ 
156 & D_{4}+A_{3}+2A_{1} & D_{9} & D_{9} & \times & 1 & 4 & \texttt{96A} \\ 
157 & A_{7}+2A_{1} & E_{8}+A_{1} & E_{7}+2A_{1} &   & 1 & 3 & \texttt{40C} \\ 
158 & D_{7}+2A_{1} & D_{9} & D_{9} &   & 1 & 3 & \texttt{20B} \\ 
159 & E_{7}+2A_{1} & E_{8}+A_{1} & E_{8}+A_{1} &   & 2 & \times4 & \texttt{20A} \\ 
160 & 4A_{2}+A_{1} & E_{8}+A_{1} & 4A_{2}+A_{1} & \times & 1 & 5 & \texttt{40E} \\ 
161 & 2A_{3}+A_{2}+A_{1} & E_{7}+A_{2} & D_{6}+A_{2}+A_{1} &   & 1 & 4 & \texttt{96C} \\ 
162 & A_{6}+A_{2}+A_{1} & \sametype & \sametype &   & 1 & 3 & \texttt{40C} \\ 
163 & D_{6}+A_{2}+A_{1} & E_{7}+A_{2} & E_{7}+A_{2} &   & 1 & 3 & \texttt{40A} \\ 
164 & E_{6}+A_{2}+A_{1} & E_{8}+A_{1} & E_{6}+A_{2}+A_{1} &   & 1 & 3 & \texttt{20E} \\ 
165 & A_{5}+A_{3}+A_{1} & E_{6}+A_{3} & E_{6}+A_{3} &   & 1 & 3 & \texttt{40E} \\ 
166 & D_{5}+A_{3}+A_{1} & E_{8}+A_{1} & D_{8}+A_{1} &   & 1 & 3 & \texttt{40A} \\ 
167 & 2A_{4}+A_{1} & E_{8}+A_{1} & 2A_{4}+A_{1} &   & 1 & 3 & \texttt{40E} \\ 
168 & A_{8}+A_{1} & \sametype & \sametype &   & 1 & 2 & \texttt{20D} \\ 
169 & A_{8}+A_{1} & E_{8}+A_{1} & A_{8}+A_{1} &   & 1 & 2 & \texttt{20D} \\ 
170 & 2D_{4}+A_{1} & E_{8}+A_{1} & E_{8}+A_{1} & \times & 1 & \times2 & \texttt{96A} \\ 
171 & D_{8}+A_{1} & E_{8}+A_{1} & E_{8}+A_{1} &   & 2 & \times3 & \texttt{20B} \\ 
172 & E_{8}+A_{1} & \sametype & \sametype &   & 2 & \times5 & \texttt{12A} \\ 
173 & A_{3}+3A_{2} & E_{6}+A_{3} & A_{3}+3A_{2} &   & 1 & 4 & \texttt{96C} \\ 
174 & A_{5}+2A_{2} & E_{7}+A_{2} & A_{5}+2A_{2} &   & 1 & 3 & \texttt{40E} \\ 
175 & A_{7}+A_{2} & E_{7}+A_{2} & E_{7}+A_{2} &   & 1 & 2 & \texttt{40C} \\ 
176 & E_{7}+A_{2} & \sametype & \sametype &   & 1 & \times3 & \texttt{20A} \\ 
177 & 3A_{3} & D_{9} & D_{9} & \times & 1 & 3 & \texttt{96C} \\ 
178 & D_{6}+A_{3} & D_{9} & D_{9} &   & 1 & 2 & \texttt{40A} \\ 
179 & E_{6}+A_{3} & \sametype & \sametype &   & 1 & 2 & \texttt{20E} \\ 
180 & A_{5}+A_{4} & \sametype & \sametype &   & 1 & 2 & \texttt{40E} \\ 
181 & D_{5}+A_{4} & \sametype & \sametype &   & 1 & 2 & \texttt{20F} \\ 
182 & A_{9} & \sametype & \sametype &   & 1 & 1 & \texttt{20D} \\ 
183 & D_{5}+D_{4} & D_{9} & D_{9} &   & 1 & 2 & \texttt{20F} \\ 
184 & D_{9} & \sametype & \sametype &   & 1 & \times2 & \texttt{12B} \\ 
\end{array}

\]
}
\vskip .1cm
\caption{$\ADE$-sublattices of  $\Lten$ (continued)}
%\label{table:184}
\end{table}
Let $R$ be an $\ADE$-sublattice of $\Lten$.
We denote by $\embR\colon R\inj \Lten$ the inclusion.
We define $M_R$ to be 
the $\Z$-submodule of   $(\Lten(2)\oplus R(2))\tensor \Q$
generated by $\Lten(2)$ and 
$(\embR(v), \pm v)/2\in(\Lten\oplus R)\tensor \Q$, 
where  $v$ runs through $R$,
and equip $M_R$ with  
an intersection form by extending the intersection form of $\Lten(2)\oplus R(2)$.
By definition,
 $M_R$ is an even hyperbolic lattice
%with a canonical primitive embedding $\embLM_R \colon \Lten (2) \inj M_R$.
% shimada changed to "chosen" on 2021 June 08
with a  chosen primitive embedding $\embLM_R \colon \Lten (2) \inj M_R$.
If $R\sprime$ is another $\ADE$-sublattice of $\Lten$
such that   
$(\tau(R\sprime), \tau(\Rbar\sprime))=(\tau(R), \tau(\Rbar))$, 
then,
by Proposition~\ref{prop:184},  we have 
an isometry $g\colon \Lten\isom \Lten$ 
that induces an  isometry $g|_R\colon R\isom R\sprime$, and hence 
we obtain an isometry 
 $\tilde{g}\colon M_R\isom M_{R\sprime}$ 
induced by $g \oplus g|_R$,
which  makes
the following  diagram  commutative:
\[
\begin{array}{ccc}
\Lten (2) &\maprightinjsp{\embLM_R}  & M_R
\mystruth{12pt} \\
\llap{\scriptsize $g$\;}\downisom &&  \llap{\scriptsize $\tilde{g}$\;}\downisom \\
\Lten (2) &\maprightinjsp{\embLM_{R\sprime}} & M_{R\sprime} \rlap{.}
\end{array}
\]
By an explicit calculation, we obtain the following:
\begin{proposition}\label{prop:Rtil}
Let $R$ be an $\ADE$-sublattice of $\Lten$.
Then the orthogonal complement of $\embLM_R \colon \Lten (2) \inj M_R$
is isomorphic to $\Rtil(2)$ for some 
$\ADE$-lattice $\Rtil$.
In the 4th column of Table~\ref{table:184}, 
we give the $\ADE$-type $\tau(\Rtil)$ of  $\Rtil$,
where  ``$\sametype$" means $\tau(R)=\tau(\Rtil)$. %shimada added on 2021 June 08.
\qed
\end{proposition}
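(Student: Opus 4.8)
The plan is to make the lattice $M_R$ completely explicit, compute the orthogonal complement of $\Lten(2)$ inside it by a direct calculation, and recognize the result as $\Rtil(2)$ for a concrete sublattice $\Rtil$ of $\Rbar$. For $v\in R$ write $\bar v:=\embR(v)\in\Lten$. The generating vector $(\bar v,v)/2$ of $M_R$ has square $\intf{(\bar v,v)/2,(\bar v,v)/2}=\intf{v,v}\in 2\Z$, and all these vectors pair integrally with one another and with $\Lten(2)\oplus R(2)$; hence $M_R$ is an even lattice containing $\Lten(2)\oplus R(2)$, and $v\mapsto(\bar v,v)/2$ induces an isomorphism $R/2R\isom M_R/(\Lten(2)\oplus R(2))$. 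Equivalently, $M_R$ is the even overlattice of $\Lten(2)\oplus R(2)$ attached to an explicit isotropic subgroup of its discriminant group. In particular every element of $M_R$ orthogonal to $\Lten\oplus 0$ has the form $(0,a)$ with $a\in R\tensor\Q$.

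Next I would determine $T$, the orthogonal complement of $\Lten(2)$ in $M_R$. Writing out when $(0,a)\in M_R$, one gets $a\in R+\tfrac12(R\cap 2\Rbar)$, using that for $w\in R$ the condition $\tfrac12\bar w\in\Lten$ is equivalent to $\bar w\in 2\Rbar$ by primitivity of $\Rbar$ in $\Lten$; and since $2R\subseteq R\cap 2\Rbar$, this set equals $\tfrac12(R\cap 2\Rbar)$. Setting
\[
\Rtil:=\tfrac12(R\cap 2\Rbar)=\set{x\in\Rbar}{2x\in R}
\]
with the bilinear form inherited from $\Rbar$, the map $(0,x)\mapsto x$ identifies $T$ with $\Rtil$ and carries the form of $M_R$ to twice that of $\Rbar$, so $T\isom\Rtil(2)$. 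Note $R\subseteq\Rtil\subseteq\Rbar$, and $\Rtil/R=(\Rbar/R)[2]$ is the $2$-torsion subgroup of the finite group $\Rbar/R$, so $\Rtil/R$ is an elementary abelian $2$-group.

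It remains to verify that $\Rtil$ is an $\ADE$-lattice and to read off $\tau(\Rtil)$. Evenness and negative-definiteness are inherited from $\Rbar$, so the only point is that $\Rtil$ is generated by its roots. Since $\Rtil$ is obtained from the root lattice $R$ by adjoining the $2$-torsion classes of $\Rbar/R$, this is verified by going through the $184$ pairs $(\tau(R),\tau(\Rbar))$ of Proposition~\ref{prop:184}: for each one, compute $\Rbar/R$, take its $2$-torsion subgroup, recognize the corresponding root overlattice $\Rtil$ of $R$ inside $\Rbar$, and record $\tau(\Rtil)$ in the $4$th column of Table~\ref{table:184} (the entry being ``$-$'' exactly when $\Rtil=R$, i.e.\ when $\Rbar/R$ has no $2$-torsion). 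This last, case-by-case identification of $\tau(\Rtil)$ is the only laborious part; the reduction to it is purely formal.
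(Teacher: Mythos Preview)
Your argument is correct. The paper itself records this proposition only as ``by an explicit calculation'' and lists the outcome in Table~\ref{table:184}, so your proposal is in fact more informative than the paper's own justification: you give a closed-form description of the orthogonal complement as $\Rtil(2)$ with $\Rtil=\{x\in\Rbar:2x\in R\}$, reducing the explicit calculation to the single verification that this intermediate lattice is root-generated in each of the $184$ cases. This identification of $\Rtil$ is exactly what the paper later unpacks in Lemma~\ref{lem:dict} (items (1) and (2)), so your approach and the paper's are the same in substance; you have simply made the structure visible up front rather than deferring it.
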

Let $Y$ be an Enriques surface.
We denote by $\SY$ the lattice of numerical equivalence classes of
divisors of  $Y$.
It is well-known that $\SY$ is isomorphic to 
$L_{10}$.
Let $\pi\colon X\to Y$ be the universal covering of $Y$,
and let $\SX$ denote the lattice of numerical equivalence classes of
divisors of the $K3$ surface $X$.
Then the \'etale double covering $\pi$ induces a primitive embedding
\[
\pi^*\colon \SY(2)\inj \SX.
\]
\begin{definition}\label{def:tautaubargeneric}
Let $(\tau, \taubar)$ be one of the $184$ pairs in Table~\ref{table:184}.
An Enriques surface $Y$ is said to be \emph{$(\tau, \taubar)$-generic} 
if the following conditions are satisfied.
%Let $\pi\colon X\to Y$ be the universal covering of $Y$.
%
\begin{enumerate}[(i)]
\item 
Let $T_X$ be the transcendental lattice of $X$,
and $\omega$ a non-zero holomorphic $2$-form of $X$,
so that we have $\C\omega=H^{2, 0}(X) \subset T_X\tensor\C$.
Then the group
\[
\OG(T_X, \omega):=\set{g\in \OG(T_X)}{\textrm{$g$ preserves
$\C\omega\subset T_X\tensor\C$}}
\]
is equal to $\{\pm 1\}$.
\item 
Let $R$ be an $\ADE$-sublattice of $\Lten$ with $(\tau(R), \tau(\Rbar))=(\tau, \taubar)$.
Then there exist isometries $g\colon \Lten\isom S_Y$
and $\tilde{g}\colon M_R\isom S_X$
that  make 
the following commutative diagram
\begin{equation}\label{eq:generic-diagram}
\begin{array}{ccc}
\Lten (2) &\maprightinjsp{\embLM_R}  & M_R
\mystruth{12pt} \\
\llap{\scriptsize $g$\;}\downisom &&  \llap{\scriptsize $\tilde{g}$\;}\downisom \\
S_Y(2) &\maprightinjsp{\pi^*} & S_X \rlap{.}
\end{array}
\end{equation}
\end{enumerate}
\end{definition}
The numbering of the $\ADE$-types in Table~\ref{table:184}~of the present article 
is the same as the numbering in Table 1.1~of our previous paper~\cite{RDPEnriques}, and hence
the 1st-3rd columns of the two tables 
are identical.
By definition, a $(\tau, \taubar)$-generic Enriques surface exists 
if and only if the 4th column of the corresponding row of 
Table 1.1~of~\cite{RDPEnriques} contains $0$.
Hence we obtain the following:
\begin{proposition}[\cite{RDPEnriques}]\label{prop:5th}
A $(\tau, \taubar)$-generic Enriques surface exists 
if and only if the 5th column of 
the corresponding row in Table~\ref{table:184}
is not marked by $\times$.
\qed
\end{proposition}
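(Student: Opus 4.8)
The statement to prove is Proposition~\ref{prop:5th}: a $(\tau,\taubar)$-generic Enriques surface exists if and only if the 5th column of Table~\ref{table:184} is not marked by $\times$. The key remark just before the statement is that, by definition of $(\tau,\taubar)$-generic, such a surface exists iff the 4th column of Table~1.1 of~\cite{RDPEnriques} contains $0$, and that the numbering of rows is identical across the two tables. So the real content is purely bookkeeping plus an existence-of-lattice-polarized-surface argument.

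The plan is as follows. First I would recall what the 4th column of Table~1.1 of~\cite{RDPEnriques} records: for each $\ADE$-sublattice $R\subset\Lten$ it lists the possible values of some invariant (here the relevant datum is whether the lattice $M_R$, equivalently the commutative square~\eqref{eq:generic-diagram}, can be realized as $\pi^*\colon S_Y(2)\inj S_X$ for an actual Enriques surface $Y$), and the entry $0$ signals that this realization is possible. Then I would invoke the surjectivity of the period map for Enriques surfaces (equivalently, for the covering $K3$ surfaces, Torelli plus surjectivity à la Namikawa/Horikawa): given the abstract lattice data $M_R\cong S_X$, $\Lten(2)\cong S_Y(2)$ and the embedding $\embLM_R$, together with a choice of transcendental lattice $T_X$ as the orthogonal complement of $M_R$ in $L_{K3}$ and a period $\omega$ in a very general position so that $\OG(T_X,\omega)=\{\pm1\}$, one obtains a $K3$ surface $X$ with an Enriques involution whose quotient $Y$ is $(\tau,\taubar)$-generic. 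The genericity of $\omega$ is exactly what forces condition~(i) of Definition~\ref{def:tautaubargeneric}, and it is available precisely when the orthogonal complement $\Rtil(2)$ (Proposition~\ref{prop:Rtil}) leaves enough room, i.e.\ when $\operatorname{rank} T_X = 12 - \operatorname{rank} R > 0$ so that a $2$-dimensional period can be chosen very general; this positivity is what the "$0$ in the 4th column" and hence "no $\times$ in the 5th column" encodes.

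Concretely the steps are: (1) fix $R$ with $(\tau(R),\tau(\Rbar))=(\tau,\taubar)$ and form $M_R$ with its embedding $\embLM_R$, using Proposition~\ref{prop:184} to know this is well defined up to $\OGP(\Lten)$; (2) check that $M_R$ admits a primitive embedding into the $K3$ lattice $L_{K3}$ with orthogonal complement a transcendental lattice $T$ of signature $(2,*)$ — this is a Nikulin-type existence statement and is exactly the computation summarized in Table~1.1 of~\cite{RDPEnriques}; the row is marked $0$ iff such an embedding exists; (3) choose a period point $[\omega]$ in the period domain of $T$ outside the countable union of the proper analytic subsets where $\OG(T,\omega)\neq\{\pm1\}$ or where extra $(-2)$-classes enter $S_X$ beyond those in $M_R$ (this union is proper precisely because $T$ has positive rank, which is what $0$ in the 4th column guarantees); (4) apply the Torelli theorem and surjectivity of the period map to produce the $K3$ surface $X$ with $S_X\cong M_R$, then descend the fixed-point-free Enriques involution (induced by $-1$ on $S_Y(2)$ and $+1$ on $T_X$ after the identification) to get $Y$; (5) verify that $Y$ satisfies (i) and (ii) of Definition~\ref{def:tautaubargeneric} by construction. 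Conversely, if $Y$ is $(\tau,\taubar)$-generic then the diagram~\eqref{eq:generic-diagram} realizes $M_R$ as $S_X$, so $M_R$ primitively embeds in $L_{K3}$ with positive-rank orthogonal complement, which is the condition recorded as $0$ in the 4th column, hence no $\times$ in the 5th.

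The main obstacle — or rather the point requiring care rather than difficulty — is step~(3): one must ensure that imposing $\OG(T_X,\omega)=\{\pm1\}$ is compatible with simultaneously forcing $S_X$ to equal $M_R$ on the nose (no bigger Néron–Severi lattice). Both are "very general" conditions, each excluding a countable union of proper subvarieties of the period domain of $T$, so their intersection is still nonempty exactly when that domain is nonempty, i.e.\ $\operatorname{rank}T>0$; but one should also check that the Enriques involution one writes down is genuinely fixed-point-free, which follows from the standard criterion that the invariant and anti-invariant lattices are $\Lten(2)$ and $\Lten(2)$ with no vectors of the right shape — this is built into the structure of $M_R$ and $\embLM_R$ and was already used in~\cite{BS2019}. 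Since all of the lattice-theoretic verifications are precisely the explicit computations recorded in~\cite{RDPEnriques} producing Table~1.1, and the row numbering is identical, the proposition reduces to citing that table together with the period-map argument above.
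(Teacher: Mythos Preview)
The paper gives no proof here: the proposition carries a \qed immediately after the statement, and the entire argument is the sentence preceding it, which says that the 5th column of Table~\ref{table:184} is simply a transcription of whether the 4th column of Table~1.1 of~\cite{RDPEnriques} contains $0$. Everything substantive is deferred to~\cite{RDPEnriques}.

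Your sketch goes well beyond this and tries to reconstruct the argument behind that citation. The overall shape (Nikulin's primitive-embedding criterion, surjectivity of the period map, Torelli, very-general choice of period to force $\OG(T_X,\omega)=\{\pm 1\}$ and $S_X=M_R$) is correct and in fact matches what the present paper spells out later in Section~\ref{subsec:construction}. Two points need correction, however.

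First, you misidentify the obstruction. You write that ``$0$ in the 4th column'' encodes $\operatorname{rank} T_X=12-\operatorname{rank} R>0$. But $R$ is negative definite inside the rank-$10$ hyperbolic lattice $\Lten$, so $\operatorname{rank} R\le 9$ and $\operatorname{rank} T_X\ge 3$ in every case; rank is never the issue. The genuine obstruction, as Example~\ref{example:8A1} illustrates, is whether $M_R$ admits a \emph{primitive} embedding into the $K3$ lattice --- a discriminant-form computation in the sense of~\cite{Nikulin79} (compare condition~(ii) in Section~\ref{subsec:construction}). Your step~(2) states this correctly, but the surrounding commentary about rank positivity is wrong and should be deleted.

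Second, in step~(4) the signs of the Enriques involution are reversed: $\enrinvol$ acts as $+1$ on $\pi^*S_Y(2)=S_X^+$ (this is the invariant sublattice by definition) and as $-1$ on $T_X$ (the involution is anti-symplectic), not the other way around. With these two fixes your outline is sound, though strictly speaking it reproduces the content of~\cite{RDPEnriques} rather than the present paper's (nonexistent) proof.
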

%
% shimada added the following example on 2021 June 08
%
\begin{example}\label{example:8A1}
Consider the case 
where $\tau(R)=8 A_1$ (Nos.~86, 87, 88).
By~\cite{RDPEnriques},
we have no $S_X$ (No.~86),
or $S_X/M_R$ is non-trivial ($(\Z/2\Z)^2$ for No.~87 and $(\Z/2\Z)^3$ for No.~88), 
that is, the inclusion $\tilde{g}$ is not an isometry.
Hence there  exist no $(\tau, \taubar)$-generic Enriques surfaces with $\tau=8A_1$,
% shimada added this on June 11
even though there exist surfaces with $8$ ordinary nodes birational to Enriques surfaces.
\end{example}
\begin{remark}
The geometry of Enriques surfaces with $\OG(T_X, \period)=\{\pm 1\}$  but  
with $S_X/M_R$ being \emph{non-trivial} and finite is left for future studies.
\end{remark}
Let $\PPP_Y$ (resp.~$\PPP_X$)
be the positive half-cone of $\SY$
(resp.~$\SX$)
containing an ample class.
We regard $\PPP_Y$ as a subspace of $\PPP_X$ by
the embedding $\pi^*\tensor\R$.
We  put
\begin{eqnarray*}
\NefX&:=&
\set{x\in \PPP_X}{\intf{x, [\wtC]}\ge 0 \;\;
\textrm{for all curves $\wtC$ on $X$}}, \\
\NefY&:=&
\set{y\in \PPP_Y\hskip 1pt}{\intf{y, [C]}\ge 0 \;\;\textrm{for all curves $C$ on $Y$}}=
\PPP_Y\cap \NefX,
\end{eqnarray*}
where $[D]$ is the class of a divisor $D$.
The following will be proved in Section~\ref{subsec:proofprop:nefisom}.
\begin{proposition}\label{prop:nefisom}
Let $Y$ and $Y\sprime$  be $(\tau, \taubar)$-generic Enriques surfaces
with the universal coverings $\pi\colon X\to Y$ and $\pi\sprime\colon X\sprime\to Y\sprime$,
respectively.
Then there exist isometries $\psi_X\colon \SX\isom S_{X\sprime}$ and 
$\psi_Y\colon \SY\isom S_{Y\sprime}$
that make the diagram
\begin{equation}\label{eq:nefisomdiagram}
\begin{array}{ccc}
\SY(2)& \maprightsp{\pi^*} & \SX \\
\mapdownleft{\psi_Y} & &\mapdownright{\psi_X}\\
S_{Y\sprime} (2) & \maprightsp{\pi^{\prime*}}& S_{X\sprime} 
\end{array}
\end{equation}
commutative and that induce $\NefX\cong \Nef_{X\sprime}$ and $\NefY\cong \Nef_{Y\sprime}$.
\end{proposition}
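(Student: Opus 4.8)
The plan is to reduce the statement to a Torelli-type comparison of the two K3 coverings and then transport the conclusion down to the Enriques quotients. First I would invoke Definition~\ref{def:tautaubargeneric}(ii): since $Y$ and $Y\sprime$ are both $(\tau,\taubar)$-generic for the \emph{same} pair, there are isometries $g\colon \Lten\isom \SY$, $\tilde g\colon M_R\isom \SX$ and $g\sprime\colon \Lten\isom S_{Y\sprime}$, $\tilde g\sprime\colon M_R\isom S_{X\sprime}$, each fitting into the square \eqref{eq:generic-diagram} with the fixed $\ADE$-sublattice $R\subset\Lten$ and the fixed primitive embedding $\embLM_R\colon \Lten(2)\inj M_R$. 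Composing, the isometries $\psi_X:=\tilde g^{-1}\tilde g\sprime$ and $\psi_Y:=g^{-1}g\sprime$ automatically make \eqref{eq:nefisomdiagram} commute, because both legs factor through the common middle object $(M_R,\embLM_R)$. So the existence of \emph{some} commuting pair of isometries is immediate; the real content is that they can be chosen to carry $\NefX$ onto $\Nef_{X\sprime}$ and $\NefY$ onto $\Nef_{Y\sprime}$.

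For the nef cones I would argue as follows. The isometry $\psi_X\colon \SX\isom S_{X\sprime}$ sends the positive half-cone $\PPP_X$ either to $\PPP_{X\sprime}$ or to its negative; composing with $-1$ if necessary (which still commutes with everything, as $-1$ is central and acts compatibly on $\Lten(2)$), we may assume $\psi_X(\PPP_X)=\PPP_{X\sprime}$. Now $\NefX$ is a fundamental chamber for the Weyl group $W_X\subset \OGP(\SX)$ generated by reflections in the classes of $(-2)$-curves (equivalently, it is the set of $x\in\PPP_X$ with $\intf{x,r}\ge 0$ for every effective $(-2)$-class). Since $\psi_X$ is a lattice isometry preserving $\PPP_X\mapsto\PPP_{X\sprime}$, it conjugates $W_X$ to the analogous reflection group $W_{X\sprime}$ and hence maps the chamber decomposition of $\PPP_X$ onto that of $\PPP_{X\sprime}$. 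Therefore $\psi_X(\NefX)$ is \emph{some} nef chamber of $S_{X\sprime}$; post-composing $\psi_X$ with an element of $W_{X\sprime}$ (which does not disturb commutativity of \eqref{eq:nefisomdiagram} provided the chosen element comes from the subgroup fixing $\pi^{\prime*}\SY(2)$ — see below) we arrange $\psi_X(\NefX)=\Nef_{X\sprime}$. The key input making the chamber structures ``the same'' is precisely that $M_R$, and with it the set of $(-2)$-classes and the shape of the nef cone, depends only on $(\tau,\taubar)$ — this is what Proposition~\ref{prop:184} together with the construction of $M_R$ buys us, and in fact the global Torelli theorem for K3 surfaces guarantees that any Hodge-isometry between the $\SX$ carrying nef cones to nef cones is induced by an isomorphism $X\isom X\sprime$.

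The descent to the Enriques surfaces is then formal: commutativity of \eqref{eq:nefisomdiagram} says $\psi_X\circ(\pi^*\otimes\R)=(\pi^{\prime*}\otimes\R)\circ(\psi_Y\otimes\R)$ on $\SY\otimes\R$, so $\psi_Y$ maps $\PPP_Y=\PPP_X\cap(\SY\otimes\R)$ to $\PPP_{Y\sprime}=\PPP_{X\sprime}\cap(S_{Y\sprime}\otimes\R)$, and since $\NefY=\PPP_Y\cap\NefX$ and $\Nef_{Y\sprime}=\PPP_{Y\sprime}\cap\Nef_{X\sprime}$ by the very definition recalled above, we get $\psi_Y(\NefY)=\Nef_{Y\sprime}$ at once. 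The one point requiring care — and the main obstacle — is the compatibility bookkeeping: after correcting $\psi_X$ by elements of $W_{X\sprime}$ and by $\pm1$ to achieve $\psi_X(\NefX)=\Nef_{X\sprime}$, one must check that the correcting elements descend to isometries of $S_{Y\sprime}$ (i.e. lie in the image of $\OGP(S_{Y\sprime})$ under the map induced by $\pi^{\prime*}$), so that \eqref{eq:nefisomdiagram} remains commutative with the \emph{corrected} pair. Here I would use that $\pi^{\prime*}S_{Y\sprime}(2)$ is a characteristic sublattice of $S_{X\sprime}$ (it is the $(+1)$-eigenlattice of the Enriques involution) together with the genericity hypothesis $\OG(T_X,\omega)=\{\pm1\}$ from Definition~\ref{def:tautaubargeneric}(i), which forces every Hodge isometry of $S_{X\sprime}$ preserving the nef cone to commute with the Enriques involution and hence to be of the required block form. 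Assembling these pieces gives the proposition.
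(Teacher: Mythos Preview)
Your reduction to a commuting square via Definition~\ref{def:tautaubargeneric}(ii) is exactly right, and so is the observation that the only real content is aligning the nef cones while keeping the diagram commutative. The gap is in the last step. You want to post-compose $\psi_X$ with an element $w\in W_{X\sprime}$ sending $\psi_X(\NefX)$ to $\Nef_{X\sprime}$, and you need $w$ to commute with the Enriques involution $\enrinvol\sprime$ so that it descends to $S_{Y\sprime}$. Your proposed justification---that $\pi^{\prime*}S_{Y\sprime}(2)$ is ``characteristic'' and that genericity forces Hodge isometries preserving the nef cone to commute with $\enrinvol\sprime$---does not do this job: the correcting element $w$ does \emph{not} preserve $\Nef_{X\sprime}$ (it moves one chamber to another), so any statement about nef-preserving Hodge isometries is irrelevant to it; and a generic reflection in a root of $S_{X\sprime}$ has no reason to commute with $\enrinvol\sprime$. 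Nor is there an isomorphism $X\cong X\sprime$ to invoke: the two K3 surfaces may have different periods.

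The paper sidesteps this entirely by working on the Enriques side. Observe that $\NefY$ and the $\psi_Y$-preimage of $\Nef_{Y\sprime}$ are both $\SX/\SY(2)$-chambers in $\PPP_Y$. Proposition~\ref{prop:SXSY2simple} shows that this tessellation is \emph{reflexively simple}: every wall of an $\SX/\SY(2)$-chamber is defined by a root $r\in\SY$, and the reflection $s_r\in\OGP(\SY)$ lifts to a product $s_{\tilr_+}s_{\tilr_-}\in\OGP(\SX)$ of two commuting reflections that automatically commutes with $\enrinvol$. Composing such reflections gives an isometry $\tilg\in\OGP(\SX)$ commuting with $\enrinvol$ whose restriction $\tilg|\SY(2)$ carries $\NefY$ to $\psi_Y^{-1}(\Nef_{Y\sprime})$; then $\tilg\circ\psi_X$ and $(\tilg|\SY(2))\circ\psi_Y$ are the desired isometries. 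In short, the missing ingredient in your argument is precisely Proposition~\ref{prop:SXSY2simple}, which produces the needed $\enrinvol$-equivariant correction by hand rather than by appeal to Torelli.
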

We denote by $\aut(Y)$ the image of the natural representation
$\Aut(Y) \to \OGP(\SY)$.
We embed the set $\Rats(Y)$ 
of smooth rational curves $C$ on $Y$ into $\SY$ by $C\mapsto [C]$,
and the set  $\Ells(Y)$ 
of elliptic fibrations $\phi\colon Y\to \P^1$ into $\SY$ by $\phi\mapsto [F]/2$,
where $F$ is a general fiber of $\phi$.
In Section~\ref{sec:proofs}, we will see that 
$\aut(Y)$ and its actions on $\NefY$, $\Rats(Y)$, $\Ells(Y)$
depend only on the  data 
$\pi^* \colon \SY(2)\inj \SX$ and 
$\NefX$.
Therefore we obtain the following: 
\begin{corollary}\label{cor:nefisom}
Let $Y$ and $Y\sprime$ be 
as in Proposition~\ref{prop:nefisom}.
Then there exist an isomorphism $\aut(Y)\cong \aut(Y\sprime)$
and  bijections $\Rats(Y)\cong \Rats(Y\sprime)$ and $\Ells(Y)\cong \Ells(Y\sprime)$
that are compatible with $\aut(Y)\cong \aut(Y\sprime)$.
\qed
\end{corollary}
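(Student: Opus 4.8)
The plan is to deduce Corollary~\ref{cor:nefisom} directly from Proposition~\ref{prop:nefisom}, using the claim—stated before the corollary and to be justified in Section~\ref{sec:proofs}—that $\aut(Y)$ and its actions on $\NefY$, $\Rats(Y)$ and $\Ells(Y)$ are intrinsically determined by the pair of data $(\pi^*\colon \SY(2)\inj\SX,\ \NefX)$. Concretely, I would first fix the isometries $\psi_X\colon\SX\isom S_{X'}$ and $\psi_Y\colon\SY\isom S_{Y'}$ provided by Proposition~\ref{prop:nefisom}, so that the square~\eqref{eq:nefisomdiagram} commutes and $\psi_X(\NefX)=\Nef_{X'}$, $\psi_Y(\NefY)=\Nef_{Y'}$. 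Since $\psi_Y$ identifies $\PPP_Y$ with $\PPP_{Y'}$ (it sends an ample-containing half-cone to one containing the image of $\NefY$, hence to $\PPP_{Y'}$), conjugation by $\psi_Y$ gives an isomorphism $\OGP(\SY)\isom\OGP(S_{Y'})$.

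Next I would transport the three structures. By the compatibility of~\eqref{eq:nefisomdiagram} with $\pi^*$, $\pi^{\prime*}$ and with the nef cones, the conjugation isomorphism $g\mapsto \psi_Y^{-1}g\psi_Y$ carries the subgroup of $\OGP(\SY)$ singled out by the data $(\pi^*,\NefX)$ onto the subgroup of $\OGP(S_{Y'})$ singled out by $(\pi^{\prime*},\Nef_{X'})$; by the result referenced from Section~\ref{sec:proofs}, these subgroups are exactly $\aut(Y)$ and $\aut(Y')$. This yields the isomorphism $\aut(Y)\cong\aut(Y')$. For the sets of curves and fibrations, one notes that under the identification $\Rats(Y)\hookrightarrow\SY$ ($C\mapsto[C]$) the image is the set of $(-2)$-vectors on the boundary of $\NefY$ that are classes of curves; this subset of $\SY$ is again determined by $\NefX$ (equivalently by $\NefY$ together with $\pi^*$, via the characterization $\NefY=\PPP_Y\cap\NefX$), so $\psi_Y$ maps it bijectively onto the corresponding subset of $S_{Y'}$, giving $\Rats(Y)\cong\Rats(Y')$. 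The same argument applies to $\Ells(Y)\hookrightarrow\SY$, $\phi\mapsto[F]/2$: the image is a set of primitive isotropic vectors characterized lattice-theoretically from $\NefX$ and $\pi^*$, hence preserved by $\psi_Y$. Finally, since all three bijections are restrictions of the single linear map $\psi_Y$ (or its dual on the $\aut$ side), their compatibility with $\aut(Y)\cong\aut(Y')$ is immediate: for $g\in\aut(Y)$ and $v\in\Rats(Y)$ or $\Ells(Y)$ one has $\psi_Y(v^g)=\psi_Y(v)^{\psi_Y^{-1}g\psi_Y}$.

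The only genuine content beyond Proposition~\ref{prop:nefisom} is the assertion that $\aut(Y)$ and its three actions depend solely on $(\pi^*,\NefX)$; but this is precisely what the excerpt says will be established in Section~\ref{sec:proofs}, and I am entitled to assume it here. So the proof of the corollary is a short formal deduction: combine Proposition~\ref{prop:nefisom} with that structural statement and read off the four required compatibilities. The step requiring the most care—though still routine—is checking that $\psi_Y$ indeed matches $\PPP_Y$ with $\PPP_{Y'}$ and the nef-boundary data correctly, i.e.\ that the commutativity of~\eqref{eq:nefisomdiagram} together with $\psi_X(\NefX)=\Nef_{X'}$ forces $\psi_Y(\NefY)=\Nef_{Y'}$ and hence the compatibility of the orbit decompositions; this is guaranteed by the last clause of Proposition~\ref{prop:nefisom}.
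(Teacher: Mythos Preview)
Your proposal is correct and follows exactly the approach the paper takes: the corollary is stated with a \qed immediately after the sentence ``$\aut(Y)$ and its actions on $\NefY$, $\Rats(Y)$, $\Ells(Y)$ depend only on the data $\pi^*\colon \SY(2)\inj\SX$ and $\NefX$,'' so the paper's proof is precisely the formal transport along the isometries $\psi_X,\psi_Y$ of Proposition~\ref{prop:nefisom} that you spell out. If anything, you have written out more detail than the paper does.
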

\begin{remark}
The root invariant of a $(\tau, \taubar)$-generic Enriques surface
(defined by Nikulin~\cite{Nikulin1984})
is equal to $(\tau, \Ker \xi)$, where $\xi\colon R\tensor  \F_2\to L_{10}\tensor \F_2$
is the linear homomorphism induced by the inclusion $R\inj L_{10}$
of  the $\ADE$-sublattice $R$ of $L_{10}$ such that $(\tau, \taubar)=(\tau(R), \tau(\Rbar))$.
\end{remark}
\subsection{Chambers}\label{subsec:chambers}
Before we state our geometric results,
we define the notion of \emph{chambers} of hyperbolic lattices,
and recall the classical result of Vinberg~\cite{Vinberg1973}.
\par
A root $r$ of an even lattice
$L$ defines  the \emph{reflection}  $s_r\colon x\mapsto x+\intf{x, r}r$ of $L$
with respect to $r$.
The \emph{Weyl group} $W(L)$ of $L$ is 
the subgroup  of  $\OG(L)$
generated by all the reflections $s_r$
with respect to the roots of $L$.
%\par
Let $L$ be an even hyperbolic lattice
with a positive half-cone $\PPP$.
For $v\in L\tensor\R$ with $\intf{v,v}<0$,
let $(v)\sperp$ denote the hyperplane of $\PPP$ defined by
$\intf{x, v}=0$.
Then we have    $W(L)\subset \OGP(L)$,
and the action of $s_r$ on $\PPP$ is the reflection into the mirror $(r)\sperp$.
A closed subset $D$ of $\PPP$ is called a \emph{chamber}
if $D$ contains a non-empty open subset of $\PPP$
and $D$ is defined by inequalities
\[
\intf{x, v_i}\ge 0\quad(i\in I),
\]
where $\{(v_i)\sperp\}_{i\in I}$ is a locally finite family of hyperplanes
of $\PPP$.
A \emph{wall} of a chamber $D$ is
a closed subset of $D$ of the form $D\cap (v)\sperp$
such that $(v)\sperp$ is disjoint from the interior of $D$
and that $D\cap (v)\sperp$  contains a non-empty open subset of $(v)\sperp$.
We say that  a vector $v\in L\tensor\R$ \emph{defines a wall $D\cap (v)\sperp$ of $D$}
if $D\cap (v)\sperp$ is a wall of $D$ and $\intf{x, v}>0$
holds for one (and hence any) point $x$ in the interior of $D$.
We say that a closed subset $A$ of $\PPP$ is
\emph{tessellated by a set $\{D_j\}_{j\in J}$
of chambers} if
$A$ is the union of $D_j$ ($j\in J$)  and
the interiors of two distinct chambers $D_j$ and $D_{j\sprime}$ in
the family $\{D_j\}_{j\in J}$ have no common  points.
\begin{definition}
Let $L$ be an even hyperbolic lattice
with a positive half-cone $\PPP$.
An \emph{$L$-chamber} is 
the closure in $\PPP$ of a connected component of
\[
\PPP\;\;\setminus\;\; \bigcup_{r}\; (r)\sperp,
\]
where $r$ runs through the set of roots of $L$.
For an $L$-chamber $D$, 
we denote 
the stabilizer of $D$ by
\[
\OG(L, D):=\set{g\in \OGP(L)}{D^g=D}.
\]
\end{definition}
\begin{remark}
In Section~\ref{subsec:LMchams}, we extend the notion
of $L$-chambers to the notion of $L/M$-chambers
in the positive-half cone $\PPP_M$ of 
a primitive lattice $M$ of $L$. 
\end{remark}
The group $\OGP(L)$ acts on the set of $L$-chambers.
The action of the subgroup $W(L)$ of $\OGP(L)$ on  
this set  is free and transitive. 
Hence an $L$-chamber is a standard fundamental domain of the Weyl group $W(L)$.
Let $D$ be an $L$-chamber.
Then we have $\OGP(L)=W(L)\semidirectproduct \OG(L, D)$,
and moreover,  $W(L)$ is generated by the reflections $s_r$
with respect to the roots $r$ that define the walls of $D$.
\par
Recall that  $L_{10}$ is an even unimodular hyperbolic lattice of rank $10$.
Then $L_{10}$ has a basis $e_1, \dots, e_{10}$
consisting of roots
whose dual graph is given in Figure~\ref{fig:E10}.
Let  $\PPP_{10}$ be the positive half-cone of $L_{10}$
containing $e_1\dual+\cdots+e_{10}\dual$,
where $\{e_1\dual, \dots, e_{10}\dual\}$ is the  basis of $L_{10}\dual=L_{10}$
dual to $\{e_1, \dots, e_{10}\}$.
\begin{theorem}[Vinberg~\cite{Vinberg1973}]
The chamber $D_0$ in $\PPP_{10}$
defined by $\intf{x, e_i}\ge 0$ for $i=1, \dots, 10$
is an $L_{10}$-chamber, and
$\{e_1, \dots, e_{10}\}$ is the set of roots defining walls of $D_0$.
\qed
\label{thm:Vinberg}
\end{theorem}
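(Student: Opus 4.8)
The plan is to verify Vinberg's criterion for the candidate set of roots $\{e_1,\dots,e_{10}\}$, which is the standard algorithm that terminates precisely because $L_{10} = E_{10}$ is a hyperbolic lattice reflective with a fundamental domain of finite covolume. First I would recall that $D_0 = \{x \in \PPP_{10} : \intf{x,e_i} \ge 0 \text{ for } i = 1,\dots,10\}$ is by construction a chamber in the sense defined above: it is cut out by the finitely many hyperplanes $(e_i)^\perp$, and it has non-empty interior because the dual vectors $e_i^\vee$ span a simplicial cone meeting $\PPP_{10}$ (here one uses that the Gram matrix of $e_1,\dots,e_{10}$ is the negative of the $E_{10}$ Cartan matrix, which is nondegenerate of signature $(1,9)$, so the $e_i^\vee$ are linearly independent and $e_1^\vee + \cdots + e_{10}^\vee$ lies in $\PPP_{10}$). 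The content to be proved is that $D_0$ is actually an \emph{$L_{10}$-chamber}, i.e. that no root of $L_{10}$ has its mirror $(r)^\perp$ cutting through the interior of $D_0$; equivalently, the reflections $s_{e_1},\dots,s_{e_{10}}$ generate the full Weyl group $W(L_{10})$.

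The key steps, in order: (1) Show the $e_i$ are \emph{simple}, meaning no $(e_i)^\perp$ is redundant — each defines an honest wall of $D_0$. This follows from the connectedness and shape of the Dynkin diagram in Figure~\ref{fig:E10}: dropping any single node still leaves the remaining sublattice negative definite (it becomes an ADE or affine-minus-a-point configuration), so removing the inequality $\intf{x,e_i}\ge 0$ strictly enlarges the cone. (2) Run Vinberg's algorithm with a controlling point: choose $a_0 \in \PPP_{10}$ in the interior of $D_0$ (e.g. a suitable multiple of $e_1^\vee + \cdots + e_{10}^\vee$), order the roots of $L_{10}$ by the value of $\intf{r,a_0}$, and show inductively that every root $r$ with $\intf{r,a_0} > 0$ already satisfies $\intf{r,e_i} \ge 0$ for all $i$ after being reduced by the reflections $s_{e_1},\dots,s_{e_{10}}$ — i.e. the algorithm produces no new simple roots beyond the ten we started with. (3) Conclude by Vinberg's theorem \cite{Vinberg1973}: since the diagram on $\{e_1,\dots,e_{10}\}$ contains no dotted/infinite edges and has the correct signature, the ten roots already form a complete set of simple roots, hence $D_0$ is an $L_{10}$-chamber and $\{e_1,\dots,e_{10}\}$ is exactly its set of wall-defining roots.

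In practice the cleanest route avoids re-running the algorithm from scratch: one can cite that $E_{10} = L_{10}$ is a well-known reflective lattice whose Weyl group acts with fundamental chamber the $E_{10}$ Weyl chamber, this being a classical fact (it is the hyperbolic Kac–Moody Weyl chamber associated with the $E_{10}$ diagram), so the statement is genuinely Vinberg's and one only needs to match conventions — that the half-cone $\PPP_{10}$ chosen (the one containing $\sum e_i^\vee$) is the one in which the inequalities $\intf{x,e_i}\ge 0$ define a chamber rather than its negative. That sign-bookkeeping, together with checking $\sum e_i^\vee \in \PPP_{10}$ (i.e. that $\intf{\sum e_i^\vee, \sum e_i^\vee} > 0$, which is a direct computation with the inverse Cartan matrix of $E_{10}$), is the only real verification needed.

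The main obstacle I anticipate is step (2): showing the algorithm \emph{terminates} after the first batch, i.e. that no eleventh simple root appears. For a general hyperbolic lattice this can fail (the lattice need not be reflective), and even when it succeeds the argument requires genuinely understanding the root system of $E_{10}$ — in particular that every root, after reflection by the parabolic subgroups generated by subsets of $\{s_{e_1},\dots,s_{e_{10}}\}$, can be driven into the closed dual cone. Here one leans on the special structure of $E_{10}$: it is an affine extension ($\widehat{E_8}$ sits inside as an affine sub-diagram) plus one node, and the null root $\delta$ of the $\widehat{E_8}$ part governs the imaginary roots, while the real roots all lie in a single $W(L_{10})$-orbit; this is exactly the input that makes Vinberg's algorithm close up. Since the paper is content to invoke \cite{Vinberg1973}, I would present this as a citation with a one-line remark on the diagram shape rather than reproving the termination.
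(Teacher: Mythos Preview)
Your proposal is correct, and you correctly anticipated the paper's approach: the paper gives no proof at all, simply citing Vinberg~\cite{Vinberg1973} and marking the statement with \qed. Your detailed sketch of Vinberg's algorithm is sound but unnecessary here, as the paper treats this entirely as a known classical result.
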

\begin{definition}
We call an $L_{10}$-chamber a \emph{Vinberg chamber}.
\end{definition}
Let $D_0$ be a Vinberg chamber.
Since the  dual graph in Figure~\ref{fig:E10} has no non-trivial symmetries, 
we have $\OG(L_{10}, D_0)=\{1\}$
and hence 
\begin{equation}\label{eq:OGPWL10}
\OGP(L_{10})=W(L_{10}).
\end{equation}

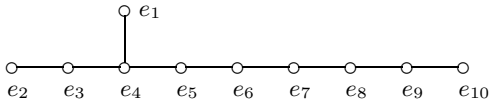
\begin{figure}
\def\ha{40}
\def\hav{37}
\def\hd{25}
\def\hdv{22}
\def\he{10}
\def\hev{7}
\setlength{\unitlength}{1.25mm}
{\small
\begin{picture}(80,11)(-5, 6)
\put(22, 16){\circle{1}}
\put(23.5, 15.5){$e\sb 1$}
\put(22, 10.5){\line(0,1){5}}
\put(9.5, \hev){$e\sb 2$}
\put(15.5, \hev){$e\sb 3$}
\put(21.5, \hev){$e\sb 4$}
\put(27.5, \hev){$e\sb 5$}
\put(33.5, \hev){$e\sb 6$}
\put(39.5, \hev){$e\sb 7$}
\put(45.5, \hev){$e\sb {8}$}
\put(51.5, \hev){$e\sb {9}$}
\put(57.5, \hev){$e\sb {10}$}
%\put(50, \hev){$f-\theta$}
\put(10, \he){\circle{1}}
\put(16, \he){\circle{1}}
\put(22, \he){\circle{1}}
\put(28, \he){\circle{1}}
\put(34, \he){\circle{1}}
\put(40, \he){\circle{1}}
\put(46, \he){\circle{1}}
\put(52, \he){\circle{1}}
\put(58, \he){\circle{1}}
%\put(52, \he){\circle{1}}
\put(10.5, \he){\line(5, 0){5}}
\put(16.5, \he){\line(5, 0){5}}
\put(22.5, \he){\line(5, 0){5}}
\put(28.5, \he){\line(5, 0){5}}
\put(34.5, \he){\line(5, 0){5}}
\put(40.5, \he){\line(5, 0){5}}
\put(46.5, \he){\line(5, 0){5}}
\put(52.5, \he){\line(5, 0){5}}
%\put(66, \hev){$f:=f_U$}
\end{picture}
}
\caption{The basis $e_1, \dots, e_{10}$ of $L_{10}$}\label{fig:E10}
\end{figure}
\subsection{Main results}\label{subsec:mainresults}
We investigate the geometry of a $(\tau, \taubar)$-generic Enriques surface $Y$.
In particular, we calculate a finite generating set of $\aut(Y)$
and the action of $\aut(Y)$ on $\NefY$, $\Rats(Y)$ and $\Ells(Y)$.
\begin{remark}\label{rem:nongeometric}
Since our approach relies on the interplay between lattice theory and hyperbolic geometry, %shimada corrected on 2021 June 08  beetween -> between
we can, except for the cases Nos.~88~and~146 in Table~\ref{table:184},
calculate the geometric data of a hypothetical $(\tau, \taubar)$-generic Enriques surface
even when it is not realized by an actual  %complex  %shimada removed "complex"  on 2021 June 08
Enriques surface.
(See Remark~\ref{rem:nonexisting}).
\end{remark}
%
%shimada added the following remark   on 2021 June 08
\begin{remark}\label{rem:88and146} 
For the cases Nos.~88~and~146 in Table~\ref{table:184},
we cannot construct  $S_X$ by means of the method described in Section~\ref{subsec:construction}.
Since there do not exist $(\tau, \bar\tau)$-generic Enriques surfaces in these cases,
we leave them blank.
\end{remark}
Let $Y$ be an Enriques surface.
Recall that $\aut(Y)\subset  \OGP(\SY)$ 
is the image of the natural homomorphism
$\Aut(Y)\to \OGP(\SY)$.
Since $\SY$ is isomorphic to $L_{10}$,
we have Vinberg chambers in the positive half-cone $\PPP_Y$.
Since $\NefY$ is bounded by
$([C])\sperp$,
where $C$ runs through $\Rats(Y)$, and $\intf{[C], [C]}=-2$, 
the cone $\NefY$ is  tessellated by Vinberg chambers.
We put
\[
\StdFD(\NefY):=
\textrm{the set of Vinberg chambers contained in $\NefY$}, 
\]
on which  $\aut(Y)$ acts, and define
\[
\vol (\NefY/\aut(Y)):=
\textrm{the number of orbits of the action of $\aut(Y)$ on 
$\StdFD(\NefY)$}.
\]
An Enriques surface that is very general in the sense of Barth--Peters~\cite{BP1983}
is ${(0,0)}$-generic,
and its automorphism group
was determined by Barth--Peters~\cite{BP1983} and  Nikulin~\cite[Theorem 10.1.2 (c)]{Nikulin1981}
independently.
%The argument of Barth--Peters~\cite{BP1983} and  Nikulin~\cite{Nikulin1981}
%for generic Enriques surfaces
%can be applied to ${(0,0)}$-generic Enriques surfaces, and we obtain the following.
%
\begin{theorem}[Barth--Peters~\cite{BP1983}, Nikulin~\cite{Nikulin1981}]\label{thm:BP}
Let $Y_0$ be a ${(0,0)}$-generic Enriques surface.
Then  $\aut({Y_0})\subset  \OGP(S_{Y_0})$
is equal to
the kernel of the  reduction homomorphism
$\OGP(S_{Y_0}) \to \OG(S_{Y_0}) \tensor \F_2$.
In particular,  the index of 
$\aut(Y_0)$ in  $\OGP(S_{Y_0})$ is equal to 
\begin{equation*}\label{eq:1BP}
2^{21}\cdot 3^5\cdot 5^2\cdot 7 \cdot 17\cdot 31=46998591897600.
\end{equation*}
\end{theorem}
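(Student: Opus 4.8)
The plan is to translate $(0,0)$-genericity into a statement about the lattice $L_{10}(2)$ and then read off $\aut(Y_0)$ from the Torelli theorem applied to the universal $K3$-cover $\pi\colon X\to Y_0$. What makes this case relatively soft is that a $(0,0)$-generic Enriques surface contains no smooth rational curve, so its nef cone is the full positive cone and imposes no constraint.

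First I would unwind the hypotheses. Since $\tau=\taubar=0$ forces the $\ADE$-sublattice $R$ to be the zero lattice, we have $M_R=L_{10}(2)$ and $\Rtil=0$, so in the diagram~\eqref{eq:generic-diagram} the primitive embedding $\pi^*\colon S_{Y_0}(2)\inj S_X$ is in fact an \emph{isometry} onto $S_X$, and $S_X\cong L_{10}(2)$. As $L_{10}$ is even, every vector of $L_{10}(2)$ has self-intersection in $4\Z$, so $S_X$ has no roots; hence $X$ carries no smooth rational curve, $\NefX=\PPP_X$, and (as the nef cone of $Y_0$ equals $\PPP_{Y_0}\cap\NefX=\PPP_{Y_0}$) $Y_0$ too has no smooth rational curve, recovering the observation of Barth--Peters~\cite{BP1983}. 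Write $\varepsilon$ for the covering involution of $\pi$ and $\varepsilon^*$ for its action on $H^2(X,\Z)$. Then $H^2(X,\Z)^{\varepsilon^*}=\NS(X)=S_X$ has rank $10$ (as $\rho(X)=\rk M_R=10$), while $H^2(X,\Z)^{-\varepsilon^*}$, being primitive of rank $12$ and orthogonal to $\NS(X)$, equals $T_X$; in particular $\varepsilon^*|_{T_X}=-\mathrm{id}$, and condition~(i) of Definition~\ref{def:tautaubargeneric} reads $\OG(T_X,\omega)=\{\pm1\}$.

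Next I would identify $\aut(Y_0)$ as a congruence subgroup of $\OGP(S_{Y_0})$ (we identify $\OGP(S_{Y_0})=\OGP(S_X)$ via $\pi^*$). Every $\psi\in\Aut(Y_0)$ lifts to an automorphism $\phi$ of $X$, and any such lift commutes with $\varepsilon$; conversely every $\phi\in\Aut(X)$ commuting with $\varepsilon$ descends to $Y_0$. Therefore $\aut(Y_0)$ is the set of $g\in\OGP(S_{Y_0})$ such that, for some block $h$ on $T_X$, the isometry $g\oplus h$ of $S_X\oplus T_X$ extends to a Hodge isometry $\tilde g$ of $H^2(X,\Z)$ that commutes with $\varepsilon^*$ and preserves $\NefX$: the forward implication holds because $\phi^*$ preserves $\NS(X)$, $T_X$, $\C\omega$, $\NefX$ and induces $g$ on $S_X$, and the converse is the strong Torelli theorem for $K3$ surfaces --- such a $\tilde g$ preserves the ample cone $\mathrm{int}\,\NefX$, hence equals $\phi^*$ for some $\phi\in\Aut(X)$, and $\tilde g\varepsilon^*=\varepsilon^*\tilde g$ forces $\phi\varepsilon=\varepsilon\phi$. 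Now the requirement that $\tilde g$ be a Hodge isometry commuting with $\varepsilon^*=\mathrm{id}_{S_X}\oplus(-\mathrm{id}_{T_X})$ forces $h=\pm\mathrm{id}_{T_X}$; the requirement $\tilde g(\NefX)=\NefX$ is automatic since $\NefX=\PPP_X$ and $g\in\OGP$; and $g\oplus h$ extends over the unimodular overlattice $H^2(X,\Z)\supset S_X\oplus T_X$ precisely when $g$ and $h$ induce the same automorphism through the gluing anti-isometry $S_X^\vee/S_X\isom T_X^\vee/T_X$. Since $\pm\mathrm{id}$ acts trivially on the $2$-elementary group $T_X^\vee/T_X$, this amounts to $g$ acting trivially on $S_X^\vee/S_X$. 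Under the identification $S_{Y_0}\cong L_{10}$ we have $S_X^\vee/S_X=L_{10}(2)^\vee/L_{10}(2)=\tfrac{1}{2}L_{10}/L_{10}\cong L_{10}\tensor\F_2$, on which $\OGP(S_{Y_0})$ acts by reduction modulo $2$; hence $\aut(Y_0)=\ker\bigl(\OGP(S_{Y_0})\to\OG(S_{Y_0})\tensor\F_2\bigr)$, the principal congruence subgroup of level $2$.

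For the index I would compute the order of the image of this reduction map. Since $-\mathrm{id}$ reduces trivially, $\OGP(L_{10})$ and $\OG(L_{10})$ have the same image, and this image is all of the orthogonal group of the discriminant form $q_{L_{10}(2)}$ on $L_{10}\tensor\F_2$ (surjectivity of $\OG(L_{10})\to\OG(q_{L_{10}(2)})$, as in Barth--Peters~\cite{BP1983}). Writing $L_{10}=U\oplus E_8$, the form $q_{L_{10}(2)}$ is the orthogonal sum of the mod-$2$ reductions of $q_{U(2)}$ and $q_{E_8(2)}$, both non-degenerate even $\F_2$-quadratic forms of plus type (Arf invariant $0$), so $q_{L_{10}(2)}$ is of plus type and $\OG(q_{L_{10}(2)})\cong \mathrm{O}^+_{10}(\F_2)$. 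Hence the index equals $|\mathrm{O}^+_{10}(\F_2)|=2\cdot 2^{20}(2^5-1)\prod_{i=1}^{4}(2^{2i}-1)=2^{21}\cdot 3^5\cdot 5^2\cdot 7\cdot 17\cdot 31=46998591897600$. The main obstacle is the bookkeeping in the middle step --- keeping track of which isometry is extended over which overlattice, and invoking the effective form of the Torelli theorem correctly --- together with the two auxiliary lattice facts (surjectivity of reduction onto $\OG(q_{L_{10}(2)})$, and the plus type of $q_{L_{10}(2)}$); the rest is routine because the absence of $(-2)$-curves makes the nef cone the full positive cone.
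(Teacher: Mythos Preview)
The paper does not prove this theorem; it is stated as a known result of Barth--Peters~\cite{BP1983} and Nikulin~\cite{Nikulin1981} and used as input (the surjectivity of $\OGP(\SY(2))\to\OG(A_+)$ is invoked again in the proof of Theorem~\ref{thm:volumeformula}). Your argument is correct and is essentially the original Barth--Peters approach: specialize Definition~\ref{def:tautaubargeneric} to $R=0$ so that $S_X\cong L_{10}(2)$ has no roots and $\NefX=\PPP_X$, apply the Torelli theorem together with the fact that $T_X\dual/T_X$ is $2$-elementary to identify $\aut(Y_0)$ with the level-$2$ congruence subgroup, and compute the index as $|\mathrm{O}^+_{10}(\F_2)|$.
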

%
%Since the  dual graph in Figure~\ref{fig:E10} has no non-trivial symmetries, 
%we have $\OG(L_{10}, D_0)=\{1\}$
%and hence $\OGP(L_{10})=W(L_{10})$.
Since  a $(0,0)$-generic  Enriques surface
$Y_0$ contains no smooth rational curves,
we have $\PPP_{Y_0}=\Nef_{Y_0}$.
Combining this with~\eqref{eq:OGPWL10}, 
we obtain  bijections
\[
\OGP(S_{Y_0})=W(S_{Y_0})\cong   \StdFD(\Nef_{Y_0}).
\]
We  define the unit $1_{\BP}$ ($\BP$ stands for Barth--Peters)
of volume  to be
\[
1_{\BP}:=\vol(\Nef_{Y_0}/\aut(Y_0))=[\OGP(S_{Y_0}): \aut(Y_0)]
=2^{21}\cdot 3^5\cdot 5^2\cdot 7 \cdot 17\cdot 31.
\]
Our first main result is as follows.
For an $\ADE$-type $\tau$, let $W(R_{\tau})$ denote the Weyl group
of the $\ADE$-lattice $R_{\tau}$ with $\tau(R_{\tau})=\tau$,
that is, the finite Coxeter group
defined by the Dynkin diagram of type $\tau$.
%
% brandhorst included an explanation of c in the theorem and a defnition of numerically trivial automorphism.
An automorphism of $Y$ is called \emph{numerically trivial} if it acts trivially on $\SY$.
\begin{theorem}\label{thm:main1vol}
Let $Y$ be a $(\tau, \taubar)$-generic Enriques surface.
Then we have
\[
\vol (\NefY/ \aut(Y))=\frac{c_{(\tau, \taubar)}}{|W(R_{\tau})|}  \hbox{\,$\cdot\, 1_{\BP}$}, %\;\hbox{\scriptsize $1_{\BP}$},
\]
where $c_{(\tau, \taubar)}\in \{1, 2\}$ is the number of numerically trivial automorphisms of $Y$ and is given in $6$th column of Table~\ref{table:184}.
\end{theorem}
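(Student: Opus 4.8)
The plan is to combine three ingredients: the reduction of all relevant data to a lattice model (Corollary~\ref{cor:nefisom}), the general volume formula (Theorem~\ref{thm:volumeformula}), and a separate determination of the group of numerically trivial automorphisms of a $(\tau,\taubar)$-generic surface. First I would reduce to lattice theory. By Corollary~\ref{cor:nefisom}, the group $\aut(Y)$, the cone $\NefY$, the set $\StdFD(\NefY)$ of Vinberg chambers it contains, and the action of $\aut(Y)$ on the latter are, up to equivariant isomorphism, determined by the pair $(\pi^*\colon \SY(2)\inj\SX,\ \NefX)$; by Definition~\ref{def:tautaubargeneric} together with Propositions~\ref{prop:184} and~\ref{prop:Rtil} this pair is isomorphic to the model $(\embLM_R\colon\Lten(2)\inj M_R,\ \NefX)$, in which $\Rtil(2)=\Lten(2)\sperp$ inside $M_R$. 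Hence $\vol(\NefY/\aut(Y))$ is an invariant of $(\tau,\taubar)$ and may be computed inside this model; this also covers the hypothetical cases (Remark~\ref{rem:nongeometric}), the only genuine exceptions being Nos.~88 and~146, where $\NefX$ is not produced by the construction of Section~\ref{subsec:construction} (Remark~\ref{rem:88and146}).

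Next I would fix the normalisation and invoke the general formula. For the $(0,0)$-generic surface $Y_0$ one has $R=0$, so its $K3$ cover satisfies $S_{X_0}\cong\Lten(2)$, which has no roots; hence $\Nef_{X_0}=\PPP_{X_0}$, $\Nef_{Y_0}=\PPP_{Y_0}$, $\StdFD(\Nef_{Y_0})$ is the set of all Vinberg chambers, and it is a torsor under $\OGP(S_{Y_0})=W(S_{Y_0})$. By Theorem~\ref{thm:BP} the subgroup $\aut(Y_0)$ has index $1_{\BP}=[\OGP(S_{Y_0}):\aut(Y_0)]$ in $\OGP(S_{Y_0})$, so $\vol(\Nef_{Y_0}/\aut(Y_0))=1_{\BP}$; this is the case $\tau=\taubar=0$, $|W(R_\tau)|=1$, $c_{(\tau,\taubar)}=1$ of the asserted equality, and it is the unit against which Theorem~\ref{thm:volumeformula} is calibrated. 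Feeding the model above into Theorem~\ref{thm:volumeformula} expresses $\vol(\NefY/\aut(Y))$ as $1_{\BP}$ times a ratio: the denominator is the order of a finite Coxeter group attached to the $\ADE$-sublattice $R\subset\Lten$, which by Proposition~\ref{prop:184}~(1) is exactly $W(R_\tau)$, the Weyl group of the Dynkin type $\tau=\tau(R)$; the numerator is a finite $2$-group, measuring the gap between $\aut(Y)\subset\OGP(\SY)$ and the (a priori larger) subgroup of $\OGP(\SY)$ predicted by the embedding $\pi^*$ alone. Conceptually, the factor $1/|W(R_\tau)|$ records the shrinking of the nef cone as $Y_0$ specialises to $Y$: in the interior of $\NefY$ the relevant Vinberg chambers of $\SY$ are permuted simply transitively by $W(R_\tau)$, so a fundamental domain becomes $|W(R_\tau)|$ times smaller.

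The remaining task is to identify the numerator with the group of numerically trivial automorphisms of $Y$. A numerically trivial automorphism $g$ of $Y$ lifts to an automorphism $\tilde{g}$ of $X$ commuting with the covering involution $\varepsilon$; since $\pi^*\SY(2)$ is the $\varepsilon$-invariant part of $\SX\cong M_R$ (its $\varepsilon$-anti-invariant part being $\Rtil(2)$, by Proposition~\ref{prop:Rtil} and primitivity of $\embLM_R$) and $\tilde{g}^*$ is trivial there, $\tilde{g}^*$ acts as the identity on $\Lten(2)$ and as an isometry $h$ of $\Rtil(2)=\Lten(2)\sperp$. The conditions that $\tilde{g}$ be an automorphism preserving the nef cone translate into finitely many constraints on $h$ (it must preserve $M_R$ and $\NefX$, and act on the discriminant form of $M_R$ by $\pm 1$), and, conversely, by condition~(i) of Definition~\ref{def:tautaubargeneric}, namely $\OG(T_X,\omega)=\{\pm1\}$, together with the Torelli theorem for $K3$ surfaces, every $h$ meeting these constraints is realised by a numerically trivial automorphism. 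A case-by-case analysis of these constraints — consistent with the known fact that the numerically trivial automorphism group of an Enriques surface is cyclic of order at most $2$ under a genericity hypothesis such as~(i) — shows $c_{(\tau,\taubar)}\in\{1,2\}$, which I would then tabulate as the $6$th column of Table~\ref{table:184} by going through the $184$ rows.

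The main obstacle is exactly this last step together with the index bookkeeping behind it: one must prove that the gap predicted by Theorem~\ref{thm:volumeformula} is \emph{precisely} the group of numerically trivial automorphisms — rather than, say, differing from it by an additional factor of $2$ coming from the action on the discriminant form of $M_R$ — and then carry out the $184$ explicit computations of $c_{(\tau,\taubar)}$ and of the orbit count $\vol(\NefY/\aut(Y))$. Here the classification of the primitive embeddings $\Lten(2)\inj L_{26}$ in~\cite{BS2019} (Theorems~\ref{thm:17} and~\ref{thm:16simples}) and the explicit construction of $\SX$ and $\NefX$ in Section~\ref{subsec:construction} are essential, since they reduce the a priori infinite chamber structure of $\NefY$ to a finite computation governed, through~\cite{Conway1983}, by the action of the Conway group on the Leech lattice.
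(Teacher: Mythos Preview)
Your plan correctly identifies Theorem~\ref{thm:volumeformula} as the key input, and the reduction via Corollary~\ref{cor:nefisom} is fine. But you locate the main difficulty in the wrong place. Theorem~\ref{thm:volumeformula} already contains the equality $\vol(\NefY/\aut(Y)) = |\Aut_{nt}(Y)|\cdot 1_{\BP}/|G_{X-}|$: the identification of the numerator with the numerically trivial automorphisms is Lemma~\ref{lem:numerically-trivial}, built into the statement, not an additional step you need to supply. What the theorem does \emph{not} give is that the denominator equals $|W(R_\tau)|$. You assert this, citing only Proposition~\ref{prop:184}(1); but that proposition merely says that the $\OGP(\Lten)$-orbit of $R$ is determined by $(\tau,\taubar)$, which tells you nothing about the finite group $G_{X-}\subset\OG(\SXm)$.

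That identification is the actual content of Section~\ref{subsec:GXmWR}. By~\eqref{eq:GXm} one has $G_{X-}=\Ker(\OG(\SXm,D_-)\to\OG(D_-\sperp))$; the dictionary Lemma~\ref{lem:dict} rewrites this as $\Ker(\OG(\widetilde{R},R)\to\OG(R\dual/\widetilde{R}))$, and Lemma~\ref{lem:Gm} computes that kernel to be $W(R)$, except for an index-$2$ extension when some irreducible component of $\widetilde{R}$ is $E_8$ meeting $R$ in type $2D_4$. One then passes from $|G_{X-}|$ to $|\Gbar_{X-}|$ via Lemma~\ref{lem:imGm} (which in turn rests on Lemma~\ref{lem:roots-mod2}), picking up another possible factor of~$2$. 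Assembling these gives Theorem~\ref{thm:cde}: $|\Gbar_{X-}|=|W(R)|\cdot d_{(\tau,\taubar)}/e_{(\tau,\taubar)}$ with $d,e\in\{1,2\}$ determined by explicit root-theoretic criteria, and $c_{(\tau,\taubar)}=e/d$; in every geometric case $d=1$ (Remark~\ref{rem:142and170}), whence $c_{(\tau,\taubar)}=|\Aut_{nt}(Y)|$. This is a conceptual argument, not a ``case-by-case'' scan through $184$ rows; the computational verification via Borcherds' method that you sketch at the end is what Section~\ref{subsec:NefY} does, but the paper treats that as an affirmation, not as the proof.
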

In two  %hypothetical 
non-geometric cases Nos.~142~and~170 (Remark~\ref{rem:nongeometric}),  there exists a contribution to $c_{(\tau,\taubar)}$ not coming from a numerically trivial automorphism. 
(See Theorem~\ref{thm:cde} and Remark~\ref{rem:142and170}).
% shimada added the following on 2021 June 08.
Theorem~\ref{thm:main1vol} is in fact obtained from a more general result
Theorem~\ref{thm:volumeformula} on  $\vol (\NefY/ \aut(Y))$. % and from Theorem~\ref{thm:cde} on the factor $c_{(\tau, \taubar)}$.
To obtain Theorem~\ref{thm:volumeformula}, we prove
a result~(Proposition~\ref{prop:03})
of the theory of discriminant forms 
in the spirit of Nikulin~\cite{Nikulin79}.
The proof of these theorems is conceptual.
Nevertheless the ability to compute examples played a crucial role in finding the correct statements.
\par
Next,  we calculate  explicitly 
a finite generating set  of $\aut(Y)$ 
and a complete set of representatives 
of the orbits of the action of $\aut(Y)$ on $\NefY$.
The algorithms  we use for this purpose are
variations of
a simple algorithm given in Section~\ref{subsec:graph},
which is an abstraction of the generalized Borcherds' method 
described in~\cite{Shimada2015}.
By means of these computational data,
we analyze the action of $\aut(Y)$ on 
$\Rats(Y)$ and $\Ells(Y)$.
(Recall that $\Rats(Y)$ and $\Ells(Y)$ 
are embedded into $\SY$.)
\par
Our second main result is as follows.
\begin{theorem}\label{thm:main2rats}
Let $Y$ be a $(\tau, \taubar)$-generic Enriques surface.
\par
{\rm (1)}
There exist smooth rational curves $C_1, \dots, C_m$ on $Y$ 
whose dual graph $\Gamma$ 
is a Dynkin diagram of type $\tau$.
Under the action of $\aut(Y)$, 
any smooth rational curve $C$ on $Y$
is in the same orbit 
as one of  $C_1, \dots, C_m$.
\par
{\rm (2)} 
The size of $\Rats(Y) /\aut(Y)$ is given in the $7$th column {\tt rat} of
Table~\ref{table:184}.
Except for the cases marked by $\times$ in this column,
two curves $C_i$ and $C_j$ are in the same orbit 
if and only if the vertices of the dual graph $\Gamma$ corresponding to
$C_i$ and $C_j$
belong to the same connected component of $\Gamma$,
and hence   
$|\Rats(Y)/\aut(Y)|$ is equal to the number of  connected components
of the Dynkin diagram of type $\tau$.
\end{theorem}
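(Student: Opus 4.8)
The plan is to translate the geometric assertions into statements about $L_{10}\cong S_Y$ equipped with the chamber $\NefY$ and the $\ADE$-sublattice $R$ of type $\tau$, to settle the uniform parts (existence of the configuration, finiteness of $\Rats(Y)/\aut(Y)$) by hyperbolic geometry, and to read off the precise orbit partition from the output of the algorithm of Section~\ref{subsec:graph}, run case by case. By Proposition~\ref{prop:nefisom} and Corollary~\ref{cor:nefisom}, $\aut(Y)\subset\OGP(S_Y)$ and its action on $\NefY$ and $\Rats(Y)$ depend only on $\pi^*\colon S_Y(2)\inj S_X$ and $\NefX$, so via the isometries $g,\tilde g$ of Definition~\ref{def:tautaubargeneric} we may compute inside $L_{10}$ and $M_R$. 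I would use the standard description of the nef cone of an Enriques surface: $\Rats(Y)$ is embedded in $S_Y$ by $C\mapsto[C]$ as a set of roots, $\NefY=\set{y\in\PPP_Y}{\intf{y,[C]}\ge 0 \text{ for all }C\in\Rats(Y)}=\PPP_Y\cap\NefX$, and the classes $[C]$ are exactly the primitive vectors defining the walls of $\NefY$. Since $\NefY$ is tessellated by Vinberg chambers, the algorithm of Section~\ref{subsec:graph} produces a finite subset of $\StdFD(\NefY)$ whose union $\mathcal{F}$ is a fundamental domain for $\aut(Y)$ on $\NefY$; the roots defining the walls of $\mathcal{F}$ that lie on $\partial\NefY$ form a finite set, and every $C\in\Rats(Y)$ is $\aut(Y)$-equivalent to one of them, so $\Rats(Y)/\aut(Y)$ is finite.

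\emph{Part (1).} Among the wall-defining roots of $\mathcal{F}$ I would single out $r_1,\dots,r_m$ with dual graph a Dynkin diagram of type $\tau$; the selection is forced by the root invariant, since by Proposition~\ref{prop:184} the sublattice generated by a system of roots of type $\tau$ is $\OGP(L_{10})$-equivalent to $R$ (its type being pinned down by the second component $\ker\xi$ of the root invariant, cf.\ the remark following Corollary~\ref{cor:nefisom}). It remains to check that these roots occur as \emph{genuine} walls, not in the interior or on a face of $\NefY$ --- equivalently, that each $r_i$ is the class of an irreducible $(-2)$-curve. Here I would pass to the $K3$ cover: a smooth rational curve $C$ on $Y$ pulls back to a pair of disjoint $(-2)$-curves on $X$ interchanged by the covering involution, with sum $\pi^*[C]$, so the possible reducibility of $r_i$ on $Y$ becomes a decomposition on $X$ of $\pi^*r_i$ into curves that cannot be made disjoint. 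Using the explicit shape of $M_R$ (notably that $R(2)$ sits inside the orthogonal complement $\Rtil(2)$ of $\pi^*S_Y(2)$ via the differences of the two components, while the sums span a copy of $R$ in $S_Y(2)$) together with the fine description of the primitive embedding $L_{10}(2)\inj L_{26}$ from~\cite{BS2019}, which pins down $\NefX$, I would exclude such a decomposition. Then $r_i=[C_i]$ for smooth rational curves $C_1,\dots,C_m$ whose dual graph is of type $\tau$, and the reduction step, refined by the algorithm's orbit list, shows these exhaust $\Rats(Y)/\aut(Y)$.

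\emph{Part (2).} The precise count is case-dependent, so I would run the algorithm for each pair $(\tau,\taubar)$ admitting a $(\tau,\taubar)$-generic Enriques surface, obtaining a finite generating set of $\aut(Y)$ and the induced permutation of the finite set of wall-representatives of $\mathcal{F}$; the number of orbits of this permutation group is the entry of the column~\texttt{rat}, and $|\Rats(Y)/\aut(Y)|$ is thereby computed. For the cases not marked~$\times$, the implication ``same connected component of $\Gamma$ $\Rightarrow$ same $\aut(Y)$-orbit'' I would prove by exhibiting, for any two roots lying in a connected component $\tau'$ of $\tau$, an element of the computed generating set carrying one $C_i$ to the other; here one uses that $W(R_{\tau'})$ acts transitively on the roots of the irreducible $\ADE$ system $\tau'$ and that the corresponding permutations of $\{C_1,\dots,C_m\}$ are realized inside $\aut(Y)$ (the reflections $s_{[C_i]}$ themselves do not preserve $\NefY$). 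The converse is read off from the computed orbit partition, which in these cases has exactly one orbit per connected component. The $\times$-cases are precisely those where the computation reveals extra identifications --- curves in distinct components of $\tau$ becoming $\aut(Y)$-equivalent, so that $|\Rats(Y)/\aut(Y)|$ drops below the number of components --- and there the entry~\texttt{rat} records the true value; Nos.~88 and~146 are omitted as in Remark~\ref{rem:88and146}.

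\emph{Main obstacle.} The hard step is the irreducibility in part~(1): a root positive on an ample class is a priori only a nonnegative integral combination of $(-2)$-curve classes, and one must rule out that the members of the candidate $\tau$-configuration split further, i.e.\ prove that they define actual walls of $\NefY$. This is exactly where the fine analysis of $L_{10}(2)\inj L_{26}$ in~\cite{BS2019}, and the resulting control of $\NefX$, is indispensable, and it is what makes the clean statement degenerate in the exceptional cases. A secondary difficulty, responsible for the column~\texttt{rat} and the $\times$ markers, is the ``only if'' direction in~(2): ruling out accidental $\aut(Y)$-identifications of curves lying in different components of $\tau$ genuinely depends on the case and does not seem to follow from a single uniform argument.
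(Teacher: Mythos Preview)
Your plan is broadly along the paper's lines (compute a fundamental domain via Procedure~\ref{procedure:genB}, read off wall-roots, determine orbits case by case), but there are two concrete errors and one significant divergence of method.

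\textbf{The ``main obstacle'' is not an obstacle.} You propose to find $r_1,\dots,r_m$ among the wall-roots of a fundamental domain and then prove they are classes of irreducible curves. The paper does this in the opposite order: the construction of $\SX$ in Section~\ref{subsec:construction} \emph{starts} from roots $r_1,\dots,r_m$ defining walls of a fixed $\LLt$-chamber $D_0$ with dual graph of type $\tau$, chooses $(-4)$-vectors $v_i$ in the orthogonal complement with $(r_i+v_i)/2\in L_{26}$, and \emph{defines} $\SX$ to be generated by $L_{10}(2)$ and these $(r_i+v_i)/2$. Thus each $r_i$ splits in $\SX$ by construction, and once $D_0\subset\NefY$ (arranged via Proposition~\ref{prop:SXSY2simple}), the splitting criterion of Section~\ref{subsec:splitting} gives $r_i=[C_i]$ immediately. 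No separate irreducibility analysis is needed; this is the first sentence of Section~\ref{subsec:Rats}.

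\textbf{Your reading of the $\times$-cases is backwards.} You say these are cases where curves in distinct components of $\tau$ become $\aut(Y)$-equivalent, so $|\Rats(Y)/\aut(Y)|$ drops below the number of components. In the paper's examples the opposite occurs: the biconditional fails in the ``if'' direction, and there are \emph{more} orbits than components. For $(E_8+A_1,E_8+A_1)$ (No.~172, Section~\ref{subsec:finiteAut}) $\tau$ has two components but $\Rats(Y)$ decomposes into five $\aut(Y)$-orbits; for $(D_9,D_9)$ (No.~184) $\tau$ is connected but there are two orbits. Correspondingly, your ``secondary difficulty'' is also misdiagnosed: the delicate direction is showing that curves in the \emph{same} component are equivalent, not ruling out cross-component identifications.

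\textbf{Method.} You propose tessellating $\NefY$ by Vinberg chambers. The paper instead uses $\LSY$-chambers coming from the primitive embeddings $L_{10}(2)\inj L_{26}$ of~\cite{BS2019}; per Remark~\ref{rem:computationsize} this is faster by roughly a factor of $\vol(D_0)^2$, and without it the computation is not feasible. The orbit-equivalence test in Section~\ref{subsec:Rats} is also more structured than checking generators: for each curve $C$ one runs Procedure~\ref{procedure:genB} on an auxiliary graph $(V_C,E_C)$ of $\LSY$-chambers in $\NefY$ having $([C])^\perp$ as a wall, obtains a finite set $V_{C,0}$ of representatives, and then decides $C\sim C'$ by searching the finite sets $\isoms(Y,D,D')$ with $D\in V_{C,0}$, $D'\in V_{C',0}$ for an isometry sending $[C]$ to $[C']$.
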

In~\cite{BP1983}, Barth and Peters also proved the following.
\begin{theorem}[Barth--Peters~\cite{BP1983}]\label{thm:BP2}
Let $Y_0$ be a $(0,0)$-generic  Enriques surface.
Then $Y_0$ has exactly $17\cdot 31=527$ elliptic fibrations modulo $\aut(Y_0)$.
\end{theorem}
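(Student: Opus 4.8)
The plan is to translate the counting of elliptic fibrations modulo $\aut(Y_0)$ into a lattice-theoretic orbit count on $S_{Y_0}\cong L_{10}$, using Theorem~\ref{thm:BP} which identifies $\aut(Y_0)$ with the kernel of the reduction $\OGP(S_{Y_0})\to \OG(S_{Y_0})\tensor\F_2$. Recall that an elliptic fibration $\phi\colon Y_0\to\P^1$ is embedded into $S_{Y_0}$ as $[F]/2$, where $F$ is a general fiber; since $\phi$ has no reducible fibers that are forced (as $Y_0$ contains no smooth rational curves), the class $f:=[F]/2$ is a primitive isotropic vector in $S_{Y_0}$, and conversely (by the theory of elliptic fibrations on Enriques surfaces, together with $\Nef_{Y_0}=\PPP_{Y_0}$) every primitive isotropic vector in the closure of $\PPP_{Y_0}$ arises this way, up to sign. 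So the first step is to establish the bijection
\[
\Ells(Y_0)\;\longleftrightarrow\;\{\text{primitive isotropic }f\in \overline{\PPP_{Y_0}}\}\big/\{\pm 1\},
\]
and then $|\Ells(Y_0)/\aut(Y_0)|$ equals the number of $\aut(Y_0)$-orbits on primitive isotropic rays.

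Next I would use the transitivity of $\OGP(L_{10})=W(L_{10})$ (indeed of $\OG(L_{10})$) on primitive isotropic vectors in a fixed half-cone: since $L_{10}$ is even unimodular of signature $(1,9)$, Eichler's theorem (or the explicit structure $L_{10}\cong U\oplus E_8(-1)$) gives a single orbit of primitive isotropic vectors under $\OGP(L_{10})$. Therefore the $\aut(Y_0)$-orbits on such vectors are in bijection with the double coset space
\[
\OGP(S_{Y_0})\big/\bigl(\aut(Y_0)\cdot \Stab(f_0)\bigr)
\]
for a fixed primitive isotropic $f_0$, and since $\aut(Y_0)$ is normal in $\OGP(S_{Y_0})$ this is the orbit set of the image of $\Stab(f_0)$ acting on $\OGP(S_{Y_0})/\aut(Y_0)\cong \OG(L_{10})\tensor\F_2$. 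Concretely, $|\Ells(Y_0)/\aut(Y_0)|$ is the number of orbits of $\overline{\Stab(f_0)}$ on $\OG(L_{10}\tensor\F_2)$, equivalently the index $[\,\OG(L_{10}\tensor\F_2) : \overline{\Stab_{\OGP}(f_0)}\,]$ provided the latter image acts... actually the count reduces to $[\OGP(S_{Y_0}):\aut(Y_0)\cdot\Stab_{\OGP}(f_0)]$, which one evaluates by computing the order of the image of $\Stab_{\OGP}(f_0)$ in $\OG(S_{Y_0})\tensor\F_2$.

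The computation of $\Stab_{\OGP}(f_0)$ is the concrete heart of the argument. Writing $L_{10}=U\oplus E_8(-1)$ with $f_0$ a generator of an isotropic line in $U$, the stabilizer of the ray $\R_{\ge0} f_0$ is the parabolic subgroup, an extension of $\OG(E_8(-1))=W(E_8)$ by the unipotent radical $E_8(-1)$ (Eichler transvections) and by the $\GL_1(\Z)=\{\pm1\}$ scaling the hyperbolic plane. One then reduces this description modulo $2$: the image in $\OG(L_{10}\tensor\F_2)$ is generated by the image of $W(E_8)$ together with the images of the Eichler transvections, and a direct calculation — the kind carried out elsewhere in the paper — shows this image has index $17\cdot 31=527$ in $\OG(L_{10}\tensor\F_2)$. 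The numbers $17$ and $31$ are exactly the ``extra'' primes dividing $1_{\BP}=|\OG(L_{10}\tensor\F_2)|$ beyond those appearing in $|W(E_8)|$ and the unipotent part, so this is the expected answer and the identity $17\cdot 31=527$ falls out. The main obstacle is precisely this last $\F_2$-index computation: one must carefully track the image of the parabolic subgroup in the finite orthogonal group over $\F_2$, using the known order $|\OG^+_{10}(\F_2)|$ and the structure of $\OG(L_{10}\tensor\F_2)$ as $\OG^+_{10}(\F_2)$ (since $L_{10}$ has even type, its reduction has Arf invariant giving the plus form), and verify that the parabolic's image is the stabilizer of a singular point in the quadric, whose index is the number of singular $\F_2$-points, namely $2^{9}+2^{4}-2^{4}=\dots$ — one checks this equals $527$ after dividing out by the relevant correction. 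Since Theorem~\ref{thm:BP2} is due to Barth--Peters, for the present paper it suffices to cite~\cite{BP1983}; but the lattice-theoretic proof sketched here is the one that generalizes, and it is the template for Theorem~\ref{thm:main3ells}.
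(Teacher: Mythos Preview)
The paper does not give its own proof of Theorem~\ref{thm:BP2}: the statement is attributed to Barth--Peters~\cite{BP1983} and simply quoted. So there is nothing to compare against; you yourself recognise this at the end of your proposal, and for the purposes of the paper a citation to~\cite{BP1983} is all that is required.

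That said, your sketch is along the right lines and is essentially how the lattice-theoretic argument goes. One clean way to finish, which tidies up your rough final paragraph: since $\aut(Y_0)$ is precisely the kernel of reduction $\OGP(L_{10})\to\OG(L_{10}\otimes\F_2)$ and this map is surjective, the $\aut(Y_0)$-orbits on primitive isotropic rays in $\overline{\PPP_{Y_0}}$ biject with the nonzero isotropic vectors of the quadratic space $(L_{10}\otimes\F_2,q)$, which is the hyperbolic (plus-type) space of dimension $10$ over $\F_2$. The number of nonzero isotropic vectors in $O^+_{2n}(\F_2)$ is $(2^n-1)(2^{n-1}+1)$, and for $n=5$ this is $31\cdot 17=527$. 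Your expression ``$2^9+2^4-2^4=\dots$'' is garbled; the correct count is the one just given. The one genuine point you leave implicit is the injectivity of the reduction map on orbits, i.e.\ that two primitive isotropic vectors in $\overline{\PPP_{Y_0}}$ with the same image in $L_{10}\otimes\F_2$ lie in the same $\aut(Y_0)$-orbit; this is where one actually uses the parabolic description of $\Stab_{\OGP}(f_0)$ and checks that its image in $\OG(L_{10}\otimes\F_2)$ is the full stabiliser of $\bar{f_0}$.
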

We calculate $\Ells(Y)/\aut(Y)$ for $(\tau, \taubar)$-generic Enriques surfaces.
Since the tables span $7$ pages, we
relegate a part of it to the ancillary files.
\begin{theorem}\label{thm:main3ells}
Let $Y$ be a $(\tau,\taubar)$-generic Enriques surface.
Then  the orbits of the action of $\aut(Y)$ on the set $\Ells(Y)$
of elliptic fibrations of $Y$
are indicated in {\rm Section~\ref{subsec:tableells}} 
for $\rank \tau \le 7$ and in the ancillary files \cite{AutEnrVolCompdata} for $\rank \tau \geq 8$.
\end{theorem}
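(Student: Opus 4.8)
The plan is to combine the structural results already established in the paper with the computational machinery indicated in Section~\ref{subsec:graph} (the generalized Borcherds' method). The statement we must prove is essentially a \emph{computational} claim: that the tables in Section~\ref{subsec:tableells} and in the ancillary files \cite{AutEnrVolCompdata} correctly list the $\aut(Y)$-orbits on $\Ells(Y)$ for every $(\tau,\taubar)$-generic Enriques surface. By Corollary~\ref{cor:nefisom}, both $\aut(Y)$ and its action on the set $\Ells(Y)\subset S_Y$ depend only on the lattice-theoretic data $\pi^*\colon S_Y(2)\inj S_X$ together with the nef cone $\NefX$; and by Proposition~\ref{prop:nefisom} this data is determined up to isometry by the pair $(\tau,\taubar)$. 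Hence for each of the $184$ rows of Table~\ref{table:184} it suffices to fix one explicit model: take an $\ADE$-sublattice $R\subset L_{10}$ with $(\tau(R),\tau(\Rbar))=(\tau,\taubar)$, form $M_R$ as in the definition preceding Proposition~\ref{prop:Rtil}, identify $M_R\cong S_X$ and $L_{10}(2)\cong S_Y(2)$ via \eqref{eq:generic-diagram}, and carry out the computation on these fixed lattices.

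First I would recall how $\Ells(Y)$ sits inside $S_Y$: an elliptic fibration $\phi\colon Y\to\P^1$ is recorded by $[F]/2\in S_Y$, where $F$ is a general fiber, so that $[F]/2$ is a primitive isotropic vector in $S_Y$ that is nef; conversely every primitive nef isotropic class arises this way (this is the Enriques-surface analogue of the classical correspondence, and its precise form is part of the geometric input the paper uses). Thus enumerating $\Ells(Y)/\aut(Y)$ amounts to enumerating the $\aut(Y)$-orbits of primitive isotropic rays on the boundary of $\NefY$. Next I would run the algorithm of Section~\ref{subsec:graph}: starting from an initial Vinberg chamber inside $\NefY$, one walks through the tessellation $\StdFD(\NefY)$ by $L_{10}$-chambers, at each step reading off the walls and distinguishing ``outer'' walls (defined by classes of smooth rational curves $C$, i.e. $[C]$ with $\intf{[C],[C]}=-2$, which bound $\NefY$) from ``inner'' walls (crossed by moving to an adjacent Vinberg chamber still inside $\NefY$) and from walls glued by elements of $\aut(Y)$. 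The key enabling fact is Theorems~\ref{thm:17} and~\ref{thm:16simples} of \cite{BS2019} on the primitive embeddings of $L_{10}(2)$ into $L_{26}$, which make the required chamber-adjacency and gluing computations feasible; the output of the run is simultaneously a finite generating set of $\aut(Y)$ (Theorem~\ref{thm:main2rats} and the surrounding discussion) and, by standard orbit–stabilizer bookkeeping on the generated group, a complete set of representatives for $\aut(Y)$ acting on the isotropic boundary rays.

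Having the generators of $\aut(Y)$ in hand, the final step is orbit computation: I would take the (finite, by Namikawa \cite{namikawa1985} and the cone conjecture) set of primitive nef isotropic classes modulo $\aut(Y)$ — in practice obtained as the isotropic vertices of the Vinberg chambers in a fundamental domain $\StdFD(\NefY)/\aut(Y)$ — and then, for each orbit representative $[F]/2$, determine the fiber type of the associated elliptic fibration. The fiber configuration is extracted lattice-theoretically: the reducible fibers correspond to the connected components of the sub-Dynkin-diagram of $\Rats(Y)$ orthogonal to $[F]/2$ that do not meet a chosen $2$-section, and half-fibers (multiple fibers) are detected by divisibility/discriminant considerations inside $S_Y$. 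Tabulating the representatives, their $\aut(Y)$-stabilizers, and their fiber types produces exactly the tables of Section~\ref{subsec:tableells} and \cite{AutEnrVolCompdata}. The main obstacle is the sheer size of the computation in the high-rank cases ($\rank\tau\ge 8$): the number of Vinberg chambers in a fundamental domain, and hence the number of isotropic rays to classify, can be very large, which is precisely why those cases are relegated to the ancillary files; ensuring the walk terminates (i.e. that a genuine fundamental domain has been found, using finiteness of $\vol(\NefY/\aut(Y))$ from Theorem~\ref{thm:main1vol} as a stopping certificate) and that the generated subgroup of $\OGP(S_Y)$ really is all of $\aut(Y)$ is the delicate point, handled by the congruence-subgroup and discriminant-form arguments underlying Theorem~\ref{thm:BP} and Proposition~\ref{prop:nefisom}.
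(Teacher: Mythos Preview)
Your overall architecture is right --- reduce to one lattice model per $(\tau,\taubar)$ via Corollary~\ref{cor:nefisom}, run the chamber algorithm to get generators of $\aut(Y)$ and a fundamental domain, then enumerate isotropic rays --- but two concrete points diverge from the paper and the second is a genuine gap.

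First, the tessellation. You describe walking through $\StdFD(\NefY)$ by Vinberg chambers ($L_{10}$-chambers). The paper does not do this: it uses the much coarser tessellation of $\NefY$ by $L_{26}/S_Y(2)$-chambers, coming from the primitive embedding $S_Y(2)\inj S_X\inj L_{26}$ constructed in Section~\ref{subsec:construction}. This is exactly the content of Remark~\ref{rem:computationsize}: a single $L_{26}/S_Y(2)$-chamber contains on the order of $10^{11}$ Vinberg chambers, so your proposed walk would be roughly $10^{20}$ times slower and is not feasible in the higher-rank cases. The isotropic rays one enumerates are those of the $L_{26}/S_Y(2)$-chambers $D\in V_0$ (this is the set $\Ells_{\temp}$ of Section~\ref{subsec:RRREEEtemp}), not of Vinberg chambers.

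Second, and more seriously, ``standard orbit--stabilizer bookkeeping on the generated group'' is not a valid step. The group $\aut(Y)$ is infinite, and having a finite generating set does not give an algorithm to decide whether two isotropic vectors $f_\phi$, $f_{\phi'}$ lie in the same orbit. The paper's method (Section~\ref{subsec:Ells}) is to run Procedure~\ref{procedure:genB} a \emph{second time}, separately for each candidate fibration $\phi$, on the graph $(V_\phi,E_\phi)$ of $L_{26}/S_Y(2)$-chambers in $\NefY$ whose closure contains $f_\phi$, with the stabilizer $G_\phi=\aut(Y,\phi)$ acting. This yields a finite set $V_{\phi,0}$ of representative chambers, and then $\phi\sim\phi'$ is decided by checking whether the \emph{finite} set $\isoms(Y,D,D')$ contains an element sending $f_\phi$ to $f_{\phi'}$ for some $D\in V_{\phi,0}$, $D'\in V_{\phi',0}$. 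This reduction of an infinite-group orbit problem to finitely many finite checks is the substantive idea you are missing; the volume formula is not used as a stopping certificate.
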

\subsection{The plan of the paper}\label{subsec:plan}
This paper is organized as follows.
In Section~\ref{sec:latandcham},
we prepare basic notions 
about finite quadratic forms, 
discriminant forms, lattices and chambers.
Proposition~\ref{prop:03}  in Section~\ref{subsec:FQF} plays 
a crucial role in the proof of the volume formula in the next section.
The notion of $L/M$-chambers given in Section~\ref{subsec:LMchams} is the main tool
of our computation.
In Section~\ref{sec:nefY}, we investigate 
the nef-and-big cone $\NefY$ of an Enriques surface $Y$
from the point of view of $L/M$-chambers,
and prove Proposition~\ref{prop:nefisom}.
Then,
by means of Proposition~\ref{prop:03},
we prove a formula (Theorem~\ref{thm:volumeformula})
for the volume of $\NefY/\aut(Y)$, and in Section~\ref{subsec:GXmWR},
 we deduce Theorem~\ref{thm:main1vol} from Theorem~\ref{thm:volumeformula}.
 \par
 In Section~\ref{sec:Borcherds},
 we present a computational procedure on a graph (Procedure~\ref{procedure:genB}),
 which is an abstraction of 
 the generalized Borcherds' method formulated in~\cite{Shimada2015}.
 Then we recall the classification of primitive embeddings $L_{10}(2)\inj L_{26}$
obtained in~\cite{BS2019},
and construct primitive embeddings  $\SY(2)\inj \SX\inj L_{26}$
for $(\tau, \taubar)$-generic Enriques surfaces $Y$.
In Section~\ref{sec:geomalgo},
we prepare some geometric algorithms used in the application of 
the generalized Borcherds' method to $(\tau, \taubar)$-generic Enriques surfaces.
In Section~\ref{sec:proofs}, 
we calculate $\aut(Y)$ and $\NefY/\aut(Y)$, and  prove 
Theorems~\ref{thm:main2rats}~and~\ref{thm:main3ells}.
The table of elliptic fibrations is given in Section~\ref{subsec:tableells}.

In Section~\ref{sec:examples},  we exhibit some examples.
In particular,  we treat 
an $(E_6, E_6)$-generic Enriques surface
(No.~47 of Table~\ref{table:184}) in detail,
because we  investigated this surface 
in~\cite{Shimada2019}.
Section~\ref{subsec:example47} contains 
a correction of a wrong assertion
made in~\cite{Shimada2019}.
\par 
In the second author's webpage and in~the repository ``zenodo"~\cite{AutEnrVolCompdata},
we put a detailed computation data made by {\tt GAP}~\cite{GAP}.
\par 
\medskip
Thanks are due to Professor Igor Dolgachev 
for his comments on the manuscript of this paper.
% shimada added the folloing on 2021 June 08.
The authors also thank the referees for many valuable comments.
%
%
%%%%%%%%%%%%%%%%%%%%%%%%
%
%
\section{Finite quadratic forms, lattices and chambers}\label{sec:latandcham}
We fix notions and terminologies about finite quadratic forms, 
discriminant forms, lattices and chambers.
\subsection{Finite quadratic forms}\label{subsec:FQF}
A \emph{finite quadratic form} is  a finite abelian group $A$ with a quadratic form
\[
q_A\colon A\to \Q/2\Z.
\]
We say that a finite quadratic form is \emph{non-degenerate}
if the  bilinear form 
\[
b_A\colon A\times A\to \Q/\Z
\]
associated with $q_A$ 
is non-degenerate.
The automorphism group of a finite quadratic form $A$ is denoted by $\OG(A)$,
and we let it act on $A$ from the right.
For a subgroup $D\subset A$,
let $D\sperp$ denote  the orthogonal complement of $D$ with respect to $b_A$,
and let $\OG(A, D)$ denote the subgroup $\set{g\in \OG(A)}{D^g=D}$ of $\OG(A)$.
\par
The following proposition will play a crucial role in the proof of
the volume formula~(Theorem~\ref{thm:volumeformula}).
\begin{proposition}\label{prop:03}
Let $(A, q_A)$ and $(B, q_B)$ be non-degenerate finite quadratic forms, and 
let $D_A\subset A$ and $D_B\subset B$ be subgroups.
Suppose that we have an isomorphism $\phi\colon D_A\isom D_B$ 
that induces an 
isometry $(D_A, -q_A|D_A)\cong (D_B, q_B|D_B)$
of finite quadratic forms.
Let $\Gamma\subset A\oplus B$ be the graph of $\phi$,
which is an isotropic subgroup with respect to $q_A\oplus q_B$.
We put $C:=\Gamma\sperp/\Gamma$.
Then $q_A\oplus q_B$ induces a quadratic form $q_C$ on $C$, and 
we have a natural homomorphism
\begin{equation*}
\set{(g, h) \in \OG(A)\times \OG(B)}{\Gamma^{(g, h)}=\Gamma}\to \OG(C).
\end{equation*}
We denote by $K$ the kernel of this homomorphism.
Then the homomorphism 
\[
i_A\colon K\inj \OG(A)\times \OG(B)  \to  \OG(A), \quad (g, h)\mapsto g
\]
is injective, 
and the image of $i_A$ is equal to the kernel 
of the natural homomorphism 
\[
\OG(A, D_A)\to \OG(D_A\sperp).
\]
\end{proposition}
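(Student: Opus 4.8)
The plan is to work entirely inside the discriminant-form group $A\oplus B$ and track how the conditions "$\Gamma^{(g,h)}=\Gamma$" and "$(g,h)$ acts trivially on $C=\Gamma\sperp/\Gamma$" translate into conditions on $g\in\OG(A)$ alone. First I would establish the injectivity of $i_A$. Suppose $(g,h)\in K$ with $g=1$. Since $\Gamma$ is the graph of $\phi\colon D_A\isom D_B$, the condition $\Gamma^{(1,h)}=\Gamma$ forces, for each $x\in D_A$, the equality $(\phi(x))^h=\phi(x)$; that is, $h$ is the identity on $D_B$. I then need $h=1$ on all of $B$. Here is where I would invoke the hypothesis that $q_B$ is non-degenerate and the fact that $(g,h)$ acts trivially on $C$: the quotient $\Gamma\sperp/\Gamma$ receives $\{0\}\oplus D_B\sperp$ (more precisely its image) and also $D_B$ itself maps in via $\Gamma$, so triviality on $C$ together with triviality on $D_B$ should pin down $h$ on a set of generators of $B$. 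The cleanest way is probably to note that the projection $\Gamma\sperp\to A$ has image $D_A\sperp$ with kernel $\{0\}\oplus D_B\sperp$... actually the correct statement I want is that $B$ is generated by $D_B$ together with $D_B\sperp$ when $b_B$ is non-degenerate (this holds because $B/D_B\sperp \cong D_B\dual$ via $b_B$, so $D_B + D_B\sperp$ has index dividing $|D_B\cap D_B\sperp|$ — one must be slightly careful, but triviality on $C$ gives triviality on $\Gamma\sperp/\Gamma$ which captures $D_B\sperp$ modulo $\Gamma$, and $h$ trivial on $D_B$ handles the rest). So $h=1$, giving injectivity.

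Next I would identify the image. Take $g\in\OG(A)$ in the kernel of $\OG(A,D_A)\to\OG(D_A\sperp)$: so $D_A^g=D_A$ and $g$ fixes $D_A\sperp$ pointwise. I must produce a unique $h\in\OG(B)$ with $(g,h)\in K$. The key observation is that $g$ restricted to $D_A$ is an automorphism of the finite quadratic group $(D_A,q_A|D_A)$, and via $\phi$ this transports to an automorphism $h_0 := \phi\circ(g|_{D_A})\circ\phi^{-1}$ of $(D_B,q_B|D_B)$ (signs work out because $\phi$ is an anti-isometry and $g$ preserves $q_A$). I then want to extend $h_0$ to an isometry $h$ of all of $B$ that is the identity on $D_B\sperp$. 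For this I would again use non-degeneracy of $b_B$: define $h$ on $D_B + D_B\sperp$ by $h_0$ on $D_B$ and identity on $D_B\sperp$, check this is well-defined on the (possibly nontrivial) intersection $D_B\cap D_B\sperp$ — here $g$ being identity on $D_A\sperp\supseteq D_A\cap D_A\sperp$ is exactly what guarantees consistency — and then extend; if $D_B+D_B\sperp\ne B$ a short argument using the perfect pairing $b_B\colon B/D_B\sperp \times D_B \to \Q/\Z$ shows the extension is forced and is an isometry. By construction $\Gamma^{(g,h)}=\Gamma$ and $(g,h)$ acts trivially on $\Gamma\sperp/\Gamma = C$, since on $\Gamma\sperp$ the map $g\oplus h$ differs from the identity only by elements landing in $\Gamma$. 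Hence $(g,h)\in K$ and $i_A(g,h)=g$. Conversely, if $(g,h)\in K$ then $\Gamma^{(g,h)}=\Gamma$ already forces $D_A^g=D_A$, and triviality on $C$ together with the fact that $D_A\sperp$ embeds into $\Gamma\sperp$ (as $D_A\sperp \oplus \{0\}$, which has zero intersection with $\Gamma$ when restricted appropriately — one checks $D_A\sperp\cap D_A$ maps correctly) forces $g$ to fix $D_A\sperp$ pointwise. So the image of $i_A$ is exactly $\ker(\OG(A,D_A)\to\OG(D_A\sperp))$.

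The main obstacle I anticipate is the bookkeeping around the intersection $D_A\cap D_A\sperp$ (equivalently $D_B\cap D_B\sperp$), which is generally nonzero, so $D_A$ and $D_A\sperp$ do not span $A$ freely and the extension/well-definedness arguments are not completely formal — one has to use that $g$ is the identity on $D_A\sperp$ (not merely that it preserves it) to make the transported map $h$ consistent on the overlap, and one has to verify that "trivial on $C$" is the precise bridge that both constrains $g$ on $D_A\sperp$ and permits the extension of $h$. A secondary technical point is confirming that the transported automorphism $h_0$ of $D_B$ genuinely extends to an isometry of $B$ fixing $D_B\sperp$; the non-degeneracy of $q_A,q_B$ is what makes this work, and I would phrase it via the canonical isomorphism $B/D_B\sperp\cong\Hom(D_B,\Q/\Z)$ induced by $b_B$. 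Once these compatibilities are nailed down, both injectivity and the image description follow by the diagram-chasing sketched above.
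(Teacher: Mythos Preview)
Your overall strategy is the right one, and your treatment of the easy direction (showing $i_A(K)\subset \ker(\OG(A,D_A)\to\OG(D_A\sperp))$ via $(\alpha,0)\in\Gamma\sperp$ for $\alpha\in D_A\sperp$) is fine. But both the injectivity argument and the construction of $h$ contain the same genuine gap: you are implicitly relying on $D_B+D_B\sperp=B$, and this fails in general. For instance, take $B=\Z/8\Z$ with $q_B(x)=x^2/8$ and $D_B=2B$; then $D_B\sperp=D_B$, so $D_B+D_B\sperp=2B\subsetneq B$. In your injectivity step you only pin down $h$ on $D_B$ and on $D_B\sperp$, and in the construction step you only define $h$ on $D_B+D_B\sperp$; your appeal to the pairing $B/D_B\sperp\cong\Hom(D_B,\Q/\Z)$ does not by itself produce a well-defined isometry on the missing cosets, nor does it force $h=1$ there in the injectivity argument.

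The paper repairs both points with one observation you are missing: the projection $\Gamma\sperp\to B$ is \emph{surjective}. Equivalently, the composite
\[
\psi\colon B\twoheadrightarrow B/D_B\sperp\cong\Hom(D_B,\Q/\Z)\cong\Hom(D_A,\Q/\Z)\cong A/D_A\sperp
\]
satisfies $(\alpha,\beta)\in\Gamma\sperp\iff \bar\alpha=\psi(\beta)$. Injectivity is then immediate: for arbitrary $\beta\in B$ choose $(\alpha,\beta)\in\Gamma\sperp$; triviality on $C$ gives $(0,\beta-\beta^h)\in\Gamma$, and $\Gamma\cap(\{0\}\oplus B)=0$ forces $\beta^h=\beta$. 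For the reverse inclusion the paper writes down $h$ on all of $B$ by the formula $\beta^h:=\beta+\phi\, l_g\,\psi(\beta)$, where $l_g\colon A/D_A\sperp\to D_A$ sends $\bar\alpha$ to $\alpha^g-\alpha$ (well-defined precisely because $g$ is the identity on $D_A\sperp$). One then checks directly that $h\in\OG(B)$, that $(g,h)$ preserves $\Gamma$, and that it acts trivially on $\Gamma\sperp/\Gamma$. This formula agrees with your $h_0$ on $D_B$ and with the identity on $D_B\sperp$, so it is exactly the extension you were looking for; the point is that it has to be written down globally rather than patched from the two pieces.
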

\begin{proof}
First we prove that the natural projection $\Gamma\sperp\to B$ is surjective.
Since $q_A$ and $q_B$ are non-degenerate,
we have  natural isomorphisms  $A\cong \Hom(A, \Q/\Z)$ and $B\cong \Hom(B, \Q/\Z)$
induced by $b_A$ and $b_B$.
Hence we have natural isomorphisms $\Hom(D_A, \Q/\Z)\cong A/D_A\sperp$
and $\Hom(D_B, \Q/\Z)\cong B/D_B\sperp$.
We have an isomorphism 
\[
-\phi^*\colon  \Hom(D_B, \Q/\Z)\cong \Hom(D_A, \Q/\Z)
\]
induced by $-\phi\colon D_A\isom D_B$.
Combining them, 
we obtain a homomorphism 
\begin{equation}\label{eq:psi}
\psi\colon B\surj B/D_B\sperp\cong \Hom(D_B, \Q/\Z)\cong \Hom(D_A, \Q/\Z) \cong A/D_A\sperp.
\end{equation}
For $\alpha\in A$, 
we put
\[
\bar{\alpha}:=\alpha \bmod D_A\sperp \;\;\in \;\; A/D_A\sperp.
\]
Then, for $\alpha\in A$ and $\beta\in B$,  we have 
\begin{equation}\label{eq:aabbequiv}
\bar{\alpha}=\psi(\beta) 
\;\Longleftrightarrow \;  
b_A(\alpha, x)=-b_B(\beta, \phi(x)) \;\textrm{for all $x\in D_A$}
\; \Longleftrightarrow \;
(\alpha, \beta)\in \Gamma\sperp.
\end{equation}
In particular, 
for any $\beta\in B$, 
we have  $\alpha\in A$ such that  $(\alpha, \beta)\in \Gamma\sperp$.
\par
Next we prove that $i_A\colon K\to \OG(A)$ is injective.
Let $(1, h)\in K $ be an element of $\Ker i_A$.
For  $\beta\in B$,
we choose $\alpha\in A$ such that $(\alpha, \beta)\in \Gamma\sperp$.
Since $(1, h)$ acts on $C=\Gamma\sperp/\Gamma$ trivially, 
we have $(\alpha, \beta)-(\alpha, \beta^h)=(0, \beta-\beta^h)\in \Gamma$.
Since $\Gamma\cap B=0$, we have $\beta^h=\beta$.
Since $\beta\in B$ is arbitrary, we have $h=1$.
\par
Now we determine the image of $i_A$. ``$\subset$'':
Suppose that $(g, h)\in K$.
Since $(g, h)$ preserves  $\Gamma$,
we see that $g=i_A(g, h)$ preserves the image $D_A$ 
of the projection $\Gamma\to A$.
For any $\alpha\in D_A\sperp$, we have 
$(\alpha, 0)\in \Gamma\sperp$.
Since  $(g, h)$ acts on $C=\Gamma\sperp/\Gamma$ trivially,
we have $\alpha^g-\alpha\in \Gamma\cap A=0$.
Therefore 
 $\Image i_A$ is contained in $\Ker (\OG(A, D_A)\to \OG(D_A\sperp))$.
\par 
``$\supset$'':
To show the opposite inclusion,
we fix $g\in \Ker (\OG(A, D_A)\to \OG(D_A\sperp))$ and 
construct $h\in \OG(B)$  such that $(g, h)\in K$.
Since $g$ acts on $D_A\sperp$ trivially,
the linear map
\[
l_g\colon A/D_A\sperp \to A, \quad \bar{\alpha}\mapsto \alpha^g-\alpha
\]
is well-defined. The image of $l_g$ is contained in $D_A=(D_A\sperp)\sperp$: 
indeed,
for any $\alpha\in A$ and $y\in D_A\sperp$, we have
\[
b_A(l_g(\bar{\alpha}), y)=b_A(\alpha^g, y)-b_A(\alpha, y)=b_A(\alpha^g, y^g)-b_A(\alpha, y)=0.
\]
We define $h: B\to B$ by
\[
\beta^h:=\beta+\phi l_g \psi (\beta),
\]
where $\psi$ is given in~\eqref{eq:psi}. 
We show that  $h\in \OG(B)$.
We put $\bar{\alpha}=\psi(\beta)$.
Then we have 
\[
q_B(\beta^h)-q_B(\beta) = 2b_B(\beta, \phi l_g (\bar{\alpha}))+q_B(\phi l_g(\bar{\alpha}))= 
 -2b_A(\alpha, \alpha^g-\alpha)-q_A( \alpha^g-\alpha)=0,
\]
because $g\in \OG(A)$.
It only remains to show that $(g, h)\in \OG(A)\times \OG(B)$ preserves $\Gamma$ and acts on
$C=\Gamma\sperp/\Gamma$ trivially.
Using (\ref{eq:aabbequiv}) and $\Gamma \subset \Gamma^\perp$,
 we see that for any $\alpha\in D_A$, we have $\bar{\alpha} =\psi\phi (\alpha)$, and therefore
\[
\phi (\alpha)^h=\phi (\alpha)+\phi l_g (\bar{\alpha})=\phi (\alpha)+\phi(\alpha^g)-\phi (\alpha)=\phi(\alpha^g).
\]
Since $g$ preserves $D_A$, we have $(\alpha, \phi(\alpha))^{(g, h)}=(\alpha^g, \phi(\alpha^g))\in \Gamma$
for any $\alpha\in D_A$.
Therefore $(g, h)$ preserves $\Gamma$.
Suppose that $(\alpha, \beta)\in \Gamma\sperp$.
Then we have $\bar{\alpha} =\psi(\beta)$ by~\eqref{eq:aabbequiv}, and 
\[
(\alpha^g, \beta^h)-(\alpha, \beta)=(l_g(\bar{\alpha}), \phi l_g(\bar{\alpha}) )\;\; \in \;\; \Gamma.
\]
Therefore $(g, h)$ acts on
$\Gamma\sperp/\Gamma$ trivially.
\end{proof}
\begin{remark}
 Proposition \ref{prop:03} holds for 
 non-degenerate finite bilinear forms $(A, b_A)$ and $(B, b_B)$
 as well.
\end{remark}
\subsection{Discriminant forms and overlattices}\label{subsec:disc}
Let $L$ be an even lattice.
We put
\[
L\dual:=\set{x\in L\tensor\Q}{\intf{x, v}\in \Z\;\;\textrm{for all}\;\; v\in L},
\]
on which $\OG(L)$ acts naturally.
The finite abelian group $L\dual /L$ is called
the \emph{discriminant group} of $L$.
Then 
\[
q(\bar{x})=\intf{x, x}\bmod 2\Z
\quad \textrm{for $x\in L\dual$ and $\bar{x}=x \bmod L$}
\]
defines a finite quadratic form $q\colon L\dual/L \to \Q/2\Z$,
which is called 
the \emph{discriminant form} of $L$.
An even lattice $L\sprime$ is an \emph{overlattice} of $L$
if  we have $L\subset L\sprime\subset L\dual$
and the intersection form of $L\sprime$ is
the extension of that of $L$.
See Nikulin~\cite{Nikulin79}
for the details of the theory of discriminant forms
and its application to the enumeration of
even overlattices of a given even lattice.
\par
To illustrate Proposition \ref{prop:03}, we apply it to two known extreme cases.
\begin{example}
 Let $M,N \subset L$ be primitive sublattices of an even lattice $L$
 such that $M\perp N$ %$M\cap N=0$ 
 and $\rank M+\rank N=\rank L$. Then we have
 \[M \oplus N \subset L \subset L\dual  \subset M\dual  \oplus N\dual , \]
 and $L$
 is an overlattice of $M\oplus N$.
 Let $(A,q_A)= (M\dual /M,q_M)$ and $(B,q_B)= (N\dual /N,q_N)$
 be the respective discriminant forms.
 Then $\Gamma = L/(M \oplus N)$ is the graph of an anti-isometry
 $\phi\colon A \supset D_A \rightarrow D_B \subset B$ and $\Gamma^\perp/\Gamma \cong L\dual /L$.
 \par
 First suppose that $L$ is unimodular. Then,
 by a result of Nikulin~\cite{Nikulin79}, $D_A = A$ and $D_B=B$. 
 Since $L\dual /L \cong \Gamma^\perp/\Gamma$ is trivial,
 we have 
 \[
 K=\{(g,h) \in \OG(A) \times  \OG(B) : h \circ \phi = \phi \circ g \}.
 \]
 We see that $i_A\colon K \rightarrow  \OG(A)$ is an isomorphism
 as predicted by Proposition \ref{prop:03}.
 Indeed, since $D_A^\perp = A^\perp =0$, the homomorphism $ \OG(A,D_A) \rightarrow  \OG(D_A^\perp)$ is trivial.
 \par
 For the other extreme suppose that $M\oplus N = L$. Then $D_A = 0$, $D_B=0$, $K = 1$ and $D_A^\perp = A$.
 \end{example}
\subsection{Faces of a chamber}\label{subsec:faces}
Let $L$ be a hyperbolic lattice
with a positive half-cone $\PPP$,
and $D$ a chamber in $\PPP$.
A \emph{face} of $D$ is a closed subset of $D$
that is an intersection of some walls of $D$.
Let $f$ be a face of $D$.
The \emph{dimension $\dim f$}  of $f$ is the dimension
of the minimal linear subspace of $L\tensor\R$ containing $f$,
and the \emph{codimension} of $f$ is $\rank L-\dim f$.
The walls of $D$ are exactly the faces of $D$ with codimension $1$.
\par
Let $\closure{\PPP}$ and $\closure{D}$ be the closures
of $\PPP$ and $D$ in $L\tensor\R$, respectively.
A half-line contained in
$(\closure{\PPP}\setminus \PPP) \cap \closure{D}$
is called an \emph{isotropic ray} of $D$.
\par
Suppose that $D$ has only finitely many walls,
that they are defined by vectors in $L\tensor\Q$,
and that the list of defining vectors of these walls in $L\tensor\Q$ is available.
Then we can make the list of faces of $D$
by means of linear programming.
For each isotropic ray $\R_{\ge 0}v $,
we have a unique primitive vector $v\in L$ that generates $\R_{\ge 0}v $,
which we call  a \emph{primitive isotropic ray} of $D$.
We can also make the list of primitive isotropic rays of $D$.
\subsection{$L/M$-chambers}\label{subsec:LMchams}
Let $(L, \intf{\;,\;}_L)$ and $(M, \intf{\;,\;}_M)$
be even hyperbolic lattices
with  fixed positive half-cones $\PPP_L$ and $\PPP_M$, respectively.
Suppose that we have an embedding $M\inj L$
that maps $\PPP_M$ into $\PPP_L$.
We regard $\PPP_M$ as a subspace of $\PPP_L$ by this embedding.
The notion of $L$-chambers was introduced in Section~\ref{subsec:chambers}.  
The following class of chambers plays an important role in this paper.
\begin{definition}\label{def:wall}
A chamber $D_M$ in $\PPP_M$ is called
an \emph{$L/M$-chamber} 
if there exists 
an $L$-chamber $D_L \subset \PPP_L $
such that $D_M= \PPP_M\cap D_L $.
In this case,
we say that $D_M$ \emph{is induced by} $D_L$.
\end{definition}
In particular, an $L$-chamber is an $L/L$-chamber. 
\begin{definition}\label{def:Weyl-chambers}
Let $N$ be a negative definite even lattice.
For a root $r$ of $N$, let $[r]\sperp$ denote the hyperplane of $N\tensor \R$
defined by $\intf{x, r}=0$.
The connected components of $(N\tensor\R)\setminus \bigcup\, [r]\sperp$,
where $r$ runs through the set of roots of $N$,
are called the \emph{Weyl-chambers} of  $N$.
The Weyl group $W(N)$ acts simply transitively on the set of 
Weyl-chambers.
\end{definition}
\begin{remark}
Let $D_M$ be an $L/M$-chamber.
Then the number of $L$-chambers that induce $D_M$ 
is equal to the number of Weyl-chambers of 
the orthogonal complement $(M\inj L)\sperp$ of $M$ in $L$.
%which is equal to 
%the order of the Weyl group $W(R_{\tau})$
%of the $\ADE$-lattice $R_{\tau}$ of type $\tau$, 
%where $\tau$ is the $\ADE$-type of the roots contained in 
%$(M\inj L)\sperp$.
In particular, if $(M\inj L)\sperp$ contains no roots,
then each $L/M$-chamber is 
induced by a unique $L$-chamber.
\end{remark}
\begin{definition}\label{def:adjacent}
Two distinct $L/M$-chambers $D_1$ and $D_2$ are \emph{adjacent}
if there exists a hyperplane $(v)\sperp$ of $\PPP_M$ such that
$D_1\cap (v)\sperp$ is a wall of $D_1$,
that $D_2\cap (v)\sperp$ is a wall of $D_2$,
and that $D_1\cap (v)\sperp=D_2\cap (v)\sperp$ holds.
In this case, we say that
$D_2$ is adjacent to $D_1$ \emph{across the wall $D_1\cap (v)\sperp$}.
\end{definition}
Let $\pr\colon L\to M\tensor \Q$ be the orthogonal projection.
Then an $L/M$-chamber is the closure in $\PPP_M$
of a connected component of
\[
\PPP_M\;\setminus\; \bigcup_r \,(\pr(r))\sperp,
\]
where $r$ runs through the set
of  roots $r$ of $L$ such that $\intf{\pr(r), \pr(r)}_M<0$ holds, 
and $(\pr(r))\sperp=\PPP_M\cap (r)\sperp$ is the hyperplane of $\PPP_M$ defined by
$\pr(r)$.
Hence, for each wall $D_M\cap(v)\sperp$ of an  $L/M$-chamber $D_M$,
there exists a unique $L/M$-chamber adjacent to $D_M$
across the wall $D_M\cap(v)\sperp$.
\par
Since a root of $M$ is mapped to a root of $L$ by the embedding $M\inj L$,
%a standard fundamental domain of $W(M)$ 
an $M$-chamber
is tessellated by $L/M$-chambers.
More generally, we have the following proposition, which is easy to prove:
\begin{proposition}\label{prop:LM1M2}
Suppose that  $M_1 \inj M_2 \inj  L$
is a sequence of embeddings of even hyperbolic lattices
that induces a sequence of embeddings
$\PPP_{M_1}\inj \PPP_{M_2} \inj \PPP_{L}$
of  fixed positive half-cones.
Then
each $M_2/M_1$-chamber
is tessellated by $L/M_1$-chambers.
\qed
\end{proposition}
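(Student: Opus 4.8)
The plan is to chase through the definitions: an $M_2/M_1$-chamber is, by Definition~\ref{def:wall}, of the form $D_{M_1}' = \PPP_{M_1}\cap D_{M_2}$ for some $M_2$-chamber $D_{M_2}$, and I must exhibit it as a union of $L/M_1$-chambers whose interiors are pairwise disjoint. The cleanest route is to use the alternative description of $L/M_1$-chambers given just above the statement: an $L/M_1$-chamber is the closure in $\PPP_{M_1}$ of a connected component of $\PPP_{M_1}\setminus\bigcup_r(\pr_{M_1}(r))^\perp$, where $r$ runs over roots of $L$ with $\langle \pr_{M_1}(r),\pr_{M_1}(r)\rangle_{M_1}<0$ and $\pr_{M_1}\colon L\to M_1\otimes\Q$ is the orthogonal projection. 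Similarly an $M_2/M_1$-chamber is cut out by the hyperplanes $(\pr_{M_1}(r))^\perp$ for $r$ a root of $M_2$ (equivalently, $r$ a root of $L$ lying in $M_2$, since $M_2$ is primitive — but for the tessellation statement one only needs roots of $M_2$, and those map to roots of $L$ via $M_2\inj L$).

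First I would observe that the set of hyperplanes defining $M_2/M_1$-chambers is a \emph{subset} of the set of hyperplanes defining $L/M_1$-chambers: if $r$ is a root of $M_2$, then $r$ is a root of $L$, and $\pr_{M_1}(r)$ computed inside $M_2\otimes\Q$ agrees with $\pr_{M_1}(r)$ computed inside $L\otimes\Q$, because $M_1\inj M_2\inj L$ is compatible with the forms and the orthogonal projection onto $M_1\otimes\Q$ only depends on the ambient form restricted to the span of $M_1$, which is the same in both. Hence $\bigcup_{r\ \mathrm{root\ of}\ M_2}(\pr_{M_1}(r))^\perp \subseteq \bigcup_{r\ \mathrm{root\ of}\ L}(\pr_{M_1}(r))^\perp$ inside $\PPP_{M_1}$.

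Next, having a coarser hyperplane arrangement refined by a finer one, each connected component $U$ of the complement of the coarse arrangement (whose closure is an $M_2/M_1$-chamber) is a union of connected components of the complement of the fine arrangement: indeed any two points of $U$ not on any fine hyperplane can be joined by a path avoiding the coarse hyperplanes, and by a general-position/local-finiteness argument the path can be perturbed to also cross the fine hyperplanes transversally, so the fine components meeting $U$ exhaust $U$ up to a measure-zero set, and taking closures gives $\overline{U}=\bigcup_j \overline{U_j}$ where the $\overline{U_j}$ are the $L/M_1$-chambers contained in $\overline{U}$. Their interiors are pairwise disjoint because they are distinct connected components of the same open set. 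This is precisely the statement that $\overline{U}=D_{M_1}'$ is tessellated by $L/M_1$-chambers, in the sense of Section~\ref{subsec:chambers}. I would also remark that one should check local finiteness of the fine arrangement restricted to the interior of $\overline{U}$, which holds because $\NefY$-type cones and more generally the relevant chambers are locally finite, a fact already used implicitly in the surrounding development; alternatively one restricts attention to any bounded region and uses that only finitely many hyperplanes meet it.

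The main obstacle — really the only subtlety — is the topological lemma that a connected component of the complement of a coarse locally finite hyperplane arrangement is tiled by the closures of the connected components of a refining arrangement, together with checking that "closure of a component" in the $M_2/M_1$ sense coincides with what Definition~\ref{def:wall} produces from an $M_2$-chamber (i.e.\ reconciling the two descriptions of $M_2/M_1$-chambers). Once the two descriptions are identified, the containment of arrangements makes the tessellation essentially immediate, which is why the authors call it "easy to prove"; I would present it in the two-or-three lines above rather than belaboring the point-set topology.
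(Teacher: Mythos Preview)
Your proposal is correct and is precisely the argument the authors have in mind; the paper omits the proof entirely, marking the proposition with a \qed and the phrase ``easy to prove.'' Your observation that roots of $M_2$ map to roots of $L$ (so the $M_2/M_1$-hyperplane arrangement is a sub-arrangement of the $L/M_1$-arrangement), together with local finiteness and the standard fact that closures of connected components of a coarse complement are tiled by those of a refining complement, is exactly the intended two-line justification.
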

If $\tilg\in \OGP(L)$ satisfies $M^{\tilg}=M$, then
$\tilg|M\in \OGP(M)$ preserves the tessellation of $\PPP_M$ by $L/M$-chambers.
\par
In general,
two distinct  $L/M$-chambers are not isomorphic to each other.
See~\cite{KKS2014} and~\cite{Shimada2015} for examples of $K3$ surfaces $X$ 
with a primitive embedding $\SX\inj L_{26}$ 
such that  $\PPP_X$ 
is tessellated by $L_{26}/\SX$-chambers of various shapes.
\begin{definition}\label{def:reflexively}
We say that the tessellation of $\PPP_M$ by $L/M$-chambers is
\emph{reflexively simple}
if, for  each wall  $D_M\cap(v)\sperp$ of an $L/M$-chamber $D_M$,
there exists an isometry $\tilg$ of $L$ preserving $M$ 
such that the restriction $\tilg |M$ of $\tilg$ to $M$ is an involution that  fixes 
every point of 
the hyperplane $(v)\sperp$.
Note that, if this is the case,
the isometry $\tilg |M$ of $M$ 
 maps $D_M$ to 
the $L/M$-chamber adjacent to $D_M$ across the wall $D_M\cap(v)\sperp$.
\end{definition}
The tessellation of $\PPP_L$ by $L/L$-chambers is
obviously reflexively simple.
%
%If the tessellation of $\PPP_M$ by $L/M$-chambers is
%reflexively simple,
%then any two   $L/M$-chambers are isomorphic.
%
%%%%%%%%%%%%%%%%%%
%
\section{The cone $\NefY$}\label{sec:nefY} 
Let $Y$ be an Enriques surface
with the universal covering $\pi\colon X\to Y$.
Let $\enrinvol\in \Aut(X)$ be the deck-transformation of $\pi\colon X\to Y$, 
and we put
\[
\SXp:=\set{v\in \SX}{v^\enrinvol=v}, \quad
\SXm:=\set{v\in \SX}{v^\enrinvol=-v}.
\]
Then 
$\SXp$  is equal to the image of $\pi^*\colon \SY(2)\inj \SX$,
and  $\SXm$  is  the orthogonal complement 
of $\SXp$.
We regard $\PPP_Y$ as a subspace of $\PPP_X$ by $\pi^*\tensor\R$.
\subsection{$\SX/\SY(2)$-chambers}\label{subsec:SYSY2chambers} 
It is well-known that  $\NefX$ is  
an $\SX$-chamber.
Therefore the chamber $\NefY=\PPP_Y\cap \NefX$ is an $\SX/\SY(2)$-chamber.
Since $\pi$ is \'etale, the lattice $\SXm$  contains no roots, and hence 
each $\SX/\SY(2)$-chamber $D_Y$ is induced by a \emph{unique} $\SX$-chamber $D_X$,
that is,  $D_Y$ contains an interior point of $D_X$.
\begin{proposition}\label{prop:SXSY2simple}
The tessellation of $\PPP_Y$  by $\SX/\SY(2)$-chambers is reflexively simple.
More precisely,
every wall of an $\SX/\SY(2)$-chamber $D_Y$ is defined by a root $r$ of $\SY$,
and the reflection $s_r\in \OGP(\SY)$ with respect to the root $r$ 
is the restriction $s_{\tilr_+} s_{\tilr_-}|\SY(2)$ of the product 
of two reflections with respect to roots $\tilr_+, \tilr_-$ of $\SX$.
\end{proposition}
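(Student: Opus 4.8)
The plan is to work entirely inside the K3 lattice $\SX$ and to exploit the fact that $\SY(2)\inj\SX$ and its orthogonal complement $\SXm$ are the $(\pm1)$-eigenlattices of the Enriques involution $\enrinvol$, with $\SXm$ containing no roots. First I would recall that $\NefX$ is an $\SX$-chamber (the standard fact that the nef-and-big cone of a K3 surface is a fundamental domain for $W(\SX)$ acting on the positive cone), so that by definition $\NefY=\PPP_Y\cap\NefX$ is an $\SX/\SY(2)$-chamber. The description of $\SX/\SY(2)$-chambers at the end of Section~\ref{subsec:LMchams}: such a chamber is the closure of a connected component of $\PPP_Y$ minus the hyperplanes $(\pr(r))\sperp$ with $r$ a root of $\SX$ and $\intf{\pr(r),\pr(r)}<0$, where $\pr\colon\SX\to\SY(2)\tensor\Q$ is the orthogonal projection. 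So a wall of $D_Y$ is defined by $\pr(r)$ for some root $r$ of $\SX$; I need to show that $\pr(r)$ is proportional to a genuine root of $\SY$, and to identify the reflection.

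The key computation: decompose a root $r\in\SX$ according to the eigenspace decomposition $\SX\tensor\Q=(\SY(2)\tensor\Q)\oplus(\SXm\tensor\Q)$, writing $r=\tfrac12(r+r^\enrinvol)+\tfrac12(r-r^\enrinvol)=\pr(r)+r_-$ with $r_-\in\SXm\tensor\Q$. Since $\SXm$ has no roots and is negative definite, and since the only way $\intf{\pr(r),\pr(r)}<0$ can hold while $\intf{r,r}=-2$ is to control $\intf{r_-,r_-}$, I would use $-2=\intf{r,r}=\intf{\pr(r),\pr(r)}+\intf{r_-,r_-}$; both summands are $\le 0$, and one checks (this is the arithmetic heart) that $\pr(r)$ lies in $\SY(2)\dual\tensor\Q$ and its value in $\SY(2)$, after rescaling by the factor $2$ coming from the twist, is a vector $v$ of square $-2$ or $-4$ in $\SY$ — in fact a root, with $r^\enrinvol$ also a root, $r\perp$ or $r\not\perp r^\enrinvol$ handled by cases. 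Concretely, set $\tilr_\pm$ to be $r$ and $r^\enrinvol$ themselves (these are both roots of $\SX$ since $\enrinvol$ is an isometry); then $s_{\tilr_+}s_{\tilr_-}$ commutes with $\enrinvol$ (because $\enrinvol$ swaps $\tilr_+$ and $\tilr_-$), hence preserves $\SY(2)=\SXp$, and a direct check on the eigenspace decomposition shows $s_{\tilr_+}s_{\tilr_-}$ acts on $\SXm$ trivially on the part orthogonal to $r_-$ and as $-1$ on $r_-$ if $r_-\ne0$; restricted to $\SY(2)$ it is the reflection in $(\pr(r))\sperp$. Matching this with $s_r\in\OGP(\SY)$ where $r$ is the corresponding root of $\SY$ is then linear algebra. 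The case $r=r^\enrinvol$ (i.e. $r\in\SXp$) is the trivial case where $\pr(r)=r$ is already a root of $\SY(2)$ and $s_{\tilr_+}s_{\tilr_-}$ should be replaced by a single reflection $s_{\tilr}$ composed with itself or handled separately — I would dispose of it first.

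I expect the main obstacle to be the bookkeeping with the scaling factor $2$: vectors of $\SY(2)$ have squares that are twice those in $\SY$, so "root of $\SY$" versus "$(-2)$-vector in $\SY(2)\dual$" must be tracked carefully, and one must verify that the projection $\pr(r)$, a priori only in $\SY(2)\dual\tensor\Q$, actually gives rise to an \emph{integral} root of $\SY$ rather than a half-integral vector — this uses that $\SY(2)\inj\SX$ is primitive and the explicit structure of $\SX/\SY(2)$ (the discriminant form computations underlying Definition~\ref{def:tautaubargeneric}, or more elementarily that $[\SX:\SY(2)\oplus\SXm]$ divides a power of $2$ with the glue determined by $\enrinvol$). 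I would isolate this integrality claim as a short lemma: for a root $r$ of $\SX$ with $\intf{\pr(r),\pr(r)}<0$, either $r\in\SXp$, or $r^\enrinvol\ne r$ and $\pr(r)=\tfrac12(r+r^\enrinvol)$ maps under the identification $\SXp=\SY(2)$ to $2$ times a root of $\SY$ (equivalently, $r+r^\enrinvol$ is twice a primitive vector of $\SXp$ of the appropriate square), at which point $s_r = s_{\tilr_+}s_{\tilr_-}|\SY(2)$ with $\tilr_\pm=r,r^\enrinvol$ follows by direct verification. Once integrality is in hand the rest is routine, and reflexive simplicity is immediate since the constructed isometry $s_{\tilr_+}s_{\tilr_-}$ of $\SX$ preserves $\SY(2)$ and restricts to an involution fixing $(\pr(r))\sperp$ pointwise, which is exactly Definition~\ref{def:reflexively}.
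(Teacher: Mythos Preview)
Your overall strategy coincides with the paper's, but you have missed the one genuinely nontrivial step and in doing so made a false claim. You assert that $s_{\tilr_+}s_{\tilr_-}$ commutes with $\enrinvol$ ``because $\enrinvol$ swaps $\tilr_+$ and $\tilr_-$''. That reasoning is wrong: conjugating $s_{\tilr_+}s_{\tilr_-}$ by $\enrinvol$ yields $s_{\tilr_-}s_{\tilr_+}$, which equals $s_{\tilr_+}s_{\tilr_-}$ only when the two reflections commute, i.e.\ when $\intfX{\tilr_+,\tilr_-}=0$. Without this orthogonality the product need not preserve $\SY(2)$, let alone restrict to an involution there; your ``direct check on the eigenspace decomposition'' likewise presupposes it. You flag ``$r\perp$ or $r\not\perp r^\enrinvol$ handled by cases'', but the non-orthogonal case cannot be handled by your argument, and in fact does not occur.

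The paper's proof is precisely the computation that rules it out, and it simultaneously settles the integrality you worry about. Writing $\tilr=v_L+v_R$ with $v_L\in\SY(2)\dual$, $v_R\in\SXm\dual$, one has $\intfX{v_L,v_L}+\intfX{v_R,v_R}=-2$ and $\intfX{\tilr,\tilr^{\enrinvol}}=\intfX{v_L,v_L}-\intfX{v_R,v_R}$. Since $2\,\SY(2)\dual=\SY(2)$, the quantity $\intfX{v_L,v_L}=2\intfY{v_L,v_L}$ is a negative integer, hence $-1$ or $-2$; the value $-2$ forces $v_R=0$ and $\tilr\in\SY(2)$, impossible because $\SY$ is even. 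Thus $\intfX{v_L,v_L}=-1$, so $\tilr\perp\tilr^{\enrinvol}$, and $r:=\tilr+\tilr^{\enrinvol}=2v_L\in\SY$ satisfies $\intfY{r,r}=-2$. This also shows that your ``trivial case $r=r^{\enrinvol}$'' never arises (and your proposed treatment of it, a reflection composed with itself, would give the identity). With orthogonality in hand the rest of your outline is correct and matches the paper.
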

\begin{proof}
Let $\intfX{-,-}$ and $\intfY{-,-}$ be the intersection forms of $\SX$ and $\SY$,
respectively.
We denote by $(u)\sperp_X$ the hyperplane of $\PPP_X$
defined by $u\in \SX\tensor\R$,
and by $(v)\sperp_Y$ the hyperplane of $\PPP_Y$
defined by $v\in \SY\tensor\R$.
Let  $D_Y$ be an $\SX/\SY(2)$-chamber,
and let $D_Y \cap (v)\sperp_Y$ be a wall of $D_Y$.
\par
By the definition of $\SX/\SY(2)$-chambers,
there exists a root $\tilr$ of $\SX$
such that
$(v)_Y\sperp=\PPP_Y\cap (\tilr)_X\sperp$.
We first prove that  $\intfX{\tilr, \tilr^{\enrinvol}}=0$.
Let $\tilr$ be written as $v_L+v_R$,
where $v_L\in \SY(2)\dual$ and $v_R\in \SXm\dual$.
We have $\intfX{v_L, v_L}+\intfX{v_R, v_R}=-2$.
Since $\tilr^{\enrinvol}=v_L-v_R$,
it is enough to show that $\intfX{v_L, v_L}=-1$.
Since 
\[
\PPP_Y\cap (\tilr)_X\sperp=(v_L)_Y\sperp
\]
is non-empty,
we have $\intfY{v_L, v_L}<0$.
Note that
$2 v_L\in \SY$ because $2\SY(2)\dual=\SY(2)$.
Since  $\SY$ is even,
 $\intfX{v_L, v_L}=2\intfY{v_L, v_L}$ must be an integer.
Since $\SXm$ is negative definite,
we have $\intfX{v_R, v_R}\le 0$ and hence $\intfX{v_L, v_L}$ is $-2$ or $-1$.
If $\intfX{v_L, v_L}=-2$, then $v_R=0$ and $\tilr=v_L\in \SY(2)$,
which is absurd.
\par
Let $s$ and $s\sprime$ be the reflections with respect to the roots
$\tilr$ and $\tilr^{\enrinvol}$ of $\SX$, respectively.
By $\intfX{\tilr, \tilr^{\enrinvol}}=0$,
we have $s s\sprime=s\sprime s$.
Since $s\sprime=\enrinvol s \enrinvol$,
we see that $ss\sprime$ commutes with $\enrinvol$ and hence 
$ss\sprime$ preserves $\PPP_Y$.
The vector $r:=\tilr+\tilr^{\enrinvol}$ is contained in $\SY$.
Moreover we have $\intfY{r, r}=-2$ and
\[
(v)_Y\sperp=\PPP_Y\cap (\tilr)_X=(v_L)_Y\sperp=(r)_Y\sperp.
\]
Therefore the wall 
$D_Y \cap (v)\sperp_Y$ of $D_Y$ is defined by a root $r$ or $-r$ of $\SY$.
It is easy to confirm  that the restriction of $ss\sprime$ to $\SY$ is equal
to the reflection with respect to the root $r$ of $\SY$ and therefore
maps $D_Y$
to the $\SX/\SY(2)$-chamber $D\sprime_Y$ adjacent to $D_Y$ across
the wall $D_Y\cap (v)_Y\sperp=D_Y\cap (r)_Y\sperp$.
\end{proof}
\subsection{Proof of Proposition~\ref{prop:nefisom}}\label{subsec:proofprop:nefisom}
%\begin{proof}[Proof of Proposition~\ref{prop:nefisom}] 
We prove Proposition~\ref{prop:nefisom}.
By Proposition~\ref{prop:184}, we have isomorphisms $\psi_X$ and $\psi_Y$ 
that make the diagram~\eqref{eq:nefisomdiagram} commutative.
By Proposition~\ref{prop:SXSY2simple},
we have $\tilg\in \OGP(\SX)$  commuting with $\enrinvol$ 
such that $\tilg|\SY(2)$ maps $\NefY$ to  the inverse image of $\Nef_{Y\sprime}$ by $\psi_Y$.
Then the isometries $\tilg\circ \psi_X\colon S_{X\sprime}\isom \SX$ and 
$\tilg|\SY(2)\circ \psi_Y \colon S_{Y\sprime}\isom \SY$ satisfy the required properties.
\qed
%\end{proof}
%
%
\subsection{The volume of $\NefY/\aut(Y)$}\label{subsec:vol}
In this subsection,
we give a  formula (Theorem~\ref{thm:volumeformula}) 
for $\vol(\NefY/\aut(Y))$
under the assumption that 
\begin{equation}\label{eq:assumpTXomega}
\textrm{the group $\OG(T_X, \omega)$  in Definition~\ref{def:tautaubargeneric} is $\{\pm 1\}$.}
\end{equation}
We put 
\begin{equation}\label{eq:GX}
G_X:=\set{\tilg \in \OGP(\SX)}{\textrm{$\tilg$ commutes with $\enrinvol$ and acts on $\SX\dual/\SX$ trivially}}.
\end{equation}
Then $\tilg\mapsto (\tilg | \SXp, \tilg | \SXm)$ embeds $G_X$ into  $\OGP(\SXp)\times \OG(\SXm)$.
Let $G_{X+}$ and $G_{X-}$ denote the images  of
the projections $G_X\to \OGP(\SXp)$ and $G_X\to \OG(\SXm)$,
respectively.
When we regard $G_{X+}$ as a subgroup of $\OGP(\SY)$ via
the identification $\SXp=\SY(2)$ induced by $\pi^*$, we write  $G_Y$ instead of $G_{X+}$.
%
%\renewcommand{\sethd}[3]{\left\{\;  {#1}\, \left|\,  \vcenter{\hbox{\parbox{#2}{#3}}}\; \right. \right\}}
%\begin{eqnarray*}
%G_Y&:=&G_{X+}:=\sethd{g \in \OGP(\SY)}{6cm}{there exists an element $\tilg\in G_X$ such that $g=\tilg|\SXp$}\\
%&=&\sethd{g \in \OGP(\SXp)}{8.4cm}{$\exists \;h\in \OG(\SXm)$ such that $(g, h)$ preserves 
%the overlattice $\SX$ and $\tilg:=(g, h)|\SX$ acts on $\SX\dual/\SX$ trivially.}
%\end{eqnarray*}
%
Recall that the set $\Rats(Y)$ of smooth rational curves on $Y$
is embedded into $\SY$ by $C\mapsto [C]$.
The correspondence
\[
C \mapsto \NefY\cap ([C])\sperp
\]
gives a bijection from $\Rats(Y)$ to the set of walls of the $\SX/\SY(2)$-chamber $\NefY$.
We denote by $W(\Rats(Y))$ the subgroup of $\OGP(\SY)$ 
generated by the reflections $s_{[C]}$ with respect to the roots $[C]\in \Rats(Y)$.
Recall also that  $\aut(Y)$ is the image of the natural representation $\Aut(Y)\to \OGP(\SY)$.
\begin{proposition}\label{prop:GY}
Suppose that $Y$ satisfies~\eqref{eq:assumpTXomega}.
\par
{\rm (1)} 
The action of $G_Y$ on $\PPP_Y$ preserves the set of  $\SX/\SY(2)$-chambers,
and $\aut(Y)$ 
is equal to the stabilizer subgroup of $\NefY$  in $G_Y$.
\par
{\rm (2)} 
The group $W(\Rats(Y))$ is contained in $G_Y$ as a normal subgroup,
and we have $G_Y=W(\Rats(Y))\semidirectproduct \aut(Y)$.
\end{proposition}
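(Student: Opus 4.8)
The plan is to deduce Proposition~\ref{prop:GY} from the structure already developed for $\SX/\SY(2)$-chambers, together with the hypothesis~\eqref{eq:assumpTXomega} on the transcendental lattice, which via the Torelli theorem is what lets us identify $\Aut(Y)$ with a lattice-theoretic stabilizer.

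\textbf{Part (1).} First I would argue that $G_Y$ preserves the tessellation of $\PPP_Y$ by $\SX/\SY(2)$-chambers. By construction every $\tilg\in G_X$ commutes with $\enrinvol$, so $\tilg|\SXp$ is well-defined in $\OGP(\SXp)=\OGP(\SY(2))$, and since $\tilg$ sends roots of $\SX$ to roots of $\SX$ it permutes the $\SX$-chambers in $\PPP_X$; intersecting with $\PPP_Y$ and using Definition~\ref{def:wall} shows $\tilg|\SY(2)$ permutes the $\SX/\SY(2)$-chambers. This proves the first assertion. For the second, I would invoke the Global Torelli theorem: under~\eqref{eq:assumpTXomega}, the natural map identifies $\aut(Y)$ with the set of $g\in\OGP(\SY)$ that preserve $\NefY$ and extend to an isometry of $\SX$ commuting with $\enrinvol$ and acting trivially on the transcendental lattice $T_X$; since $T_X$ has the opposite discriminant form to $\SX$ and the extension must glue along the discriminant, acting trivially on $T_X$ is equivalent to acting trivially on $\SX\dual/\SX$. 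Hence an element of $\aut(Y)$ is exactly an element of $G_Y$ stabilizing $\NefY$, and conversely any element of $G_X$ stabilizing $\NefY$ lifts (being a Hodge isometry preserving the ample cone) to an automorphism of $Y$. I expect the main obstacle here to be phrasing the Torelli argument cleanly: one must check that an isometry of $\SX$ in $G_X$ preserving $\NefX$ automatically preserves the period, automatically preserves the canonical $\enrinvol$-structure, and hence really comes from $\Aut(Y)$, citing the appropriate form of Torelli for K3 surfaces and the descent to Enriques surfaces.

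\textbf{Part (2).} The inclusion $W(\Rats(Y))\subset G_Y$ is immediate from Proposition~\ref{prop:SXSY2simple}: each generator $s_{[C]}$ with $C\in\Rats(Y)$ is the restriction to $\SY$ of $s_{\tilr_+}s_{\tilr_-}$ for roots $\tilr_\pm$ of $\SX$, and this product commutes with $\enrinvol$ and acts trivially on $\SX\dual/\SX$ (a product of reflections in an orthogonal pair of $(-2)$-vectors lies in $W(\SX)$, which acts trivially on the discriminant group). Normality follows because $\aut(Y)$ (together with $W(\Rats(Y))$ itself) permutes the set $\Rats(Y)$ of walls of $\NefY$, hence conjugates the corresponding reflections among themselves, and any element of $G_Y$ is a product of such by the reflexive simplicity of the tessellation. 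For the semidirect product decomposition, I would use that $G_Y$ preserves the tessellation of $\PPP_Y$ by $\SX/\SY(2)$-chambers and that $\NefY$ is one such chamber (Section~\ref{subsec:SYSY2chambers}): by Proposition~\ref{prop:SXSY2simple} together with the standard fact that the Weyl group of a chamber decomposition acts simply transitively on chambers, $W(\Rats(Y))$ acts simply transitively on the $\SX/\SY(2)$-chambers lying in the $\SY$-chamber adjacent structure — more precisely, since $\NefY$ is a $W(\Rats(Y))$-fundamental chamber, every $\tilg\in G_Y$ can be written as $w\cdot h$ with $w\in W(\Rats(Y))$ sending $\NefY^{\tilg}$ back to $\NefY$ and $h=w^{-1}\tilg\in G_Y$ stabilizing $\NefY$, so $h\in\aut(Y)$ by part (1); and $W(\Rats(Y))\cap\aut(Y)=\{1\}$ because a nontrivial element of $W(\Rats(Y))$ moves $\NefY$ off itself. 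This gives $G_Y=W(\Rats(Y))\rtimes\aut(Y)$.

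The one subtlety I would be careful about throughout is the distinction between $W(\Rats(Y))$, generated only by reflections in classes of \emph{actual} smooth rational curves (the walls of $\NefY$), and the full Weyl group $W(\SY)$; the cone $\NefY$ is a union of Vinberg chambers, not a single one, so $W(\Rats(Y))$ is in general a proper subgroup of $W(\SY)$, and the transitivity statement I need is transitivity on the \emph{coarser} tessellation by $\SX/\SY(2)$-chambers, for which Proposition~\ref{prop:SXSY2simple} (reflexive simplicity: each wall of an $\SX/\SY(2)$-chamber is crossed by exactly one reflection $s_r$ with $r\in\Rats$-class) is precisely the input that makes the argument go through.
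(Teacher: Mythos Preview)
Your approach matches the paper's almost exactly: Torelli plus the gluing along discriminant groups for part~(1), and Proposition~\ref{prop:SXSY2simple} plus a chamber/fundamental-domain argument for part~(2). Two small points deserve tightening. First, in the direction $\aut(Y)\subset G_Y$ you should make explicit the $\pm 1$ trick: a lift $\tilde\gamma\in\Aut(X,\enrinvol)$ of $g\in\aut(Y)$ might act on $T_X$ by $-1$ rather than $+1$, and the paper fixes this by replacing $\tilde\gamma$ with $\tilde\gamma\enrinvol$ (since $\enrinvol$ acts as $-1$ on $T_X$); only then does the action on $\SX\dual/\SX$ become trivial. Second, your normality argument contains a slip: $W(\Rats(Y))$ does \emph{not} permute $\Rats(Y)$, since a nontrivial $w$ moves $\NefY$ and hence sends its walls to walls of a different chamber. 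The paper instead argues directly that for $g\in G_Y$ and $r\in\Rats(Y)$ one has $g^{-1}s_rg=s_{r^g}$ with $r^g$ a wall of $\NefY^g$; using transitivity of $W(\Rats(Y))$ on chambers one conjugates $s_{r^g}$ back to a reflection in a wall of $\NefY$. Alternatively, once the set-theoretic decomposition $G_Y=W(\Rats(Y))\cdot\aut(Y)$ is in hand, normality follows just from $\aut(Y)$ permuting $\Rats(Y)$, so reordering your argument also works.
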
 
\begin{proof}
Since every $g\in G_Y$ lifts to an element $\tilg$ of $G_X\subset \OGP(\SX)$,
the action of $G_Y$ on $\PPP_Y$ preserves the tessellation of $\PPP_Y$ by $\SX/\SY(2)$-chambers.
\par
Let $\aut(X)$ be the image of the natural representation $\Aut(X)\to \OGP(\SX)$.
By the Torelli theorem for complex $K3$ surfaces~(\cite[Chapter~VIII]{CCSBook}),
we have a natural embedding 
\begin{equation}\label{eq:AutX}
\Aut(X)\inj \OGP(\SX)\times \OG(T_X, \period),
\end{equation}
and  an element 
$(\tilg,  f)$ of $\OGP(\SX)\times \OG(T_X, \period)$ belongs to $\Aut(X)$
if and only if 
$(\tilg,  f)$ preserves the overlattice $H^2(X, \Z)$ of $\SX\oplus T_X$ and 
$\tilg$ preserves $\NefX$.
The even unimodular overlattice $H^2(X, \Z)$ of $\SX\oplus T_X$
induces an isomorphism
\[
i_{H(X)} \colon \SX\dual/\SX\;\cong\; T_X\dual/T_X
\]
of discriminant groups, and 
$(\tilg,  f)$ preserves  $H^2(X, \Z)$ 
if and only if the action of $\tilg$ on $\SX\dual/\SX$ is compatible with the action 
of $f$ on $T_X\dual/T_X$ via $i_{H(X)} $~(see~\cite{Nikulin79}).
Therefore, by  assumption~\eqref{eq:assumpTXomega},
an isometry $\tilg\in \OGP(\SX)$ belongs to $\aut(X)$ 
if and only if $\tilg$ preserves $\NefX$ and  acts on $\SX\dual/\SX$  as $\pm 1$.
%Moreover, 
%the kernel of $\Aut(X)\to \aut(X)$
%consists of elements $(1, \kappa)$, where $\kappa$ acts on $T_X$ as $\pm 1$
%and on $T_X\dual/T_X$ as $1$.
%(In particular, if $\SX\dual/\SX\cong T_X\dual/T_X$ is not $2$-elementary, 
%we have $\Aut(X)\cong \aut(X)$. )

\par
Let $\Aut(X, \enrinvol)$ denote the centralizer of $\enrinvol$ in $\Aut(X)$.
We have a natural identification $\Aut(Y)\cong \Aut(X, \enrinvol)/\gen{\enrinvol}$.
Suppose that $g\in \aut(Y)$.
We will show that $g$ belongs to the stabilizer subgroup of $\NefY$ in $G_Y$.
It is obvious that $g$ preserves $\NefY$.
Let $\tilde{\gamma}$ be an element  of $\Aut(X, \enrinvol)$
that induces $g$ on $\SY$.
We write $\tilde{\gamma}$ as $(\tilde{g}, f)$ by~\eqref{eq:AutX}.
Note that $\enrinvol$  acts on $T_X$ as $-1$.
Hence, replacing $\tilde{\gamma}  $ with $\tilde{\gamma}\enrinvol  $ if  $f=-1$,
we can assume $f=1$.
Then the action $\tilde{g}\in \OGP(\SX)$ of $\tilde{\gamma}$ on $\SX$
induces the trivial action  on $\SX\dual/\SX$, which means  $\tilde{g}\in G_X$.
Hence $g=\tilde{g}|\SY$ belongs to $G_Y$.
\par
Conversely, 
suppose that $g$ is an element of  the stabilizer subgroup of $\NefY$ in $G_Y$.
We will show that $g\in \aut(Y)$.
Let $\tilde{g}$ be an element of $G_X$  such that $g=\tilde{g}|\SY$.
Since $\NefY$ contains an interior point of $\NefX$,
$\tilde{g}$  preserves $\NefX$,
and hence $\tilde{g}$ belongs to $\aut(X)$.
Let $\tilde{\gamma}  =(\tilde{g}, f)$ be an element of $\Aut(X)$ that induces $\tilde{g}$.
Since   $\tilde{g}\in G_X$ commutes with the action of $\enrinvol$ on $\SX$,
the first factor of the commutator $[\tilde{\gamma}, \enrinvol]\in \Aut(X)$ is $1$.
Since $\OG(T_X, \period)=\{\pm 1\}$ is abelian, the second factor  of $[\tilde{\gamma}, \enrinvol]$
is also $1$.
Hence $\tilde{\gamma} \in \Aut(X, \enrinvol)$,
and therefore $g$ is induced by an element of $\Aut(Y)$.
Thus  assertion~(1) is proved.
\par
%Since the action of $G_X$ on $\PPP_X$ preserves 
%the set of $\SX$-chambers
%and the linear subspace $\PPP_Y\subset\PPP_X$,
%the action of $G_Y$ on $\PPP_Y$ 
%preserves 
%the set of $\SX/\SY(2)$-chambers.
By Proposition~\ref{prop:SXSY2simple}, 
for each $r\in \Rats(Y)$,
the reflection $s_r=s_{\tilr_+} s_{\tilr_-}|\SY(2)$ belongs to $G_Y$,
because the reflections $s_{\tilr_+}$ and  $s_{\tilr_-}$ 
act on $\SX\dual/\SX$ trivially and hence $s_{\tilr_+} s_{\tilr_-}\in G_X$.
Therefore we have $W(\Rats(Y))\subset G_Y$.
Moreover, 
by Proposition~\ref{prop:SXSY2simple} again, 
we see that $W(\Rats(Y))$ acts on the set of $\SX/\SY(2)$-chambers transitively.
\par
If   $C_1, C_2\in \Rats(Y)$ 
satisfy
$\intf{C_1, C_2}_Y>1$,
then the walls $\NefY\cap ([C_1])\sperp$ and   $\NefY\cap ([C_2])\sperp$ 
of $\NefY$ do not intersect.
Hence
each face of $\NefY$ with codimension $2$ 
is of the form
\[\NefY\cap ([C_1])\sperp \cap ([C_2])\sperp \quad \mbox{ with } \quad  \intf{C_1, C_2}_Y\in \{0, 1\},\]
and we have $(s_{[C_1]} s_{[C_2]})^m=1$, 
where $m=2$ if $\intf{C_1, C_2}_Y=0$ and $m=3$ if $\intf{C_1, C_2}_Y=1$.
Therefore, by the standard method of geometric group theory
(see, for example, Section 1.5 of~\cite{VinbergShvartsman1993}), 
we see that $\NefY$ is 
a standard fundamental domain of the action of $W(\Rats(Y))$ on $\PPP_Y$,
and $W(\Rats(Y))$ acts on the set of $\SX/\SY(2)$-chambers simply-transitively.
Recalling that $\aut(Y)$ is the stabilizer subgroup of
$\NefY$ in $G_Y$,  we have $W(\Rats(Y))\cap  \aut(Y)=\{1\}$.
Moreover $G_Y$ is generated by the union of $W(\Rats(Y))$ and $\aut(Y)$.
\par
It remains to show that $W(\Rats(Y))$ is a normal subgroup of 
$G_Y$.
Let $r$ be a root in $\Rats(Y)$ and $g$ an arbitrary element of $G_Y$.
It is enough to show that $g\inv s_r g$ 
belongs to   $W(\Rats(Y))$.
Note that  $g\inv s_r g=s_{r^g}$ and $r^g$ defines a wall of 
the $\SX/\SY(2)$-chamber $D_Y:={\NefY}^g$.
We have an element $w\in W(\Rats(Y))$ such that $D_Y={\NefY}^w$.
Then $r\sprime:=r^{gw\inv}$ defines a wall of $\NefY$, and 
$w s_{r^g} w\inv =s_{r\sprime}$ is an element of $W(\Rats(Y))$.
Hence  $g\inv s_r g=s_{r^g}=w\inv s_{r\sprime} w \in W(\Rats(Y))$.
\end{proof}
Let $(A_+, q_+)$ and $(A_-, q_-)$ be
the discriminant forms of $\SXp=\SY(2)$ and $\SXm$, respectively.
We put 
\[
\Gamma_X:=\SX/(\SXp\oplus \SXm)\;\; \subset\;\; A_+\oplus A_-,
\]
and let $D_+\subset A_+$ and $D_-\subset A_-$ be the image of the projections of $\Gamma_X$.
Then $\Gamma_X$ is the graph of an isometry
$(D_+, q_+|D_+)\cong (D_-, -q_-|D_-)$,
and the discriminant group of $\SX$ is canonically isomorphic to
$\Gamma_X\sperp/\Gamma_X$.
We denote 
by $\Gbar_{X+}$ and $\Gbar_{X-}$ the images of
$G_{X+}$ and $G_{X-}$ by the natural homomorphisms
$\OGP(\SXp)\to \OG(A_{+})$ and $\OG(\SXm)\to \OG(A_{-})$,
respectively,
and 
by  $\Gbar_{X}$ the image of $G_X$ 
 by the natural homomorphism to $\OG(A_{+}) \times \OG(A_{-})$.
 % ichiro moved the following from the proof of Themorem 3.4 and added the diagram 2021 June 11
Note that  $\Gbar_X$ is a subgroup
of the kernel $K$ of the natural homomorphism 
\[
\set{(g, h) \in \OG(A_+)\times \OG(A_-)}{\Gamma_X^{(g, h)}=\Gamma_X}\to 
\OG(\Gamma_X\sperp/\Gamma_X)=\OG(\SX\dual/\SX).
\]
Then we have a commutative diagram
\begin{equation}\label{eq:Gdiagram}
\renewcommand{\arraystretch}{1.2}
\begin{array}{ccccc}
G_{X+} & \longleftarrow & G_X & \longrightarrow  & G_{X-} \\
\mapdownsurj& & \mapdownsurj& & \mapdownsurj \\
\Gbar_{X+} &\mapleftisom & \Gbar_X & \maprightisom  & \Gbar_{X-} \\
\end{array}
\end{equation}
where the two arrows below are isomorphisms by 
the first part of Proposition~\ref{prop:03} applied to to $(A, B)=(A_+, A_-)$ and $(A, B)=(A_-, A_+)$.

% brandhorst added this lemma and the numerically trivial automorphisms to the next theorem on June 9
\begin{lemma}\label{lem:numerically-trivial}
Suppose that $Y$ satisfies~\eqref{eq:assumpTXomega}.
Then the group 
\[
\Aut_{nt}(Y):=\Ker (\Aut(Y) \to \aut(Y))
\]
 of numerically trivial automorphisms of $Y$ is isomorphic to the kernel of the natural homomorphism $G_{X-} \to \Gbar_{X-}$.
\end{lemma}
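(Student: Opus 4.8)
The plan is to track a numerically trivial automorphism $\gamma\in\Aut(Y)$ through the Torelli-type description of $\Aut(X)$ used in the proof of Proposition~\ref{prop:GY} and to read off what ``numerically trivial'' says on the $\SXm$-side. First I would use the identification $\Aut(Y)\cong\Aut(X,\enrinvol)/\gen{\enrinvol}$ together with the embedding~\eqref{eq:AutX} $\Aut(X)\inj\OGP(\SX)\times\OG(T_X,\period)$. If $\gamma$ is numerically trivial, a lift $\tilde\gamma=(\tilg,f)\in\Aut(X,\enrinvol)$ has $\tilg|\SXp=1$ (this is exactly numerical triviality, after identifying $\SXp=\SY(2)$), and after possibly composing with $\enrinvol$ we may arrange $f=1$, so that $\tilg\in G_X$. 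Thus $\tilg$ projects to $G_{X-}$, giving a map $\Aut_{nt}(Y)\to G_{X-}$; since $\tilg|\SXp=1$, the image lies in the subgroup of $G_{X-}$ of elements $h$ for which $(1,h)\in G_X$, and by diagram~\eqref{eq:Gdiagram} together with the injectivity statement in Proposition~\ref{prop:03} (applied with $(A,B)=(A_+,A_-)$, so that $i_{A_-}\colon K\hookrightarrow\OG(A_-)$ and $\Gbar_X\isom\Gbar_{X-}$), such $h$ has trivial image in $\Gbar_{X-}$, i.e.\ $h\in\Ker(G_{X-}\to\Gbar_{X-})$.

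Next I would check this map is well-defined and a homomorphism: the only ambiguity in the lift $\tilde\gamma$ is multiplication by $\enrinvol$, which on $\SXm$ acts as $-1$ and on $\SXp$ as $+1$; but we pinned down the lift by requiring $f=1$ (equivalently $\tilg|\SXm$ is the genuine action, not its negative), so once $f=1$ is imposed the lift is unique in $\Aut(X,\enrinvol)$ up to the trivial one, and the resulting $\tilg|\SXm$ is well-defined. Homomorphism-ness is then immediate from functoriality of~\eqref{eq:AutX}. For injectivity: if $\tilg|\SXm=1$ as well, then $\tilg=1$ on all of $\SX$ and $f=1$ on $T_X$, so $\tilde\gamma$ acts trivially on $H^2(X,\Z)=\SX\oplus T_X$ (or its even unimodular overlattice), hence $\tilde\gamma=\id_X$ by the Torelli theorem, so $\gamma=\id_Y$.

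For surjectivity onto $\Ker(G_{X-}\to\Gbar_{X-})$, I would reverse the construction: given $h\in G_{X-}$ with trivial image in $\Gbar_{X-}$, lift it (via the definition of $G_{X-}$ as a projection of $G_X$) to $\tilg\in G_X$; by the isomorphism $\Gbar_{X+}\isom\Gbar_{X-}$ in~\eqref{eq:Gdiagram}, $\tilg$ acts trivially on $A_+$, and since $h$ acts trivially on $A_-$ as well, $\tilg$ acts trivially on $\Gamma_X\sperp/\Gamma_X=\SX\dual/\SX$ — in particular as $+1$ — so $(\tilg,1)$ preserves the overlattice $H^2(X,\Z)$. We may choose the $G_X$-lift $\tilg$ with $\tilg|\SXp=1$: indeed two $G_X$-lifts of $h$ differ by an element of $G_X$ supported on $\SXp$, and one checks (using $\Gbar_{X+}\isom\Gbar_{X-}$ again, so that an element of $G_X$ killing $A_-$ also kills $A_+$) that $\Ker(G_X\to G_{X-})\to G_{X+}$ surjects onto $\Ker(G_{X+}\to\Gbar_{X+})$ — actually the cleanest route is: pick any $G_X$-lift, correct it by an element of $\Ker(G_{X+}\to\Gbar_{X+})$, which is possible precisely because the induced map on discriminant forms already matches. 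Since $\tilg|\SXp=1$ fixes $\PPP_Y$ pointwise and $\NefY$ contains an interior point of $\NefX$, $\tilg$ preserves $\NefX$; hence $(\tilg,1)\in\Aut(X)$, it commutes with $\enrinvol$ (as $\tilg\in G_X$ and $f=1$), so it lies in $\Aut(X,\enrinvol)$ and descends to $\gamma\in\Aut(Y)$ which is numerically trivial since $\tilg|\SXp=1$, and $\gamma$ maps to $h$ by construction.

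\textbf{Main obstacle.} The delicate point is the bookkeeping around the sign $f=\pm1$ and the non-uniqueness of lifts from $Y$ to $X$ and from $G_{X-}$ back to $G_X$ — i.e.\ making sure the map is genuinely well-defined and that the $G_X$-lift can always be normalized to act trivially on $\SXp$. All of this is controlled by the three isomorphisms in diagram~\eqref{eq:Gdiagram}, which come from Proposition~\ref{prop:03}; the rest is a straightforward application of the Torelli theorem exactly as in the proof of Proposition~\ref{prop:GY}. So I expect the proof to be short once the normalization of lifts is handled carefully.
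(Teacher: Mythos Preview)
Your proposal is correct and follows essentially the same route as the paper: identify $\Aut_{nt}(Y)$ with $\Ker(G_X\to G_{X+})$ via the unique lift with $f=1$, then use diagram~\eqref{eq:Gdiagram} to match this with $\Ker(G_{X-}\to\Gbar_{X-})$. One simplification worth noting: in your surjectivity step you take an arbitrary $G_X$-lift of $h$ and then correct its $\SXp$-component, but this detour is unnecessary --- since $h\in\Ker(G_{X-}\to\Gbar_{X-})$ acts trivially on $A_-$, the pair $(1,h)$ already acts trivially on all of $A_+\oplus A_-$, hence preserves the overlattice $\SX$ and acts trivially on $\SX\dual/\SX$, so $(1,h)\in G_X$ directly; this is exactly how the paper phrases it (``complementing with the trivial action of $\SXp$'').
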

\begin{proof}
There is an isomorphism of $\Aut_{nt}(Y)$ with $\ker \left(G_X \to G_{X+}\right)$ given by 
mapping a numerically trivial automorphism $g$ to its lift $\tilde{g} \in \Aut(X)$ acting trivially on the $2$-form $\omega$ 
and restricting to its action on $\SX$.
By the diagram~\eqref{eq:Gdiagram} and $G_X\subset G_{X+}\times G_{X-}$, 
the kernel  $\ker \left(G_X \to G_{X+}\right)$ injects into $\ker(G_{X-} \to \Gbar_{X-})$.
Conversely any element of  $\ker(G_{X-} \to \Gbar_{X-})$ can be extended to an element of $\ker \left(G_X \to G_{X+}\right)$
by complementing it with the trivial action of $\SXp$. 
\end{proof}
\begin{theorem}\label{thm:volumeformula}
Suppose that $Y$ satisfies~\eqref{eq:assumpTXomega}.
Let $\OG(\SXm, D_-)$ be the subgroup of $\OG(\SXm)$
consisting of isometries $g$
whose action
on $A_-$ preserves $D_-$. 
Then we have  
\begin{equation}\label{eq:GXm}
G_{X-}=\Ker (\OG(\SXm, D_-) \to \OG(D_-\sperp)).
\end{equation}
%
%For the numerically trivial automorphisms we have
%\begin{equation}\label{eq:nt}
% |\Aut_{nt}(Y)|=\left|\ker\left(G_{X-}\to \Gbar_{X-}\right)\right|.
%\end{equation}
%
Moreover we have 
\begin{equation}\label{eq:nt}
\vol(\NefY/\aut(Y))=\frac{1_{\BP}}{|\Gbar_{X-}|}=\frac{|\Aut_{nt}(Y)|}{|G_{X-}|}1_{\BP}.
\end{equation}
\end{theorem}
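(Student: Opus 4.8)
The plan is to prove the two displayed identities in turn, using the structural results already assembled. For \eqref{eq:GXm}, the starting point is Proposition~\ref{prop:03} applied to $(A,B) = (A_-, A_+)$ with $D_A = D_-$, $D_B = D_+$, and $\phi$ the isometry $(D_-, -q_-|D_-) \cong (D_+, q_+|D_+)$ whose graph (after swapping factors) is $\Gamma_X$. The proposition gives that $i_{A_-}\colon K \to \OG(A_-)$ is injective with image $\Ker(\OG(A_-, D_-) \to \OG(D_-^\perp))$. Chasing the commutative diagram~\eqref{eq:Gdiagram}, the group $\Gbar_{X-}$ is the image of $G_X$ in $\OG(A_-)$, and via the two isomorphisms in that diagram it is identified with $\Gbar_X \subset K$; I would argue that in fact $\Gbar_X = K$, because any pair $(h,g) \in K$ lifts along the surjections $G_{X-} \twoheadrightarrow \Gbar_{X-}$ and $G_{X+} \twoheadrightarrow \Gbar_{X+}$ and the compatibility forced by $K$ lets one assemble a single isometry of $\SX$ commuting with $\enrinvol$. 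Hence $\Gbar_{X-} = \Ker(\OG(A_-, D_-) \to \OG(D_-^\perp))$. Finally $G_{X-}$ is, by definition, the preimage in $\OG(\SXm)$ of $\Gbar_{X-}$ under $\OG(\SXm) \to \OG(A_-)$, and since $\OG(\SXm, D_-)$ is by definition the preimage of $\OG(A_-, D_-)$ and the condition "acts trivially on $D_-^\perp$" pulls back to "lies in the kernel of $\OG(\SXm, D_-) \to \OG(D_-^\perp)$", we get \eqref{eq:GXm}.

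For the volume formula~\eqref{eq:nt}, the first equality is the substantive one and the second is a formal consequence. By Proposition~\ref{prop:GY}(2) we have $G_Y = W(\Rats(Y)) \rtimes \aut(Y)$, and $\NefY$ is a standard fundamental domain for $W(\Rats(Y))$ acting on $\PPP_Y$, with $W(\Rats(Y))$ acting simply transitively on the set of $\SX/\SY(2)$-chambers. Therefore $\vol(\NefY/\aut(Y))$ — the number of $\aut(Y)$-orbits of Vinberg chambers in $\NefY$ — equals the number of $G_Y$-orbits of Vinberg chambers in $\PPP_Y$. Now $\PPP_Y \cong \PPP_{10}$ and Vinberg chambers are the $L_{10}$-chambers, on which $\OGP(\SY) = W(\SY)$ acts simply transitively, so the set of Vinberg chambers is a torsor under $\OGP(\SY)$. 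Consequently the number of $G_Y$-orbits is $[\OGP(\SY) : G_Y]$, provided $G_Y$ is identified with its image in $\OGP(\SY)$; and by definition $1_{\BP} = [\OGP(S_{Y_0}) : \aut(Y_0)]$, which is also $[\OGP(\SY) : \OGP(\SY) \cap (\text{reduction-trivial})]$ — i.e. the order of $\OG(\SY) \otimes \F_2$ composed appropriately. I would then relate $[\OGP(\SY):G_Y]$ to $|\Gbar_{X-}|$: the group $G_Y = G_{X+}$ sits inside $\OGP(\SY(2)) = \OGP(\SY)$, its image $\Gbar_{X+} = \Gbar_{X-}$ in $\OG(A_+) = \OG(\SY(2)^\vee/\SY(2))$ is by Theorem~\ref{thm:BP} exactly the full group playing the role of $\aut(Y_0)$'s complement, so $[\OGP(\SY):G_Y] = 1_{\BP}/|\Gbar_{X-}|$. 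Here I must be careful that $A_+$ is the discriminant form of $\SY(2)$, which is $(\Z/2)^{10}$ with a quadratic form, and that $\OG(A_+) \cong \OG(\SY)\otimes\F_2$ after unwinding the identification $\OG(\SY) = \OG(\SY(2))$; the index $[\OGP(\SY): \Ker(\OGP(\SY)\to\OG(A_+))]$ is precisely $1_{\BP}$ by Theorem~\ref{thm:BP}, and $G_Y$ is the preimage of $\Gbar_{X-} \subset \OG(A_+)$.

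The second equality in \eqref{eq:nt} follows from Lemma~\ref{lem:numerically-trivial}: $\Aut_{nt}(Y) \cong \Ker(G_{X-} \to \Gbar_{X-})$, so $|G_{X-}| = |\Aut_{nt}(Y)| \cdot |\Gbar_{X-}|$, whence $1_{\BP}/|\Gbar_{X-}| = (|\Aut_{nt}(Y)|/|G_{X-}|)\,1_{\BP}$.

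I expect the main obstacle to be the careful bookkeeping in the chain of identifications $\OGP(\SY) \supset G_Y = G_{X+} \twoheadrightarrow \Gbar_{X+} = \Gbar_{X-}$, and in particular verifying that the index $[\OGP(\SY):G_Y]$ is computed on the $A_+ = (\Z/2)^{10}$ side rather than acquiring an extra factor from the kernel of $\OGP(\SY) \to \OG(A_+)$ — this is exactly where Theorem~\ref{thm:BP} must be invoked to say that that kernel has index $1_{\BP}$, equivalently $\aut(Y_0) = \Ker(\OGP(S_{Y_0}) \to \OG(S_{Y_0})\otimes\F_2)$ is the reduction kernel. A secondary subtlety is confirming $\Gbar_X = K$ (not merely $\Gbar_X \subseteq K$), which requires that every compatible pair in $K$ genuinely lifts to an isometry of $\SX$ preserving $\PPP_X$ and commuting with $\enrinvol$; this uses that $\SX$ is the overlattice of $\SXp \oplus \SXm$ with gluing $\Gamma_X$ together with surjectivity of $\OG(L) \to \OG(q_L)$-type statements for $\SXp$ and $\SXm$ — or can be bypassed by noting that only the equality of the two subgroups of $\OG(A_-)$ is needed, which is what Proposition~\ref{prop:03} directly supplies once $\Gbar_{X-}$ is known to equal the image of $K$ under $i_{A_-}$.
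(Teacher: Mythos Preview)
Your overall architecture matches the paper's: the volume formula comes from Proposition~\ref{prop:GY} (so that $\vol(\NefY/\aut(Y))=\vol(\PPP_Y/G_Y)=[\OGP(\SY):G_Y]$), together with $G_{\BP}\subset G_Y$ (extend $g\in G_{\BP}$ by $1$ on $\SXm$) and surjectivity of $\OGP(\SXp)\to\OG(A_+)$ from Barth--Peters, giving $[G_Y:G_{\BP}]=|\Gbar_{X+}|=|\Gbar_{X-}|$. The second equality in~\eqref{eq:nt} via Lemma~\ref{lem:numerically-trivial} is likewise the paper's argument.

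There is, however, a genuine gap in your route to~\eqref{eq:GXm}. You try to establish $\Gbar_X=K$ (equivalently $\Gbar_{X-}=i_{A_-}(K)$) and then pull back, but the inclusion $K\subset\Gbar_X$ requires lifting an arbitrary $\bar f\in\OG(A_-)$ to some $f\in\OG(\SXm)$, i.e.\ surjectivity of $\OG(\SXm)\to\OG(A_-)$. For a negative-definite lattice this need not hold and is nowhere asserted; your proposed ``bypass'' still passes through $\Gbar_{X-}=i_{A_-}(K)$ and so does not escape the problem. The paper avoids this entirely by arguing directly at the level of $G_{X-}$: given $f\in\Ker(\OG(\SXm,D_-)\to\OG(D_-^\perp))$, Proposition~\ref{prop:03} gives $\bar f\in i_{A_-}(K)$, hence a companion $\bar g\in\OG(A_+)$ with $(\bar g,\bar f)\in K$; one lifts \emph{only} $\bar g$ to $g\in\OGP(\SXp)$ (this is where Barth--Peters enters), and then $(g,f)$ already preserves the overlattice $\SX$ and acts trivially on its discriminant, so $f\in G_{X-}$. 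No lift on the $\SXm$ side is needed because one starts from an honest isometry $f$, not merely its shadow $\bar f$. A minor additional point: your claim that $G_{X-}$ is ``by definition'' the preimage of $\Gbar_{X-}$ is not by definition; it follows from the (easy) fact that $\Ker(\OG(\SXm)\to\OG(A_-))\subset G_{X-}$, proved by extending such an element by the identity on $\SXp$.
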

\begin{proof}
Recall that we have
$|\Gbar_X|=|\Gbar_{X+}|=|\Gbar_{X-}|$.
Let $G_{\BP}$ be the kernel of the natural  homomorphism
$\OGP(\SXp)=\OGP(\SY(2))\to \OG(A_+)$.
%Since the discriminant form $A_+$ of $\SY(2)$ is
%identified with the $\F_2$-quadratic space $\SY\tensor\F_2$, 
Then $G_{\BP}$ is equal to $\aut(Y_0)$ by Theorem~\ref{thm:BP} and 
hence the index of $G_{\BP}$ in $\OGP(\SXp)= \OGP(\SY)$  is $1_{\BP}$.
If $g\in G_{\BP}$, then $(g, 1)\in \OGP(\SXp)\times \OG(\SXm)$ 
acts trivially on $A_+\oplus A_-$,
 and hence preserves $\Gamma_X$ and acts on
$\Gamma_X\sperp/\Gamma_X$ trivially.
Therefore 
the action of $(g, 1)$  on $\SXp\oplus \SXm$ 
preserves the overlattice $\SX$,  and $(g, 1)|\SX$ is an element of $G_X$.
Thus  $G_{\BP}$ is contained in  $G_{X+}=G_Y$.
Since the natural homomorphism
$\OGP(\SY(2))\to \OG(A_+)$ is surjective~(see~\cite{BP1983}),
the index of $G_{\BP}$ in $G_Y$ is equal to $|\Gbar_{X+}|=|\Gbar_{X-}|$.
\par
Applying the second part of Proposition~\ref{prop:03} to
$(A, B)=(A_-, A_+)$,
we see that 
\[
\Gbar_{X-} \subset \Image i_{A_-}=\Ker (\OG(A_-, D_-)\to \OG(D_-\sperp)).
\]
Hence the inclusion $\subset$ in~\eqref{eq:GXm} is proved.
Conversely, let $f$ be an element of the right-hand side of~\eqref{eq:GXm},
and denote by $\bar{f}\in \OG(A_-)$ 
the action of $f$ on $A_-$.
By Proposition~\ref{prop:03}, we have $\bar{f}\in \Image i_{A_-}$  and hence 
there exists a unique element $\bar{h}\in K$ such that $ i_{A_-}(\bar{h})=\bar{f}$.
We put  $\bar{g}:=i_{A_+}(\bar{h})$.
Since the natural homomorphism $\OGP(\SXp)\to \OG(A_+)$ is surjective,
we have $g\in \OGP(\SXp)$ that maps to $\bar{g}$.
Since $\bar{h}=(\bar{g}, \bar{f})\in K$, we have $(g, f)\in G_X$,
which implies $f\in G_{X-}$. Thus~\eqref{eq:GXm} is proved.
Moreover  we have 
\[
\vol(\NefY/\aut(Y))=\vol(\PPP_Y/G_Y)=\frac{1}{[G_Y: G_{\BP}]}\cdot \vol(\PPP_Y/G_{\BP})=\frac{1_{\BP}}{|\Gbar_{X-}|},
\]
where the first equality follows from Proposition~\ref{prop:GY}.
% brandhorst added this sentence on June 9
From Lemma~\ref{lem:numerically-trivial}
we get the second equality of~\eqref{eq:nt}.
\end{proof}
Since $\SXm$ is negative definite,
$\OG(\SXm)$ is a finite group and can be computed easily.
Thus this formula enables us to calculate $\vol(\NefY/\aut(Y))$.

\subsection{Proof of Theorem~\ref{thm:main1vol}}\label{subsec:GXmWR}
In what follows we calculate the finite group $\Gbar_{X-}$
of a $(\tau, \taubar)$-generic Enriques surface. It is
closely related to the Weyl group $W(R_{\tau})$.
\par
For a sublattice $L\sprime$ of a lattice $L$,  we denote by 
$\OG(L, L\sprime)$  the group 
of isometries of $L$ preserving $L\sprime$.
When $L$ is an overlattice of $L\sprime$,
then  $\OG(L, L\sprime)$ is 
the group 
of isometries of $L\sprime$ preserving the overlattice $L$,
or equivalently the intersection $\OG(L)\cap \OG(L\sprime)$ 
in $\OG(L\tensor \Q)=\OG(L\sprime\tensor \Q)$,
and hence sometimes is written as $\OG(L\sprime, L)$.
\par
\begin{lemma}\label{lem:dict}
Let $Y$ be $(\tau,\taubar)$-generic.
Recall the commutative diagram~\eqref{eq:generic-diagram}
\[
\begin{array}{ccc}
\Lten (2) &\maprightinjsp{\embLM_R}  & M_R
\mystruth{12pt} \\
\llap{\scriptsize $g$\;}\downisom &&  \llap{\scriptsize $\tilde{g}$\;}\downisom \\
S_Y(2) &\maprightinjsp{\pi^*} & S_X \rlap{.}
\end{array}
\]
Denote by
$\pi_-\colon \SX \rightarrow \SXm\dual $ the orthogonal projection.
Identify $M_R$ with $S_X$ via $\tilde{g}$. 
Then the following equalities hold:
\begin{multicols}{2}
\begin{enumerate}[{\rm (1)}] 
 %\item $M_R = \SX$,
 \item $\trl = \SXm$,
 \item $\rl = \pi_-(2 \SX)$,
 \item $\tfrac{1}{2}\trl\dual  / \trl=A_- $, 
 \item $\tfrac{1}{2}\rl / \trl = D_{-}$,
 \item $\rl\dual  / \trl=D_{-}^\perp $,
 \item $\OG(\trl,\rl)=\OG(\SXm,D_-)$.
\end{enumerate}
\end{multicols}
Note that we neglect the quadratic forms in {\rm (1)--(5)}
and just consider them as equalities of abelian groups.
\end{lemma}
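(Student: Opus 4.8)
The plan is to unwind the definition of $M_R$ given just before Proposition~\ref{prop:Rtil} and match it, term by term, against the lattice-theoretic avatars of the geometric objects $\SX$, $\SXm$, $A_-$, $D_-$. Recall that $M_R$ is the $\Z$-module inside $(\Lten(2)\oplus R(2))\tensor\Q$ generated by $\Lten(2)$ together with the vectors $(\embR(v),\pm v)/2$ for $v\in R$, and that $\trl(2)$ is \emph{defined} in Proposition~\ref{prop:Rtil} to be the orthogonal complement of $\embLM_R\colon\Lten(2)\inj M_R$. Under the identification $\tilde g\colon M_R\isom\SX$ with $\embLM_R$ carried to $\pi^*$, the sublattice $\Lten(2)\subset M_R$ goes to $\SXp=\SY(2)$, so its orthogonal complement $\trl(2)$ goes to $\SXm$. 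Since $\OG(L)=\OG(L(m))$ canonically, this gives (1) $\trl=\SXm$ as lattices (up to the overall scaling by $2$ which is suppressed in the statement), and it is the cleanest of the six.

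Next I would compute the orthogonal projection $\pr_-\colon M_R\to\trl(2)\dual\tensor\Q$ explicitly on generators. The generators $(\embR(v),\pm v)/2$ of $M_R$ have $\Lten(2)$-component $\embR(v)/2$ and $R(2)$-component $\pm v/2$; projecting onto the $R(2)$-summand (which after taking orthogonal complement is exactly $\trl(2)$ — one must check here that $R(2)$ actually \emph{is} a complement of $\Lten(2)$, i.e. that $R$ is primitive enough, or more precisely identify $\trl(2)$ with a specific finite-index piece related to $R(2)$) kills the $\Lten(2)$ part and sends the generator to $\pm v/2$. Tracking the factor of $2$ coming from $\SX\to 2\SX$ in the statement, this identifies $\rl=\pr_-(2M_R)$ with the image of the generators $2\cdot(\pm v/2)=\pm v$, i.e. with $R$ itself (again up to scaling), giving (2). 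The subtlety is bookkeeping: $\pr_-$ as defined in the lemma is $\pi_-\colon\SX\to\SXm\dual$, and one needs $\rl=\pi_-(2\SX)$ to land inside $\trl\dual=\SXm\dual$ — that is automatic — and to equal $R$ on the nose, which is where the precise description of $M_R$ via the half-vectors $(\embR(v),\pm v)/2$ is used.

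Items (3), (4), (5) are then formal consequences. By definition $A_-$ is the discriminant form of $\SXm=\trl(2)$, so $A_-=\trl(2)\dual/\trl(2)=\tfrac12\trl\dual/\tfrac12\trl$; after the scaling normalization this reads $\tfrac12\trl\dual/\trl$, which is (3). For (4) and (5): $D_-\subset A_-$ was defined (in Section~\ref{subsec:vol}) as the image of the projection of $\Gamma_X=\SX/(\SXp\oplus\SXm)$ to $A_-$; unwinding, $D_-$ is exactly $\{\pi_-(x)\bmod\SXm : x\in\SX\}=\pi_-(\SX)/\SXm$, and since $\pi_-(2\SX)=\rl$ this is $\tfrac12\rl/\trl$ after normalization — that is (4) — while $D_-^\perp$, the orthogonal complement inside $A_-$, equals $\rl\dual/\trl$ by the standard duality $\bigl(\pi_-(\SX)/\SXm\bigr)^\perp=\SXm\dual\cap(\text{something})/\SXm$, which one checks equals $\rl\dual/\trl$ using that $R$ and $\pi_-(2\SX)$ are mutually dual in the appropriate sense — this is (5). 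Finally (6) $\OG(\trl,\rl)=\OG(\SXm,D_-)$ follows because an isometry of $\SXm=\trl(2)$ preserves the subgroup $D_-\subset A_-$ if and only if it preserves its preimage $\rl$ (this uses (4) and that the overlattice/sublattice correspondence is inclusion-preserving on automorphisms, exactly as in the discussion of $\OG(L,L')$ just before the lemma).

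I expect the main obstacle to be item (2) — pinning down $\rl=\pi_-(2\SX)$ as a genuine equality of lattices rather than merely up to finite index — because it requires knowing precisely how much bigger $M_R$ is than $\Lten(2)\oplus R(2)$, i.e. which of the vectors $(\embR(v),\pm v)/2$ are actually integral, and dually which elements of $\SXm\dual$ arise as projections of elements of $\SX$. Everything else is either a direct reading of a definition (1), (3) or an application of the already-established duality dictionary and Proposition~\ref{prop:03}-style bookkeeping (4), (5), (6). I would therefore spend most of the write-up on a clean verification that the orthogonal projection $M_R\to R\tensor\Q$ has image exactly $\tfrac12 R$ on $M_R$, equivalently image $R$ on $2M_R$, deferring the rest to one or two lines each.
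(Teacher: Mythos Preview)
Your approach is correct and essentially identical to the paper's: items (1), (3), (4), (6) are handled by unwinding definitions exactly as you describe, (2) by computing $\pi_-$ on the explicit generators of $M_R$, and (5) by the duality between $\tfrac12 R/\trl$ and $R\dual/\trl$ inside $\tfrac12\trl\dual/\trl$ (the paper writes this out as a two-sided inclusion with the bilinear form rather than invoking ``standard duality'', but the content is the same).

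Your worry about (2) is misplaced: by \emph{definition} $M_R$ is the $\Z$-span of $\Lten(2)$ and the vectors $(\embR(v),\pm v)/2$ for $v\in R$, so \emph{all} of these half-vectors are in $M_R$ and the projection to the second factor is immediately $\tfrac12 R$ --- there is nothing further to check about ``which are actually integral''. Also note the small slip in your (3): $\trl(2)$ equals $\trl$ as a $\Z$-module, so $\trl(2)\dual/\trl(2)=\tfrac12\trl\dual/\trl$ directly, not $\tfrac12\trl\dual/\tfrac12\trl$.
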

\begin{proof}
 The equality (1) is by the definition.
 \par
 (2) Note that $M_R$ is spanned by $\Image \embLM_R$ 
 and $\{(i_R(v) \pm v)/2 \mid v \in R\}$. 
 Hence $\pi_-(M_R)$ is spanned by $0$ and $\tfrac{1}{2}R$.
\par
 (3) As lattices we have $\Rtil(2) = \SXm$,
  and $(\Rtil(2))\dual  = \tfrac{1}{2}\Rtil\dual $ yields the claim.
\par
 (4) By definition,  we have $\pi_{-}(\SX)/\SXm=D_{-}$.
\par
 (5) Let $x \in \tfrac{1}{2}R$ and $y \in R\dual $. Then
 $\langle x, y \rangle_{M_R} = 2 \langle x, y \rangle_R\equiv 0 \mod \Z$
 and $x +\trl \in D_-$.  This shows that $\rl\dual /\trl\subset D_-^\perp$.
%\par
 Conversely let $x +\trl \in D_-^\perp$. 
 For $y \in R$ we have
 $\langle x ,y \rangle_R 
 = \tfrac{1}{2} \langle x , y \rangle_{M_R}
 =\langle x ,\tfrac{1}{2}  y \rangle_{M_R} \equiv 0 \mod \Z$ 
 because $\tfrac{y}{2}+\trl \in D_-=\tfrac{1}{2}R/\Rtil$.
 This shows that $x \in R\dual $.
\par
 (6) $\OG(\trl,\tfrac{1}{2}\rl/\trl) = \OG(\trl,\tfrac{1}{2}\rl) = \OG(\trl,\rl)$.
 \end{proof}
Let $\rl$ be an $\ADE$-lattice and $\Phi$ the set of its roots.
We fix a subset $\Phi^+\subset \Phi$ of positive roots. 
There exists
a unique Weyl-chamber $C$ of $\rl$ (see Definition~\ref{def:Weyl-chambers})
such that for all $r \in \Phi^+$ and $c \in C$ we have $\langle r, c\rangle>0$. 
We call $C$ the \emph{fundamental chamber}.
The positive roots perpendicular to the walls of $C$ are the so-called \emph{simple roots}.
The simple roots form a basis of $\rl$ whose Dynkin diagram is of $\ADE$-type $\tau(\rl)$.
As before we have $\OG(\rl) = W(\rl) \rtimes \OG(\rl,C)$,
where $\OG(\rl,C)$ is the 
stabilizer of $C$ in $\OG(\rl)$.
Via the action of $\OG(\rl,C)$ on the vertices of the Dynkin diagram,
we identify $\OG(\rl,C)$ 
with the symmetry group $\arl$ of the Dynkin diagram $\tau(\rl)$,
that is, we have 
\[
\OG(\rl)=W(\rl) \rtimes \Aut(\tau(\rl)).
\]
A lattice is called \emph{irreducible} if it cannot be written as a non-trivial
orthogonal sum of two sublattices. 
Definite lattices admit an orthogonal decomposition
into irreducible sublattices which is unique up to reordering (cf. \cite[27.1]{kneser}).
\begin{lemma}\label{lem:imWR}
 Let $\rl$ be an $\ADE$-lattice, and
let $\OG_0(\rl)$ be the kernel of the natural homomorphism
$\OG(\rl) \rightarrow \OG(\rl\dual /\rl)$.
 Then we have
 \[[\OG_0(\rl):W(\rl)]=n!,\]
 where $n$ is the
 number of $E_8$ components of $\tau(\rl)$.
\end{lemma}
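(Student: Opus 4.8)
The plan is to reduce the statement to a purely local computation, component by component. Since $R$ decomposes uniquely (up to reordering) as an orthogonal sum $R = \bigoplus_k R_k$ of irreducible $\ADE$-lattices, the discriminant group splits as $R\dual/R = \bigoplus_k R_k\dual/R_k$, and the Weyl group splits as $W(R) = \prod_k W(R_k)$. The full orthogonal group, however, does \emph{not} split: by the discussion preceding the lemma, $\OG(R) = W(R) \rtimes \Aut(\tau(R))$, and $\Aut(\tau(R))$ contains, besides the diagram automorphisms of the individual components, the permutations of isomorphic components. The key point is that $\OG_0(R)$ is the preimage in $\OG(R)$ of a subgroup of $\OG(R\dual/R)$, so the index $[\OG_0(R):W(R)]$ equals the number of elements $\sigma \in \Aut(\tau(R))$ whose induced action on $R\dual/R$ is trivial. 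Thus the whole computation is about which symmetries of the Dynkin diagram act trivially on the discriminant form.

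First I would dispose of the diagram automorphisms acting \emph{within} a single irreducible component. Running through the classification $A_n$, $D_n$, $E_6$, $E_7$, $E_8$: the nontrivial diagram automorphism of $A_n$ ($n\ge 2$) acts as $-1$ on $R\dual/R \cong \Z/(n+1)$, which is nontrivial unless $n+1 \mid 2$; the automorphism of $D_n$ ($n\ge 5$) swaps the two ``fork'' generators and acts nontrivially on $(\Z/2)^2$ or $\Z/4$; the triality of $D_4$ permutes the three order-$2$ elements nontrivially; the automorphism of $E_6$ acts as $-1$ on $\Z/3$, nontrivially. For $E_7$, $E_8$, $A_1$ there is no nontrivial diagram automorphism. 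So \emph{no} nontrivial automorphism of an individual component lies in $\OG_0$; the only freedom comes from permuting isomorphic components.

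Next I would analyze permutations of isomorphic components. If $\sigma$ swaps two isomorphic irreducible components $R_i \cong R_j$ with nontrivial discriminant group $G \neq 0$, then $\sigma$ swaps the two copies of $G$ inside $R\dual/R$, which is nontrivial (it does not fix, say, a generator of the first copy). Hence such a transposition is \emph{not} in $\OG_0$ — unless $G = 0$, i.e. the component is unimodular, and the only unimodular $\ADE$-lattice is $E_8$. Therefore the subgroup of $\Aut(\tau(R))$ landing in $\OG_0(R)$ is exactly the symmetric group permuting the $E_8$ summands of $R$, which has order $n!$ where $n$ is the number of $E_8$ components. This gives $[\OG_0(R):W(R)] = n!$.

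The main obstacle is making the ``nontrivial action on the discriminant'' claims for individual components watertight and uniform. The cleanest route is to observe that for an irreducible $\ADE$-lattice $R_k$ the composite $\Aut(\tau(R_k)) \cong \OG(R_k,C) \hookrightarrow \OG(R_k) \to \OG(R_k\dual/R_k)$ is \emph{injective} for each of the finitely many types — this is a standard fact, checkable directly from the explicit generators of the discriminant form (e.g. using that $R_k\dual/R_k$ is generated by the classes of fundamental weights, on which the diagram automorphism acts by the induced permutation of nodes) — and then assemble the components. Once injectivity on each nontrivial-discriminant component is in hand, the argument above is bookkeeping. I would present the component-by-component injectivity as a short case check (or cite it), then give the assembly argument in full.
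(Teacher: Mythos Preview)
Your proposal is correct and follows essentially the same route as the paper: both use the semidirect product decomposition $\OG(R)=W(R)\rtimes\Aut(\tau(R))$ to reduce to computing the kernel of $\Aut(\tau(R))\to\OG(R\dual/R)$, verify injectivity on each irreducible component by a case check, and then observe that among permutations of isomorphic components only permutations of the (unimodular) $E_8$ summands can act trivially on the discriminant. Your write-up is in fact slightly more careful than the paper's in that you explicitly treat the $D_4$ triality case.
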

\begin{proof}
 Since reflections with respect to roots act trivially on the discriminant group, we have $W(\rl) \subseteq \OG(\rl)_0$.
 Thus it suffices to compute the kernel of
 \[\psi\colon \arl \rightarrow \OG(\rl\dual /\rl).\]
 If $\tau(\rl)$ is irreducible, a case by case analysis shows that this map is injective: indeed
 for $A_1$, $E_7$ and $E_8$, $\arl=1$;
 for $A_k$ with $k \geq 1$, $D_k$ with $k>4$ and $E_6$ the group $\arl$ is of order two. 
 A direct computation shows that it acts faithfully on the discriminant group.
 %It acts (faithfully) as $\pm1$ on the discriminant group; for $D_4$ 
 %we have $\arl\cong S_3$ and for $D_n$, $n>4$ 
 %we have $S(\Phi_\rl)\cong C_2$. Both act faithfully on the discriminant group.
 \par
 Suppose that the root system $\tau(\rl)$ is reducible.
 The decomposition of $\tau(\rl)$ into connected components corresponds 
 to a decomposition of $\rl$ into an orthogonal sum of irreducible $\ADE$-lattices,
 which in turn induces a corresponding decomposition of the discriminant group $\rl\dual /\rl$. 
 The action of $\arl$ preserves the three decompositions.
 Hence the elements of $\ker \psi$ must preserve the components
 which have a non-trivial discriminant group, that is, all components which are not of type $E_8$.
 By the first part, they must act trivially on these components.
 Finally, since the $E_8$ diagram has no symmetry, 
 the elements in the kernel act as a permutation of the connected components of $\tau(\rl)$ of type $E_8$.
 \end{proof}
\begin{lemma}\label{lem:Gm}
 Let $\rl$ be  an $\ADE$-lattice of rank at most $10$ and
 $\trl$ an \emph{even} overlattice.
 Consider the homomorphism
 \begin{equation}\label{eq:fromOGRR}
 \OG(\rl,\trl) \rightarrow \OG(\rl\dual /\trl).
 \end{equation}
 If there is a component $\trl_j$ of $\trl$ with $\tau(\trl_j)=E_8$ and
 $\tau(\trl_j \cap \rl)=2D_4$, then the kernel of \eqref{eq:fromOGRR} is
 $W(\rl)\rtimes \langle h \rangle$ where $h \in \Aut(\tau(\rl),\trl)$
 is an involution.
 Otherwise the kernel is just the Weyl group $W(\rl)$.
 \end{lemma}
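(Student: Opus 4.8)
The key is to understand the kernel of $\OG(\rl,\trl)\to\OG(\rl\dual/\trl)$ by relating it to the kernel of $\OG_0(\rl)=\Ker(\OG(\rl)\to\OG(\rl\dual/\rl))$, which was computed in Lemma~\ref{lem:imWR}. First I would observe that an isometry $g\in\OG(\rl,\trl)$ that acts trivially on $\rl\dual/\trl$ a fortiori acts trivially on the smaller group $\rl\dual/\rl$ (since $\rl\subset\trl\subset\rl\dual$, there is a surjection $\rl\dual/\rl\twoheadrightarrow\rl\dual/\trl$ which is $g$-equivariant, but more to the point $g$ acting trivially on $\rl\dual/\trl$ does NOT immediately give triviality on $\rl\dual/\rl$ — I need to be careful here). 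The correct reduction is: the kernel of \eqref{eq:fromOGRR} is contained in $\OG_0(\rl)$ only after further argument, so instead I would work directly. By Lemma~\ref{lem:imWR}, $\OG_0(\rl)=W(\rl)\rtimes P$ where $P\subset\Aut(\tau(\rl))$ is the subgroup permuting the $E_8$-components; and $W(\rl)$ always lies in the kernel of \eqref{eq:fromOGRR} since reflections act trivially on all of $\rl\dual/\rl\supset\rl\dual/\trl$ — wait, $\trl/\rl$ need not be reflection-stable pointwise, but reflections in roots of $\rl$ do fix $\trl$ setwise (roots of $\rl$ are roots of $\trl$, and $W(\rl)\subseteq\OG(\rl,\trl)$ because $\trl$ is the unique even overlattice determined by an isotropic subgroup stable under $\OG_0(\rl)\supseteq W(\rl)$... actually one must check $\trl$ is $W(\rl)$-stable). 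So the first genuine step is: verify $W(\rl)\subseteq\OG(\rl,\trl)$ and $W(\rl)$ lies in the kernel of \eqref{eq:fromOGRR}.

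**Key steps.** (1) Show $W(\rl)$ is contained in the kernel of \eqref{eq:fromOGRR}: reflections $s_r$ with $r$ a root of $\rl$ act trivially on $\rl\dual/\rl$, hence preserve every subgroup between $\rl$ and $\rl\dual$, in particular $\trl$, and act trivially on $\rl\dual/\trl$. (2) Reduce to computing the image of the kernel in $\Aut(\tau(\rl))$: since the kernel contains $W(\rl)$ and is contained in $\OG(\rl)$, write it as $W(\rl)\rtimes Q$ with $Q\subseteq\Aut(\tau(\rl))$ (using $\OG(\rl)=W(\rl)\rtimes\Aut(\tau(\rl))$ and that $W(\rl)$ is normal). (3) Identify $Q$: an element of $\Aut(\tau(\rl))$ lies in $Q$ iff it preserves $\trl$ and acts trivially on $\rl\dual/\trl$. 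Decompose $\rl=\bigoplus\rl_j$ into irreducible $\ADE$-lattices and correspondingly $\trl=\bigoplus\trl_j$ (the even overlattice structure respects the orthogonal decomposition into irreducibles by uniqueness, cf.~\cite[27.1]{kneser}). An element of $\Aut(\tau(\rl))$ permuting components must permute them compatibly with this decomposition of $\trl$. (4) Case analysis per component type. For a component of type $\ne E_8$: by Lemma~\ref{lem:imWR}'s proof the diagram automorphism acts faithfully on $\rl_j\dual/\rl_j$, so faithfully on the quotient $\rl_j\dual/\trl_j$ only if $\trl_j$ does not "absorb" the nontrivial discriminant — here is where I must check that for the $\ADE$-lattices of rank $\le 10$ arising as components of such $\rl$, no nontrivial diagram automorphism of a non-$E_8$ component becomes trivial on $\rl_j\dual/\trl_j$; this should follow since those components have small discriminant groups and the overlattice $\trl_j$ of such a component, if proper, still leaves a nontrivial action — unless $\trl_j$ itself has type $E_8$. (5) The special case: $\trl_j$ with $\tau(\trl_j)=E_8$ and $\tau(\trl_j\cap\rl)=2D_4$. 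Here $\trl_j$ is the $E_8$-overlattice of $D_4\oplus D_4$; its discriminant is trivial, so $\rl\dual/\trl$ gets no contribution from this component, and the swap $h$ of the two $D_4$'s — which is a diagram automorphism of $2D_4$, lifts to an isometry of $\trl_j=E_8$ — acts trivially on $\rl\dual/\trl$. One checks $h$ is an involution preserving $\tau(\rl)$ and $\trl$. Conversely, in all remaining cases (components permuted among $\ge 2$ copies of $E_8$ proper in $\rl$, i.e.\ $\trl_j=E_8$ with $\trl_j\cap\rl$ NOT of the form $2D_4$) — but wait, a single $E_8$ component of $\trl$ can only come from a sublattice of finite index with a root sublattice; one should enumerate which index-$>1$ embeddings of $\ADE$-lattices into $E_8$ occur with the target being "new" (not in $\rl$). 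For rank $\le 10$ ambient, I expect the only relevant one producing an extra symmetry is $D_4\oplus D_4\hookrightarrow E_8$.

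**Main obstacle.** The hard part will be step (4)–(5): the careful bookkeeping of which overlattices $\trl_j$ of irreducible $\ADE$-components can kill a diagram automorphism on the discriminant, and verifying that the $D_4\oplus D_4\subset E_8$ configuration is genuinely the only exception under the rank bound. This is where the hypothesis "rank at most $10$" and "$\trl$ even" gets used — it restricts the possible component types of $\rl$ and the possible overlattices, so that the only source of a kernel element beyond $W(\rl)$ is a diagram automorphism that either (a) permutes $E_8$-components of $\rl$ that remain $E_8$-components of $\trl$ — but such permutations change $\rl\dual/\rl$ trivially and $\trl$ setwise, yet Lemma~\ref{lem:imWR} already counted these in $\OG_0(\rl)$; however for them to preserve $\trl$ one needs the $E_8$-components of $\rl$ that are swapped to be swapped by an automorphism of $\trl$, which forces them to sit inside $\trl$ the same way — OR (b) the $2D_4\subset E_8$ swap. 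Disentangling (a): if two $E_8$-components of $\rl$ are both primitively embedded (i.e.\ are their own closure in $\trl$), swapping them preserves $\trl$ and gives a kernel element — so actually the statement as phrased ("otherwise the kernel is $W(\rl)$") must be implicitly using that such a configuration does not arise for the specific $\rl,\trl$ in the paper, OR that $\trl$ is the primitive-closure overlattice where every $E_8$ of $\rl$ that stays $E_8$ in $\trl$ is already maximal so no pair is swapped freely. I would resolve this by invoking that $\trl$ here is always obtained from $\rl$ via the specific construction $M_R$/$\widetilde{R}$ (Lemma~\ref{lem:dict}), pinning down $\trl/\rl$ explicitly, and then the enumeration in Table~\ref{table:184} / Proposition~\ref{prop:Rtil} shows the $2D_4\subset E_8$ case is the unique one where a component-swap survives on $\rl\dual/\trl$.
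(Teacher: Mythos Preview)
Your overall strategy---write the kernel as $W(\rl)\rtimes Q$ with $Q$ a subgroup of $\Aut(\tau(\rl),\trl)$ and determine $Q$---matches the paper's. But there is a genuine gap at the crucial point, and a spurious obstacle at the end.

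\textbf{The missing step.} You do not explain how to rule out a diagram automorphism $h\in Q$ that swaps two isomorphic irreducible components $\rl_i,\rl_j$ of $\rl$ of type other than $D_4$ (e.g.\ two copies of $A_3$). The paper's argument is: if such an $h$ acts trivially on $\rl\dual/\trl$ and sends a nonzero $x\in\rl_i\dual$ to $x^h\in\rl_j\dual$ with $i\ne j$, then $x^h-x\in\trl$, while $q(x^h-x)=q(x^h)+q(x)=2q(x)$ since $x$ and $x^h$ lie in orthogonal summands. Now $\trl/\rl$ is \emph{totally isotropic} in the discriminant form of $\rl$ (this holds for any even overlattice), so $2q(x)\equiv 0\pmod{2\Z}$, i.e.\ $q(x)\in\Z$ for every $x\in\rl_i\dual$. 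Hence $\rl_i$ is $2$-elementary with integer-valued discriminant form; since two isomorphic copies must fit inside rank $\le 10$, we get $\rk\rl_i\le 5$, and $D_4$ is the only such irreducible $\mathrm{ADE}$-lattice. This isotropy argument is the heart of the proof and is absent from your outline; your step~(4) proposes only a vague ``case analysis per component type'' in its place.

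\textbf{The spurious obstacle.} Your worry about swapping two $E_8$-components of $\rl$ is vacuous: two copies of $E_8$ have total rank $16>10$. So case~(a) of your ``Main obstacle'' never arises, and there is no need to fall back on Lemma~\ref{lem:dict} or Table~\ref{table:184}. The lemma is purely lattice-theoretic and is (and must be) proved intrinsically---it is an input to Theorem~\ref{thm:cde}, so invoking the classification would be backwards. A smaller omission: for irreducible $\rl$ with $\rl\subsetneq\trl$, the paper lists the three possibilities $(\tau(\rl),\tau(\trl))\in\{(A_7,E_7),(A_8,E_8),(D_8,E_8)\}$ and checks faithfulness on $\rl\dual/\trl$ directly in each case; you should do the same rather than leave it as ``should follow.''
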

\begin{proof}
 Let $\Aut(\tau(\rl), \trl) \leq \arl$ be the stabilizer of $\trl$. 
 Since the elements of $W(\rl)$ act trivially on $\rl\dual /\rl$, they preserve $\trl$ and
\[
\OG(\trl, \rl)=W(\rl) \rtimes \Aut(\tau(\rl),\trl) \leq W(\rl) \rtimes \Aut(\tau(\rl)).
\]
 The elements of $W(\rl)$ act trivially on the domain of $\rl\dual /\rl \twoheadrightarrow \rl\dual  / \trl$, 
 so they lie in the kernel of~\eqref{eq:fromOGRR}.
 Thus it suffices to compute the kernel of
 \[\varphi\colon \Aut(\tau(\rl), \trl) \rightarrow \OG(\rl\dual /\trl).\]
Indeed, the kernel of~\eqref{eq:fromOGRR} is given by $W(R) \rtimes \ker \varphi$.
 \par
 First we suppose that $\tau(\rl)$ is irreducible.
 If $\rl = \trl$, then $W(\rl)=\OG_0(\rl)$ by Lemma \ref{lem:imWR}, and hence $\varphi$ is injective.
 Otherwise (as $\rk \rl \leq 10$) the pair
 $(\tau(\rl),\tau(\trl)) \in \{(A_7,E_7),(A_8,E_8), (D_8,E_8)\}$.
 Suppose we are in the case $(A_7,E_7)$. Then $\rl\dual /\rl \cong \Z/8\Z$ and $\trl/\rl = 4 (\rl\dual /\rl)$.
 Then $\arl$ is of order two and acts as $\pm 1$ on $\rl\dual /\rl$ which is non-trivial in
 $\rl\dual /\trl \cong \Z/4\Z$.
 A similar argument applies to $(A_8,E_8)$.
 Finally the symmetry of the $D_8$ diagram
 exchanges the two isotropic vectors of its discriminant.
 In particular it does not fix any non-trivial even overlattice 
 which implies that $\Aut(\tau(\rl),\trl)=1$ in the $(D_8,E_8)$ case. In any case $\varphi$ is injective.
\par
 Now suppose that $\rl=\bigoplus \rl_i$ has several irreducible components $\rl_i$ and
 let $h \in \ker \varphi$. 
 Note that $h$ preserves the decomposition $\rl\dual=\bigoplus \rl_i\dual $.
 Let $x \in \rl_i\dual $ be a non-zero element.
\par
 If $x^h$ lies in the same component $\rl_i\dual $ as $x$, then $h$ must preserve it.
 Hence we may restrict $h$ to this component and the previous paragraph yields $x^h=x$.
\par
 If $x$ and $x^h$ lie in different components $\rl_i\dual $ and $\rl_j\dual $,
 then these components are isomorphic and $q(x^h-x)=q(x^h)+q(x) = 2q(x)$.
 Since $h \in \ker \varphi$,
 we have $x^h - x \in \trl$ 
 Further $\trl/\rl$ is totally isotropic with respect to the discriminant form. 
 Thus $q(x^h-x)=2q(x) \equiv 0 \mod 2\Z$, i.e. $q(x) \equiv 0 \mod \Z$.
 If $y$ is any non-trivial element of $\rl_i\dual $, 
 then $x^h$ and $y^h$ lie in the same connected component $\rl_j\dual $ 
 and the same reasoning applies. 
 In particular
 \[\forall y \in \rl_i\dual \colon q(y) \equiv 0 \mod \Z\]
 which implies that $\rl_i$ is $2$-elementary and $q_{\rl_i}$ has values in $\Z/2\Z$.
 Under the constraint $\rk \rl \leq 10$, this is possible only if $\tau(\rl_i)=\tau(\rl_j)=D_4$.
 To sum up $\varphi$ is injective, except possibly if $\tau(\rl)$ has two $D_4$ components. We analyse this case in detail.
 \par
 We may assume that $\rl=\rl_1\oplus \rl_2$ is of type $2D_4$ and
 $\tilde{R}$ an overlattice of $R$. If $\trl = \rl$, then $\varphi$ is injective by Lemma \ref{lem:imWR}.
 Hence we may further assume that $\rl \subsetneq \trl$.
 Suppose there exists a non-trivial element $h$ in the kernel of $\varphi$. 
 By the previous part this implies that $R_1^h=R_2$.
\par
Let $e_1,e_2,e_3,e_4$ be the simple roots of $\rl_1$
with $e_4$ giving the central vertex of the Dynkin diagram of type $D_4$,
i.e. $\langle e_4,e_i \rangle =1$ for $i=1,2,3$.
Let $(e_1\dual , \dots,  e_4\dual ) \in \rl_1\dual $ be the dual basis. %shimada corrected on 2021 June 08
The four elements of $\rl_1\dual /\rl_1$ are represented by
$e_1\dual , e_2\dual , e_3\dual $ and $e_4\dual $
representing $0$.
Set $f_i = e_i^h \in \rl_2$. Then $f_i^h = e_{\sigma(i)}$
for some permutation $\sigma \in S_4$ with $\sigma(4)=4$.
Since $h \in \ker \varphi$,
we have $t_i:=e_i\dual  - f_i\dual  \in \trl$ for $i\in \{1,2,3\}$.
Now the cosets of $0$, $t_1$, $t_2$ and $t_3$ constitute a maximal totally isotropic subspace of $\rl\dual /\rl$ contained in $\trl/\rl$.
Since $\trl/R$ is totally isotropic as well, the subspaces must be equal. We conclude that $\tau(\trl)=E_8$.
By the same reasoning we have $f_i\dual-e_{\sigma(i)}\dual \in \trl$.
As $\trl/\rl$ has only four elements, this is possible only if $\sigma =1$. Hence $h$ is an involution and uniquely determined by $\trl/\rl$. This shows that the kernel of $\varphi$ is of order $2$.
\end{proof}
\begin{lemma}\label{lem:roots-mod2}
 Let $\trl$ be  an $\ADE$-lattice
  and $\Phi^+$ the set of its positive roots.
 Then the natural map $\Phi^+ \rightarrow\trl/2\trl$ is injective.
\end{lemma}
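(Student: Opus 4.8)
The plan is to reduce the claim to the case where $\widetilde{R}$ is irreducible, and there to use the explicit structure of the root systems of $ADE$-type. First I would observe that if $\widetilde{R} = \bigoplus_i \widetilde{R}_i$ is the orthogonal decomposition into irreducible components, then $\Phi^+ = \bigsqcup_i \Phi_i^+$ where $\Phi_i^+$ are the positive roots of $\widetilde{R}_i$, and $\widetilde{R}/2\widetilde{R} = \bigoplus_i \widetilde{R}_i/2\widetilde{R}_i$. Since a root of $\widetilde{R}$ lies in exactly one component (a root has square $-2$ and the components are mutually orthogonal, so it cannot have nonzero projection to two of them), the map $\Phi^+ \to \widetilde{R}/2\widetilde{R}$ is injective as soon as each $\Phi_i^+ \to \widetilde{R}_i/2\widetilde{R}_i$ is injective. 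So we may assume $\widetilde{R}$ is irreducible of type $A_n$, $D_n$, $E_6$, $E_7$ or $E_8$.

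Next I would exploit the Weyl group action together with a square-count. Suppose $r, r' \in \Phi^+$ with $r \equiv r' \pmod{2\widetilde{R}}$; write $r - r' = 2w$ with $w \in \widetilde{R}$. Then $\langle r - r', r - r'\rangle = 4\langle w, w\rangle$ is divisible by $8$, i.e. $\langle w,w\rangle$ is an even (negative) integer, while $\langle r-r',r-r'\rangle = -4 - 2\langle r,r'\rangle$, forcing $\langle r,r'\rangle$ to be even. For roots in an $ADE$ system $\langle r, r'\rangle \in \{-2,-1,0,1,2\}$ with the value $\pm 2$ only for $r' = \pm r$. Since $r,r'$ are both positive and $r \equiv r' \bmod 2\widetilde R$, if $r' = -r$ then $2r \in 2\widetilde{R}$ is automatic, so this case must be excluded by a separate argument; otherwise $\langle r,r'\rangle \in \{0\}$ (the only even value among $\{-1,0,1\}$), so $r$ and $r'$ are orthogonal. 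Thus the problem reduces to: (a) rule out $r' = -r$ for distinct elements of $\Phi^+$ — which is immediate since $\Phi^+$ contains exactly one of $\pm r$; and (b) show that two \emph{orthogonal} positive roots $r \perp r'$ cannot be congruent modulo $2\widetilde{R}$ unless $r = r'$.

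For step (b), I would argue that if $r \perp r'$ and $r - r' = 2w$, then $r + r' = r - r' + 2r' = 2(w + r')$ is also in $2\widetilde{R}$, and $\langle r+r', r+r'\rangle = -4$, so $\tfrac{r+r'}{2}$ would be a vector of square $-1$ in $\widetilde{R}$; but $\widetilde{R}$ is even, a contradiction, unless $r + r' = 0$ — impossible for positive roots — or $r = r'$. Wait: more carefully, $r+r' \in 2\widetilde R$ means $(r+r')/2 \in \widetilde R$, and its square is $\tfrac14(-2 + 2\langle r,r'\rangle - 2) = \tfrac14(-4) = -1$ when $\langle r,r'\rangle = 0$, contradicting evenness of $\widetilde R$. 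Hence $r = r'$. This handles all cases uniformly and in fact shows the irreducibility hypothesis was not even needed once the square-count is in place; the only genuine input is that $\widetilde{R}$ is even and that $\Phi^+$ picks one root out of each pair $\{\pm r\}$.

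The step I expect to require the most care is making sure the congruence $r \equiv r' \pmod{2\widetilde R}$ is used in \emph{both} directions to produce the two auxiliary vectors $(r-r')/2$ and $(r+r')/2$ in $\widetilde R$, and then checking that the square of at least one of them is an odd integer (namely $-1$), which is incompatible with $\widetilde R$ being even. Everything else is a short computation with the root-pairing values $\langle r,r'\rangle \in \{-2,-1,0,1,2\}$, valid for simply-laced root systems. I would present the argument in the clean form: from $r\equiv r'$ we get $r-r'\in 2\widetilde R$ hence $\langle r,r'\rangle$ even hence (as $r,r'$ positive and $r\neq r'$) $\langle r,r'\rangle=0$; then $r+r'\in 2\widetilde R$ gives $(r+r')/2\in\widetilde R$ of square $-1$, contradicting that $\widetilde R$ is even.
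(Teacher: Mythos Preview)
Your argument is correct and genuinely different from the paper's. The paper proceeds by explicit case analysis: it writes down the standard realizations of the $A_n$ and $D_n$ root systems inside $\Z^{n+1}$ and $\Z^n$, checks by hand that two positive roots congruent modulo $2\trl$ must coincide, and leaves $E_6,E_7,E_8$ to the reader. Your proof is uniform and conceptual: from $r-r'=2w$ with $w\in\trl$ even you extract that $\intf{r,r'}$ is even, then Cauchy--Schwarz in the negative-definite lattice forces $\intf{r,r'}\in\{-2,0,2\}$; the values $\pm 2$ give $r'=\pm r$, and $\intf{r,r'}=0$ is killed by observing that $(r+r')/2\in\trl$ would have odd square $-1$. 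This uses only that $\trl$ is even and negative definite and that roots have square $-2$, so it is both shorter and more general than the paper's treatment, and in particular handles the exceptional types without any extra work. One small sharpening: your computation actually gives $4\intf{w,w}=-4-2\intf{r,r'}\in 8\Z$, hence $\intf{r,r'}\equiv 2\pmod 4$, which already excludes $\intf{r,r'}=0$ and makes step~(b) unnecessary; but step~(b) is correct as written and costs nothing.
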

\begin{proof}
 We may assume that $\trl$ is irreducible. 
 In what follows we explicitly compute $\eta\colon \Phi^+ \rightarrow\trl/2\trl$ 
 for each case using 
 classical constructions
 of the $\ADE$-lattices (see e.g. \cite[Theorem 1.2]{ebeling:lattices-and-codes}).
\par
 Let $(\epsilon_1, \dots, \epsilon_{n+1})$ be the standard basis of $\Z^{n+1}$.
 The $n(n+1)$ roots of the lattice
 \[A_n = \left\{(x_i) \in \Z^{n+1} : \sum_{i=1}^{n+1} x_i = 0 \right\}\]
 are given by
 \[\Phi(A_n)=\{\alpha_{ij}=\epsilon_i - \epsilon_j \mid 1\leq i \neq j \leq n+1\}.\]
 Suppose that $\alpha_{ij}\equiv \alpha_{lk} \mod 2A_n\subseteq 2\Z^{n+1}$.
 Then
  we have that $\epsilon_i - \epsilon_j +\epsilon_k - \epsilon_l \equiv 0 \mod 2\Z^{n+1}$.
 This is possible only if each standard basis vector appears twice, i.e. $(i,j)=(k,l)$ or $(i,j)=(l,k)$ which means that
 $\alpha_{ij} = \pm \alpha_{lk}$. 
 Since either $\alpha_{lk}\in \Phi^+$ or $-\alpha_{lk} \in \Phi^+$, the map $\eta$ is injective.
\par
 Let $(\epsilon_1, \dots, \epsilon_{n})$ be the standard basis of $\Z^{n}$, $n\geq 4$.
 The $2n(n-1)$ roots of  the lattice
 \[D_n = \left\{(x_i) \in \Z^{n} : \sum_{i=1}^n x_i \equiv 0 \mod 2 \right\}\]
 are given by
 $\pm (\epsilon_i + \epsilon_j)$ and $\pm (\epsilon_i -  \epsilon_j)$ for $1\leq i < j \leq n$.
 Suppose that $\pm \epsilon_i \pm \epsilon_j \equiv \pm \epsilon_k \pm \epsilon_l \mod 2D_n$.
 As before this implies that $\{i,j\} = \{k,l\}$.
 Since \[(\epsilon_i + \epsilon_j)- (\epsilon_i - \epsilon_j)=2\epsilon_j \notin 2D_4,\]
 the map $\eta$ is injective.
We leave the exceptional cases $E_6,E_7,E_8$ to the reader.
\end{proof}
\begin{lemma}\label{lem:imGm}
Let $\trl=\bigoplus_{j \in J} \trl_j$ be  an $\ADE$-lattice with $\trl_j$ irreducible.
Then the kernel of the natural homomorphism
\[
\psi \colon \OG(\trl)=\OG(\trl(2)) \rightarrow \OG(\tfrac{1}{2}\trl\dual /\trl),
\]
where  $\tfrac{1}{2}\trl\dual /\trl$ is the discriminant form of $\trl(2)$,
is generated by the elements $\oplus_{j\in J} g_j$ with $g_j = \pm 1_{\trl_j}$
 if $\trl_j$ is unimodular and $g_j = 1_{\trl_j}$  otherwise.
\end{lemma}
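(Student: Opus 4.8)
The plan is to compute $\ker\psi$ one irreducible summand at a time, so the first move is a reduction to the irreducible case. Since $\trl=\bigoplus_{j\in J}\trl_j$ is an orthogonal decomposition, we get $\tfrac{1}{2}\trl\dual=\bigoplus_j\tfrac{1}{2}\trl_j\dual$ and hence $\tfrac{1}{2}\trl\dual/\trl=\bigoplus_j(\tfrac{1}{2}\trl_j\dual/\trl_j)$. As $\trl$ is negative definite, any $g\in\OG(\trl)$ permutes the irreducible summands $\trl_j$ (uniqueness of the irreducible decomposition of a definite lattice). If $g$ actually moves a summand, say $g(\trl_j)=\trl_k$ with $k\ne j$, I would pick $x\in\tfrac{1}{2}\trl_j\dual$ with $x\notin\trl_j$ (possible because $\trl_j\subsetneq\tfrac{1}{2}\trl_j\subseteq\tfrac{1}{2}\trl_j\dual$); then the $\trl_j$-component of $g(x)-x$ is $-x\notin\trl_j$, so $g(x)\not\equiv x\pmod\trl$ and $g\notin\ker\psi$. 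Thus $\ker\psi\subseteq\prod_j\OG(\trl_j)$, and $\bigoplus_j g_j\in\ker\psi$ if and only if each $g_j$ acts trivially on $\tfrac{1}{2}\trl_j\dual/\trl_j$; so it is enough to determine $\ker\psi_j$ for $\psi_j\colon\OG(\trl_j)\to\OG(\tfrac{1}{2}\trl_j\dual/\trl_j)$ with $\trl_j$ irreducible.

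Next I would analyze the irreducible case. Let $g\in\ker\psi_j$. Then $g$ fixes the subgroup $\tfrac{1}{2}\trl_j/\trl_j\cong\trl_j/2\trl_j$ pointwise, i.e.\ $g\equiv 1\pmod 2$ as an endomorphism of $\trl_j$, that is $(g-1)(\trl_j)\subseteq 2\trl_j$. A finite-order element of $GL(\trl_j)$ congruent to the identity modulo $2$ must square to the identity: writing $g=1+2h$ gives $g^2\equiv 1\pmod 4$, and a finite-order integral automorphism congruent to $1$ modulo $4$ is trivial (the standard Minkowski-type $2$-adic descent). Being an involution, $g$ is $+1$ on an eigenspace $V_+$ and $-1$ on the orthogonal eigenspace $V_-$; writing $x=x_++x_-$ accordingly, $(g-1)(x)=-2x_-$, so $g\equiv 1\pmod 2$ forces $x_-\in\trl_j$, hence $x_+\in\trl_j$, i.e.\ $\trl_j=(\trl_j\cap V_+)\oplus(\trl_j\cap V_-)$ is an orthogonal direct sum. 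By irreducibility one factor vanishes, so $g=1$ or $g=-1$, giving $\ker\psi_j\subseteq\{\pm 1_{\trl_j}\}$.

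Finally I would check when $-1$ lies in the kernel: $-1$ acts on $\tfrac{1}{2}\trl_j\dual/\trl_j$ by $x\mapsto -x$, which is trivial if and only if $2x\in\trl_j$ for all $x\in\tfrac{1}{2}\trl_j\dual$, i.e.\ if and only if $\trl_j\dual\subseteq\trl_j$, i.e.\ if and only if $\trl_j$ is unimodular (equivalently $\tau(\trl_j)=E_8$, the unique irreducible unimodular $\ADE$-lattice). Hence $\ker\psi_j=\{\pm 1_{\trl_j}\}$ when $\trl_j$ is unimodular and $\ker\psi_j=\{1_{\trl_j}\}$ otherwise, and feeding this back into the product decomposition from the first paragraph yields exactly the claimed generating set of $\ker\psi$. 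The argument is almost entirely formal; the only external ingredient is the Minkowski-type fact that a torsion element of $GL_n(\Z)$ reducing to the identity modulo $2$ squares to the identity, which I would either cite or prove in one short $2$-adic paragraph, and the only spot needing care is the deduction that the two eigenlattices of the involution $g$ split $\trl_j$ orthogonally.
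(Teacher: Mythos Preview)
Your proof is correct and takes a genuinely different route from the paper. Both arguments begin with the same reduction to the irreducible case (showing that an element of $\ker\psi$ must fix each summand), but they diverge in the irreducible analysis. The paper observes that $g\in\ker\psi$ acts trivially on $\trl_j/2\trl_j$, then invokes the auxiliary Lemma~\ref{lem:roots-mod2} (the positive roots of an $\ADE$-lattice inject into $\trl_j/2\trl_j$) to deduce $g(r)=\pm r$ for every root $r$, and uses connectedness of the Dynkin diagram to force a uniform sign; the $-1$ case is then handled via Proposition~\ref{prop:03}. You instead use the Minkowski-type fact that a torsion element of $GL_n(\Z)$ congruent to $1$ modulo $2$ is an involution, and then split $\trl_j$ as the orthogonal sum of the $\pm 1$-eigenlattices to conclude $g=\pm 1$ by irreducibility; your treatment of the $-1$ case is also more direct. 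Your argument is more conceptual and in fact proves the lemma for \emph{any} irreducible even definite lattice, not just $\ADE$-lattices, at the cost of importing the Minkowski congruence result; the paper's argument is more self-contained within the root-system framework but requires the case-by-case verification of Lemma~\ref{lem:roots-mod2} and the somewhat heavier Proposition~\ref{prop:03}.
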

\begin{proof}
 We identify $\tfrac{1}{2}\trl\dual /\trl$ and $\trl\dual /2\trl$. Let $g \in \ker \psi$.
 Since $\trl \subseteq \trl\dual $, $g$ acts trivially
 on $\trl\dual /2\trl\dual $. The action of $\OG(\trl)$ preserves the decomposition $\trl = \bigoplus_{j \in J} \trl_j$.
 In particular $g$ acts on the set $J$. As $\trl\dual /2\trl\dual = \bigoplus_{j \in J}\trl_j\dual /2\trl_j\dual $ and $g$ is in $\ker \psi$ we have $j^g=j$. Hence $g$ must fix each connected component of $\trl$ and we may and will assume that $\trl$ is irreducible.
\par
 We tensor the perfect pairing $\trl\dual  \times \trl \rightarrow \Z$ with $\F_2$,
 to obtain a perfect pairing $\trl\dual /2\trl\dual   \times \trl/2\trl \rightarrow \F_2$. Since $g$ acts trivially on the first factor, so does it on the second factor $\trl/2\trl$.
 By Lemma \ref{lem:roots-mod2} $\Phi(\trl)/\{\pm 1\} \cong \Phi^+(\trl)$ injects into $\trl/2\trl$, which implies that
 $g(r)= \pm r$ for every root $r\in \Phi(\trl)$.
 As any simple root system of $\trl$ is connected, the sign is the same for each simple root.
 Since the simple roots form a basis, $g=\pm 1$.
\par
 Set $\trl_\pm= \ker(g\mp 1)\subset \trl$.
 We apply Proposition \ref{prop:03} to the primitive extension
 \[\trl_+(2) \oplus \trl_-(2)
 \subseteq \trl(2).\]
 Since $g$ acts trivially on the discriminant
 group $\tfrac{1}{2}\trl\dual /\trl$ of $\trl(2)$,
 the implication
 \begin{equation}\label{eq:g-implies}
g|_{\tfrac{1}{2}\trl_{+}\dual /\trl_+}=1 \implies
 g|_{\tfrac{1}{2}\trl_{-}\dual /\trl_-}=1
 \end{equation}
 holds.
 By definition $g|_{\trl_-}=-1_{\trl_-}$ and then by the right hand side of (\ref{eq:g-implies}), 
 the lattice $\trl_{-}(2)$ must be $2$-elementary, i.e. $\trl_-$ is unimodular.
 In particular $\trl_-$ is a direct summand of $\trl$.
 But we assumed the latter to be irreducible, so that
 $\trl\in\{0,\trl_-\}$. Thus $g=\pm 1$ if $\trl$ is unimodular and $g = 1$ else.
\end{proof}
\begin{remark}\label{rem:-1}
Let $\rl$ be an irreducible $\ADE$-lattice.
By \cite[Proposition 1.5]{ebeling:lattices-and-codes}, we have $-1 \in W(\rl)$ if and only if
$\rl$ contains $\rk \rl$ pairwise orthogonal roots,
if and only if $\tau(\rl)$ is one of $A_n$ ($n\geq 1$), $D_n$ ($n\geq 4$, $n$ even), $E_7$, $E_8$.
\end{remark}
\begin{theorem}\label{thm:cde}
 Let $Y$ be a $(\tau, \taubar)$-generic Enriques surface,
  and let $\rl$, $\brl$, $\trl$ be as in Table \ref{table:184}.
Let $\trl = \bigoplus_j \trl_j$ be the decomposition into irreducible components. 
 Then we have 
 \[
 |\closure{G}_{X-}|  = |W(R)|\;  \frac{d_{(\tau,\taubar)}}{ e_{(\tau,\taubar)}},
 \]
 where $d_{(\tau,\taubar)}, e_{(\tau,\taubar)}$ are given as follows.
\begin{eqnarray*}
 d_{(\tau,\taubar)}&:=&
 \begin{cases}
 2 & \text{$\exists$ $j$    such that
  $\tau(\trl_j) = E_8$ and $\tau(\trl_j\cap \rl) = 2D_4$, }\\
 1 & \text{otherwise,}
 \end{cases} 
 \\
  e_{(\tau,\taubar)}&:=&
  \begin{cases}
  2 & \text{$\exists$ $j$     such that    $\tau(\trl_j) = E_8$ and $\trl_j \cap \rl$ contains $8$ orthogonal roots,}\\
 1 & \text{otherwise.}
 \end{cases}
\end{eqnarray*}
 Hence the value of $c_{(\tau,\taubar)}$  in Table \ref{table:184} is 
 equal to 
 %brandhorst added aut_nt on June 9
$e_{(\tau,\taubar)}/  d_{(\tau,\taubar)}=|\Aut_{nt}(Y)|/d_{(\tau,\taubar)}$.
\end{theorem}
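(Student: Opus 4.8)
The plan is to assemble Theorem~\ref{thm:volumeformula} with Lemmas~\ref{lem:dict},~\ref{lem:Gm} and~\ref{lem:imGm}, and then to dispose of a single arithmetic point. Identifying $M_R$ with $S_X$ via $\tilde{g}$, Lemma~\ref{lem:dict} realizes $\SXm$ as an even overlattice $\trl$ of the $\ADE$-lattice $\rl$, with moreover $2\trl\subseteq\rl$ (the relation $D_-=\tfrac12\rl/\trl$ only parses because $\trl\subseteq\tfrac12\rl$), and identifies $\OG(\SXm,D_-)$ with $\OG(\trl,\rl)$ and $D_-\sperp$ with $\rl\dual/\trl$. Hence~\eqref{eq:GXm} reads $G_{X-}=\ker\bigl(\OG(\rl,\trl)\to\OG(\rl\dual/\trl)\bigr)$, so $|G_{X-}|=d_{(\tau,\taubar)}\,|W(\rl)|$ by Lemma~\ref{lem:Gm}. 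Since $\rl\subseteq\trl$ is an overlattice, every root of $\rl$ is a root of $\trl$ and hence lies in a single component of $\trl=\bigoplus_j\trl_j$; therefore $\rl=\bigoplus_j\rl_j$ with $\rl_j:=\rl\cap\trl_j=\trl_j\cap\rl$, and componentwise $2\trl_j\subseteq\rl_j\subseteq\trl_j$, $\rl\dual/\trl=\bigoplus_j\rl_j\dual/\trl_j$.

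Next, $\closure{G}_{X-}$ is the image of $G_{X-}\subset\OG(\SXm)=\OG(\trl)$ under the reduction $\psi\colon\OG(\trl)\to\OG(A_-)=\OG(\tfrac12\trl\dual/\trl)$ of Lemma~\ref{lem:imGm}, so $|\closure{G}_{X-}|=|G_{X-}|/|G_{X-}\cap\ker\psi|$. By Lemma~\ref{lem:imGm}, $\ker\psi$ is generated by the sign changes supported on the unimodular components of $\trl$, i.e.\ on its $E_8$-components; as $\rk\trl=\rk\rl\le 10$ there is at most one such component, and $\ker\psi$ is trivial unless some $\trl_{j_0}$ has type $E_8$, in which case $\ker\psi=\{1,g_0\}$ with $g_0:=(-1_{\trl_{j_0}})\oplus 1$. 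If there is no $E_8$-component then $d_{(\tau,\taubar)}=e_{(\tau,\taubar)}=1$ and $|\closure{G}_{X-}|=|G_{X-}|=|W(\rl)|$, which is the asserted formula. Otherwise I would verify $g_0\in G_{X-}$: it acts by $\pm1_{\trl_j}$ on each $\trl_j$, hence preserves each $\rl_j$ and so $\rl$; and on $\rl\dual/\trl=\bigoplus_j\rl_j\dual/\trl_j$ it is the identity on the summands $j\neq j_0$ and acts by $-1$ on $\rl_{j_0}\dual/\trl_{j_0}$, which is again the identity since $2\trl_{j_0}\subseteq\rl_{j_0}$ and $\trl_{j_0}\cong E_8$ is unimodular give $\rl_{j_0}\dual\subseteq(2\trl_{j_0})\dual=\tfrac12\trl_{j_0}\dual=\tfrac12\trl_{j_0}$, i.e.\ $2\rl_{j_0}\dual\subseteq\trl_{j_0}$. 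Thus $G_{X-}\cap\ker\psi=\ker\psi$ and $|\closure{G}_{X-}|=|G_{X-}|/2=d_{(\tau,\taubar)}|W(\rl)|/2$.

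It remains to identify this factor $2$ with $e_{(\tau,\taubar)}$, i.e.\ to check that whenever $\trl$ has an $E_8$-component $\trl_{j_0}$, the lattice $\rl_{j_0}=\trl_{j_0}\cap\rl$ contains $8$ pairwise orthogonal roots. From $2\trl_{j_0}\subseteq\rl_{j_0}\subseteq\trl_{j_0}\cong E_8$ the lattice $\rl_{j_0}$ has rank $8$; moreover $\trl_{j_0}/\rl_{j_0}$ is a maximal isotropic subgroup of exponent $\le 2$ of the discriminant form of $\rl_{j_0}$ (maximal because $\trl_{j_0}$ is unimodular), and its Pontryagin dual $\rl_{j_0}\dual/\trl_{j_0}$ likewise has exponent $\le 2$, so $\rl_{j_0}\dual/\rl_{j_0}$ has exponent dividing $4$. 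A short verification — e.g.\ listing the rank-$8$ root sublattices of $E_8$ containing $2E_8$, which turn out to be exactly $E_8,\ D_8,\ E_7\oplus A_1,\ 2D_4,\ D_6\oplus 2A_1,\ D_4\oplus 4A_1,\ 8A_1$ — now shows that $\rl_{j_0}$ is $2$-elementary. Then by Remark~\ref{rem:-1} a $2$-elementary $\ADE$-lattice of rank $8$ contains $8$ pairwise orthogonal roots, so $e_{(\tau,\taubar)}=2$ and $|\closure{G}_{X-}|=|W(\rl)|\,d_{(\tau,\taubar)}/e_{(\tau,\taubar)}$.

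Finally, since $|W(\rl)|=|W(R_\tau)|$, feeding $|\closure{G}_{X-}|$ into Theorem~\ref{thm:volumeformula} yields Theorem~\ref{thm:main1vol} with $c_{(\tau,\taubar)}=e_{(\tau,\taubar)}/d_{(\tau,\taubar)}$; and since $|\Aut_{nt}(Y)|=|G_{X-}|/|\closure{G}_{X-}|=e_{(\tau,\taubar)}$ by Lemma~\ref{lem:numerically-trivial}, also $c_{(\tau,\taubar)}=|\Aut_{nt}(Y)|/d_{(\tau,\taubar)}$. The one nonformal ingredient is the verification in the third paragraph that an $E_8$-component of $\trl$ is always ``saturated'' by $8$ orthogonal roots of $\rl$; the rest is a direct assembly of the cited lemmas.
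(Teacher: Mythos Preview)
Your argument is correct and assembles the same lemmas as the paper, but with one genuine twist in how you handle $\ker\psi\cap G_{X-}$. The paper uses the structural description $G_{X-}=W(R)$ or $W(R)\rtimes\langle h\rangle$ from Lemma~\ref{lem:Gm}, computes $\ker\psi\cap W(R)$ via Remark~\ref{rem:-1} to obtain the $e_{(\tau,\taubar)}$-condition directly, and then observes separately that $h\notin\ker\psi$ (because the sign-flip $\pm 1_{\trl_j}$ preserves no system of positive roots, hence cannot equal $h$). You instead bypass the question of whether $g_0\in W(R)$ by showing $g_0\in G_{X-}$ straight from the discriminant description~\eqref{eq:GXm}: since $\rl_{j_0}\dual/\trl_{j_0}$ is killed by $2$, the action of $-1$ on it is trivial. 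This is slick and avoids checking $h\notin\ker\psi$, but then forces you to verify \emph{a posteriori} that the ``$8$ orthogonal roots'' condition in the definition of $e_{(\tau,\taubar)}$ is automatically satisfied whenever an $E_8$-component is present. Your case list of rank-$8$ root sublattices of $E_8$ containing $2E_8$ is correct (one checks that $D_5A_3$, $A_7A_1$, $A_3^2A_1^2$ and the other exponent-$4$ candidates fail the $2$-elementary quotient test), and indeed each entry has only $A_1,D_{2k},E_7,E_8$ components, so Remark~\ref{rem:-1} applies. In effect you have proved slightly more than the paper states: the second clause in the definition of $e_{(\tau,\taubar)}$ is redundant. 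The trade-off is that the paper's route reaches the $e$-condition without any classification, while yours needs the finite check but dispenses with the argument that $h\notin\ker\psi$.
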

\begin{proof}
By Theorem~\ref{thm:volumeformula} and Lemma \ref{lem:dict}, we have
\[G_{X-}= %\ker \OG(\rl, \trl) \rightarrow \OG(\rl\dual / \trl)
\ker (\OG(\trl, \rl) \rightarrow \OG(\rl\dual / \trl)), 
\]
which, by Lemma \ref{lem:Gm}, is given
 by $W(\rl)$, or by $W(\rl)\rtimes \langle h \rangle$ for some involution $h \in \Aut(\tau(\rl),\trl)$ if there is some component $\trl_j$ with $\tau(\trl_j)=E_8$ and $\tau(\trl_j \cap \rl)=2D_4$.
Consider the natural homomorphism $\psi\colon \OG(\trl) \rightarrow \OG(\tfrac{1}{2}\trl\dual /\trl)$
in Lemma~\ref{lem:imGm}.
By our dictionary in Lemma \ref{lem:dict}, 
we have $\closure{G}_{X-} = \psi(G_{X-})$.
 By Lemma \ref{lem:imGm},
 the kernel 
 of 
 $\psi$ consists of those $g=\oplus_{j \in J} g_j$ with $g_j = \pm 1_{\trl_j}$ 
 if $\trl_j$ is unimodular and $g_j = 1_{\trl_j}$ else.
Further $\Ker \psi  \cap W(R)$ consists of those
$g$ with $g_j = \pm 1$ if $\trl_j$ is unimodular and $-1 \in W(R\cap \trl_j)$,  and $g_j=1$ else. 
Now Remark \ref{rem:-1} yields the condition for $e_{(\tau,\taubar)}$.
Since the $g_j=\pm 1$ do not preserve any positive root system, 
the involution $h$ is not in $\Ker \psi $. 
This explains the presence of $d_{(\tau,\taubar)}$.
%shimada added this on June 11
Finally, in the geometric situation,
we have $G_{X-}=W(R)$
(see Remark~\ref{rem:142and170} below), 
% brandhorst added this on June 9
 and hence $\Aut_{nt}(Y) \cong \ker\left( G_{X-} \to \Gbar_{X-}\right) = \Ker \psi \cap W(R)$ gives $e_{(\tau,\taubar)}=|\Aut_{nt}(Y)|$,
 where the isomorphism 
follows from Lemma~\ref{lem:numerically-trivial}.
\end{proof}
%
%
%
% shimada moved below to Introduction, just after Theorem 1.17
%\begin{remark}
%The contribution $e_{(\tau,\taubar)}$ is due to the presence of semi-symplectic numerically trivial automorphisms.
%\end{remark}
% Brandhorst added an explanation for d_(tau,taubar) on June 9 2021
\begin{remark}\label{rem:142and170}
The factor $d_{(\tau,\taubar)}$ is nontrivial only for
Nos.~142 and~170 which are not realized geometrically.
This is explained by an extra ``automorphism" of $Y$ which exchanges two $D_4$ configurations of ``smooth rational curves" and acts trivially on their orthogonal complement in $\SY$. 
This is not visible in the Weyl group.
Thus in the geometric cases a nontrivial contribution of $c_{(\tau,\taubar)}=e_{(\tau,\taubar)}$ is indeed explained by the presence of a numerically trivial involution of $Y$.
\end{remark}

\section{Borcherds' method}\label{sec:Borcherds}
\subsection{An algorithm on a graph}\label{subsec:graph}
The algorithms to prove our main results are variations of 
the following computational procedure.
\par
Let $(V, E)$ be a simple non-oriented connected graph,
where $V$ is the set of vertices
and $E$ is the set of edges,
which is a set of non-ordered pairs of distinct elements of $V$.
The set  $V$ may be  infinite.
Suppose that a group $G$ acts on $(V, E)$ from the right.
We assume the following.
\newcommand{\AssumpVE}{$\mathord{\rm VE}$}
\begin{enumerate}[(\AssumpVE-1)]
\item
For any vertex $v\in V$,
the set $\set{v\sprime \in V}{\{v, v\sprime\}\in E}$
of vertices adjacent to $v$ is finite and can be calculated
effectively.
\item
For any vertices $v, v\sprime\in V$, we can determine effectively
whether the set
\begin{equation}\label{eq:TG}
\TG (v, v\sprime):=\set{g\in G}{v^g=v\sprime}
\end{equation}
is empty or not,
and when it is non-empty,
we can calculate an element of $\TG (v, v\sprime)$.
\item
For any  $v\in V$,
the stabilizer subgroup $\TG (v, v)$ of $v$ in $G$
is finitely generated, and a finite set of generators of $\TG (v, v)$
can be calculated
effectively.
\end{enumerate}
We define the \emph{$G$-equivalence relation} $\sim$ on $V$ by
\[
v\sim v\sprime\;\;\Longleftrightarrow\;\; \TG (v, v\sprime)\ne\emptyset.
 \]
Suppose that $V_0$ is
a non-empty finite subset of $V$ with the following properties.
\newcommand{\PropertyVzero}{$\mathord{\rm V}_0$}
\begin{enumerate}[(\PropertyVzero-1)]
\item If $v, v\sprime \in V_0$ are distinct,
then $v\not\sim v\sprime$.
\item
We put
$\widetilde{V}_0:=\set{v\in V}{ \textrm{$v$ is adjacent to a vertex belonging to $V_0$}}$.
Then, for each  $v\in \widetilde{V}_0$,
there exists a vertex $v\sprime\in V_0$ such that  $v\sim v\sprime$.
\end{enumerate}
For each $v\in \widetilde{V}_0$,
we choose an element
$h(v)\in \TG (v, v\sprime)$,
where $v\sprime$ is
the unique vertex in $V_0$ such that  $v\sim v\sprime$,
and put
\[
\HHH:=\set{h(v)}{v\in \widetilde{V}_0}.
\]
We fix an element $v_0\in V_0$.
\begin{proposition}
The natural mapping
\begin{equation}\label{eq:V0bijection}
V_0\inj V\surj V/\mathord{\sim}=V/G
\end{equation}
is a bijection,
and the group $G$ is generated by the union of $\TG (v_0, v_0)$ and  $\HHH$.
\end{proposition}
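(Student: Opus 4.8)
The plan is to prove the two assertions in turn, the common engine being the connectedness of $(V,E)$ together with the adjacency property (V$_0$-2). I would begin by recording the trivial but essential point that, because $G$ is a group, $\sim$ is an equivalence relation (reflexivity, symmetry, transitivity correspond to the identity, inverses, and products in $G$), so that $V/{\sim}=V/G$ is meaningful.

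For the bijectivity of \eqref{eq:V0bijection}: injectivity is exactly property (V$_0$-1). For surjectivity I would show, by induction on the graph distance $d(v_0,v)$ (finite since the graph is connected), that every $v\in V$ satisfies $v\sim w$ for some $w\in V_0$. The case $v=v_0$ is clear. For the step, choose a neighbour $u$ of $v$ lying on a shortest path from $v_0$, so $d(v_0,u)<d(v_0,v)$; by the inductive hypothesis there is $g\in G$ with $u^g=w\in V_0$. Then $v^g$ is adjacent to $w\in V_0$, hence $v^g\in\widetilde{V}_0$, and (V$_0$-2) gives $v^g\sim w'$ for some $w'\in V_0$; therefore $v\sim v^g\sim w'$. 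Combined with injectivity, \eqref{eq:V0bijection} is a bijection, and consequently there is a well-defined map $r\colon V\to V_0$ with $v\sim r(v)$ for all $v$ (uniqueness being property (V$_0$-1)), which restricts to the identity on $V_0$.

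For the generating set, let $G'$ denote the subgroup of $G$ generated by $\TG(v_0,v_0)\cup\HHH$; only the inclusion $G\subseteq G'$ requires an argument. The heart of the matter is the following claim, again proved by induction on $d(v_0,v)$: for every $v\in V$ there is $g\in G'$ with $v^g=r(v)$. For $v=v_0$ take $g=1\in\TG(v_0,v_0)$. For the step, pick a neighbour $u$ of $v$ closer to $v_0$; the inductive hypothesis yields $g_1\in G'$ with $u^{g_1}=r(u)\in V_0$, so $v^{g_1}$ is adjacent to $r(u)\in V_0$, i.e.\ $v^{g_1}\in\widetilde{V}_0$. Then $h(v^{g_1})\in\HHH\subseteq G'$ carries $v^{g_1}$ to $r(v^{g_1})\in V_0$; since $v^{g_1 h(v^{g_1})}\sim v$ and lies in $V_0$, uniqueness forces $r(v^{g_1})=r(v)$, so $g:=g_1h(v^{g_1})\in G'$ works. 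Granting the claim, an arbitrary $g\in G$ is handled by applying it to $v:=v_0^g$: since $v\sim v_0$ we have $r(v)=v_0$, so there is $g'\in G'$ with $v^{g'}=v_0$, i.e.\ $v_0^{gg'}=v_0$; hence $gg'\in\TG(v_0,v_0)\subseteq G'$ and $g=(gg')(g')^{-1}\in G'$, as desired.

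The arguments are elementary, and I do not expect a genuine obstacle; the only things to be careful about are the order of the two inductions — surjectivity, hence the well-definedness of $r$, has to be established before the generation claim, which refers to $r$ — and the mechanism of each inductive step, where a far-away vertex is first transported into the controlled neighbourhood $\widetilde{V}_0$ by a group element supplied by the inductive hypothesis, rather than attempting to move stepwise along a path inside $V$.
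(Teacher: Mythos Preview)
Your proof is correct and follows essentially the same approach as the paper: connectedness is exploited via a path-shortening argument (you phrase it as induction on graph distance, the paper iteratively shortens a path from $V_0$ to the orbit $v^{\langle\HHH\rangle}$), and generation is deduced by transporting $v_0^g$ back into $V_0$ and invoking (V$_0$-1). The only cosmetic difference is that the paper's intermediate claim uses $\langle\HHH\rangle$ alone rather than your larger $G'$, but your induction in fact only ever multiplies by elements of $\HHH$, so this distinction is immaterial.
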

\begin{proof}
Let $\gen{\HHH}$ be the subgroup of $G$ generated by $\HHH$.
First we prove that,
for any $v\in V$, there exists an element $h\in \gen{\HHH}$ such that
$v^h\in V_0$.
Let an element $v\in V$ be fixed.
A sequence
\begin{equation}\label{eq:sequence}
v\sbar{0}, v\sbar{1}, \dots, v\sbar{l}
\end{equation}
of vertices is said to be a \emph{path from $V_0$ to $v^{\gen{\HHH}}$}
if $v\sbar{i-1}$ and $v\sbar{i}$ are adjacent for $i=1, \dots, l$,
the starting vertex $v\sbar{0}$ is in $V_0$,
and the ending vertex $v\sbar{l}$ belongs to the orbit $v^{\gen{\HHH}}$ of
the fixed vertex $v$
under the action of $\gen{\HHH}$.
Since $(V, E)$ is connected and $V_0$ is non-empty,
there exists at least one path from $V_0$ to $v^{\gen{\HHH}}$.
Suppose that the sequence~\eqref{eq:sequence} is
a path from $V_0$ to $v^{\gen{\HHH}}$  of length $l>0$.
Since $v\sbar{1}$ is adjacent to the vertex $v\sbar{0}$ in $V_0$,
we have $v\sbar{1}\in \widetilde{V}_0$ and
there exists an element $h_1:=h(v\sbar{1})\in \HHH$ that maps $v\sbar{1}$ to
an element of $V_0$.
Then
\[
v\sbar{1} \sp{h_1}, \dots, v\sbar{l} \sp{h_1}
\]
is a path from $V_0$ to $v^{\gen{\HHH}}$  of length $l-1$.
Thus we obtain a path from $V_0$ to $v^{\gen{\HHH}}$  of length $0$,
which implies the claim.
\par
The injectivity of~\eqref{eq:V0bijection}
follows from property~(\PropertyVzero-1) of $V_0$.
The surjectivity
follows from the claim above.
Suppose that $g\in G$.
By the claim,
there exists an element $h\in \gen{\HHH}$ such that $v_0^{gh}\in V_0$.
By property~(\PropertyVzero-1) of $V_0$,
we have $v_0=v_0^{gh}$ and hence $gh\in \TG (v_0,v_0)$.
Therefore $G$ is generated by the union of $\HHH$ and $\TG (v_0, v_0)$.
\end{proof}
To obtain  $V_0$ and $\HHH$,
we employ Procedure~\ref{procedure:genB}.
This procedure terminates
if and only if $|V/G|<\infty$.
\begin{procedure}
\begin{algorithmic} %[1]
\State Initialize $V_0:=[v_0]$,  $\HHH:= \{\}$, and $i:= 0$.
\While{$i <|V_0|$}
    \State Let $v_i$ be the $(i+1)$st entry of the list $V_0$.
    \State Let $\AAA(v_i)$ be the set of vertices adjacent to $v_i$.
    \For {each vertex $v\sprime$ in $\AAA(v_i)$}
    	\State  Set ${\tt flag}:=\mathrm{true}$.
    	\For {each  $v\spprime$ in $V_0$}
    		\If{$\TG (v\sprime, v\spprime)\ne \emptyset$}
            	\State Add an element $h$ of $\TG (v\sprime, v\spprime)$ to $\HHH$.
            	\State Replace ${\tt flag}$ by $\mathrm{false}$.
            	\State Break from the innermost for--loop.
            \EndIf
        \EndFor
        \If{${\tt flag}=\mathrm{true}$}
        	\State Append $v\sprime$ to the list $V_0$ as the last entry.
        \EndIf
    \EndFor
    \State Replace $i$ by $i+1$.
\EndWhile
\end{algorithmic}
 \caption{A computational procedure on a graph}\label{procedure:genB} 
\end{procedure}
\subsection{$17$ primitive embeddings}\label{subsec:17}
Recall that $L_{26}$ is an even unimodular hyperbolic lattice
of rank $26$.
The $L_{26}$-chamber (that is, the standard fundamental domain of $W(L_{26})$)
was studied by Conway~\cite{Conway1983}.
He constructed a bijection between the set of walls of 
an $L_{26}$-chamber $D$ and the set of vectors of the Leech lattice,
and showed that the automorphism group $\OG(L_{26}, D)$ of $D$ is
isomorphic to the group of \emph{affine} isometries of the Leech lattice.
Using this result,
Borcherds~\cite{Bor1},~\cite{Bor2} developed a method
to calculate the orthogonal group of an even  hyperbolic lattice $S$
by embedding $S$ primitively into $L_{26}$
and investigating the tessellation of 
an $S$-chamber (that is, a standard fundamental domain of $W(S)$)
by $L_{26}/S$-chambers.
\par
In~\cite{BS2019},
we apply this method to $S=L_{10}(2)$.
We fix positive half-cones  $\PPP_{10}$ of $L_{10}$ and $\PPP_{26}$ of $L_{26}$.
In~\cite{BS2019},
we have proved the following.
\begin{theorem}[\cite{BS2019}]\label{thm:17}
Up to the action of $\OG(L_{10})$ and $\OG(L_{26})$,
there exist exactly $17$ primitive embeddings of
$L_{10}(2)$ into $L_{26}$.
\qed
\end{theorem}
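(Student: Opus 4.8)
The plan is to classify primitive embeddings $L_{10}(2) \inj L_{26}$ by means of the standard discriminant-form correspondence of Nikulin~\cite{Nikulin79}. Recall that $L_{10}(2)$ is an even lattice of rank $10$ and signature $(1,9)$ with discriminant form $(A_+, q_+)$, the hyperbolic lattice $L_{10}$ scaled by $2$, so $A_+ \cong (\Z/2\Z)^{10}$ with the associated quadratic form. Since $L_{26}$ is even unimodular of signature $(1,25)$, a primitive embedding $L_{10}(2) \inj L_{26}$ is equivalent — up to the action of $\OG(L_{10}(2)) = \OG(L_{10})$ on the source and $\OG(L_{26})$ on the target — to the following data: a choice of orthogonal complement $N$, which must be an even lattice of signature $(0, 16)$ (i.e.\ negative definite of rank $16$) whose discriminant form $(A_N, q_N)$ is isomorphic to $(A_+, -q_+)$, together with a choice of gluing, which is forced to be the full graph since $L_{26}$ is unimodular. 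By Nikulin's existence and uniqueness machinery the isomorphism class of the embedding (modulo the two orthogonal groups) is then determined by the isomorphism class of $N$ as a lattice.

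So the problem reduces to: \emph{enumerate the even negative definite lattices $N$ of rank $16$ whose discriminant form is $(A_+, -q_+)$.} First I would compute $-q_+$ explicitly and observe that it is $2$-elementary with the correct length $10$ and signature invariant $\bmod\, 8$; in particular any such $N$ is $2$-elementary of rank $16$ with $\ell(A_N) = 10$. Nikulin's classification of even $2$-elementary lattices (in terms of the triple $(\operatorname{rank}, \ell, \delta)$ together with signature) then gives at most a bounded list of genera, and for negative definite lattices of rank $16$ one can pin down the genus precisely — there is essentially one genus with these invariants. The remaining task is to enumerate the isometry classes inside that genus; this is where one invokes the known mass formula / Kneser neighbour computations for rank-$16$ even lattices, or simply cites the classical fact (going back to Niemeier-type enumerations in rank $\le 16$, or a direct machine computation) that this genus contains exactly $17$ classes. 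Each class $N$ then produces, by the gluing construction above, exactly one orbit of primitive embeddings, and conversely distinct $N$ give distinct orbits; counting gives $17$.

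The step I expect to be the main obstacle is the last one: showing that the relevant genus of rank-$16$ even negative definite $2$-elementary lattices with $\ell = 10$ contains precisely $17$ isometry classes, and that each of them does embed (i.e.\ the genus is nonempty and every class is realized). The existence half is handled uniformly by Nikulin's criterion (the numerical conditions on $(r_\pm, \ell, q)$ are easily checked), and the uniqueness-of-orbit-per-class half is immediate from unimodularity of $L_{26}$ (there is no choice in the gluing map beyond the $\OG$-action already quotiented out, and two embeddings with isometric complements are conjugate by Nikulin's uniqueness theorem applied to the unimodular overlattice). What genuinely requires work — and what \cite{BS2019} carries out — is the enumeration of the $17$ classes themselves, either via a direct Kneser-neighbour search seeded by one explicit lattice (e.g.\ $E_8(2) \oplus E_8(2)$ has the wrong $\ell$, so one starts from a lattice like $D_{16}^+(2)$-type constructions or from $L_{10}(2)^\perp$ inside an explicit model of $L_{26}$) together with a mass-formula check to certify completeness. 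Once that list is in hand, Theorem~\ref{thm:17} follows.
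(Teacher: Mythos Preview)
The paper does not prove this theorem; it is quoted from \cite{BS2019} with a \qed, so there is no argument here to compare against. Your outline via Nikulin's discriminant-form correspondence is correct and is the standard route: since $L_{26}$ is unimodular and $\OG(L_{10}(2))\to\OG(A_+)$ is surjective (a fact the present paper itself invokes in the proof of Theorem~\ref{thm:volumeformula}), orbits of primitive embeddings biject with isometry classes of the orthogonal complement, and these lie in a single genus of even negative-definite $2$-elementary lattices of rank $16$ with length $\ell=10$, which \cite{BS2019} then enumerates explicitly. One small slip in your outline: your proposed seed lattices $E_8(2)^{\oplus 2}$ and $D_{16}^+(2)$ both have $\ell=16$, not $10$, so neither lies in the target genus; a valid seed is most easily obtained exactly as you also suggest, by taking the orthogonal complement of an explicit copy of $L_{10}(2)$ inside a concrete model of $L_{26}$.
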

These $17$ primitive embeddings of $L_{10}(2)$ into $L_{26}$ are named as
\[
{\tt 12A},\;
{\tt 12B},\;
{\tt  20A},\; \dots\;,
{\tt 20F},\;
{\tt 40A},\;  \dots\;,
{\tt 40E},\;
{\tt 96A},\;
{\tt 96B},\;
{\tt 96C},\;
{\tt infty}.
\]
Recall the notion of being reflexively simple from Definition~\ref{def:reflexively}.
\begin{theorem}[\cite{BS2019}]\label{thm:16simples}
Suppose that 
a primitive embedding $\Lten(2)\inj L_{26}$ 
is  not of type~{\tt infty},
Then 
each $\LLt$-chamber has only finitely many walls, and 
they are defined by roots of $L_{10}$.
Moreover the tessellation of $\PPP_{10}$ by $\LLt$-chambers is reflexively simple.
\qed
\end{theorem}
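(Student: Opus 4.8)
The plan is to reduce all three assertions to a single $L_{26}/L_{10}(2)$-chamber. Fix an $L_{26}$-chamber $D\subset\PPP_{26}$ meeting $\PPP_{10}$ in full dimension and put $D_M:=\PPP_{10}\cap D$; every $L_{26}/L_{10}(2)$-chamber arises this way. Let $\pr\colon L_{26}\to L_{10}(2)\tensor\Q$ be the orthogonal projection and $N:=(L_{10}(2)\inj L_{26})\sperp$, which is negative definite of rank $16$; recall $L_{10}(2)\dual=\tfrac12 L_{10}$. Throughout I use Conway's description of $D$~\cite{Conway1983}: $D$ has a \emph{Weyl vector}, i.e.\ a primitive isotropic $\rho\in\closure{D}$ such that the walls of $D$ are exactly the hyperplanes $(r)\sperp$ with $r$ a root of $L_{26}$ and $\intf{r,\rho}=1$. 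The first assertion, that the walls of $D_M$ are defined by roots of $L_{10}$, is soft: by the description of $L/M$-chambers in Section~\ref{subsec:LMchams}, a wall of $D_M$ is $D_M\cap(\pr(r))\sperp$ for a root $r$ defining a wall of $D$ with $\pr(r)\neq0$ and $\intf{\pr(r),\pr(r)}<0$; since $r-\pr(r)\in N\tensor\Q$ and $N$ is negative definite, $\intf{\pr(r),\pr(r)}\geq\intf{r,r}=-2$, so writing $\pr(r)=u/2$ with $u\in L_{10}$ gives $\intf{u,u}_{L_{10}}\in(-4,0)$, hence $\intf{u,u}_{L_{10}}=-2$ because $L_{10}$ is even; thus $u$ is a root of $L_{10}$ and $(\pr(r))\sperp=(u)\sperp$.

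The finiteness statement is where the hypothesis enters. The point — to be extracted from the classification of Theorem~\ref{thm:17} by examining the $17$ complement lattices — is that the embedding is of type {\tt infty} \emph{precisely} when a Weyl vector of some $L_{26}$-chamber meeting $\PPP_{10}$ lies in $L_{10}(2)$, and in the remaining $16$ cases $\rho\notin L_{10}(2)$. Granting $\rho\notin L_{10}(2)$, set $w:=\pr(\rho)$, so $\intf{w,w}=-\intf{\rho-\pr(\rho),\rho-\pr(\rho)}=:c>0$. If $D_M\cap(u)\sperp$ is a wall of $D_M$ coming from a wall $r$ of $D$ with $u/2=\pr(r)$, then $\intf{\pr(r),\pr(r)}=-1$ by the first step, so $\intf{r-\pr(r),r-\pr(r)}=-1$; expanding $1=\intf{\rho,r}$ along $L_{10}(2)\tensor\Q\oplus N\tensor\Q$ and applying Cauchy--Schwarz for the negative definite form on $N$ gives $|\intf{w,u}_{L_{10}}|=|\intf{w,\pr(r)}|\leq 1+\sqrt{c}$. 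Since $\intf{w,w}_{L_{10}}>0$, only finitely many roots $u$ of $L_{10}$ satisfy such a bound (a standard compactness argument), so $D_M$ has finitely many walls; as this uses only the Weyl vector of the ambient $L_{26}$-chamber, it applies uniformly to every $L_{26}/L_{10}(2)$-chamber.

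For reflexive simplicity, let $D_M\cap(u)\sperp$ be a wall of $D_M$ with $u/2=\pr(r)$, $r$ a root of $L_{26}$, and put $r':=u-r\in L_{26}$. From $\intf{u,u}_{L_{26}}=-4$ and $\intf{u,r}_{L_{26}}=-2$ one checks $\intf{r',r'}=-2$ and $\intf{r,r'}=0$, so $s_r,s_{r'}\in W(L_{26})$ commute; a direct computation then shows that $\tilg:=s_rs_{r'}\in\OG(L_{26})$ preserves $L_{10}(2)$ and $N$, acting on $L_{10}(2)\tensor\Q$ as $x\mapsto x+\intf{x,u}_{L_{10}}(r+r')=x+\intf{x,u}_{L_{10}}\,u=s_u(x)$ and on $N\tensor\Q$ as the involution $z\mapsto z+\intf{z,r}(2r-u)$, which preserves $N$ since $2r-u\in N$. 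Hence $\tilg|L_{10}(2)=s_u\in W(L_{10})$ is an involution fixing $(u)\sperp$ pointwise, which is precisely the condition of Definition~\ref{def:reflexively}.

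The hard part will be the finiteness step, and inside it the claim that among the $17$ embeddings only {\tt infty} has its Weyl vectors inside $L_{10}(2)$: this is the single place where Conway's combinatorial structure on the $L_{26}$-chamber, together with the explicit list of Theorem~\ref{thm:17}, is genuinely needed. The other two assertions follow formally from the evenness of $L_{10}$ and the elementary root arithmetic above.
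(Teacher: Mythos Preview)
The paper does not prove this theorem; it is cited from~\cite{BS2019} and marked with \qed, so there is no in-paper argument to compare against. That said, your sketch is sound and its pieces are recognisable from the paper itself.

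Your first and third assertions (walls defined by roots of $L_{10}$; reflexive simplicity) are correct and require no hypothesis on the embedding type. The reflexive-simplicity computation---splitting a $(-4)$-vector $u\in L_{10}(2)$ as a sum $r+r'$ of two orthogonal roots of $L_{26}$ and checking that $s_rs_{r'}$ restricts to the reflection $s_u$ on $L_{10}(2)$---is precisely the mechanism the paper uses in Proposition~\ref{prop:SXSY2simple} for the analogous statement on $\SX/\SY(2)$-chambers. Your arithmetic ($\intf{u,u}_{L_{26}}=-4$, $\intf{u,r}_{L_{26}}=-2$, hence $\intf{r',r'}=-2$ and $\intf{r,r'}=0$, with $2r-u\in N$) is correct.

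For finiteness, your strategy is also correct: if the Weyl vector $\rho$ of an inducing $L_{26}$-chamber has $\pr(\rho)=w$ of positive norm, then the Leech-root condition $\intf{\rho,r}=1$ together with Cauchy--Schwarz on the negative-definite complement bounds $|\intf{w,u}_{L_{10}}|$, and since $\intf{w,w}_{L_{10}}>0$ only finitely many roots $u$ of $L_{10}$ satisfy such a bound. You correctly isolate the one genuine input: that for the $16$ non-{\tt infty} types the Weyl vector does not lie in $L_{10}(2)$. This you do not verify but defer to the explicit classification of Theorem~\ref{thm:17}; that is indeed where it lives, and the naming convention ({\tt 12A}, {\tt 20A}, \dots, {\tt 96C} versus {\tt infty}) records exactly the wall count---see Example~\ref{example:96C}. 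One remark that tightens your reduction: since reflexive simplicity is established independently of the embedding type, the isometries $s_rs_{r'}\in\OG(L_{26})$ preserve $L_{10}(2)$ and relate all $L_{26}/L_{10}(2)$-chambers, so it suffices to check $\rho\notin L_{10}(2)$ for a \emph{single} $L_{26}$-chamber meeting $\PPP_{10}$ in full dimension; the property then propagates.
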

The explicit description of the $17$ primitive embeddings
and $\LLt$-chambers
is given in~\cite{BS2019} and~\cite{BrandhorstShimadaCompData}.
From these data, we see the following.
Let $L_{10}(2) \inj L_{26}$ be a primitive embedding
whose type is not  {\tt infty},
and $D$ an $\LLt$-chamber.
The automorphism group of $D$ is denoted by
\[
\OG(L_{10}, D):=\set{g\in \OGP(L_{10})}{D^g=D}.
\]
Since the walls of $D$ are defined by  roots of $L_{10}$, 
the chamber $D$ is tessellated by Vinberg chambers.
The volume of $D$ is defined by
\[
\vol(D):=\textrm{the number of Vinberg chambers contained in $D$}.
\]
Let $f$ be a face of $D$ with codimension $k$.
Then the defining roots of the walls of $D$ containing $f$
form a configuration whose dual graph is a Dynkin diagram
of an $\ADE$-type.
The \emph{$\ADE$-type of $f$} is the $\ADE$-type of this Dynkin diagram.
The closure $\clD$ of $D$ in $\SX\tensor\R$ 
contains only a finite number of isotropic rays.
Let $v\in \SX\cap \clD$ be a primitive isotropic ray (see Section~\ref{subsec:faces}).
Then the defining roots $r$ of walls of $D$
such that $\intf{r, v}=0$
form a configuration whose dual graph is a Dynkin diagram
of an \emph{affine} $\ADE$-type.
The \emph{affine $\ADE$-type} of the  isotropic ray $\R_{>0}v$ is
the affine $\ADE$-type of this Dynkin diagram.
\begin{example}\label{example:96C}
Let $L_{10}(2) \inj L_{26}$ be the primitive embedding of type ${\tt 96C}$,
and  $D_0$  an $\LLt$-chamber.
Then $D_0$ has exactly $96$ walls.
The group $\OG(L_{10}, D_0)$ is 
%\[
%(((C_2^8: C_3^2): Q_8): C_3): C_2
%\]
of order $110592=2^{12}\cdot 3^3$, and
this group acts on the set of walls of $D_0$ transitively.
We have
\[
\vol(D_0)=\frac{1_{\BP}}{72}=652758220800.
\]
%1728 36
%768 16
%144 3
The $\LLt$-chamber  $D_0$ has $1728+768+144$ faces of codimension $2$,
which are decomposed into orbits of size $1728$, $768$, $144$
under the action of $\OG(L_{10}, D_0)$.
Hence each wall of $D_0$
is bounded by $36+16+3=55$ faces of codimension $2$ of $D_0$.
The $\ADE$-types of faces in these orbits are 
$2A_1$, $2A_1$, $A_2$, 
respectively.
%The faces of codimension $2$ in the orbit of size $1728$ or of size $144$
%are of $\ADE$-type $2A_1$,
%whereas
%the faces  in the orbit of size $768$
%are of $\ADE$-type $A_2$.
The $\LLt$-chamber  $D_0$ has $18+256+256+864$  isotropic rays,
which are decomposed into orbits of size $18$, $256$, $256$, $864$
by the action of $\OG(L_{10}, D_0)$.
The affine $\ADE$-types of  isotropic rays of these  orbits
are $8{A}_1$, $4{A}_2$,  $4{A}_2$,  $2{A}_1+2{A}_3$,
respectively.
\end{example}
\subsection{Constructing $\SX$}\label{subsec:construction}
Let $Y$ be an Enriques surface 
with the universal covering $\pi\colon X\to Y$.
We consider the following assumption:
\begin{equation}\label{eq:assumpSXL26}
\parbox{10.5cm}{
we have a primitive embedding $\SX\inj L_{26}$
such that  the composite $ \SY(2)\cong L_{10}(2) \inj L_{26}$ of 
$\pi\sp*\colon \SY(2)\inj \SX$ and
$\SX\inj L_{26}$ 
is not of type {\tt infty}, and we have the list of walls 
of an $\LSY$-chamber $D_0$ that is contained in $\NefY$.}
\end{equation}
Suppose that~\eqref{eq:assumpSXL26} holds.
Then  $\PPP_Y$
has the following three tessellations,
each of which  is a refinement of the one below.
\begin{itemize}
\item by Vinberg chambers, 
\item by $\LSY$-chambers, each of which has only finite number of walls, and 
\item by $\SX/\SY(2)$-chambers, one of which is $\NefY$.
\end{itemize}
The tessellation of 
$\NefY$ by $\LSY$-chambers is very useful in analyzing  $\NefY$.
Recall that $G_Y\subset \OGP(\SY)$ is the image of the projection of $G_X\subset \OGP(\SX)$
defined by~\eqref{eq:GX}.
\begin{proposition}\label{prop:tressellationpreserve}
Suppose that $Y$ satisfies~\eqref{eq:assumpTXomega}~and~\eqref{eq:assumpSXL26}. 
Then the action of $G_Y$ on
$\PPP_Y$ preserves the  tessellation of $\PPP_Y$ by $\LSY$-chambers.
In particular, the action of $\aut(Y)$ 
on $\NefY$ preserves the  tessellation of $\NefY$ by $\LSY$-chambers.
\end{proposition}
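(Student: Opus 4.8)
The plan is to reduce the statement to the already-established fact (Proposition~\ref{prop:LM1M2}) that, under the sequence of primitive embeddings $\SY(2)\inj \SX\inj L_{26}$, each $\SX/\SY(2)$-chamber is tessellated by $\LSY$-chambers, together with the description of $G_Y$ as a group of isometries that lift to $\OGP(\SX)$. The key point is that any isometry which lifts to $\SX$ and which moreover preserves the ambient tessellation of $\PPP_X$ by $\LSX$-chambers must carry the refinement structure on $\PPP_Y$ to itself; so the crux is to see that an element of $G_X$ preserves the tessellation of $\PPP_X$ by $L_{26}/\SX$-chambers. This in turn follows because $G_X$ acts on $\SX\dual/\SX$ trivially (by definition \eqref{eq:GX}), hence every $\tilg\in G_X$ extends to an isometry of $L_{26}$ preserving $\SX$: indeed $L_{26}$ is the overlattice of $\SX\oplus \SX^{\perp_{L_{26}}}$ glued along the (anti-)isometry of discriminant forms, and an isometry of $\SX$ acting trivially on the discriminant group glues with the identity on $\SX^{\perp_{L_{26}}}$ to an isometry of $L_{26}$. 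Any isometry of $L_{26}$ permutes the $L_{26}$-chambers, hence preserves the tessellation of $\PPP_{26}$, and therefore its restriction to $\PPP_X$ preserves the tessellation of $\PPP_X$ by $L_{26}/\SX$-chambers.

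Concretely I would argue as follows. Let $g\in G_Y$. By the definition of $G_Y$ as the image of $G_X$ under the projection $\OGP(\SX)\to\OGP(\SY)$, there is $\tilg\in G_X$ with $\tilg|_{\SY(2)}=g$; recall $\tilg$ commutes with $\enrinvol$ and acts trivially on $\SX\dual/\SX$. As above, $\tilg$ extends to an isometry $\hat g\in\OGP(L_{26})$ with $\hat g(\SX)=\SX$ and $\hat g|_{\SX}=\tilg$. Since $\hat g\in\OGP(L_{26})$, it permutes the $\LSX$-chambers that tessellate $\PPP_X$; equivalently, $\tilg$ preserves the tessellation of $\PPP_X$ by $L_{26}/\SX$-chambers. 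Because $\tilg$ commutes with $\enrinvol$, it preserves $\SY(2)=\SXp$ and hence $\PPP_Y$, and the restriction $\tilg|_{\PPP_Y}=g$ carries the induced tessellation of $\PPP_Y$ by $\LSY$-chambers to itself: an $\LSY$-chamber is by definition $\PPP_Y\cap D$ for some $L_{26}/\SX$-chamber $D$ (the relevant instance of the $L/M$-chamber formalism applied to $\SY(2)\inj \SX\inj L_{26}$), and $g(\PPP_Y\cap D)=\PPP_Y\cap \tilg(D)$ is again of this form since $\tilg(D)$ is an $L_{26}/\SX$-chamber. This proves that $G_Y$ preserves the tessellation of $\PPP_Y$ by $\LSY$-chambers.

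Finally, the "in particular" clause follows from Proposition~\ref{prop:GY}(1): $\aut(Y)$ is the stabilizer of $\NefY$ inside $G_Y$, and $\NefY$ is an $\SX/\SY(2)$-chamber which by Proposition~\ref{prop:LM1M2} is a union of $\LSY$-chambers; so $\aut(Y)$ both preserves the global tessellation of $\PPP_Y$ by $\LSY$-chambers and fixes the subset $\NefY$ setwise, hence permutes the $\LSY$-chambers contained in $\NefY$. The main obstacle is the bookkeeping around the extension step — verifying that an isometry of $\SX$ trivial on the discriminant group genuinely extends to $L_{26}$ preserving $\SX$ — but this is precisely the standard gluing lemma of Nikulin~\cite{Nikulin79} (already invoked in Section~\ref{subsec:vol} in the analogous situation for $H^2(X,\Z)$), applied here to the primitive embedding $\SX\inj L_{26}$ furnished by assumption~\eqref{eq:assumpSXL26}; everything else is a direct unwinding of the definitions of $L/M$-chambers and of $G_X$, $G_Y$.
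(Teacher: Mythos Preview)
Your argument is correct and follows essentially the same route as the paper's own proof: extend $\tilg\in G_X$ to an isometry of $L_{26}$ by gluing with the identity on the orthogonal complement (using that $\tilg$ acts trivially on $\SX\dual/\SX$), conclude that $\tilg$ preserves the $L_{26}/\SX$-tessellation of $\PPP_X$, and restrict to $\PPP_Y$. Your write-up is more detailed than the paper's, but the core mechanism is identical.
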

\begin{proof}
It is enough to prove that 
the action of $\tilg\in G_X$ on
$\PPP_X$ preserves the  tessellation of $\PPP_X$ by $L_{26}/\SX$-chambers.
Let  $\id_P$ be the identity  of the orthogonal complement $P$ of $\SX$ in $L_{26}$.
Since
 the action of $\tilg$ on $\SX\dual/\SX$ is  $1$,
the action of $(\tilg, \id_P)$ on $\SX\oplus P$
 preserves the even unimodular overlattice $L_{26}$ of $\SX\oplus P$.
 Thus
$\tilg$ extends to
 an isometry of $L_{26}$, and hence
 its action on $\PPP_X$
  preserves the $L_{26}/\SX$-chambers.
  The second assertion follows from the fact that $\aut(Y)$ is 
  the stabilizer subgroup of $\NefY$ in $G_Y$.
  \end{proof}
The purpose of this section is 
to construct a primitive embedding $\SX\inj L_{26}$ 
for a $(\tau, \taubar)$-generic Enriques surface $Y$, so that we can assume
\eqref{eq:assumpSXL26}.
We start from a  primitive embedding
$\iota\colon L_{10}(2)\inj L_{26}$
whose type  is not {\tt infty} and which has  a fixed  $\LLt$-chamber $D_0$,
and then proceed to
the  construction of $\SX$  between $L_{10}(2)\cong \SY(2)$ and $L_{26}$
such that the inclusion of 
$L_{10}(2)\cong \SY(2)$ into $\SX$ is the embedding $\pi^*$,
and that the fixed $\LLt$-chamber $D_0$ is contained in $\NefY$.
\par
Recall that, for a $(\tau, \taubar)$-generic Enriques surface $Y$,
the lattice $\SX$ 
is obtained from $\SY(2)$ by adding roots 
of the form $(r+v)/2$,
where 
$r$ is a root of $\SY$ and $v$ is a $(-4)$-vector 
in $\SXm$.
To find roots in $L_{26}$ that yield 
an appropriate extension from $\SY(2)$ to $\SX$,
we search for pairs
$\alpha=(r, v)$ of a root $r$ of $L_{10}$ defining a wall of $D_0$
and a $(-4)$-vector $v$ of 
$Q_{\iota}$
such  that $(r+v)/2$ is in $L_{26}$,
where $Q_{\iota}$ is the orthogonal complement of $L_{10}(2)$ in $L_{26}$.
For a finite set
$p=\{\alpha_1, \dots, \alpha_m\}$
of such pairs,
we consider the sublattice $M_p$
of $L_{26}$ generated by
$L_{10}(2)$ and the roots
$(r_1+v_1)/2, \dots, (r_m+v_m)/2$ of $L_{26}$,
where  $\alpha_i=(r_i, v_i)$.
Suppose that $p=\{\alpha_1, \dots, \alpha_m\}$ satisfies the following:
\begin{enumerate}[(i)]
\item %The size $m$ of $p$ is equal to the rank of $\tau$.

The dual graph of
$r_1, \dots, r_m$ is a Dynkin diagram of some $\ADE$-type $\tau$.
By Proposition~\ref{prop:184}, 
the primitive closure  $\Rbar$ of
the  $\ADE$-sublattice $R$ of $L_{10}$ generated by $r_1, \dots, r_m$
is also an $\ADE$-sublattice of $L_{10}$.
Let  $\taubar$ denote the $\ADE$-type of $\Rbar$.
%Let $\tau$ be the type of the dual graph of ... .}
%
\end{enumerate}
By Proposition~\ref{prop:184}, 
 the embedding $\Lten (2) \inj M_p$ is isomorphic to $\Lten  (2) \inj M_R$,
 and hence,  by Proposition~\ref{prop:Rtil}, 
 we see that $L_{10}(2)$ is a primitive sublattice of $M_p$,
 and  the orthogonal complement  of $L_{10}(2)$ in $M_p$
contains no roots.
 %we have the following:
 %
%\begin{enumerate}[(i)]
%\setcounter{enumi}{1}
%%
%\item $L_{10}(2)$ is a primitive sublattice of $M_p$, 
%and  the orthogonal complement  of $L_{10}(2)$ in $M_p$
%contains no roots.
%%
%\end{enumerate}
%
We consider the following condition:
\begin{enumerate}[(i)]
\setcounter{enumi}{1}
\item $M_p$ can be embedded primitively
into the $K3$ lattice
(an even unimodular lattice of rank $22$ with signature $(3, 19)$).
This condition is checked by calculating the discriminant form of $M_p$
and applying the theory of genera~(see~\cite{Nikulin79}).
\end{enumerate}
Suppose that $M_p$ satisfies condition (ii).
Since $22-\rank M_{p}=12-m>2$,
the surjectivity of the period mapping
of complex $K3$ surfaces~(\cite[Chapter~VIII]{CCSBook}) implies that
there exists a $K3$ surface $X$ with $M_p \cong \SX $
such that $\OG(T_X, \omega)=\{\pm 1\}$.
Moreover,
by~\cite{Keum1990},
the $K3$ surface $X$ has a fixed point free involution $\enrinvol$
with the quotient morphism $\pi\colon X\to Y=X/\gens{\enrinvol}$
to the Enriques surface $Y$
such that, under suitable choices of
isometries $M_p \cong \SX $, the embedding $L_{10}(2)\inj M_p$
is identified with $\pi^*\colon \SY(2)\inj \SX$.
By the construction of $M_p$,
this Enriques surface $Y$ is $(\tau, \taubar)$-generic.
Thanks to Proposition~\ref{prop:SXSY2simple},
we can further assume that
$D_0$ is contained in $\NefY$ by
changing the isometry $M_p \cong \SX$.
\par
Except for the type $(\tau, \taubar)$ of Nos~88~and~146,
we can find a set $p=\{\alpha_1, \dots, \alpha_m\}$ satisfying condition (i)  above
using the primitive embedding $\iota\colon L_{10}(2)\inj L_{26}$
given in the 8th column ({\tt irec})  of Table~\ref{table:184}.
If the 5th column ({\tt exist}) is not marked by $\times$, then  $M_p$  satisfies 
condition (ii). 
\begin{example}\label{example:constructSX}
Let $\iota\colon L_{10}(2)\inj L_{26}$
be the primitive embedding of type {\tt 96C}
(see Example~\ref{example:96C}).
Then the even negative definite lattice $Q_{\iota}$
contains  $2208$ vectors $v$ of square-norm  $-4$,
and we have $192$ pairs $\alpha=(r, v)$ such that $(r+v)/2\in L_{26}$.
Choosing appropriate subsets  from these $192$ pairs,
we can construct $\SX$ for many types $(\tau, \taubar)$ (Nos.~1, 2, \dots).
\end{example}
\begin{remark}\label{rem:nonexisting}
Even when $M_p$ does not satisfy condition (ii),
we can use $M_p$ as the N\'eron-Severi lattice $\SX$ of a  ``non-existing $K3$ surface" $X$
and run the geometric algorithms below.
\end{remark}
\section{Geometric algorithms}\label{sec:geomalgo}
We prepare some algorithms that will be used in the application of
 the generalized Borcherds' method to geometric situations.
  \par
 Let $Y$ be an Enriques surface 
 with the universal covering $\pi\colon X\to Y$.
 We assume~\eqref{eq:assumpTXomega}~and~\eqref{eq:assumpSXL26}.
First we prepare the following computational data:
\begin{enumerate}[(i)]
\item an integral interior point $a_{Y0}\in \SY$ of $D_0$, which is an ample class of $Y$, 
\item the list of roots defining the walls of $D_0$, 
\item the finite group $\OGP(\SY, D_0)=\set{g\in \OGP(\SY)}{D_0^g=D_0}$, 
\item the finite group $\OG(\SXm)$, and
\item the list of $(-4)$-vectors of $\SXm$.
\end{enumerate}
\subsection{Separating roots}\label{subsec:separating}
\begin{definition}
Let $L$ be an even  hyperbolic lattice with a positive half-cone $\PPP$, and
let $a_1, a_2$ be elements of $\PPP\cap L$.
We say that a hyperplane $(v)\sperp$ of $\PPP$
\emph{separates $a_1$ and $a_2$}
if $\intf{v, a_1}$ and $\intf{v, a_2}$ are non-zero and have different signs.
We say that a vector $v\in L\tensor\Q$
with $\intf{v, v}<0$ \emph{separates $a_1$ and $a_2$}
if $(v)\sperp$ separates $a_1$ and $a_2$.
\end{definition}
By an algorithm given in~\cite{ShimadaChar5},
we can calculate, for any $a_1, a_2\in \PPP\cap L$, 
 the set of roots of $L$ that
separate $a_1$ and $a_2$.
\subsection{Splitting roots}\label{subsec:splitting}
\begin{definition}
We say that a root $r$ of $\SY$ \emph{splits in $\SX$}
if there exists a root  $\tilr$ of $\SX$ such that $\pi^*(r)=\tilr+\tilr^\enrinvol$.
 \end{definition}
A root $r$ of $\SY$ splits in $\SX$ if and only if 
there exists a $(-4)$-vector $v$ of $\SXm$ such that $(\pi^*(r)+v)/2\in \SX$.
Hence we can effectively determine whether a given root $r$ of $\SY$ splits in $\SX$ or not.
Moreover, when $r$ splits,
we can calculate the  roots  $\tilr=(\pi^*(r)+v)/2$ and $\tilr^\enrinvol=(\pi^*(r)-v)/2$ of $\SX$
such that $\pi^*(r)=\tilr+\tilr^\enrinvol$.
\par
Suppose that  a  root $r$ of $\SY$  satisfies that 
$\NefY\cap (r)\sperp $ contains a non-empty open subset of $(r)\sperp $
and that $\intf{r, a_Y}>0$ for an ample class $a_Y$ of $Y$.
Then the following are equivalent:
\begin{itemize} 
\item $\NefY\cap (r)\sperp $ is a wall of $\NefY$ 
(that is, the hyperplane $(r)\sperp$ is disjoint from  the interior of $\NefY$),
\item $r$ splits in $\SX$, and 
\item $r$ is the class of a smooth rational curve $C$ on $Y$.
\end{itemize}
In this case, 
the  roots  $\tilr$ and $\tilr^\enrinvol$ of $\SX$ 
are the classes of  the smooth rational curves 
 $\tilC$ and $\tilC^\enrinvol$ on $X$ such that $\pi\inv(C)=\tilC+\tilC^\enrinvol$.
\subsection{Membership criterion of $G_Y$ in  $ \OGP(\SY)$}\label{subsec:Membership1}
An element $g$ of $ \OGP(\SY)$ belongs to $G_Y$
if and only if 
there exists an isometry $h\in \OG(\SXm)$ such that the action of 
$(g, h)$ on $\SXp\oplus \SXm$ preserves 
the overlattice $\SX$ and that $\tilg:=(g, h)|\SX$ 
acts on  $\SX\dual/\SX$ trivially.
Since we have the list of elements of the finite group $\OG(\SXm)$,
we can determine whether an element $g\in \OGP(\SY)$
belongs to $G_Y$ or not,
and if  $g\in \OGP(\SY)$,
we can calculate a  lift $\tilg\in G_X$ of $g$.
\subsection{Membership criterion of $\aut(Y)$ in  $ G_Y$}\label{subsec:Membership2}
Suppose that $g\in G_Y$, 
and let $\tilg \in G_X$ be a lift  
of $g$.
Recall from Proposition~\ref{prop:GY}
 that $g$ belongs to $\aut(Y)$ if and only if $g$ preserves $\NefY$,
or equivalently $\tilg$ preserves $\NefX$.
Hence $g\in\aut(Y)$ holds if and only if one of the following conditions
that are mutually equivalent is satisfied:
\begin{itemize}
\item For any ample classes $a_X$ and $a_X\sprime$ of $X$,
there exist no root 
of $\SX$ separating $a_X^{\tilg}$ and $a_X\sprime$.
\item For any ample classes $a_Y$ and $a_Y\sprime$ of $Y$,
any  roots of $\SY$ separating $a_Y^{g}$ and $a_Y\sprime$
does not split in $\SX$.
\item There exist ample classes $a_X$ and $a_X\sprime$ of $X$
such that 
there exist no roots of $\SX$ separating $a_X^{\tilg}$ and $a_X\sprime$.
\item  There exist ample classes $a_Y$ and $a_Y\sprime$ of $Y$ such that 
any root 
of $\SY$ separating $a_Y^{g}$ and $a_Y\sprime$
does not split in $\SX$.
\end{itemize}
Thus we can determine effectively whether 
a given isometry $g\in G_Y$ belongs to $\aut(Y)$ or not,
because we have at least one ample class $a_{Y0}$ of $Y$.
\subsection{Criterion for $\aut(Y)$-equivalence}\label{subsec:criterionGequiv}
Recall from Theorem~\ref{thm:16simples} that, 
for every $\LSY$-chamber $D$,
we have an isometry $g\in \OGP(\SY)$ such that $D=D_0^g$.
Let $D_1$ and $D_2$ be $\LSY$-chambers.
Suppose that we have
isometries $g_1, g_2\in \OGP(\SY)$  such that $D_1=D_0^{g_1}$ and $D_2=D_0^{g_2}$.
Then the set
\[
\isoms(D_1, D_2):=\set{g\in \OGP(\SY)}{D_1^g=D_2}=g_1\inv \cdot \OGP(\SY, D_0) \cdot g_2
\]
is finite, and can be explicitly calculated.
Therefore we can calculate the set 
\[
\isoms(Y, D_1, D_2):=\aut (Y)\cap \isoms(D_1, D_2)
\]
explicitly, and in particular, we can calculate the group
$\aut(Y, D):=\isoms(Y, D, D)$
for an $\LSY$-chamber $D$.
\section{Proofs of main theorems}\label{sec:proofs}
We present algorithms that
prove
Theorems~\ref{thm:main2rats}~and~\ref{thm:main3ells}.
Let $Y$ be an Enriques surface with the universal covering $\pi\colon X\to Y$.
Suppose that $Y$ is $(\tau, \taubar)$-generic,
where $(\tau, \taubar)$ is \emph{not} equal to No.~88 nor No.~146
in Table~\ref{table:184}, so that 
we can assume~\eqref{eq:assumpTXomega}~and~\eqref{eq:assumpSXL26}.
\subsection{Generators of $\aut(Y)$ and representatives of $\NefY/\aut(Y)$}
\label{subsec:NefY}
 We calculate a finite generating set of $\aut(Y)$ and 
 a complete set of representatives  of $\NefY/\aut(Y)$.
 This calculation affirms Theorem~\ref{thm:main1vol} computationally.
 Moreover 
the results will be used in 
the proofs of Theorems~\ref{thm:main2rats}~and~\ref{thm:main3ells} below.
\par
Let $(V, E)$ be the graph where
$V$ is the set of $\LSY$-chambers contained in $\NefY$
and  $E$ is defined by the adjacency relation of $\LSY$-chambers.
Let $G$ be the group $\aut(Y)$,
and let $v_0\in V$ be the $\LSY$-chamber $D_0$ in $\NefY$.
Let   $D=D_0^g$ be an  $\LSY$-chamber contained in $\NefY$,
where $g\in \OGP(\SY)$.
Then we can calculate  the set of roots defining the walls of  $D$
by mapping the set of roots  defining the walls of  $D_0$ by the isometry $g$.
For each root $r$ defining a wall of $D$,
the  chamber $D^{s_r}=D_0^{gs_r}$ adjacent to $D$ across the wall $D\cap(r)\sperp$
of $D$
is contained in $\NefY$ if and only if 
$r$ does \emph{not} split in $\SX$.
Therefore we can determine $D^{s_r}\subset \NefY$ or not by
the method in Section~\ref{subsec:splitting}.
Therefore condition~(\AssumpVE-1)
in Section~\ref{subsec:graph} is satisfied.
Since we can calculate $\tisom(Y, D_0^g, D_0^{g\sprime})$
for any $g, g\sprime\in \OGP(\SY)$ by Section~\ref{subsec:criterionGequiv}, 
conditions~(\AssumpVE-2) and~(\AssumpVE-3) are also satisfied.
Therefore we can apply Procedure~\ref{procedure:genB} to 
the graph $(V, E)$ and the group $G$,
and obtain
a complete set $V_0$ of representatives of orbits of the action of $G$ on $V$,
the stabilizer subgroups 
$\tisom(Y, D, D)=\aut(Y, D)$ of these representatives $D\in V_0$, and
a generating set
\[
\GGG:=\HHH\cup \aut(Y, D_0)
\]
of $\aut(Y)$.
Then we have
\begin{equation}\label{eq:voloverAut}
\vol(\NefY/\aut(Y))=\vol(D_0) \sum_{D\in V_0} \frac{1}{| \aut(Y, D)|}.
\end{equation}
Thus  Theorem~\ref{thm:main1vol} is computationally affirmed.
\begin{remark}\label{rem:computationsize}
The amount of computation of Procedure~\ref{procedure:genB} grows quadratically
as $|V/G|$
becomes large,
because we have to check $T_G(v, v\sprime)=\emptyset$
 for all pairs of distinct $v, v\sprime \in V_0$.
We could calculate a finite generating set
of $\aut(Y)$ 
by using, naively,
the graph $(V\sprime, E\sprime)$,
where $V\sprime$ is the set of Vinberg chambers
contained in $\NefY$ and
$E\sprime$ is
the adjacency relation of Vinberg chambers.
However, 
the size of $V\sprime/\aut(Y)$
is  approximately $\vol (D_0)$ times the size of $V/\aut(Y)$.
Thus, very roughly speaking,   using the primitive embedding
$\SY(2)\inj L_{26}$ of type {\tt 96C}
gives us
computational advantage of
multiplicative factor the square of $\vol(D_0)=652758220800$.
\end{remark}
\subsection{Calculating  $\Rats_{\temp}$, $\Ells_{\temp}$  and $\GGG_X$}
\label{subsec:RRREEEtemp}
From  $V_0$ and $\GGG$ calculated above,
we compute the following data,
which will be used in Sections~\ref{subsec:Rats}~and~\ref{subsec:Ells}. 
\par
Recall that $\Rats(Y)$ is embedded in $\SY$ 
by $C\mapsto [C]$.
For each $D\in V_0$,
let $\Rats(Y, D)$ be the set of roots $r=[C]$ in $\Rats(Y)$
such that  $D\cap (r)\sperp$ is a wall of $D$.
Since $D\subset\NefY$, 
a root $r$ defining a wall of $D$ belongs to $\Rats(Y)$ 
if and only if $r$ splits in $\SX$.
Therefore 
we can calculate $\Rats(Y, D)$ by the method
in Section~\ref{subsec:splitting}.
We put
\[
\Rats_{\temp}:=\bigcup_{D\in V_0} \Rats(Y, D).
\]
Then the mapping
\[
\Rats_{\temp}\inj \Rats(Y)\surj \Rats(Y)/\aut(Y)
\]
is surjective.
Via the generating set $\GGG$,
we can generate (pseudo-)random elements of $\aut(Y)=\gen{\GGG}$.
For $[C], [C\sprime]\in \Rats_{\temp}$,
if we find  $g\in \aut(Y)$
such that $[C]^g=[C\sprime]$,
then we remove $[C\sprime]$ from $\Rats_{\temp}$.
Repeating this process many times, 
we obtain a smaller subset $\Rats_{\temp}\sprime$ of $\Rats(Y)$
that is mapped to $\Rats(Y)/\aut(Y)$
surjectively.
\par
Let $\phi\colon Y\to \P^1$ be an elliptic fibration of  $Y$,
and $F$ a  general fiber of $\phi$.
Then   $f_{\phi}:=[F]/2 \in \SY$
is a primitive isotropic ray  (see Section~\ref{subsec:faces} for the definition)
contained in the closure of $\NefY$ in $\closure{\PPP}_Y$.
For each $D\in V_0$,
let $\Ells(Y, D)$ be the set of primitive isotropic rays
contained in the closure $\closure{D}$ of $D$ in $\closure{\PPP}_Y$.
We put
\[
\Ells_{\temp}:=\bigcup_{D\in V_0} \Ells(Y, D).
\]
Then the mapping
\[
\Ells_{\temp}\inj \Ells(Y)\surj \Ells(Y)/\aut(Y)
\]
is surjective.
As above,
from  $\Ells_{\temp}$ and  using  $\GGG$,
we  obtain a smaller subset $\Ells_{\temp}\sprime$ of $\Ells(Y)$
that is mapped to $\Ells(Y)/\aut(Y)$
surjectively.
\par
Let  $\Aut(X, \enrinvol)$ be the centralizer of $\enrinvol\in \Aut(X)$ in $\Aut(X)$, 
and let $\aut(X, \enrinvol)$ be the image of $\Aut(X, \enrinvol)$ in $\aut(X)$.
We write an element $\tilde{\gamma}\in \Aut(X)$ as $(\tilde{g}, f)$ by~\eqref{eq:AutX}.
Since $\OG(T_X, \period)=\{\pm 1\}$ is abelian,
we see that $\tilde{\gamma}$ commutes with $\enrinvol\in \Aut(X)$
if and only if $\tilde{g}$ commutes with $\enrinvol\in \aut(X)$.
Hence $\aut(X, \enrinvol)$ is equal to the centralizer of $\enrinvol\in\aut(X)$ in $\aut(X)$.
 By the Torelli theorem~(see the proof of Proposition~\ref{prop:GY}),
 an element $\tilde{g}$ of  $\OGP(\SX)$ belongs to 
 $\aut(X, \enrinvol)$  if and only if 
 $\tilde{g}$ acts on $\SX\dual/\SX$ as $\pm 1$,
 preserves $\NefX$, and commutes with $\enrinvol\in \OGP(\SX)$.
 Let 
 $\aut(X,\enrinvol)_0$ be the group 
 consisting of elements  $\tilde{g} \in \aut(X, \enrinvol)$  that 
act on $\SX\dual/\SX$ as $1$.
 We have $\aut(X,\enrinvol)_0=\aut(X,\enrinvol)\cap G_X$.
\par
The restriction homomorphism 
$\tilg\mapsto \tilg|\SY$
gives a surjective homomorphism $\aut(X, \enrinvol) \to \aut(Y)$.
We calculate the kernel
\[
K:=\Ker(\aut(X, \enrinvol) \to \aut(Y)).
\]
The kernel $K$ is naturally embedded into $\OG(\SXm)$
by $\tilg\mapsto \tilg|\SXm$.
We  put 
\[
K_0:=\Ker(\aut(X,\enrinvol)_0 \to \aut(Y))\subset G_X.
\]
By definition $K_0$ acts trivially on $\SXp\dual/ \SXp$ and by
Proposition \ref{prop:03} it must act trivially on $\SXm\dual/\SXm$ as well. 
Hence, regarded as a subgroup of $G_{X-}\subset \OG(\SXm)$, $K_0$ is contained in the kernel of
\[\psi\colon G_{X-}\rightarrow \OG(\SXm\dual/\SXm).\]
Conversely the elements of $\ker \psi$ can be extended by the identity on $\SXp$ to elements of $G_X$ 
which trivially preserve $\Nef_Y$.
Hence they are induced by automorphisms of $Y$ and we have $K_0 = \ker \psi$. 
The kernel of $\psi$ is explicitly computed in the proof of Theorem \ref{thm:cde}. 
Its order is given by $e_{\tau,\taubar} \in \{1,2\}$.
Suppose that $e_{\tau,\taubar} =2$.
If $\enrinvol \in K_0$, then $K=K_0=\langle \enrinvol \rangle$.
This is the case if in addition $\tau(\Rtil)=E_8$.
Otherwise $K=K_0 \times \langle \enrinvol\rangle$ is of order $4$.
\par
For each $g$ in the generating set $\GGG$ of $\aut(Y)$, we calculate a lift
$\tilg\in \aut(X, \enrinvol)$ of $g$, 
and put 
\[
\GGG_X:=\set{\tilg}{g\in \GGG} \; \cup\;K.
\]
Then $\aut(X, \enrinvol)$ is generated by $\GGG_X$.
\subsection{Rational curves on $Y$}\label{subsec:Rats}
We prove Theorem~\ref{thm:main2rats}.
By the construction of $\SX$ given in Section~\ref{subsec:construction}, 
we have  a set of splitting roots that define some walls of $D_0\subset \NefY$
and form the dual graph of $\ADE$-type $\tau$. 
Therefore the existence of $C_1, \dots, C_m$ 
in assertion~(1) is proved.
\par
Let $C$ be a smooth rational curve on $Y$,
and  $r:=[C]$ the class of $C$.
Let $\Vtil_C$ be the set of
$\LSY$-chambers $D$ such that
$D\cap (r)\sperp$ is a wall of $D$ and that
$D$ is located on the same side of $(r)\sperp$ as $\NefY$.
Let $D$ be an element of $\Vtil_C$,
and suppose that $F:=D\cap (r)\sperp\cap (r\sprime)\sperp$
is a face of codimension $2$ of $D$ that is a boundary of the wall $D\cap (r)\sperp$,
where $r\sprime$ is a root of $\SY$ defining a wall of $D$.
Then there exists a unique element $D\sprime$ of $\Vtil_C$
such that $D\cap D\sprime=F$ holds.
We say that this chamber $D\sprime$ is \emph{adjacent in $\Vtil_C$} to $D$
across $F$.
This $\LSY$-chamber $D\sprime$ is calculated as follows.
As is seen from the set of faces of $\LSY$-chambers (see~\cite{BrandhorstShimadaCompData}),
we have $\intf{r, r\sprime}=0$ or $\intf{r, r\sprime}=1$.
Let $s$ and $s\sprime$ be the reflections
with respect to the roots $r=[C]$ and $r\sprime$,
respectively.
Then
\[
D\sprime=
\begin{cases}
 D^{s\sprime} &\textrm{if $\intf{r, r\sprime}=0$,} \\
 D^{ss\sprime} &\textrm{if $\intf{r, r\sprime}=1$.}
\end{cases}
\]
Suppose that $D$ is contained in $\NefY$.
Then $D\sprime$ is contained in $\NefY$ if and only if
$r\sprime$ is \emph{not} the class of a smooth rational curve on $Y$,
or equivalently, $r\sprime$ does not split in $\SX$.
We consider the  graph  $(V_C, E_C)$,
where
$V_C$ is the set of $\LSY$-chambers $D\in \Vtil_C$ contained in $\NefY$,
and $E_C$  is the restriction to $V_C\subset \Vtil_C$
of the adjacency relation on $\Vtil_C$ defined above.
Then the stabilizer subgroup
\[
G_C:=\aut(Y, C)=\set{g\in \aut(Y)}{r^g=r}
\]
of $C$ in $ \aut(Y)$
acts on $(V_C, E_C)$.
For $D, D\sprime\in V_C$,  we have 
\[
\TG (D, D\sprime)=\set{g\in \isoms(Y, D, D\sprime)}{ r^g=r},
\]
where 
 $\TG (D, D\sprime)\subset G_C$ is 
defined by~\eqref{eq:TG}, 
and $ \isoms(Y, D, D\sprime)$ is
defined in Section~\ref{subsec:criterionGequiv}.
Therefore $(V_C, E_C)$ and $G_C$
satisfy conditions~(\AssumpVE-1),~\dots,~(\AssumpVE-3)
in Section~\ref{subsec:graph}.
We apply Procedure~\ref{procedure:genB}
to every $C\in \Rats_{\temp}\sprime$
and obtain a complete  set $V_{C,0}$
of representatives of orbits of 
the action of $G_C$ on $V_C$.
\par
Two elements $C$ and $C\sprime$ of $\Rats_{\temp}\sprime$
are contained in the same orbit
under the action of $\aut(Y)$ on $\Rats(Y)$
if and only if we have one of the following conditions 
that are mutually equivalent.
\begin{itemize}
\item
Let $D$ be an arbitrary  element of $V_{C, 0}$.
Then there exists an $\LSY$-chamber $D\sprime $ in $V_{C\sprime, 0}$
such that $\isoms(Y, D, D\sprime)$ contains an isometry
$g$ such  that $[C]^g=[C\sprime]$.
\item
There exist a pair of  $\LSY$-chambers 
$D\in V_{C, 0}$ and 
$D\sprime \in V_{C\sprime, 0}$ and an isometry 
$g\in \isoms(Y, D, D\sprime)$ 
such  that $[C]^g=[C\sprime]$.
\end{itemize}
Applying this method to all pairs $C, C\sprime$ of
distinct elements of $\RRR_{\temp}\sprime$,
we obtain a complete set of representatives 
$C_1\sprime, \dots, C_k\sprime$  of orbits of 
the action of $\aut(Y)$ on $\Rats(Y)$.
We then apply this method to the representatives $C_1\sprime, \dots, C_k\sprime$ 
and the smooth rational curves $C_1, \dots, C_m$
in assertion~(1),
and complete the proof of Theorem~\ref{thm:main2rats}.
\par
The algorithm given above is a priori guaranteed to work.
A posteriori, Theorem~\ref{thm:main2rats} can be verified by the following simple strategy.
Let $\aut(X, \enrinvol)|\SXm$ be the image of 
 the homomorphism
\[
\aut(X, \enrinvol)\to \OG(\SXm)
\]
given by $\tilg\mapsto \tilg|\SXm$.
Since we have calculated a finite generating set $\GGG_X$ of $\aut(X, \enrinvol)$,
we can calculate the elements of the finite group $\aut(X, \enrinvol)|\SXm$.
Let  $C, C\sprime$ be elements of $\Rats(Y)$.
If the orbit of $\{\pm v_C\}\subset \SXm$ by $\aut(X, \enrinvol)|\SXm$ 
and that of $\{\pm v_{C\sprime}\}$ are disjoint, 
then the orbits of $C$ and $C\sprime$ by $\aut(Y)$ are disjoint.
Even though the converse does not necessarily hold,
we know a posteriori that once the size of $\Rats_{\temp}\sprime$ is small enough,
this separates the orbits of $\Rats_{\temp}\sprime$.
\subsection{Elliptic fibrations of $Y$}\label{subsec:Ells}
Let $\phi\colon Y\to \P^1$ be an elliptic fibration of  $Y$.
We consider the following graph  $(V_{\phi}, E_{\phi})$.
We define $V_{\phi}$ to be the set of $\LSY$-chambers $D$ contained in $\NefY$
such that the closure $\closure{D}$  of $D$ in $\SY\tensor\R$ contains the 
primitive isotropic ray  $f_{\phi}=[F]/2$, where $F$ is a general fiber of $\phi$,
and $E_{\phi}$ to be the set of pairs of adjacent $\LSY$-chambers in $V_{\phi}$.
The stabilizer subgroup
\[
G_{\phi}:=\aut(Y, \phi):=\set{g\in \aut(Y)}{f_{\phi}^g=f_{\phi}}
\]
of $\phi$ in $\aut(Y)$
acts on $(V_{\phi}, E_{\phi})$.
Then condition~(\AssumpVE-1) is satisfied.
Indeed, the set of $\LSY$-chambers in $V_{\phi}$ adjacent to $D\in V_{\phi}$ is
the set of all 
$ D^{s_r}$, where $r$ runs through the set of  \emph{non-splitting} roots of $\SY$ 
defining walls of $D$
such that $\intf{r, f_{\phi}}=0$.
For $D, D\sprime\in V_{\phi}$,  the subset $\TG (D, D\sprime)$ of $G_{\phi}$ is
the set of isometries belonging to $\isoms(Y, D, D\sprime)$ that fixes $f_{\phi}$.
Therefore~(\AssumpVE-2) and~(\AssumpVE-3) are also satisfied.
\par
We apply Procedure~\ref{procedure:genB}
to every $\phi\in \Ells_{\temp}\sprime$
and obtain a complete set $V_{\phi, 0}$
of representatives of orbits of the action of $G_{\phi}$ on $V_{\phi}$.
We also obtain a finite generating set $\GGG_{\phi}$
of the stabilizer subgroup $\aut(Y, \phi)$. 
\par
The set $\Sigma_{\phi}$ of classes of smooth rational curves $C$
contained in some fiber of $\phi$ is calculated as follows.
Let $a_Y$ be an ample class of $Y$.
Every class $[C]\in \Sigma_{\phi}$ satisfies $\intf{[C], f_{\phi}}=0$ 
and $0<\intf{[C], a_Y}< 2\intf{f_{\phi}, a_Y}$.
We calculate the set  $\Sigma\sprime$ 
of all roots $r$ of $\SY$ satisfying 
$\intf{r, f_{\phi}}=0$ 
and $0<\intf{r, a_Y}<2\intf{f_{\phi}, a_Y}$.
Then $r\in \Sigma\sprime$ belongs to $\Sigma_{\phi}$
if and only if $r$ splits in $\SX$ (see Section~\ref{subsec:splitting})
and there exist no roots $r\sprime\in \Sigma_{\phi}$ such that
$\intf{r\sprime, a_Y}<\intf{r, a_Y}$ and $\intf{r, r\sprime}<0$.
Therefore we can calculate $\Sigma_{\phi}$ by sorting the elements 
$r$ of $\Sigma\sprime$ according to  $\intf{r, a_Y}$ 
and applying the above criterion to  $r\in \Sigma\sprime$ in this order.
\par
Each connected component of the dual graph of roots
in $\Sigma_{\phi}$
corresponds to a reducible fiber of $\phi$,
and is the Dynkin diagram of an affine
$\ADE$-type.
Let $\Gamma$ be a connected component.
The weighted sum of roots in $\Gamma$
with appropriate weights according to the $\ADE$-type of $\Gamma$
(see, for example,~\cite[Theorem 5.12]{SMSbook})
is either $f_{\phi}$ or $2f_{\phi}$.
The former case occurs when the corresponding reducible fiber is a multiple fiber,
while the latter occurs when the  fiber is non-multiple.
\par
Let $\phi\sprime\colon Y\to\P^1$ be another element of $\Ells_{\temp}\sprime$.
Then $\phi$ and $\phi\sprime$ are contained in the same orbit
under the action of $\aut(Y)$ on $\Ells(Y)$
if and only if the following holds.
Let $D$ be an element of $V_{\phi, 0}$.
Then there exists $D\sprime \in V_{\phi\sprime, 0}$
such that $\isoms(Y, D, D\sprime)$ contains an isometry
that maps $f_{\phi}$ to $f_{\phi\sprime}$. 
Note that $D'$ can be computed explicitly.
Applying this method to all pairs $\phi, \phi\sprime$
of distinct elements of $\Ells_{\temp}\sprime$,
we obtain a complete set of representatives of
the action of $\aut(Y)$ on $\Ells(Y)$.
\subsection{Table of elliptic fibrations}\label{subsec:tableells}
Let $\phi\colon Y\to \P^1$ be
an elliptic fibration of an Enriques surface $Y$.
Then $\phi$ has exactly two multiple fibers,
and both of them are of multiplicity $2$.
In the table below,
the first column shows the $\ADE$-types of non-multiple reducible fibers,
and the second column shows the $\ADE$-types of multiple reducible fibers.
The third column gives the number of elliptic fibrations modulo $\aut(Y)$. 
See \cite{AutEnrVolCompdata} for the cases with $\rank \tau \geq 8$.
\par
{\tiny
\setlength{\columnsep}{10pt}
\setlength{\columnseprule}{.4pt}
\begin{multicols}{3}
\renewcommand{\arraystretch}{.85}
\par\noindent
% NewEllFibTable.tex
% Made by MakeNewEllFibTable.txt
\begin{align*}
&\hbox to 4cm {\textrm{No.~1:}\quad$(A_{1}, A_{1})$\hss}\\ 
&\begin{array}{lll}
\none & \none & 136\\
A_{1} & \none & 255\\
\end{array}
\end{align*}
\begin{align*}
&\hbox to 4cm {\textrm{No.~2:}\quad$(2A_{1}, 2A_{1})$\hss}\\ 
&\begin{array}{lll}
\none & \none & 36\\
\none & A_{1} & 1\\
A_{1} & \none & 128\\
2A_{1} & \none & 126\\
\end{array}
\end{align*}
\begin{align*}
&\hbox to 4cm {\textrm{No.~3:}\quad$(A_{2}, A_{2})$\hss}\\ 
&\begin{array}{lll}
A_{1} & \none & 136\\
A_{2} & \none & 119\\
\end{array}
\end{align*}
\begin{align*}
&\hbox to 4cm {\textrm{No.~4:}\quad$(3A_{1}, 3A_{1})$\hss}\\ 
&\begin{array}{lll}
\none & \none & 10\\
A_{1} & \none & 48\\
A_{1} & A_{1} & 3\\
2A_{1} & \none & 96\\
3A_{1} & \none & 60\\
\end{array}
\end{align*}
\begin{align*}
&\hbox to 4cm {\textrm{No.~5:}\quad$(A_{2}+A_{1}, A_{2}+A_{1})$\hss}\\ 
&\begin{array}{lll}
\none & A_{1} & 1\\
A_{1} & \none & 36\\
A_{2}+A_{1} & \none & 63\\
2A_{1} & \none & 63\\
A_{2} & \none & 28\\
\end{array}
\end{align*}
\begin{align*}
&\hbox to 4cm {\textrm{No.~6:}\quad$(A_{3}, A_{3})$\hss}\\ 
&\begin{array}{lll}
\none & A_{2} & 1\\
2A_{1} & \none & 36\\
A_{2} & \none & 64\\
A_{3} & \none & 54\\
\end{array}
\end{align*}
\begin{align*}
&\hbox to 4cm {\textrm{No.~7:}\quad$(4A_{1}, 4A_{1})$\hss}\\ 
&\begin{array}{lll}
\none & \none & 3\\
A_{1} & \none & 16\\
2A_{1} & \none & 48\\
2A_{1} & A_{1} & 6\\
3A_{1} & \none & 64\\
4A_{1} & \none & 25\\
\end{array}
\end{align*}
\begin{align*}
&\hbox to 4cm {\textrm{No.~8:}\quad$(4A_{1}, D_{4})$\hss}\\ 
&\begin{array}{lll}
\none & \none & 10\\
\none & 2A_{1} & 3\\
2A_{1} & \none & 96\\
4A_{1} & \none & 60\\
\end{array}
\end{align*}
\begin{align*}
&\hbox to 4cm {\textrm{No.~9:}\quad$(A_{2}+2A_{1}, A_{2}+2A_{1})$\hss}\\ 
&\begin{array}{lll}
A_{1} & \none & 10\\
A_{1} & A_{1} & 2\\
A_{2}+A_{1} & \none & 32\\
2A_{1} & \none & 32\\
A_{2}+2A_{1} & \none & 30\\
3A_{1} & \none & 30\\
A_{2} & \none & 6\\
A_{2} & A_{1} & 1\\
\end{array}
\end{align*}
\begin{align*}
&\hbox to 4cm {\textrm{No.~10:}\quad$(A_{3}+A_{1}, A_{3}+A_{1})$\hss}\\ 
&\begin{array}{lll}
A_{1} & A_{1} & 1\\
A_{1} & A_{2} & 1\\
A_{2}+A_{1} & \none & 32\\
A_{3}+A_{1} & \none & 30\\
2A_{1} & \none & 10\\
3A_{1} & \none & 15\\
A_{2} & \none & 16\\
A_{3} & \none & 12\\
\end{array}
\end{align*}
\begin{align*}
&\hbox to 4cm {\textrm{No.~11:}\quad$(2A_{2}, 2A_{2})$\hss}\\ 
&\begin{array}{lll}
\none & A_{1} & 1\\
A_{2}+A_{1} & \none & 56\\
2A_{1} & \none & 35\\
2A_{2} & \none & 35\\
\end{array}
\end{align*}
\begin{align*}
&\hbox to 4cm {\textrm{No.~12:}\quad$(A_{4}, A_{4})$\hss}\\ 
&\begin{array}{lll}
\none & A_{2} & 1\\
A_{2}+A_{1} & \none & 36\\
A_{3} & \none & 27\\
A_{4} & \none & 27\\
\end{array}
\end{align*}
\begin{align*}
&\hbox to 4cm {\textrm{No.~13:}\quad$(D_{4}, D_{4})$\hss}\\ 
&\begin{array}{lll}
\none & A_{3} & 3\\
4A_{1} & \none & 10\\
A_{3} & \none & 48\\
D_{4} & \none & 20\\
\end{array}
\end{align*}
\begin{align*}
&\hbox to 4cm {\textrm{No.~14:}\quad$(5A_{1}, 5A_{1})$\hss}\\ 
&\begin{array}{lll}
\none & \none & 1\\
A_{1} & \none & 5\\
2A_{1} & \none & 20\\
3A_{1} & \none & 40\\
3A_{1} & A_{1} & 10\\
4A_{1} & \none & 40\\
5A_{1} & \none & 5\\
\end{array}
\end{align*}
\begin{align*}
&\hbox to 4cm {\textrm{No.~15:}\quad$(5A_{1}, D_{4}+A_{1})$\hss}\\ 
&\begin{array}{lll}
\none & \none & 3\\
A_{1} & \none & 4\\
A_{1} & 2A_{1} & 3\\
2A_{1} & \none & 24\\
3A_{1} & \none & 48\\
3A_{1} & A_{1} & 4\\
4A_{1} & \none & 16\\
5A_{1} & \none & 24\\
\end{array}
\end{align*}
\begin{align*}
&\hbox to 4cm {\textrm{No.~16:}\quad$(A_{2}+3A_{1}, A_{2}+3A_{1})$\hss}\\ 
&\begin{array}{lll}
A_{1} & \none & 3\\
A_{2}+A_{1} & \none & 12\\
A_{2}+A_{1} & A_{1} & 3\\
2A_{1} & \none & 12\\
2A_{1} & A_{1} & 3\\
A_{2}+2A_{1} & \none & 24\\
3A_{1} & \none & 24\\
A_{2}+3A_{1} & \none & 12\\
4A_{1} & \none & 13\\
A_{2} & \none & 1\\
\end{array}
\end{align*}
\begin{align*}
&\hbox to 4cm {\textrm{No.~17:}\quad$(A_{3}+2A_{1}, A_{3}+2A_{1})$\hss}\\ 
&\begin{array}{lll}
A_{2}+A_{1} & \none & 16\\
A_{3}+A_{1} & \none & 16\\
2A_{1} & \none & 3\\
2A_{1} & A_{1} & 2\\
2A_{1} & A_{2} & 1\\
A_{2}+2A_{1} & \none & 16\\
A_{3}+2A_{1} & \none & 13\\
3A_{1} & \none & 8\\
4A_{1} & \none & 6\\
A_{2} & \none & 4\\
A_{3} & \none & 2\\
A_{3} & A_{1} & 1\\
\end{array}
\end{align*}
\begin{align*}
&\hbox to 4cm {\textrm{No.~18:}\quad$(A_{3}+2A_{1}, D_{5})$\hss}\\ 
&\begin{array}{lll}
\none & A_{2}+A_{1} & 1\\
\none & 2A_{1} & 1\\
A_{2}+A_{1} & \none & 32\\
2A_{1} & \none & 10\\
A_{3}+2A_{1} & \none & 30\\
4A_{1} & \none & 15\\
A_{3} & \none & 6\\
\end{array}
\end{align*}
\begin{align*}
&\hbox to 4cm {\textrm{No.~19:}\quad$(2A_{2}+A_{1}, 2A_{2}+A_{1})$\hss}\\ 
&\begin{array}{lll}
A_{1} & A_{1} & 1\\
A_{2}+A_{1} & \none & 12\\
2A_{2}+A_{1} & \none & 15\\
2A_{1} & \none & 10\\
A_{2}+2A_{1} & \none & 30\\
3A_{1} & \none & 15\\
A_{2} & A_{1} & 2\\
2A_{2} & \none & 10\\
\end{array}
\end{align*}
\begin{align*}
&\hbox to 4cm {\textrm{No.~20:}\quad$(A_{4}+A_{1}, A_{4}+A_{1})$\hss}\\ 
&\begin{array}{lll}
A_{1} & A_{2} & 1\\
A_{2}+A_{1} & \none & 10\\
A_{3}+A_{1} & \none & 15\\
A_{4}+A_{1} & \none & 15\\
A_{2}+2A_{1} & \none & 15\\
A_{2} & A_{1} & 1\\
A_{3} & \none & 6\\
A_{4} & \none & 6\\
\end{array}
\end{align*}
\begin{align*}
&\hbox to 4cm {\textrm{No.~21:}\quad$(D_{4}+A_{1}, D_{4}+A_{1})$\hss}\\ 
&\begin{array}{lll}
A_{1} & A_{3} & 3\\
A_{3}+A_{1} & \none & 24\\
D_{4}+A_{1} & \none & 12\\
3A_{1} & A_{1} & 1\\
4A_{1} & \none & 3\\
5A_{1} & \none & 3\\
A_{3} & \none & 12\\
D_{4} & \none & 4\\
\end{array}
\end{align*}
\begin{align*}
&\hbox to 4cm {\textrm{No.~22:}\quad$(A_{3}+A_{2}, A_{3}+A_{2})$\hss}\\ 
&\begin{array}{lll}
A_{1} & A_{1} & 1\\
A_{2}+A_{1} & \none & 16\\
A_{3}+A_{1} & \none & 12\\
A_{2}+2A_{1} & \none & 6\\
3A_{1} & \none & 9\\
A_{2} & A_{2} & 1\\
A_{3}+A_{2} & \none & 18\\
2A_{2} & \none & 16\\
\end{array}
\end{align*}
\begin{align*}
&\hbox to 4cm {\textrm{No.~23:}\quad$(A_{5}, A_{5})$\hss}\\ 
&\begin{array}{lll}
A_{1} & A_{2} & 1\\
A_{3}+A_{1} & \none & 15\\
2A_{2} & \none & 10\\
A_{4} & \none & 12\\
A_{5} & \none & 15\\
\end{array}
\end{align*}
\begin{align*}
&\hbox to 4cm {\textrm{No.~24:}\quad$(D_{5}, D_{5})$\hss}\\ 
&\begin{array}{lll}
\none & A_{3} & 1\\
\none & A_{4} & 1\\
A_{3}+2A_{1} & \none & 10\\
A_{4} & \none & 16\\
D_{4} & \none & 5\\
D_{5} & \none & 10\\
\end{array}
\end{align*}
\begin{align*}
&\hbox to 4cm {\textrm{No.~25:}\quad$(6A_{1}, D_{4}+2A_{1})$\hss}\\ 
&\begin{array}{lll}
\none & \none & 1\\
A_{1} & \none & 2\\
2A_{1} & \none & 8\\
2A_{1} & 2A_{1} & 3\\
3A_{1} & \none & 24\\
4A_{1} & \none & 28\\
4A_{1} & A_{1} & 9\\
5A_{1} & \none & 16\\
6A_{1} & \none & 3\\
\end{array}
\end{align*}
\begin{align*}
&\hbox to 4cm {\textrm{No.~27:}\quad$(A_{2}+4A_{1}, A_{2}+4A_{1})$\hss}\\ 
&\begin{array}{lll}
A_{1} & \none & 1\\
A_{2}+A_{1} & \none & 4\\
2A_{1} & \none & 4\\
A_{2}+2A_{1} & \none & 12\\
A_{2}+2A_{1} & A_{1} & 6\\
3A_{1} & \none & 12\\
3A_{1} & A_{1} & 4\\
A_{2}+3A_{1} & \none & 16\\
4A_{1} & \none & 16\\
A_{2}+4A_{1} & \none & 1\\
5A_{1} & \none & 4\\
\end{array}
\end{align*}
\begin{align*}
&\hbox to 4cm {\textrm{No.~28:}\quad$(A_{2}+4A_{1}, D_{4}+A_{2})$\hss}\\ 
&\begin{array}{lll}
A_{1} & \none & 3\\
A_{2}+2A_{1} & \none & 24\\
3A_{1} & \none & 24\\
3A_{1} & A_{1} & 4\\
A_{2}+4A_{1} & \none & 12\\
5A_{1} & \none & 12\\
A_{2} & \none & 1\\
A_{2} & 2A_{1} & 3\\
\end{array}
\end{align*}
\begin{align*}
&\hbox to 4cm {\textrm{No.~29:}\quad$(A_{3}+3A_{1}, A_{3}+3A_{1})$\hss}\\ 
&\begin{array}{lll}
A_{2}+A_{1} & \none & 6\\
A_{3}+A_{1} & \none & 6\\
A_{3}+A_{1} & A_{1} & 3\\
2A_{1} & \none & 1\\
A_{2}+2A_{1} & \none & 12\\
A_{3}+2A_{1} & \none & 12\\
3A_{1} & \none & 3\\
3A_{1} & A_{1} & 3\\
3A_{1} & A_{2} & 1\\
A_{2}+3A_{1} & \none & 8\\
A_{3}+3A_{1} & \none & 3\\
4A_{1} & \none & 6\\
5A_{1} & \none & 1\\
A_{2} & \none & 1\\
\end{array}
\end{align*}
\begin{align*}
&\hbox to 4cm {\textrm{No.~30:}\quad$(A_{3}+3A_{1}, D_{5}+A_{1})$\hss}\\ 
&\begin{array}{lll}
A_{1} & A_{2}+A_{1} & 1\\
A_{1} & 2A_{1} & 1\\
A_{2}+A_{1} & \none & 8\\
A_{3}+A_{1} & \none & 4\\
A_{3}+A_{1} & A_{1} & 2\\
2A_{1} & \none & 3\\
A_{2}+2A_{1} & \none & 16\\
A_{3}+2A_{1} & \none & 8\\
3A_{1} & \none & 4\\
3A_{1} & A_{1} & 1\\
A_{3}+3A_{1} & \none & 12\\
4A_{1} & \none & 4\\
5A_{1} & \none & 6\\
A_{3} & \none & 1\\
\end{array}
\end{align*}
\begin{align*}
&\hbox to 4cm {\textrm{No.~31:}\quad$(2A_{2}+2A_{1}, 2A_{2}+2A_{1})$\hss}\\ 
&\begin{array}{lll}
A_{2}+A_{1} & \none & 2\\
A_{2}+A_{1} & A_{1} & 4\\
2A_{2}+A_{1} & \none & 8\\
2A_{1} & \none & 3\\
2A_{1} & A_{1} & 1\\
A_{2}+2A_{1} & \none & 16\\
2A_{2}+2A_{1} & \none & 6\\
3A_{1} & \none & 8\\
A_{2}+3A_{1} & \none & 12\\
4A_{1} & \none & 7\\
2A_{2} & \none & 3\\
2A_{2} & A_{1} & 1\\
\end{array}
\end{align*}
\begin{align*}
&\hbox to 4cm {\textrm{No.~32:}\quad$(A_{4}+2A_{1}, A_{4}+2A_{1})$\hss}\\ 
&\begin{array}{lll}
A_{2}+A_{1} & \none & 3\\
A_{2}+A_{1} & A_{1} & 2\\
A_{3}+A_{1} & \none & 8\\
A_{4}+A_{1} & \none & 8\\
2A_{1} & A_{2} & 1\\
A_{2}+2A_{1} & \none & 8\\
A_{3}+2A_{1} & \none & 7\\
A_{4}+2A_{1} & \none & 6\\
A_{2}+3A_{1} & \none & 6\\
A_{3} & \none & 1\\
A_{4} & \none & 1\\
A_{4} & A_{1} & 1\\
\end{array}
\end{align*}
\begin{align*}
&\hbox to 4cm {\textrm{No.~33:}\quad$(D_{4}+2A_{1}, D_{4}+2A_{1})$\hss}\\ 
&\begin{array}{lll}
A_{3}+A_{1} & \none & 12\\
D_{4}+A_{1} & \none & 8\\
2A_{1} & A_{3} & 3\\
A_{3}+2A_{1} & \none & 12\\
D_{4}+2A_{1} & \none & 3\\
4A_{1} & \none & 1\\
4A_{1} & A_{1} & 2\\
5A_{1} & \none & 2\\
A_{3} & \none & 3\\
D_{4} & A_{1} & 1\\
\end{array}
\end{align*}
\begin{align*}
&\hbox to 4cm {\textrm{No.~34:}\quad$(D_{4}+2A_{1}, D_{6})$\hss}\\ 
&\begin{array}{lll}
\none & A_{3}+A_{1} & 1\\
A_{3}+A_{1} & \none & 16\\
2A_{1} & 2A_{1} & 1\\
2A_{1} & A_{3} & 2\\
A_{3}+2A_{1} & \none & 8\\
D_{4}+2A_{1} & \none & 12\\
4A_{1} & \none & 3\\
6A_{1} & \none & 3\\
A_{3} & \none & 4\\
D_{4} & \none & 2\\
\end{array}
\end{align*}
\begin{align*}
&\hbox to 4cm {\textrm{No.~35:}\quad\hss}\\ 
&(A_{3}+A_{2}+A_{1}, A_{3}+A_{2}+A_{1})\\ 
&\begin{array}{lll}
A_{2}+A_{1} & \none & 4\\
A_{2}+A_{1} & A_{1} & 1\\
A_{2}+A_{1} & A_{2} & 1\\
A_{3}+A_{2}+A_{1} & \none & 6\\
2A_{2}+A_{1} & \none & 8\\
A_{3}+A_{1} & \none & 2\\
2A_{1} & A_{1} & 1\\
A_{2}+2A_{1} & \none & 9\\
A_{3}+2A_{1} & \none & 7\\
3A_{1} & \none & 3\\
A_{2}+3A_{1} & \none & 3\\
4A_{1} & \none & 3\\
A_{3}+A_{2} & \none & 6\\
2A_{2} & \none & 4\\
A_{3} & A_{1} & 1\\
\end{array}
\end{align*}
\begin{align*}
&\hbox to 4cm {\textrm{No.~36:}\quad$(A_{5}+A_{1}, A_{5}+A_{1})$\hss}\\ 
&\begin{array}{lll}
2A_{2}+A_{1} & \none & 4\\
A_{3}+A_{1} & \none & 4\\
A_{4}+A_{1} & \none & 8\\
A_{5}+A_{1} & \none & 7\\
2A_{1} & A_{2} & 1\\
A_{3}+2A_{1} & \none & 6\\
2A_{2} & \none & 3\\
A_{3} & A_{1} & 1\\
A_{4} & \none & 2\\
A_{5} & \none & 4\\
\end{array}
\end{align*}
\begin{align*}
&\hbox to 4cm {\textrm{No.~37:}\quad$(A_{5}+A_{1}, E_{6})$\hss}\\ 
&\begin{array}{lll}
\none & A_{2}+A_{1} & 1\\
A_{5}+A_{1} & \none & 15\\
A_{3}+2A_{1} & \none & 15\\
2A_{2} & \none & 10\\
A_{4} & \none & 6\\
\end{array}
\end{align*}
\begin{align*}
&\hbox to 4cm {\textrm{No.~38:}\quad$(D_{5}+A_{1}, D_{5}+A_{1})$\hss}\\ 
&\begin{array}{lll}
A_{1} & A_{3} & 1\\
A_{1} & A_{4} & 1\\
A_{3}+A_{1} & A_{1} & 1\\
A_{4}+A_{1} & \none & 8\\
D_{4}+A_{1} & \none & 3\\
D_{5}+A_{1} & \none & 6\\
A_{3}+2A_{1} & \none & 3\\
A_{3}+3A_{1} & \none & 3\\
A_{4} & \none & 4\\
D_{4} & \none & 1\\
D_{5} & \none & 2\\
\end{array}
\end{align*}
\begin{align*}
&\hbox to 4cm {\textrm{No.~39:}\quad$(3A_{2}, 3A_{2})$\hss}\\ 
&\begin{array}{lll}
2A_{2}+A_{1} & \none & 30\\
A_{2}+2A_{1} & \none & 15\\
3A_{1} & \none & 10\\
A_{2} & A_{1} & 3\\
3A_{2} & \none & 5\\
\end{array}
\end{align*}
\begin{align*}
&\hbox to 4cm {\textrm{No.~40:}\quad$(3A_{2}, E_{6})$\hss}\\ 
&\begin{array}{lll}
2A_{2}+A_{1} & \none & 30\\
A_{2}+2A_{1} & \none & 15\\
3A_{1} & \none & 10\\
A_{2} & A_{1} & 3\\
3A_{2} & \none & 5\\
\end{array}
\end{align*}
\begin{align*}
&\hbox to 4cm {\textrm{No.~41:}\quad$(A_{4}+A_{2}, A_{4}+A_{2})$\hss}\\ 
&\begin{array}{lll}
2A_{2}+A_{1} & \none & 6\\
A_{3}+A_{1} & \none & 6\\
A_{4}+A_{1} & \none & 6\\
A_{2}+2A_{1} & \none & 9\\
A_{2} & A_{1} & 1\\
A_{2} & A_{2} & 1\\
A_{3}+A_{2} & \none & 9\\
A_{4}+A_{2} & \none & 9\\
\end{array}
\end{align*}
\begin{align*}
&\hbox to 4cm {\textrm{No.~42:}\quad$(D_{4}+A_{2}, D_{4}+A_{2})$\hss}\\ 
&\begin{array}{lll}
A_{3}+A_{1} & \none & 12\\
D_{4}+A_{1} & \none & 4\\
3A_{1} & A_{1} & 1\\
A_{2}+4A_{1} & \none & 1\\
5A_{1} & \none & 2\\
A_{2} & A_{3} & 3\\
A_{3}+A_{2} & \none & 12\\
D_{4}+A_{2} & \none & 8\\
\end{array}
\end{align*}
\begin{align*}
&\hbox to 4cm {\textrm{No.~43:}\quad$(2A_{3}, 2A_{3})$\hss}\\ 
&\begin{array}{lll}
2A_{1} & A_{1} & 1\\
A_{2}+2A_{1} & \none & 8\\
A_{3}+2A_{1} & \none & 4\\
4A_{1} & \none & 2\\
A_{3}+A_{2} & \none & 16\\
2A_{2} & \none & 8\\
A_{3} & A_{2} & 2\\
2A_{3} & \none & 9\\
\end{array}
\end{align*}
\begin{align*}
&\hbox to 4cm {\textrm{No.~44:}\quad$(2A_{3}, D_{6})$\hss}\\ 
&\begin{array}{lll}
\none & 2A_{1} & 1\\
\none & 2A_{2} & 1\\
A_{3}+2A_{1} & \none & 12\\
4A_{1} & \none & 9\\
2A_{2} & \none & 16\\
2A_{3} & \none & 18\\
\end{array}
\end{align*}
\begin{align*}
&\hbox to 4cm {\textrm{No.~45:}\quad$(A_{6}, A_{6})$\hss}\\ 
&\begin{array}{lll}
A_{4}+A_{1} & \none & 6\\
A_{2} & A_{2} & 1\\
A_{3}+A_{2} & \none & 9\\
A_{5} & \none & 6\\
A_{6} & \none & 9\\
\end{array}
\end{align*}
\begin{align*}
&\hbox to 4cm {\textrm{No.~46:}\quad$(D_{6}, D_{6})$\hss}\\ 
&\begin{array}{lll}
\none & A_{5} & 1\\
2A_{1} & A_{3} & 1\\
D_{4}+2A_{1} & \none & 3\\
2A_{3} & \none & 3\\
A_{5} & \none & 8\\
D_{5} & \none & 2\\
D_{6} & \none & 6\\
\end{array}
\end{align*}
\begin{align*}
&\hbox to 4cm {\textrm{No.~47:}\quad$(E_{6}, E_{6})$\hss}\\ 
&\begin{array}{lll}
\none & A_{4} & 1\\
A_{5}+A_{1} & \none & 10\\
D_{5} & \none & 5\\
E_{6} & \none & 5\\
\end{array}
\end{align*}
\begin{align*}
&\hbox to 4cm {\textrm{No.~50:}\quad\hss}\\ 
&(A_{2}+5A_{1}, D_{4}+A_{2}+A_{1})\\ 
&\begin{array}{lll}
A_{1} & \none & 1\\
A_{2}+A_{1} & \none & 1\\
A_{2}+A_{1} & 2A_{1} & 3\\
2A_{1} & \none & 1\\
A_{2}+2A_{1} & \none & 6\\
3A_{1} & \none & 6\\
A_{2}+3A_{1} & \none & 12\\
A_{2}+3A_{1} & A_{1} & 4\\
4A_{1} & \none & 12\\
4A_{1} & A_{1} & 5\\
A_{2}+4A_{1} & \none & 4\\
5A_{1} & \none & 4\\
6A_{1} & \none & 3\\
\end{array}
\end{align*}
\begin{align*}
&\hbox to 4cm {\textrm{No.~51:}\quad$(A_{3}+4A_{1}, D_{5}+2A_{1})$\hss}\\ 
&\begin{array}{lll}
A_{2}+A_{1} & \none & 2\\
A_{3}+A_{1} & \none & 2\\
2A_{1} & \none & 1\\
2A_{1} & A_{2}+A_{1} & 1\\
2A_{1} & 2A_{1} & 1\\
A_{2}+2A_{1} & \none & 8\\
A_{3}+2A_{1} & \none & 4\\
A_{3}+2A_{1} & A_{1} & 5\\
3A_{1} & \none & 2\\
A_{2}+3A_{1} & \none & 8\\
A_{3}+3A_{1} & \none & 8\\
4A_{1} & \none & 3\\
4A_{1} & A_{1} & 2\\
A_{3}+4A_{1} & \none & 1\\
5A_{1} & \none & 4\\
6A_{1} & \none & 1\\
\end{array}
\end{align*}
\begin{align*}
&\hbox to 4cm {\textrm{No.~52:}\quad$(A_{3}+4A_{1}, D_{4}+A_{3})$\hss}\\ 
&\begin{array}{lll}
2A_{1} & \none & 1\\
A_{2}+2A_{1} & \none & 12\\
A_{3}+2A_{1} & \none & 12\\
4A_{1} & \none & 6\\
4A_{1} & A_{1} & 4\\
4A_{1} & A_{2} & 1\\
A_{2}+4A_{1} & \none & 8\\
A_{3}+4A_{1} & \none & 3\\
A_{2} & \none & 1\\
A_{3} & 2A_{1} & 3\\
\end{array}
\end{align*}
\begin{align*}
&\hbox to 4cm {\textrm{No.~54:}\quad$(2A_{2}+3A_{1}, 2A_{2}+3A_{1})$\hss}\\ 
&\begin{array}{lll}
2A_{2}+A_{1} & \none & 3\\
2A_{2}+A_{1} & A_{1} & 3\\
2A_{1} & \none & 1\\
A_{2}+2A_{1} & \none & 6\\
A_{2}+2A_{1} & A_{1} & 6\\
2A_{2}+2A_{1} & \none & 6\\
3A_{1} & \none & 3\\
3A_{1} & A_{1} & 1\\
A_{2}+3A_{1} & \none & 12\\
4A_{1} & \none & 6\\
A_{2}+4A_{1} & \none & 2\\
5A_{1} & \none & 3\\
2A_{2} & \none & 1\\
\end{array}
\end{align*}
\begin{align*}
&\hbox to 4cm {\textrm{No.~55:}\quad$(A_{4}+3A_{1}, A_{4}+3A_{1})$\hss}\\ 
&\begin{array}{lll}
A_{2}+A_{1} & \none & 1\\
A_{3}+A_{1} & \none & 3\\
A_{4}+A_{1} & \none & 3\\
A_{4}+A_{1} & A_{1} & 3\\
A_{2}+2A_{1} & \none & 3\\
A_{2}+2A_{1} & A_{1} & 3\\
A_{3}+2A_{1} & \none & 6\\
A_{4}+2A_{1} & \none & 6\\
3A_{1} & A_{2} & 1\\
A_{2}+3A_{1} & \none & 6\\
A_{3}+3A_{1} & \none & 3\\
A_{2}+4A_{1} & \none & 1\\
\end{array}
\end{align*}
\begin{align*}
&\hbox to 4cm {\textrm{No.~56:}\quad$(D_{4}+3A_{1}, D_{6}+A_{1})$\hss}\\ 
&\begin{array}{lll}
A_{1} & A_{3}+A_{1} & 1\\
A_{3}+A_{1} & \none & 6\\
D_{4}+A_{1} & \none & 2\\
D_{4}+A_{1} & A_{1} & 2\\
A_{3}+2A_{1} & \none & 10\\
D_{4}+2A_{1} & \none & 4\\
3A_{1} & 2A_{1} & 1\\
3A_{1} & A_{3} & 2\\
A_{3}+3A_{1} & \none & 4\\
D_{4}+3A_{1} & \none & 2\\
4A_{1} & \none & 1\\
5A_{1} & \none & 1\\
5A_{1} & A_{1} & 1\\
6A_{1} & \none & 1\\
A_{3} & \none & 1\\
\end{array}
\end{align*}
\begin{align*}
&\hbox to 4cm {\textrm{No.~58:}\quad\hss}\\ 
&(A_{3}+A_{2}+2A_{1}, A_{3}+A_{2}+2A_{1})\\ 
&\begin{array}{lll}
A_{2}+A_{1} & \none & 1\\
A_{3}+A_{2}+A_{1} & \none & 4\\
2A_{2}+A_{1} & \none & 4\\
A_{3}+A_{1} & A_{1} & 2\\
A_{2}+2A_{1} & \none & 4\\
A_{2}+2A_{1} & A_{1} & 2\\
A_{2}+2A_{1} & A_{2} & 1\\
A_{3}+A_{2}+2A_{1} & \none & 1\\
2A_{2}+2A_{1} & \none & 4\\
A_{3}+2A_{1} & \none & 4\\
3A_{1} & \none & 1\\
3A_{1} & A_{1} & 1\\
A_{2}+3A_{1} & \none & 6\\
A_{3}+3A_{1} & \none & 2\\
4A_{1} & \none & 2\\
5A_{1} & \none & 1\\
A_{3}+A_{2} & \none & 2\\
A_{3}+A_{2} & A_{1} & 1\\
2A_{2} & \none & 1\\
\end{array}
\end{align*}
\begin{align*}
&\hbox to 4cm {\textrm{No.~59:}\quad\hss}\\ 
&(A_{3}+A_{2}+2A_{1}, D_{5}+A_{2})\\ 
&\begin{array}{lll}
2A_{2}+A_{1} & \none & 8\\
A_{3}+A_{1} & \none & 1\\
A_{3}+A_{1} & A_{1} & 2\\
A_{2}+2A_{1} & \none & 9\\
A_{3}+A_{2}+2A_{1} & \none & 6\\
3A_{1} & \none & 3\\
3A_{1} & A_{1} & 1\\
A_{3}+3A_{1} & \none & 6\\
A_{2}+4A_{1} & \none & 3\\
5A_{1} & \none & 3\\
A_{2} & A_{2}+A_{1} & 1\\
A_{2} & 2A_{1} & 1\\
A_{3}+A_{2} & \none & 3\\
\end{array}
\end{align*}
\begin{align*}
&\hbox to 4cm {\textrm{No.~60:}\quad$(A_{5}+2A_{1}, A_{5}+2A_{1})$\hss}\\ 
&\begin{array}{lll}
2A_{2}+A_{1} & \none & 2\\
A_{3}+A_{1} & \none & 1\\
A_{3}+A_{1} & A_{1} & 2\\
A_{4}+A_{1} & \none & 4\\
A_{5}+A_{1} & \none & 4\\
2A_{2}+2A_{1} & \none & 2\\
A_{3}+2A_{1} & \none & 4\\
A_{4}+2A_{1} & \none & 4\\
A_{5}+2A_{1} & \none & 2\\
3A_{1} & A_{2} & 1\\
A_{3}+3A_{1} & \none & 1\\
2A_{2} & \none & 1\\
A_{5} & \none & 1\\
A_{5} & A_{1} & 1\\
\end{array}
\end{align*}
\begin{align*}
&\hbox to 4cm {\textrm{No.~61:}\quad$(A_{5}+2A_{1}, E_{6}+A_{1})$\hss}\\ 
&\begin{array}{lll}
A_{1} & A_{2}+A_{1} & 1\\
2A_{2}+A_{1} & \none & 4\\
A_{3}+A_{1} & A_{1} & 1\\
A_{4}+A_{1} & \none & 4\\
A_{5}+A_{1} & \none & 4\\
A_{3}+2A_{1} & \none & 4\\
A_{5}+2A_{1} & \none & 6\\
A_{3}+3A_{1} & \none & 6\\
2A_{2} & \none & 3\\
A_{4} & \none & 1\\
A_{5} & A_{1} & 1\\
\end{array}
\end{align*}
\begin{align*}
&\hbox to 4cm {\textrm{No.~62:}\quad$(D_{5}+2A_{1}, D_{5}+2A_{1})$\hss}\\ 
&\begin{array}{lll}
A_{4}+A_{1} & \none & 4\\
D_{4}+A_{1} & \none & 2\\
D_{5}+A_{1} & \none & 4\\
2A_{1} & A_{3} & 1\\
2A_{1} & A_{4} & 1\\
A_{3}+2A_{1} & \none & 1\\
A_{3}+2A_{1} & A_{1} & 2\\
A_{4}+2A_{1} & \none & 4\\
D_{4}+2A_{1} & \none & 1\\
D_{5}+2A_{1} & \none & 1\\
A_{3}+3A_{1} & \none & 2\\
A_{4} & \none & 1\\
D_{5} & A_{1} & 1\\
\end{array}
\end{align*}
\begin{align*}
&\hbox to 4cm {\textrm{No.~63:}\quad$(D_{5}+2A_{1}, D_{7})$\hss}\\ 
&\begin{array}{lll}
\none & A_{4}+A_{1} & 1\\
A_{4}+A_{1} & \none & 8\\
2A_{1} & A_{3} & 1\\
A_{3}+2A_{1} & \none & 3\\
D_{4}+2A_{1} & \none & 3\\
D_{5}+2A_{1} & \none & 6\\
A_{3}+4A_{1} & \none & 3\\
A_{3} & 2A_{1} & 1\\
D_{4} & \none & 1\\
D_{5} & \none & 1\\
\end{array}
\end{align*}
\begin{align*}
&\hbox to 4cm {\textrm{No.~64:}\quad$(3A_{2}+A_{1}, 3A_{2}+A_{1})$\hss}\\ 
&\begin{array}{lll}
A_{2}+A_{1} & A_{1} & 3\\
2A_{2}+A_{1} & \none & 9\\
3A_{2}+A_{1} & \none & 3\\
A_{2}+2A_{1} & \none & 3\\
2A_{2}+2A_{1} & \none & 9\\
3A_{1} & \none & 3\\
A_{2}+3A_{1} & \none & 9\\
4A_{1} & \none & 4\\
2A_{2} & A_{1} & 3\\
3A_{2} & \none & 1\\
\end{array}
\end{align*}
\begin{align*}
&\hbox to 4cm {\textrm{No.~65:}\quad$(3A_{2}+A_{1}, E_{6}+A_{1})$\hss}\\ 
&\begin{array}{lll}
A_{2}+A_{1} & A_{1} & 3\\
2A_{2}+A_{1} & \none & 9\\
3A_{2}+A_{1} & \none & 3\\
A_{2}+2A_{1} & \none & 3\\
2A_{2}+2A_{1} & \none & 9\\
3A_{1} & \none & 3\\
A_{2}+3A_{1} & \none & 9\\
4A_{1} & \none & 4\\
2A_{2} & A_{1} & 3\\
3A_{2} & \none & 1\\
\end{array}
\end{align*}
\begin{align*}
&\hbox to 4cm {\textrm{No.~66:}\quad\hss}\\ 
&(A_{4}+A_{2}+A_{1}, A_{4}+A_{2}+A_{1})\\ 
&\begin{array}{lll}
A_{2}+A_{1} & A_{1} & 1\\
A_{2}+A_{1} & A_{2} & 1\\
A_{3}+A_{2}+A_{1} & \none & 3\\
A_{4}+A_{2}+A_{1} & \none & 3\\
2A_{2}+A_{1} & \none & 1\\
A_{3}+A_{1} & \none & 1\\
A_{4}+A_{1} & \none & 1\\
A_{2}+2A_{1} & \none & 3\\
2A_{2}+2A_{1} & \none & 3\\
A_{3}+2A_{1} & \none & 4\\
A_{4}+2A_{1} & \none & 3\\
A_{2}+3A_{1} & \none & 3\\
A_{3}+A_{2} & \none & 3\\
A_{4}+A_{2} & \none & 3\\
2A_{2} & A_{1} & 1\\
A_{4} & A_{1} & 1\\
\end{array}
\end{align*}
\begin{align*}
&\hbox to 4cm {\textrm{No.~67:}\quad\hss}\\ 
&(D_{4}+A_{2}+A_{1}, D_{4}+A_{2}+A_{1})\\ 
&\begin{array}{lll}
A_{2}+A_{1} & A_{3} & 3\\
A_{3}+A_{2}+A_{1} & \none & 6\\
A_{3}+A_{1} & \none & 3\\
A_{3}+2A_{1} & \none & 6\\
D_{4}+2A_{1} & \none & 3\\
A_{2}+3A_{1} & A_{1} & 1\\
4A_{1} & A_{1} & 1\\
5A_{1} & \none & 1\\
A_{3}+A_{2} & \none & 3\\
D_{4}+A_{2} & \none & 4\\
D_{4} & A_{1} & 1\\
\end{array}
\end{align*}
\begin{align*}
&\hbox to 4cm {\textrm{No.~68:}\quad$(2A_{3}+A_{1}, 2A_{3}+A_{1})$\hss}\\ 
&\begin{array}{lll}
A_{3}+A_{2}+A_{1} & \none & 8\\
2A_{2}+A_{1} & \none & 4\\
A_{3}+A_{1} & A_{1} & 2\\
A_{3}+A_{1} & A_{2} & 2\\
2A_{3}+A_{1} & \none & 1\\
A_{2}+2A_{1} & \none & 2\\
3A_{1} & A_{1} & 1\\
A_{2}+3A_{1} & \none & 4\\
A_{3}+3A_{1} & \none & 2\\
4A_{1} & \none & 1\\
A_{3}+A_{2} & \none & 4\\
2A_{2} & \none & 2\\
2A_{3} & \none & 4\\
\end{array}
\end{align*}
\begin{align*}
&\hbox to 4cm {\textrm{No.~69:}\quad$(2A_{3}+A_{1}, D_{6}+A_{1})$\hss}\\ 
&\begin{array}{lll}
A_{1} & 2A_{1} & 1\\
A_{1} & 2A_{2} & 1\\
2A_{2}+A_{1} & \none & 8\\
A_{3}+A_{1} & A_{1} & 2\\
2A_{3}+A_{1} & \none & 6\\
A_{3}+2A_{1} & \none & 2\\
A_{3}+3A_{1} & \none & 6\\
4A_{1} & \none & 3\\
5A_{1} & \none & 3\\
2A_{2} & \none & 4\\
2A_{3} & \none & 6\\
\end{array}
\end{align*}
\begin{align*}
&\hbox to 4cm {\textrm{No.~70:}\quad$(2A_{3}+A_{1}, E_{7})$\hss}\\ 
&\begin{array}{lll}
A_{1} & 2A_{1} & 1\\
A_{1} & 2A_{2} & 1\\
2A_{2}+A_{1} & \none & 8\\
A_{3}+A_{1} & A_{1} & 2\\
2A_{3}+A_{1} & \none & 6\\
A_{3}+2A_{1} & \none & 2\\
A_{3}+3A_{1} & \none & 6\\
4A_{1} & \none & 3\\
5A_{1} & \none & 3\\
2A_{2} & \none & 4\\
2A_{3} & \none & 6\\
\end{array}
\end{align*}
\begin{align*}
&\hbox to 4cm {\textrm{No.~71:}\quad$(A_{6}+A_{1}, A_{6}+A_{1})$\hss}\\ 
&\begin{array}{lll}
A_{2}+A_{1} & A_{2} & 1\\
A_{3}+A_{2}+A_{1} & \none & 3\\
A_{4}+A_{1} & \none & 1\\
A_{5}+A_{1} & \none & 4\\
A_{6}+A_{1} & \none & 3\\
A_{4}+2A_{1} & \none & 3\\
A_{3}+A_{2} & \none & 3\\
A_{4} & A_{1} & 1\\
A_{5} & \none & 1\\
A_{6} & \none & 3\\
\end{array}
\end{align*}
\begin{align*}
&\hbox to 4cm {\textrm{No.~72:}\quad$(D_{6}+A_{1}, D_{6}+A_{1})$\hss}\\ 
&\begin{array}{lll}
A_{1} & A_{5} & 1\\
2A_{3}+A_{1} & \none & 1\\
A_{5}+A_{1} & \none & 4\\
D_{4}+A_{1} & A_{1} & 1\\
D_{5}+A_{1} & \none & 2\\
D_{6}+A_{1} & \none & 2\\
D_{4}+2A_{1} & \none & 1\\
3A_{1} & A_{3} & 1\\
2A_{3} & \none & 1\\
A_{5} & \none & 2\\
D_{6} & \none & 2\\
\end{array}
\end{align*}
\begin{align*}
&\hbox to 4cm {\textrm{No.~73:}\quad$(D_{6}+A_{1}, E_{7})$\hss}\\ 
&\begin{array}{lll}
A_{1} & A_{3}+A_{1} & 1\\
A_{1} & A_{5} & 1\\
A_{5}+A_{1} & \none & 4\\
D_{6}+A_{1} & \none & 6\\
D_{4}+3A_{1} & \none & 3\\
2A_{3} & \none & 3\\
A_{5} & \none & 4\\
D_{5} & \none & 1\\
\end{array}
\end{align*}
\begin{align*}
&\hbox to 4cm {\textrm{No.~74:}\quad$(E_{6}+A_{1}, E_{6}+A_{1})$\hss}\\ 
&\begin{array}{lll}
A_{1} & A_{4} & 1\\
A_{5}+A_{1} & \none & 3\\
D_{5}+A_{1} & \none & 3\\
E_{6}+A_{1} & \none & 3\\
A_{5}+2A_{1} & \none & 3\\
A_{5} & A_{1} & 1\\
D_{5} & \none & 1\\
E_{6} & \none & 1\\
\end{array}
\end{align*}
\begin{align*}
&\hbox to 4cm {\textrm{No.~75:}\quad$(A_{3}+2A_{2}, A_{3}+2A_{2})$\hss}\\ 
&\begin{array}{lll}
A_{2}+A_{1} & A_{1} & 2\\
A_{3}+A_{2}+A_{1} & \none & 12\\
2A_{2}+A_{1} & \none & 8\\
A_{2}+2A_{1} & \none & 4\\
2A_{2}+2A_{1} & \none & 3\\
A_{3}+2A_{1} & \none & 1\\
4A_{1} & \none & 3\\
2A_{2} & A_{2} & 1\\
3A_{2} & \none & 4\\
A_{3} & A_{1} & 1\\
\end{array}
\end{align*}
\begin{align*}
&\hbox to 4cm {\textrm{No.~76:}\quad$(A_{5}+A_{2}, A_{5}+A_{2})$\hss}\\ 
&\begin{array}{lll}
A_{2}+A_{1} & A_{2} & 1\\
A_{3}+A_{2}+A_{1} & \none & 3\\
2A_{2}+A_{1} & \none & 3\\
A_{4}+A_{1} & \none & 2\\
A_{5}+A_{1} & \none & 4\\
A_{3}+2A_{1} & \none & 3\\
A_{4}+A_{2} & \none & 6\\
A_{5}+A_{2} & \none & 3\\
3A_{2} & \none & 1\\
A_{3} & A_{1} & 1\\
\end{array}
\end{align*}
\begin{align*}
&\hbox to 4cm {\textrm{No.~77:}\quad$(A_{5}+A_{2}, E_{7})$\hss}\\ 
&\begin{array}{lll}
A_{2}+A_{1} & A_{2} & 1\\
A_{3}+A_{2}+A_{1} & \none & 3\\
2A_{2}+A_{1} & \none & 3\\
A_{4}+A_{1} & \none & 2\\
A_{5}+A_{1} & \none & 4\\
A_{3}+2A_{1} & \none & 3\\
A_{4}+A_{2} & \none & 6\\
A_{5}+A_{2} & \none & 3\\
3A_{2} & \none & 1\\
A_{3} & A_{1} & 1\\
\end{array}
\end{align*}
\begin{align*}
&\hbox to 4cm {\textrm{No.~78:}\quad$(D_{5}+A_{2}, D_{5}+A_{2})$\hss}\\ 
&\begin{array}{lll}
A_{3}+A_{1} & A_{1} & 1\\
A_{4}+A_{1} & \none & 4\\
D_{4}+A_{1} & \none & 1\\
D_{5}+A_{1} & \none & 2\\
A_{3}+A_{2}+2A_{1} & \none & 1\\
A_{3}+3A_{1} & \none & 2\\
A_{2} & A_{3} & 1\\
A_{2} & A_{4} & 1\\
A_{4}+A_{2} & \none & 4\\
D_{4}+A_{2} & \none & 2\\
D_{5}+A_{2} & \none & 4\\
\end{array}
\end{align*}
\begin{align*}
&\hbox to 4cm {\textrm{No.~79:}\quad$(A_{4}+A_{3}, A_{4}+A_{3})$\hss}\\ 
&\begin{array}{lll}
A_{2}+A_{1} & A_{1} & 1\\
A_{3}+A_{2}+A_{1} & \none & 2\\
2A_{2}+A_{1} & \none & 4\\
A_{3}+2A_{1} & \none & 1\\
A_{4}+2A_{1} & \none & 1\\
A_{2}+3A_{1} & \none & 2\\
A_{3}+A_{2} & \none & 4\\
A_{4}+A_{2} & \none & 4\\
A_{3} & A_{2} & 1\\
A_{4}+A_{3} & \none & 4\\
2A_{3} & \none & 5\\
A_{4} & A_{2} & 1\\
\end{array}
\end{align*}
\begin{align*}
&\hbox to 4cm {\textrm{No.~80:}\quad$(D_{4}+A_{3}, D_{4}+A_{3})$\hss}\\ 
&\begin{array}{lll}
A_{3}+2A_{1} & \none & 3\\
4A_{1} & A_{1} & 1\\
A_{2}+4A_{1} & \none & 1\\
A_{3}+A_{2} & \none & 6\\
D_{4}+A_{2} & \none & 4\\
A_{3} & A_{3} & 3\\
D_{4}+A_{3} & \none & 3\\
2A_{3} & \none & 6\\
D_{4} & A_{2} & 1\\
\end{array}
\end{align*}
\begin{align*}
&\hbox to 4cm {\textrm{No.~81:}\quad$(D_{4}+A_{3}, D_{7})$\hss}\\ 
&\begin{array}{lll}
\none & A_{3}+A_{2} & 1\\
2A_{1} & 2A_{1} & 1\\
A_{3}+2A_{1} & \none & 4\\
D_{4}+2A_{1} & \none & 2\\
A_{3}+4A_{1} & \none & 1\\
6A_{1} & \none & 2\\
A_{3}+A_{2} & \none & 8\\
A_{3} & A_{3} & 2\\
D_{4}+A_{3} & \none & 8\\
2A_{3} & \none & 4\\
\end{array}
\end{align*}
\begin{align*}
&\hbox to 4cm {\textrm{No.~82:}\quad$(A_{7}, A_{7})$\hss}\\ 
&\begin{array}{lll}
A_{5}+A_{1} & \none & 2\\
A_{4}+A_{2} & \none & 4\\
A_{3} & A_{2} & 1\\
2A_{3} & \none & 2\\
A_{6} & \none & 4\\
A_{7} & \none & 5\\
\end{array}
\end{align*}
\begin{align*}
&\hbox to 4cm {\textrm{No.~83:}\quad$(A_{7}, E_{7})$\hss}\\ 
&\begin{array}{lll}
\none & 2A_{2} & 1\\
A_{5}+A_{1} & \none & 6\\
2A_{3} & \none & 9\\
A_{7} & \none & 9\\
\end{array}
\end{align*}
\begin{align*}
&\hbox to 4cm {\textrm{No.~84:}\quad$(D_{7}, D_{7})$\hss}\\ 
&\begin{array}{lll}
\none & A_{6} & 1\\
D_{5}+2A_{1} & \none & 1\\
A_{3} & A_{3} & 1\\
D_{4}+A_{3} & \none & 2\\
A_{6} & \none & 4\\
D_{6} & \none & 1\\
D_{7} & \none & 4\\
\end{array}
\end{align*}
\begin{align*}
&\hbox to 4cm {\textrm{No.~85:}\quad$(E_{7}, E_{7})$\hss}\\ 
&\begin{array}{lll}
A_{1} & A_{5} & 1\\
D_{6}+A_{1} & \none & 3\\
A_{7} & \none & 3\\
E_{6} & \none & 1\\
E_{7} & \none & 3\\
\end{array}
\end{align*}

\end{multicols}
}
\section{Examples}\label{sec:examples}
\subsection{An $(E_6, E_6)$-generic Enriques surface}\label{subsec:example47}
In~\cite{Shimada2019}, we  investigated 
an $(E_6, E_6)$-generic Enriques surface
(No.~47 of Table~\ref{table:184}).
We briefly review the result of~\cite{Shimada2019}.
\par
Let $\cloX\subset \P^3$ be a quartic Hessian surface
associated with a
very general cubic homogeneous polynomial,
and $X$ the minimal resolution of $\cloX$.
Then $\cloX$ contains $10$ lines and has $10$ ordinary nodes,
and the $K3$ surface $X$ has a fixed-point free involution $\enrinvol$
that interchanges the strict transforms of the $10$ lines
and the exceptional curves
over the $10$ ordinary nodes.
Let $\pi\colon X\to Y$ be the quotient morphism by
$\enrinvol$.
Then the Enriques surface $Y$ is
$(E_6, E_6)$-generic~(see Kondo~\cite{Kondo2012}).
\par
We can construct
a sequence of primitive
embeddings $\SY(2)\inj \SX \inj L_{26}$
from the primitive embeddings $L_{10}(2) \inj L_{26}$
of type {\tt 20E}.
We see that
$D_0$ is a fundamental domain of the action
of $\aut(Y)$ on $\Nef_Y$, and hence
\[
\vol(\NefY/\aut(Y))=\vol(D_0)=\frac{1_{\BP}}{51840}=
\frac{1_{\BP}}{|W(R_{E_6})|}.
\]
In fact, the $\LSY$-chamber  $D_0$ is equal to the chamber $D_Y$ in~\cite{Shimada2019}.
We then obtain the same result as Table 1.1 of~\cite{Shimada2019}
for $\Ells(Y)/\aut(Y)$.
We also prove that $\aut(Y)$ acts on $\Rats(Y)$ transitively.
\par
The last result contradicts Theorem~1.5~of~\cite{Shimada2019},
because Table~1.2~of~\cite{Shimada2019} says that
there exist  $10$
orbits of the action of $\aut(Y)$ on $\Rats(Y)$.
In fact, the argument in Section 7.6 of~\cite{Shimada2019} for
the calculation of the number of $\aut(Y)$-orbits
of $\RDP$-configurations  is wrong, and Table~1.2~of~\cite{Shimada2019}
should be replaced by Table~\ref{table:erratum} below.
\begin{table}[b]
{\small
$$
\arraycolsep=6pt
\begin{array}{cc}
 \hbox{\rm $\ADE$-type} & {\rm number} \\
 \hline
E_{6} & 1\\
A_{5}+A_{1} & 5\\
3A_{2} & 1\\
D_{5} & 1\\
A_{5} & 1\\
A_{4}+A_{1} & 1\\
A_{3}+2A_{1} & 5\\
2A_{2}+A_{1} & 1\\
D_{4} & 1\\
A_{4} & 1\\
\end{array}
\qquad\quad
\begin{array}{cc}
 \hbox{\rm $\ADE$-type} & {\rm number} \\
 \hline
A_{3}+A_{1} & 1\\
2A_{2} & 1\\
A_{2}+2A_{1} & 1\\
4A_{1} & 5\\
A_{3} & 1\\
A_{2}+A_{1} & 1\\
3A_{1} & 2\\
A_{2} & 1\\
2A_{1} & 1\\
A_{1} & 1\\
%\hline
\end{array}
$$
}
\vskip .2cm
\caption{$\RDP$-configurations on $Y$}\label{table:erratum}
\end{table}
\par
Here we present a correct method for
the calculation of $\aut(Y)$-orbits
of $\RDP$-configurations.
Let $\psi\colon Y\to \cloY$  be a birational morphism
to a surface $\cloY$ that has only rational double points
as its singularities,
and let $h_{\psi}$ be an ample class of $\cloY$.
Since the $\LSY$-chamber $D_0$ is a fundamental domain
of the action of $\aut(Y)$ on $\NefY$,
we can assume that $\psi^*(h_{\psi}) \in \SY$ belongs to $D_0$
by composing $\psi$ with an automorphism of $Y$.
Let $f$ be the minimal face of $D_0$ containing $\psi^*(h_{\psi})$.
Then the set of the classes of smooth rational curves $C$
contracted by $\psi$ is equal to
\[
\Gamma (f):=\set{[C]}{\textrm{$C$ is a smooth rational curve 
on $Y$ such that $f\subset ([C])\sperp$}}.
\]
For a given face $f$ of $D_0$,
we calculate the set of roots $r$ of $\SY$
such that $f\subset(r)\sperp$.
From this set, we can calculate $\Gamma(f)$ by 
using the ample class $a_Y$ and
the set of $(-4)$-vectors in
$\SXm$.
We calculate $\Gamma(f)$ for all faces  $f$ of $D_0$,
and obtain $750$ $\RDP$-configurations of smooth rational curves.
Every $\RDP$-configuration on $Y$ is equal to one of them
modulo the action of $\aut(Y)$.
\par
Let $\Gamma$ be one of the $750$ $\RDP$-configurations.
We put $\mu:=|\Gamma|$, that is, $\mu$ is 
the total Milnor number 
of the singularities of the surface $\cloY$ corresponding to $\Gamma$.
The sublattice $\gen{\Gamma}$ of $\SY$
generated by the classes in $\Gamma$ is negative definite of rank $\mu$,
and  its orthogonal complement $\gen{\Gamma}\sperp$ is
hyperbolic of rank $10-\mu$.
Let $\PPP_{\gen{\Gamma}\sperp}$ be the positive half-cone
of $\gen{\Gamma}\sperp$ contained in $\PPP_Y$.
Composing the primitive embedding $\gen{\Gamma}\sperp\inj \SY$
with the primitive embedding $\SY(2)\inj L_{26}$ of type {\tt 20E},
we have $L_{26}/\gen{\Gamma}\sperp(2)$-chambers
of  $\PPP_{\gen{\Gamma}\sperp}$.
The intersection $f_0:=\PPP_{\gen{\Gamma}\sperp}\cap D_0$
is one of the $L_{26}/\gen{\Gamma}\sperp(2)$-chambers,
and it is the maximal face of $D_0$ among all 
the faces $f$ of $D_0$ such that $\Gamma(f)=\Gamma$.
Let $(V_{\Gamma}, E_{\Gamma})$ be the graph where
$V_{\Gamma}$ is the set of $L_{26}/\gen{\Gamma}\sperp(2)$-chambers
on $\PPP_{\gen{\Gamma}\sperp}$
contained in $\PPP_{\gen{\Gamma}\sperp}\cap \NefY$
and $E_{\Gamma}$ is the usual adjacency relation of chambers.
Then $D\mapsto \PPP_{\gen{\Gamma}\sperp}\cap D$
gives a bijection to the set $V_{\Gamma}$ of vertices
from the set of $\LSY$-chambers  $D$ contained in $\NefY$
such that $\PPP_{\gen{\Gamma}\sperp}\cap D$
is a face of $D$ of dimension $10-\mu$,
or equivalently, such that 
$\PPP_{\gen{\Gamma}\sperp}\cap D$ contains 
a non-empty open subset of $\PPP_{\gen{\Gamma}\sperp}$.
The group
\[
G_{\Gamma}:=\set{g\in \aut(Y)}{\Gamma^g=\Gamma}
\]
acts on the graph $(V_{\Gamma}, E_{\Gamma})$.
We apply Procedure~\ref{procedure:genB} to
$(V_{\Gamma}, E_{\Gamma})$ and $G_{\Gamma}$,
and obtain
a complete  set $V_{\Gamma, 0}$ of representatives of $V_{\Gamma}/G_{\Gamma}$.
Let $\Gamma\sprime$ be one of the $750$ $\RDP$-configurations
with the same $\ADE$-type as $\Gamma$.
Let $V_{\Gamma\sprime, 0}$ be a complete set of
representatives of $V_{\Gamma\sprime}/G_{\Gamma\sprime}$.
Then the $\RDP$-configurations $\Gamma$ and $\Gamma\sprime$
are in the same orbit under the action of $\aut(Y)$
if and only if
there exists  an $L_{26}/\gen{\Gamma\sprime}\sperp(2)$-chamber
$f\sprime =\PPP_{\gen{\Gamma\sprime}\sperp}\cap D\sprime \in V_{\Gamma\sprime,0}$
with $D\sprime\subset\NefY$ 
such that  $\isoms(Y, D_0, D\sprime)$ contains an element $g$ 
satisfying  $\Gamma^g=\Gamma\sprime$.
Since $|V_{\Gamma\sprime,0}|$ is finite,
we can determine whether 
$\Gamma$ and $\Gamma\sprime$
are in the same orbit or not.
Applying this method to all pairs $\Gamma$ and $\Gamma\sprime$
with the same $\ADE$-type,
we obtain a complete set of representatives of $\RDP$-configurations
modulo $\aut(Y)$.
%\begin{remark}
%When the $\ADE$-type of $\Gamma$ is $A_5+A_1$ or $A_3+2A_1$ or $4A_1$,
%then the torsion part of $\SY/\gen{\Gamma}$ is $\Z/2\Z$,
%whereas for other $\ADE$-types,  
%the negative definite root lattice 
%$\gen{\Gamma}$ is primitive in $\SY$.
%\end{remark} 
%
\subsection{$(4A_1, 4A_1)$-generic and $(4A_1, D_4)$-generic Enriques surfaces}
Let $Y$ be a $(4A_1, 4A_1)$-generic Enriques surface (No.~7 of Table~\ref{table:184}).
We construct a sequence $\SY(2)\inj \SX\inj L_{26}$
from the primitive embedding $L_{10}(2)\inj L_{26}$ of type {\tt 96C}.
The complete set $V_0$ of representatives 
of orbits of the action of $\aut(Y)$ on 
the set of $\LSY$-chambers contained in $\NefY$ consists of $5$ elements
with the orders of stabilizer subgroups $1,1,1,2,1$. 
Since $\vol(D_0)=1_{\BP}/72$, we have 
\[
\vol(\NefY/\aut(Y))=
\vol(D_0)\left(\frac{1}{1}+\frac{1}{1}+\frac{1}{1}+\frac{1}{2}+\frac{1}{1} \right)
=\frac{1_{\BP}}{16}=\frac{1_{\BP}}{|W(R_{4A_1})|}.
\]
The set $\Rats_{\temp}$ is of size $56$ and 
the set $\Ells_{\temp}$ is  of size $6270$.
\par 
We also construct  $\SY(2)\inj \SX\inj L_{26}$
for  a $(4A_1, D_4)$-generic Enriques surface (No.~8 of Table~\ref{table:184})
from the primitive embedding  of type {\tt 96C}.
The set $V_0$  consists of $18$ elements
with the orders of stabilizer subgroups $4, \dots, 4$.
We have  $|\Rats_{\temp}|=154$ and 
 $|\Ells_{\temp}|=21452$.
\subsection{A $(D_5, D_5)$-generic  Enriques surface}
We have to use the primitive embedding  of type {\tt 40A}
to construct  $\SY(2)\inj \SX\inj L_{26}$
for  a $(D_5, D_5)$-generic Enriques surface (No.~24 of Table~\ref{table:184}).
The set $V_0$  consists of $6$ elements
with the orders of stabilizer subgroups $2, \dots, 2$.
In this case,  we have $\vol(D_0)=1_{\BP}/5760$ and 
\[
\vol(\NefY/\aut(Y))=
\vol(D_0)\left(\frac{1}{2}+\frac{1}{2}+\frac{1}{2}+\frac{1}{2}+\frac{1}{2}+\frac{1}{2} \right)
=\frac{1_{\BP}}{1920}=\frac{1_{\BP}}{|W(R_{D_5})|}.
\]
We have  $|\Rats_{\temp}|=15$ and 
 $|\Ells_{\temp}|=758$.
\subsection{Enriques surfaces with finite automorphism group}\label{subsec:finiteAut}
Let $Y$ be an Enriques surface
with finite automorphism group
of type I in Kondo's classification~\cite{Kondo1986}.
We assume that  $Y$  is chosen very general
so that the covering $K3$ surface $X$ is
of Picard number $19$ and satisfies $\OG(T_X, \omega)=\{\pm 1\}$.
Then $Y$ is $(E_8+A_1, E_8+A_1)$-generic~(No.~172 of Table~\ref{table:184}).
The automorphism group $\Aut(Y)$ is
a dihedral group of order $8$,
and its image $\aut(Y)$ in $\OGP(\SY)$
is order $4$.
The Enriques surface $Y$ has exactly $12$ smooth rational curves,
and their dual graph is given in~\cite[Fig.~1.4]{Kondo1986}.
The chamber $\NefY$ is isomorphic to an $\LLt$-chamber $D_0$
of the primitive embedding $L_{10}(2)\inj L_{26}$
of type {\tt 12A}, and hence 
$\vol(D_0)=1_{\BP}/174182400$ (see~\cite{BS2019}).
Therefore
\[
%[ [ 2, 12 ], [ 3, 5 ], [ 5, 2 ], [ 7, 1 ] ]
\vol(\NefY/\aut(Y))=\frac{\vol(D_0)}{4}=\frac{1_{\BP}}{2^{14} 3^5 5^2 7}
=\frac{2_{\BP}}{ |W(R_{E_8+A_1})|}.
\]
%The group $\OG(L_{10}, D_0)$ is of order $4$ and hence equal to $\aut(Y)$.
The group $\aut(Y)$ decomposes $\Rats(Y)$
as $2+2+2+2+4$.
\par
For a very general  Enriques surface $Y$
with finite automorphism group
of type II,
the chamber 
 $\NefY$ is isomorphic to an $\LLt$-chamber $D_0$
of the primitive embedding $L_{10}(2)\inj L_{26}$
of type {\tt 12B}.
We have $\vol(D_0)=1_{\BP}/3870720$.
Note that $3870720\cdot |\SSSS_4|=|W(R_{D_9})|$.
The Enriques surface $Y$ is $(D_9, D_9)$-generic~(No.~184 of Table~\ref{table:184}),
and we have  $\Aut(Y)\cong \aut(Y)\cong \SSSS_4$.
The group $\aut(Y)$  decomposes $\Rats(Y)$
as $6+6$.
\bibliographystyle{plain}
%\bibliography{mybibAutEnr}

%
\end{document}